\newtheorem{thm}{Theorem}[section]
\newtheorem{prop}[thm]{Proposition}
\newtheorem{lemma}[thm]{Lemma}
\newtheorem{cor}[thm]{Corollary}
\theoremstyle{definition}
\newtheorem{defn}[thm]{Definition}
\theoremstyle{remark}
\newtheorem{rmk}[thm]{Remark}
\newtheorem{ex}[thm]{Example}
\newcommand{\ol}[1]{\overline{#1}}
\newcommand{\fw}[1]{{\operatorname{fw}\mathopen{}\left(#1\right)\mathclose{}}}
\newcommand{\fwC}[2]{{\operatorname{fw}_{#1}\mathopen{}\left(#2\right)\mathclose{}}}
\newcommand{\bk}[2]{{\operatorname{bk}_{#1}\mathopen{}\left(#2\right)\mathclose{}}}
\newcommand{\jbk}{\mathrm{bk}}
\newcommand{\str}[2]{{\operatorname{\mathfrak{st}}_{#1}\mathopen{}\left(#2\right)\mathclose{}}}
\def\st{{\mathfrak{st}}}
\newcommand{\Z}{\mathbb{Z}}
\newcommand{\A}{\mathcal{A}}
\newcommand{\B}{\mathcal{B}}
\newcommand{\C}{\mathcal{C}}
\newcommand{\h}{h}
\newcommand{\abs}[1]{\left\lvert#1\right\rvert}
\newcommand{\sh}[1]{\mathopen{}\left\langle#1\right\rangle\mathclose{}}
\newcommand{\tps}{\texorpdfstring}
\author{Michael Chmutov \and Pavlo Pylyavskyy \and Elena Yudovina}
\address{Department of Mathematics, University of Minnesota, Minneapolis, MN 55455}
\thanks{M. C. was partially supported	 by NSF grants DMS-1148634 and DMS-1503119; P. P. was partially supported by NSF grants DMS-1148634, DMS-1351590, and Sloan Fellowship.}
\dedicatory{In memory of V. A. Yasinskiy}
\keywords{Affine Weyl group, Kazhdan-Lusztig cells, Matrix-Ball Construction, Robinson-Schensted correspondence}
\subjclass[2010]{05E10,20C08}
\begin{document}
\begin{abstract}
In his study of Kazhdan-Lusztig cells in affine type $A$, Shi has introduced an affine analog of Robinson-Schensted correspondence. We generalize the Matrix-Ball Construction of Viennot and Fulton to give a more combinatorial realization of Shi's algorithm. As a byproduct, we also give a way to realize the affine correspondence via the usual Robinson-Schensted bumping algorithm. Next, inspired by Lusztig and Xi, we extend the algorithm to a bijection between the extended affine symmetric group and collection of triples $(P, Q, \rho)$ where $P$ and $Q$ are tabloids and $\rho$ is a dominant weight. The weights $\rho$ get a natural interpretation in terms of the Affine Matrix-Ball Construction. Finally, we prove that fibers of the inverse map possess a Weyl group symmetry, explaining the dominance condition on weights.
\end{abstract}
\title[MBC for affine Robinson-Schensted correspondence]{Matrix-ball construction of affine Robinson-Schensted correspondence}
\maketitle
\tableofcontents

\section{Introduction}
\subsection{Cells in Kazhdan-Lusztig theory}
In a groundbreaking paper \cite{KL}, Kazhdan and Lusztig laid a basis for an approach to the representation theory of Hecke algebras. Since then this approach has been significantly developed, and is called \emph{Kazhdan-Lusztig theory}. Of particular importance in it is the notion of \emph{cells}. 

Briefly, the definition is as follows. The Hecke algebra is associated with a Coxeter group $W$. Kazhdan and Lusztig define a pre-order on
elements of $W$ denoted $\leq_L$. Some pairs $v,w$ of elements of $W$ satisfy both $v \leq_L w$ and $w \leq_L v$, in which case we say that they are left-equivalent, denoted $v \sim_L w$. Similarly one can define right equivalence $\sim_R$. The respective equivalence classes are called the \emph{left cells} and the \emph{right cells}. Another important notion turns out to be that of left-right, or two-sided, equivalence $\sim_{LR}$. Two elements of $W$ are left-right equivalent if they are connected by a series of moves where every move is either a left equivalence or a right equivalence. The corresponding equivalence classes are called \emph{two-sided cells}.

\subsection{Type $A$}
In type $A$, i.e., when $W$ is a symmetric group, the Kazhdan-Lusztig cell structure corresponds to something very familiar to combinatorialists, the \emph{Robinson-Schensted correspondence}. It is a bijective correspondence between elements of the symmetric group and pairs of {\it {standard Young tableaux}} of the same shape. It is well known (see \cite{BV,KL,GM,A}) that
\begin{itemize}
 \item two permutations lie in the same left cell if and only if they have the same \emph{recording tableau} $Q$;
 \item two permutations lie in the same right cell if and only if they have the same \emph{insertion tableau} $P$;
 \item two permutations lie in the same two-sided cell if and only if their insertion and recording tableaux have the same shape $\lambda$; equivalently, this happens when certain posets naturally associated with the permutations have the same \emph{Greene-Kleitman invariants} \cite{GK}.
\end{itemize}
We refer the reader to \cite{fulton_yt} for the classical formulation of Robinson-Schensted correspondence via an insertion algorithm. We will soon describe a less widely known construction, which can also be found in \cite{fulton_yt}, and which we generalize in this paper.

\subsection{Affine type $A$}
For $W$ of affine type $A$, i.e. an affine symmetric group, Shi has shown in \cite{shibook} that the left Kazhdan-Lusztig cells correspond to \emph{tabloids}. These are equivalence classes of fillings of Young diagrams with the integers (or residue classes), up to permuting elements within rows. The shape of these tabloids determines the two-sided cell. Furthermore, in \cite{Shi} Shi gave an algorithm for constructing a tabloid $P(w)$ out of an affine permutation $w \in W$. We refer to this algorithm as \emph{Shi's algorithm}. We will describe it in Section \ref{sec:Shi algorithm and KL cells}. We also refer to the affine Robinson-Schensted correspondence between left cells and tabloids as the \emph{Shi correspondence}.

A major difference between the finite and affine types is that in the latter case the map $$w \in W \mapsto (\text{insertion tabloid } P(w), \text{ recording tabloid } Q(w))$$ is {\it {not}} an injection. This is to be expected of course, as there are only finitely many pairs $(P,Q)$, while $W$ is infinite.

One can get an actual bijection if one adds a third piece to the data: weights. Namely, there exists a bijection 
$$w \in W \mapsto (\text{insertion tabloid } P(w), \text{ recording tabloid } Q(w), \text{ dominant weight } \rho).$$
This was essentially known since Lusztig's conjecture in \cite{lusztig}. A proof and an explicit description of this correspondence was given by Honeywill \cite{honey}, relying on some key ideas of Xi \cite{xibook}.

\subsection{Goals}

\subsubsection*{Goal A} \emph{Find a description of the Shi correspondence which would generalize in a natural way some known construction of the Robinson-Schensted correspondence, would yield insight into the meaning of associating a tabloid to a cell, and would be convenient in its applications to the further study of Kazhdan-Lusztig cells in affine type $A$.}

Shi's algorithm is an involved process consisting of several distinct sub-algorithms which need to be applied in specific order. It makes it challenging to develop any direct intuitive connection between the permutation and the resulting tabloids. An alternative algorithm explained in Shi's paper (the one we will describe) is more natural in that it, essentially, consists of a sequence of Knuth moves getting a permutation to a form where the tabloid can be read off. This version however requires pre-computing the Greene-Kleitman invariants of the associated poset.

It is our impression that the \emph{Matrix-Ball Construction} is the most suitable candidate to generalize. The standard version first appeared in \cite{viennot_shadow}, however we will use the terminology from the semistandard generalization in \cite{fulton_yt}. Thus, in this paper we introduce the \emph{Affine Matrix-Ball Construction}, or \emph{AMBC}. AMBC has been implemented in Java \cite{program}; we strongly suggest the program to the reader for additional examples. 

Surprisingly, as a byproduct of our analysis we get a realization of the Shi correspondence in terms of the usual Robinson-Schensted insertion, see Section \ref{sec:noproofs asymptotic rsk} for details. A reader who wishes to learn \emph{a} simple combinatorial way to realize the Shi correspondence can jump directly to this section. This realization implicitly appeared in the work of Pak \cite{pak}.

\subsubsection*{Goal B}
\emph{Describe an extension of Shi's correspondence to a bijection in terms of AMBC.}

It turns out that weights $\rho$ can be given a natural interpretation in terms of AMBC.  We also explain how to complete the bijection by describing the inverse map $(P,Q,\rho) \mapsto w$. We leave the question of the exact relationship between our weights and those of Honeywill for future research.

Throughout the paper we mostly work with a slightly larger group, namely the extended affine symmetric group. In Section \ref{sec:nonext} we show that by imposing a simple restriction $\sum_i \rho_i = 0$ on weights one recovers all of the theory for the non-extended case.

\subsubsection*{Goal C}
\emph{The inverse map $(P,Q,\rho) \mapsto w$ is defined for any (not necessarily dominant) weight; describe its fibers.}

 It turns out that the fibers are orbits of a certain Weyl group acting on the lattice of weights. This explains why orbit representatives can be chosen to be dominant weights. We are unaware of anything similar to this theorem in the existing literature.

\subsection{Paper parts}
The paper is divided into two parts. The first part is relatively short and is written to explain the main results in an accessible form. The second part contains a detailed development of the theory, including all the proofs.

\subsection{Notational preliminaries}
\label{sec:notational preliminaries}
Let $n$ be a positive integer. Let $[n] := \{1,\dots, n\}$. For each $i\in\Z$ denote by $\ol{i}$ the residue class $i+n\Z$. Let $[\ol{n}]:= \{\ol{1},\dots, \ol{n}\}$.

We will encounter several groups. Let $W$ be the symmetric group of type $A_{n-1}$, i.e. $S_n$. Let $\widetilde{W}$ be the extended affine symmetric group of type $\widetilde{A}_{n-1}$; it consists of bijections $w:\Z\to\Z$ such that 
$$w(i+n) = w(i)+ n.$$
The elements of $\widetilde{W}$ are called \emph{extended affine permutations}. Since these will be common objects we deal with, we will  shorten that to just permutations.
Let $\ol{W}$ be the affine symmetric group of type $\widetilde{A}_{n-1}$; it consists of $w\in\widetilde{W}$ such that 
$$\sum_{i=1}^n w(i) = \frac{n(n+1)}{2}.$$ 

A \emph{partial (extended affine) permutation} is a pair $(U, w)$ where $U\subseteq\Z$ has the property that $(x\in U) \Leftrightarrow (x+n\in U)$ and $w:U\to\Z$ is an injection such that $w(i+n) = w(i)+ n$. We will suppress the explicit mention of the subset $U$ in the notation and just refer to the partial permutation $w$. Any permutation is viewed as a partial permutation with $U = \Z$. 

A permutation is determined by its values on $1,\dots, n$. The \emph{window notation} for a permutation $w$ is $[a_1,\dots, a_n]$ where $w(1)=a_1,\dots, w(n)=a_n$.	A partial permutation $w$ is also determined by its values on $1,\dots, n$, except it may not be defined on some of them. The \emph{window notation} for a partial permutation $w$ is $[a_1,\dots, a_n]$ where $a_i = w(i)$ if $w$ is defined on $i$ and $a_i=\varnothing$ otherwise.

We usually think of permutations in terms of pictures such as the one in Figure \ref{fig:proper numbering}. More precisely, on the plane we draw an infinite matrix; the rows are labeled by $\Z$, increasing downward, and the columns are labeled by $\Z$, increasing to the right. If $w(i)=j$ then we place a ball in the $i$-th row and $j$-th column. To distinguish the $0$-th row, we put a solid red line between the $0$-th and $1$-st rows, and similarly for columns. We also put dashed red lines every $n$ rows and columns. 

The cells of the matrix will be referred to by their matrix coordinates, e.g. the cell $(1,4)$ in Figure \ref{fig:proper numbering} contains a ball. The balls of a partial permutation will also be referred to by their matrix coordinates. For a partial permutation $w$, we denote by $\B_w\subsetneq\Z\times\Z$ the collection of balls of $w$. 

For an integer $k$ and a ball $b=(i,j)$ we will refer to the ball $b' = (i+kn, j+kn)$ as the \emph{$k(n,n)$-translate of $b$} and denote this by $b' = b+k(n,n)$. Two balls $b$ and $b'$ are $(n,n)$-translates if for some $k$ one is a $k(n,n)$-translate of the other.

We will assign numbers to balls of permutations as well as to other cells of the matrix. For a partial permutation $w$, a \emph{numbering of $w$} is a function $d:\B_w\to\Z$. A numbering $d$ of $w$ is \emph{semi-periodic with period $m$} if for any $b\in\B_w$ we have $d(b+(n,n)) = d(b) + m$. When referring to a numbering in pictures, we will write $d(b)$ inside the ball $b$ as done in Figure \ref{fig:proper numbering}, where we show a semi-periodic numbering of period 3.

We use compass directions (north, east, etc.) inside the matrix with north corresponding to smaller row numbers and east corresponding to larger column numbers. By a composite direction (e.g. northeast) we mean north and east. The relations are weak by default: a cell $(i,j)$ is \emph{southwest of $(i',j')$} if $i\geqslant i'$and $j\leqslant j'$. Adding the modifier ``directly'' constrains one of the two coordinates: a cell $(i,j)$ is \emph{directly south} of $(i',j')$ if $i\geqslant i'$ and $j=j'$. Directions define partial orders on $\Z\times\Z$: we say $(i,j)\leqslant_{SW} (i',j')$ if $(i,j)$ is southwest of $(i',j')$. 

Sequences of cells going southeast and northwest will be particularly important to us. A $\emph{path}$ is a sequence $(b_0,\dots, b_k)$ of cells such that for each $i$, $b_{i+1}$ is strictly northwest (meaning both strictly north and strictly west) of $b_i$. A $\emph{reverse path}$ is a sequence $(b_0,\dots, b_k)$ of cells such that for each $i$, $b_{i+1}$ is strictly southeast of $b_i$. In both cases, the number $k$ is referred to as the \emph{length} or \emph{number of steps} of the (reverse) path.

\subsection{Acknowledgments}
The authors are grateful to Joel Lewis and an anonymous referee for reading the paper and providing valuable feedback, and to Darij Grinberg for helpful comments.

\part{Statements.}

\section{Matrix-Ball Construction}

In this section we briefly review the algorithm called the Matrix-Ball Construction (MBC) and its inverse. MBC is an implementation of the Robinson-Schensted correspondence which was originally described by Viennot \cite{viennot_shadow} in terms of shadows. It was later generalized by Fulton \cite{fulton_yt} to the setting of integer matrices. We will use the terminology from the second reference. 

In these algorithms (MBC, its inverse, AMBC, and its inverse) we will consider collections of cells of the following form:

\begin{defn}
\label{def:zig-zag}
A \emph{reverse zig-zag} is a non-empty sequence $(c_1, c_2, \ldots, c_k)$ of cells such that both of the following hold:
\begin{enumerate}
\item for all $1\leqslant i< k$, $c_{i+1}$ is adjacent to and either directly north or directly east of $c_i$, and
\item if $k\geqslant 2$, then $c_2$ is directly east of $c_1$ and $c_k$ is directly north of $c_{k-1}$.
\end{enumerate}
See Figure \ref{fig:pieces_of_yd} for an example. Similarly, a \emph{forward zig-zag} is a collection of cells that satisfies the first of the above conditions, starts with a step north, and ends with a step east.
\end{defn}

The forward/reverse terminology is motivated by the fact that in Section \ref{sec:weights} we will use forward zig-zags to create paths and reverse zig-zags to create reverse paths. Most of the time it will be clear from the context whether we are talking about forward zig-zags or reverse zig-zags, so we will drop the specification and just call them zig-zags.

\begin{defn}
Given a reverse zig-zag $Z = (c_1, c_2, \ldots, c_k)$, we say that
\begin{enumerate}
\item the \emph{back corner-post of $Z$} is the cell in the same column as $c_1$ and the same row as $c_k$,
\item the \emph{inner corner-posts of $Z$} are the cells of $Z$ such that no cell directly north or directly west of them is in $Z$, and
\item if $k \geqslant 2$, the \emph{outer corner-posts of $Z$} are the cells of $Z$ such that no cell directly south or directly east of them is in $Z$; if $k = 1$ then there are no outer corner-posts of $Z$.
\end{enumerate}
The definitions for forward zig-zags are obtained by reflection in the anti-diagonal (i.e. interchange ``north'' with ``east'', and ``south'' with ``west'').
\end{defn}
Note that while the inner and outer corner-posts are always parts of the zig-zag, the back corner-post is only part of the zig-zag if the zig-zag consists of just one cell.

\begin{figure}
\centering
\resizebox{.8\textwidth}{!}{\input{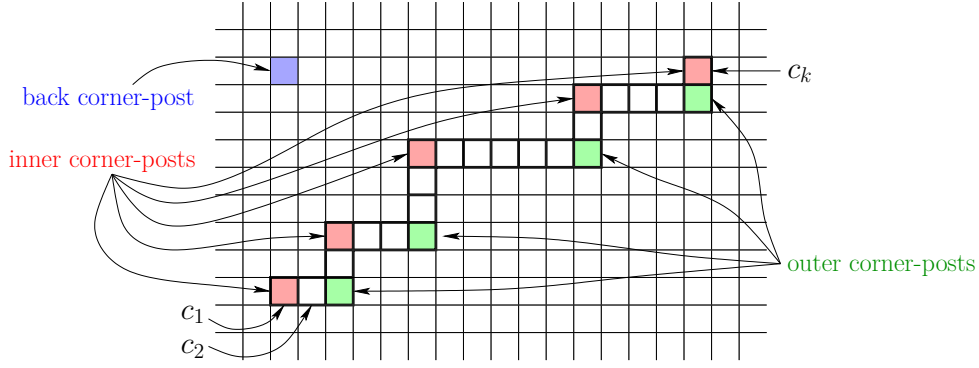}}
\caption{A reverse zig-zag and various corner-posts.}
\label{fig:pieces_of_yd}
\end{figure}

The Robinson-Schensted correspondence sends a non-affine permutation $w$ to a pair of standard Young tableaux $(P(w), Q(w))$ of the same shape. Before describing the algorithm we will discuss a way to produce a partial non-affine permutation $\fw{w}$ from a partial non-affine permutation $w$. View $w$ as an $n\times n$ matrix with balls as described in the introduction. Make a numbering $d_w$ of $w$ iteratively as follows. For any ball $b$ with no balls strictly northwest of it, let $d_w(b) = 1$. For any ball $b$ with only balls numbered $1$ strictly northwest of it, let $d_w(b) = 2$.  Continue until all balls are numbered (see Figure \ref{fig:mbc}). For each $i$, consider the unique zig-zag $Z_i$ whose inner corner-posts are the balls labeled $i$ (denoted by green lines in Figure \ref{fig:mbc}). For each $i$, let $(a_{w,i}, b_{w,i})$ be the back corner-post of $Z_i$ (denoted by $*$'s in the figure). Let $\fw{w}$ be the partial non-affine permutation whose balls are the outer corner-posts of all the $Z_i$'s (the green balls in the figure). Now we can describe the algorithm:

\begin{itemize}
 \item Input $w\in W$.
 \item Output: a pair $(P(w),Q(w))$ of standard tableaux of the same shape.
 \item Initialize $(P,Q)$ to $(\varnothing,\varnothing)$.
 \item Repeat until $w$ is the empty partial permutation:
\begin{itemize}
 \item Add a row $(b_{w,1},b_{w,2},\dots)$ to $P$ and a row $(a_{w,1},a_{w,2},\dots)$ to $Q$.
 \item Reset $w$ to $\fw{w}$.
\end{itemize}
\item Set $P(w) = P$ and $Q(w) = Q$.
\end{itemize}

\begin{ex}
Consider the non-affine permutation $w=[5,6,1,3,4,2]$. The steps of MBC are shown in Figure \ref{fig:mbc}. The columns of the $*$'s in the first picture are $1,2,4$ so these numbers form the first row of $P(w)$, while the rows are $1,2,5$ so these numbers form the first row of $Q(w)$. The remaining two rows are handled similarly, yielding
$$P(w) = \tableau[sY]{1,2,4\\3,6\\5}\qquad Q(w)=\tableau[sY]{1,2,5\\3,4\\6}$$
\end{ex}

\begin{figure}
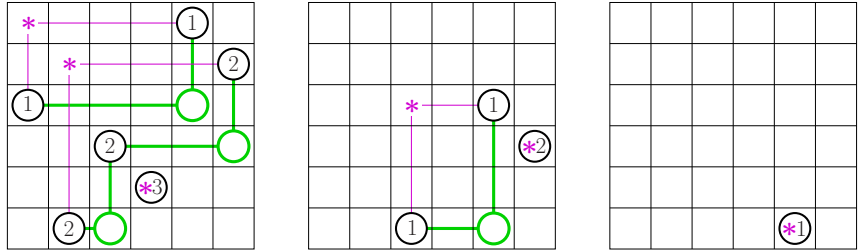

\centering\resizebox{.2\textwidth}{!}{\input{figures/mbc.pspdftex}}\qquad\resizebox{.2\textwidth}{!}{\input{figures/mbc2.pspdftex}}\qquad\resizebox{.2\textwidth}{!}{\input{figures/mbc3.pspdftex}}
\caption{Steps of MBC for the non-affine permutation $[5,6,1,3,4,2]$.}
\label{fig:mbc}
\end{figure}

It turns out that this procedure always produces a pair of standard Young tableaux of the same shape; moreover this is a bijection between $W$ and the collection of pairs of Young tableaux of the same shape and of size $n$. Next we want to describe an algorithm which  inverts the map $w\mapsto (P(w), Q(w))$. Just as with MBC, we start by describing an essential construction in the step.

Suppose $w$ is a partial permutation and $p=(p_1,\dots,p_k)$, $q=(q_1,\dots, q_k)$ are two sequences of integers of the same length. In the examples in Figure \ref{fig:invmbc}, the balls of $w$ are black, and each cell $(p_i, q_i)$ is labeled by the magenta number $i$. Construct a numbering of $w$ iteratively as follows. For a ball $b$ with no balls southeast of it, number $b$ with the largest integer $i$ such that $(p_i, q_i)$ is northwest of $b$. Once all balls southeast of a given ball $b$ have been numbered, number $b$ with the largest integer $i$ such that $(p_i, q_i)$ is northwest of $b$ and all balls southeast of $b$ have numbers strictly larger than $i$. The resulting numbering is shown in blue in Figure \ref{fig:invmbc}. Now for each $i$ form the unique zig-zag $Z_i$ whose back corner-post is $(p_i,q_i)$ and whose outer corner-posts are the balls numbered $i$. Let $\bk{p,q}{w}$ be the partial permutation whose balls are the inner corner-posts of all the zig-zags $Z_i$. Now we can describe the inverse MBC algorithm:

\begin{itemize}
 \item Input a pair $(P,Q)$ of standard Young tableaux of the same shape.
 \item Output: $w\in W$.
 \item Initialize $w$ to the empty partial permutation.
 \item Repeat until $(P,Q) = (\varnothing,\varnothing)$:
\begin{itemize}
 \item Set $p$ to be the last row of $P$ and $q$ to be the last row of $Q$. Remove the last rows from the tableaux.
 \item Reset $w$ to $\bk{p,q}{w}$.
\end{itemize}
\end{itemize}

\begin{ex}
\label{ex:invmbc}
The steps of the algorithm for
$$P = \tableau[sY]{1,3,5\\2,6\\4}\qquad Q=\tableau[sY]{1,4,6\\2,5\\3}$$
are shown in Figure \ref{fig:invmbc}. The result is the non-affine permutation $[4,2,1,6,3,5]$.
\end{ex}
\begin{figure}
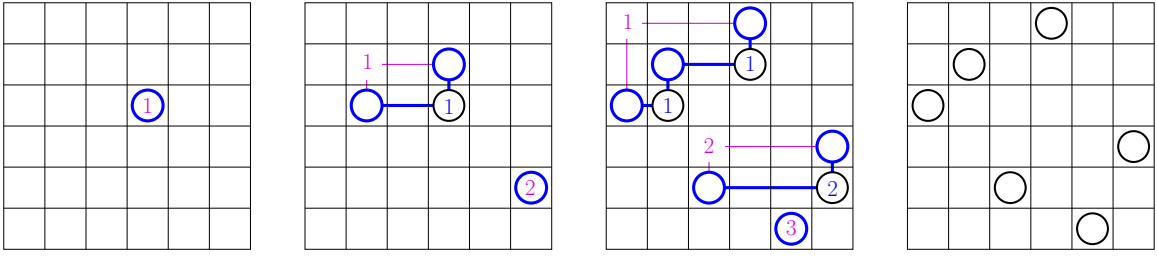

\centering\resizebox{.2\textwidth}{!}{\input{figures/imbc.pspdftex}}\qquad\resizebox{.2\textwidth}{!}{\input{figures/imbc2.pspdftex}}\qquad\resizebox{.2\textwidth}{!}{\input{figures/imbc3.pspdftex}}\qquad\resizebox{.2\textwidth}{!}{\input{figures/imbc4.pspdftex}}
\caption{Steps of inverse MBC in Example \ref{ex:invmbc}.}
\label{fig:invmbc}
\end{figure}

\section{Affine Matrix-Ball Construction}

\subsection{Proper numberings}

In the step of MBC one began by numbering the balls of the permutation according to a certain rule. Attempting to directly apply the same rule to the balls of an affine permutation faces the problem that one does not know how to start. As will be described in Proposition \ref{prop:irrelevant which proper}, to some extent it does not matter. In this section we introduce a collection of numberings, called proper numberings, which work well to produce the two tabloids. In Section \ref{sec:noproofs channels} we will choose a particular proper numbering which will be used in the step of AMBC.

\begin{defn}
Let $w\in\widetilde{W}$ be a permutation. A function $d:\B_w\to\Z$ is a \emph{proper numbering} if it is
\begin{itemize}
\item Monotone: for any $b, b'\in \B_w$, if $b$ lies strictly northwest of $b'$ then $d(b) < d(b')$, and
\item Continuous: for any $b'\in \B_w$, there exists $b$ northwest of it with $d(b) = d(b')-1$.
\end{itemize}
\end{defn}
An example of a proper numbering is given in Figure \ref{fig:proper numbering}. Note that if we start with a proper numbering and increase the number of every ball by the same amount, then the resulting numbering is still proper. 

\begin{figure}
\centering\resizebox{.6\textwidth}{!}{\input{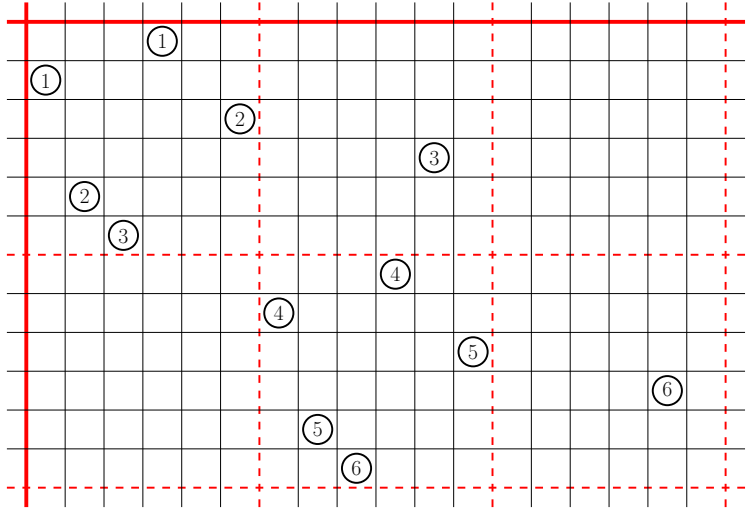}}
	\caption{A proper numbering for the permutation $[4,1,6,11,2,3]$. It is semi-periodic with period 3.}
\label{fig:proper numbering}
\end{figure}

It is not a priori obvious that proper numberings exist. We will describe a construction of such a numbering in Section \ref{sec:noproofs channels}, and undertake a detailed study of their structure theory in Section \ref{sec: proper numberings structure}. 

\subsection{Shi poset}

Given a permutation $w\in\widetilde{W}$, Shi defined a (labeled) poset on $[n]$. As we shall see in Section \ref{sec:Shi algorithm and KL cells}, the Greene-Kleitman invariants (see \cite{GK} and a survey \cite{BF}) of this poset have significance in the Kazhdan-Lusztig theory of the affine symmetric group. 

\begin{defn}
 The (labeled) \emph{Shi poset} $P_w$ associated with $w\in\widetilde{W}$ is the poset on $[n]$ with $i\leqslant_S j$ if 
$$i > j\text{, and } w(i) < w(j),$$
or if
$$w(j) > w(i) + n.$$ The element $i$ is labeled by the residue class $\ol{w(i)}$.
\end{defn}

\begin{ex}
For $w=[2, 8, 1, 14, 7, 16, 15, 0, 3, 9]$, the Shi poset is shown on the left in Figure \ref{fig:shi poset}.
\end{ex}

\begin{figure}
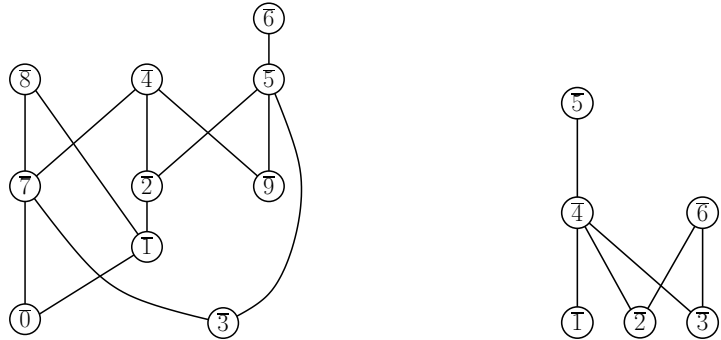

\centering\resizebox{!}{.2\textheight}{\input{figures/shi_poset.pspdftex}}\hspace{.2\textwidth} 
     \resizebox{!}{.15\textheight}{\input{figures/shi_poset_2.pspdftex}} 
\caption{The Hasse diagrams for the Shi posets associated with $[2, 8, 1, 14, 7, 16, 15, 0, 3, 9]$ and $[4,1,6,11,2,3]$.}
\label{fig:shi poset}
\end{figure}

The Shi poset is essentially the natural interpretation of the relation $\leqslant_{SW}$ on translate classes of balls of $w$. The notion extends naturally to partial permutations. We can now state the first major result about the structure of proper numberings.

\begin{prop}
\label{prop: period of proper numbering}
Any proper numbering of a partial permutation is semi-periodic with period $m$ equal to the width (i.e. maximal size of an antichain) of the Shi poset.
\end{prop}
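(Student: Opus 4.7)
The plan is to first establish semi-periodicity, and then match the period to the width of the Shi poset via matching upper and lower bounds.

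For semi-periodicity, set $\Delta(b) := d(b+(n,n)) - d(b)$ for $b \in \B_w$; monotonicity forces $\Delta(b) \geqslant 1$. Iterating continuity starting from $b+(n,n)$ produces a strictly NW chain $b+(n,n) = b_0, b_{-1}, \ldots, b_{-\Delta(b)}$ whose endpoint has $d$-value equal to $d(b)$. Since there are only $n$ translate classes, a pigeonhole argument forces such chains to respect the periodic structure, and I would use this to conclude that $\Delta$ is constant along each translate class of balls. A direct monotonicity comparison across distinct translate classes, using concatenation of chains, then shows $\Delta$ is globally a constant, which we call $m$.

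The next step is a concrete description of Shi-antichains. For $i < j$ in $[n]$, unpacking $\leqslant_S$ shows $i$ and $j$ are Shi-incomparable if and only if $w(i) < w(j)$ and $w(j) - w(i) < n$, with strictness forced by the fact that $w$ is injective modulo $n$. Thus a Shi-antichain of size $k$ is exactly a subset $\{i_1 < \cdots < i_k\} \subseteq [n]$ whose balls $(i_1, w(i_1)), \ldots, (i_k, w(i_k))$ form a strictly SE chain of column-span less than $n$. For the lower bound $m \geqslant \text{width}$: given such an antichain, the translate $(i_1, w(i_1)) + (n,n)$ is strictly SE of $(i_k, w(i_k))$ (using both $i_1 + n > i_k$ and $w(i_1) + n > w(i_k)$), extending the chain to $k+1$ balls ending at $(i_1, w(i_1)) + (n,n)$, so monotonicity gives $m \geqslant k$.

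For the upper bound $m \leqslant \text{width}$, apply continuity $m$ times to $b+(n,n)$ to obtain a strictly NW chain $b+(n,n) = b_0, b_{-1}, \ldots, b_{-m}$ with $d(b_{-m}) = d(b)$. The $m$ balls $b_{-1}, \ldots, b_{-m}$ lie in pairwise distinct $(n,n)$-translate classes: if $b_{-j}$ and $b_{-k}$ with $1 \leqslant j < k \leqslant m$ were $(n,n)$-translates differing by $\ell(n,n)$ with $\ell \geqslant 1$, then the fact that $d$ decreases by $1$ at each step forces $k - j = \ell m \geqslant m$, contradicting $k - j \leqslant m - 1$. Translating each $b_{-j}$ into the row window $[n]$ produces $m$ distinct elements of $[n]$, which I would then argue form a Shi-antichain by exploiting the strict NW geometry of the original chain to control the column order and total column-span of their window representatives.

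The main obstacle is the verification in the last step: confirming that the individual $(n,n)$-translations needed to bring each $b_{-j}$ into the row window $[n]$ preserve the increasing order of columns and keep the total span strictly less than $n$, so that the resulting subset of $[n]$ is a genuine Shi-antichain. The semi-periodicity step, though conceptually cleaner, also requires a careful pigeonhole argument, and the partial-permutation setting demands care that sparsely populated translate classes do not obstruct the iteration of continuity.
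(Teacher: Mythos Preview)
Your proposal has genuine gaps in both Step 1 (semi-periodicity) and Step 3 (upper bound), and the paper's route is substantially different.

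\textbf{Step 1 is the real obstruction.} Your sketch says a pigeonhole argument on the continuity chain will force $\Delta(b) = d(b+(n,n)) - d(b)$ to be constant along each translate class, and then globally. But the continuity chain from $b+(n,n)$ lands at \emph{some} ball with $d$-value $d(b)$, not necessarily at $b$ itself, so nothing closes up. Extending the chain infinitely and applying pigeonhole gives two translates $b_{-j}, b_{-k} = b_{-j} - \ell(n,n)$ on the chain, and then $k-j = \sum_{i=0}^{\ell-1}\Delta(b_{-j}-i(n,n))$; this is a sum of $\ell$ a priori different $\Delta$-values, so you learn nothing about individual $\Delta$'s. The ``monotonicity comparison across translate classes'' you allude to does not produce a contradiction either: there is no obvious inequality relating $\Delta(b)$ and $\Delta(b')$ for $b,b'$ in different classes. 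In short, I do not see any way to push your outline through here without importing something like the paper's machinery.

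\textbf{Step 3 has the gap you flag, and it is real.} Even granting semi-periodicity with period $m$, the $m$ balls $b_{-1},\ldots,b_{-m}$ are a strict NW chain in distinct translate classes, but after pushing each independently into the row window $[n]$ there is no control on their relative column positions: the row-translations needed are different for each ball and can scramble the column order and blow up the span. The correct way to extract width $\geq m$ is via Dilworth: iterate continuity until the chain closes up on a translate $b_{-j} - \ell(n,n)$ of $b_{-j}$, giving $\ell m$ steps between them; then cover $P_w$ by (width)-many chains and argue each chain's preimage can meet the path in at most $\ell$ balls. That is exactly the content of the paper's Corollary~\ref{cor:reinterpret m}, and your proposal does not invoke it.

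\textbf{How the paper actually proceeds.} The paper does not first prove semi-periodicity and then identify the period. It first proves the one-sided bound $d(b-(n,n)) \geq d(b) - m$ for every $b$ (Lemma~\ref{lem:half of periodicity}), where $m$ is the Shi width; this is the hard step, argued by contradiction via a cascading construction of violations that eventually yields a path of length $>m$ between a ball and its translate, contradicting Corollary~\ref{cor:reinterpret m}. With this bound in hand, equality on channels is immediate (the channel itself supplies a path of length exactly $m$), then Lemma~\ref{lem:finitely many gaps} shows the strict inequality can occur only finitely often, and the final argument rules out any gap. So the width $m$ enters at the very start through Dilworth, not at the end as a separate matching of bounds.
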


The proof of this statement is found on page \pageref{pf:period of proper numbering}. The numbering in Figure \ref{fig:proper numbering} is semi-periodic with period 3. The Hasse diagram for the corresponding Shi poset is shown on the right of Figure \ref{fig:shi poset}.

\subsection{Channels and channel numberings}
\label{sec:noproofs channels}

In this section we construct the most important examples of proper numberings.

\begin{defn}
Suppose we have a collection $C$ of cells which is invariant under translation by $(n,n)$ and forms a chain in the partial ordering $\leqslant_{SE}$. Then the \emph{density} of $C$ is the number of distinct translation classes in $C$.
\end{defn}
In Figure \ref{fig:proper numbering}, the collection consisting of balls $(2,1), (5,2), (6,3)$ and their translates has density $3$.

\begin{defn}
Suppose $w$ is a partial permutation. Then $C\subseteq \B_w$ is a \emph{channel} if all of the following hold:
\begin{enumerate}
\item $C$ is invariant under translation by $(n,n)$,
\item $C$ forms a chain in the partial ordering $\leqslant_{SE}$, and
\item the density of $C$ is maximal among all subsets of $\B_w$ satisfying the first two conditions. 
\end{enumerate}
\end{defn}
In Figure \ref{fig:proper numbering}, the collection consisting of balls $(2,1), (5,2), (6,3)$ and their translates is a channel since no higher density can be achieved within $\B_w$. On the other hand, balls $(1,4)$ and $(3,6)$ are not part of any channel, since no translation-invariant chain of balls with density $3$ passes through both.



The definition of a channel may be reformulated in terms of the Shi poset. 
\begin{defn}
For a partial permutation $w$, the \emph{projection map} $\varphi_w:\B_w\to P_w$ is the map sending $(i, w(i))$ to the representative of $\ol{i}$ in $[n]$.
\end{defn}
It follows easily that channels are precisely the preimages under the projection map of longest antichains of the Shi poset.

Let $\C_w$ be the set of all channels for a given partial permutation $w$. For a channel $C\in \C_w$ we can consider a proper numbering $\tilde{d}:C\to\Z$, namely number balls in $C$ by consecutive integers from northwest to southeast. Such a numbering is shown in red in Figure \ref{fig:channel numbering} (it is unique up to an overall shift).

\begin{figure}
\centering
\resizebox{.8\textwidth}{!}{\input{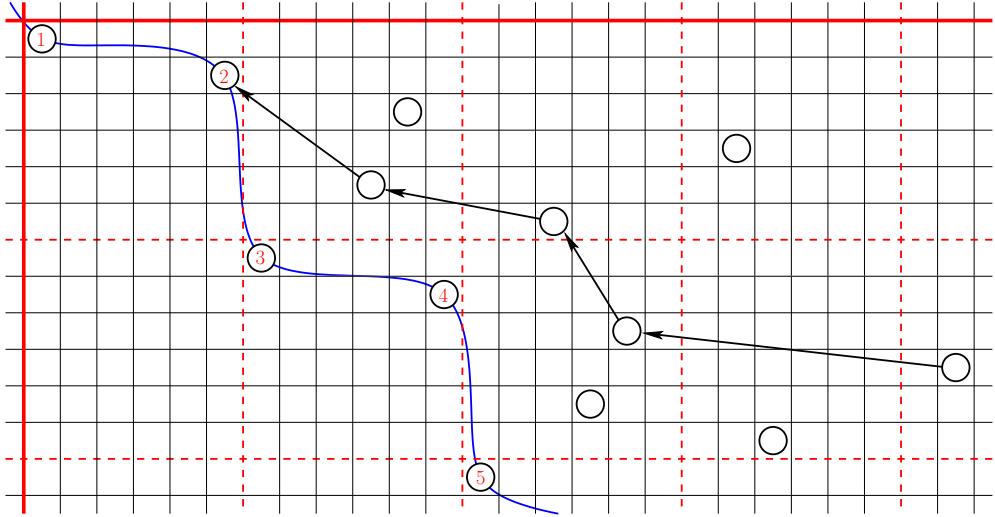}}
\caption{A numbered channel of $[1,6,11,20,10,15]$ and a path to it. The path has $4$ steps and leads to a ball with number $2$. }
\label{fig:channel numbering}
\end{figure}

\begin{defn}
\label{def:channel numbering}
Suppose $C\in\C_w$ and $\tilde{d}$ is a proper numbering of $C$. Define a \emph{channel numbering} of $w$ with respect to $C$ by
$$d_w^C(b) := \sup_{(b_0, b_1, \dots, b_k)} \tilde{d}(b_k) + k,$$
where the supremum is taken over all paths from $b$ to $C$. Sometimes we will omit $w$ from the notation when it is unambiguous and write just $d^C$.
\end{defn}

The numbering $d_w^C$ inherits an arbitrary shift freedom from $\tilde{d}$. Most of the time we do not care what the shift is, and we will explicitly specify the shift in the cases that it is relevant (such as when we consider numberings with respect to different channels and we want to compare them). 

A priori it seems that sometimes $d_w^C(b)$ could be infinite; this is not the case:
\begin{prop}
\label{prop:channel numbering defined}
Suppose $w$ is a partial permutation, $C\in\C_w$ and $\tilde{d}$ is a proper numbering of $C$.  Then for any ball $b$, the set
$$\{\tilde{d}(b_k) + k : (b_0, b_1, \dots, b_k) \text{ is a path from $b$ to $C$}\}$$
is bounded above.
\end{prop}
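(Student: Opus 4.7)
The plan is a proof by contradiction exploiting the maximality condition in the definition of a channel: assuming the supremum were $+\infty$, I would construct an $(n,n)$-invariant chain in $\B_w$ of density strictly exceeding $m$.

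\emph{Setup.} Label the cells of $C$ as $(c_s)_{s\in\Z}$ with $\tilde d(c_s)=s$ and each $c_s$ strictly northwest of $c_{s+1}$; by $(n,n)$-invariance one has $c_{s+m}=c_s+(n,n)$. Since $c_s$ moves arbitrarily far southeast as $s\to+\infty$, there is a smallest integer $t^*$ such that $c_{t^*}$ is strictly southeast of $b$. Writing $t=\tilde d(b_k)$ so that $b_k=c_t$, the goal is to prove
\[
t+k\leqslant t^*-1\qquad\text{for every path }(b_0=b,b_1,\dots,b_k)\text{ from $b$ to $C$},
\]
which bounds the supremum by the quantity $t^*-1$ depending only on $b$ and $C$.

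\emph{Easy case: the entire path lies in $C$.} Suppose for contradiction $t+k\geqslant t^*$. If every $b_l$ lies in $C$, write $b_l=c_{s_l}$; the strict northwest steps of the path translate within $C$ to $s_0>s_1>\dots>s_k=t$, whence $s_0\geqslant t+k\geqslant t^*$. But then $c_{s_0}=b_0=b$ is either equal to or strictly southeast of $c_{t^*}$, while $c_{t^*}$ is itself strictly southeast of $b$ by definition---a contradiction. Consequently, some $b_l$ lies outside $C$, and its $(n,n)$-orbit is therefore not a channel orbit.

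\emph{Surgery.} Splice the path into $C$ by forming
\[
C^{\#}\;=\;\{c_s:s\leqslant t\}\;\cup\;\{b_{k-1},b_{k-2},\dots,b_0\}\;\cup\;\{c_s:s\geqslant t^*\},
\]
which is a chain in the northwest-to-southeast ordering: the junction $c_t=b_k$ is strictly northwest of $b_{k-1}$ (a path step), and $b_0$ is strictly northwest of $c_{t^*}$ by the defining property of $t^*$. Its $(n,n)$-orbit closure $\widetilde{C^{\#}}=\bigcup_{j\in\Z}(C^{\#}+j(n,n))$ is automatically $(n,n)$-invariant; its orbits are the $m$ orbits of $C$ (all already represented in $\{c_s:s\leqslant t\}$ by periodicity of $\tilde d$) together with the orbits of $b_0,\dots,b_{k-1}$, at least one of which is new by the easy case. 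Hence if $\widetilde{C^{\#}}$ is a chain, it has density at least $m+1$, contradicting the maximality of $C$.

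\emph{Main obstacle.} The crucial and technically hardest step is verifying that $\widetilde{C^{\#}}$ is itself a chain, since $(n,n)$-translates of the ``path bump'' $\{b_0,\dots,b_{k-1}\}$ could a priori be incomparable with cells $c_s$ for $t<s<t^*$ or with each other. My plan is to select a path of minimum length $k$ subject to $t+k\geqslant t^*$ and to argue that any incomparability in $\widetilde{C^{\#}}$ would permit a strict shortening of the path while preserving $t+k\geqslant t^*$, contradicting minimality; combined with a pigeonhole on row residues modulo $n$---which forces two path balls into the same $(n,n)$-orbit as soon as $k\geqslant n$---this reduces the general case to short paths, for which the single-period chain-hood check is straightforward.
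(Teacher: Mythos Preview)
Your surgery idea is natural, but the argument has a genuine gap at the step you yourself flag as the ``main obstacle.'' You need $\widetilde{C^{\#}}$ to be a chain, and the proposed mechanism---minimality of $k$ plus pigeonhole on row residues---does not close this. Concretely, the pigeonhole reduction says: if $k\geqslant n$, two path balls $b_i,b_j$ (with $i<j$) are $(n,n)$-translates, $b_j=b_i-r(n,n)$, and one can splice to get a shorter path of length $k-(j-i)$ ending at $c_{t+rm}$. But the new worth is $(t+k)+(rm-(j-i))$, and keeping this $\geqslant t^*$ requires $j-i\leqslant rm$. That inequality---a path from a ball to its $r(n,n)$-translate has at most $rm$ steps---is exactly the content of the statement you are proving (it is the paper's Corollary~\ref{cor:reinterpret m}, derived \emph{from} this proposition). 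So the reduction is circular. The other half of your plan, ``incomparability $\Rightarrow$ shortening preserving worth,'' is asserted without a mechanism, and I do not see one: an incomparability between $b_l$ and some $c_s$ with $t<s<t^*$ lets you jump onto $C$ earlier, but the resulting worth $s''+l+1$ need not dominate $t+k$.

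The paper avoids the chain-hood issue entirely by working on the Shi poset rather than on $\B_w$. First it reduces to $b\in C$ (every ball lies northwest of some channel ball, and worth is monotone). Then, for a hypothetical path $(b_0,\dots,b_l)$ from $b\in C$ to $b-t(n,n)$ with $l>tm$, it invokes Dilworth: $P_w$ decomposes into $m$ chains, so by pigeonhole some chain $c$ receives at least $t+1$ of the projections $\varphi_w(b_1),\dots,\varphi_w(b_l)$; but the region strictly northwest of $b$ and weakly southeast of $b-t(n,n)$ meets only $t$ translates of $\varphi_w^{-1}(c)$, and each translate can contain at most one $b_i$ (the $b_i$ form an antichain in $\leqslant_{SW}$). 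This is the clean way to get the density contradiction without ever needing the spliced set to be globally totally ordered.
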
 
The proof of this statement is found on page \pageref{pf:channel numbering defined}.
Once we know that the channel numbering is well defined, the following is obvious. 
\begin{prop}
If $w$ is a partial permutation and $C\in\C_w$, then $d_w^C$ is proper.
\end{prop}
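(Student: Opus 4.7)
The plan is to verify the two defining conditions of a proper numbering---monotonicity and continuity---directly from the path-based definition of $d_w^C$; Proposition \ref{prop:channel numbering defined} does the heavy lifting by ensuring the supremum in Definition \ref{def:channel numbering} is finite and hence attained (a bounded-above set of integers has a maximum). I can therefore work with an \emph{optimal path} $(b_0=b, b_1, \ldots, b_k)$ realizing $d_w^C(b) = \tilde d(b_k)+k$ for any $b\in\B_w$. I would also note that distinct balls of a partial permutation lie in distinct rows and distinct columns, so any two balls comparable in $\leqslant_{SE}$ are automatically strictly comparable; within $\B_w$ I can freely conflate ``northwest of'' with ``strictly northwest of''.

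For monotonicity I would argue by path prepending. Suppose $b$ is strictly northwest of $b'$, and let $(b_0=b, b_1, \ldots, b_k)$ be an optimal path from $b$. Then $(b', b_0, b_1, \ldots, b_k)$ is a valid path from $b'$ to $C$ of length $k+1$, yielding
\[
  d_w^C(b') \;\geqslant\; \tilde d(b_k) + (k+1) \;=\; d_w^C(b) + 1 \;>\; d_w^C(b).
\]

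For continuity, given $b'\in\B_w$ I need $b$ strictly northwest of $b'$ with $d_w^C(b)=d_w^C(b')-1$. Pick an optimal path $(b'=b_0, b_1, \ldots, b_k)$. If $k\geqslant 1$, take $b:=b_1$: the truncation $(b_1, \ldots, b_k)$ is a path from $b_1$ to $C$ of length $k-1$, so $d_w^C(b_1)\geqslant d_w^C(b')-1$, while monotonicity (just proved) gives the reverse inequality. Otherwise every optimal path has length $0$, which forces $b'\in C$ and $d_w^C(b')=\tilde d(b')$; then the predecessor $b^{-}$ of $b'$ in $C$---which exists because $C$ is a bi-infinite chain invariant under $(n,n)$-translation---is strictly northwest of $b'$ with $\tilde d(b^{-})=\tilde d(b')-1$, and the same pair of inequalities pins $d_w^C(b^{-})=d_w^C(b')-1$. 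The only genuine subtlety is this boundary case $b'\in C$; given Proposition \ref{prop:channel numbering defined}, no serious obstacle remains and the entire proof reduces to path-prepending and path-truncation.
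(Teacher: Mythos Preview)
Your proof is correct and is exactly the argument the paper has in mind: the paper itself offers no proof beyond the remark ``Once we know that the channel numbering is well defined, the following is obvious,'' and the path-prepending/path-truncation you carry out is precisely what makes it obvious (the monotonicity half is already sketched inside the paper's proof of Proposition~\ref{prop:channel numbering defined}). One tiny quibble: in the $k=0$ case you do not need that \emph{every} optimal path has length~$0$---there may well be longer optimal paths walking along $C$---only that the chosen one does, which already forces $b'\in C$; your argument goes through unchanged with that correction.
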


A proper numbering is determined by its value on the channels:
\begin{prop}
\label{prop:proper determined by channels}
Suppose $w$ is a partial permutation and $d, d'$ are two proper numberings. Suppose for all channels $C$ and all balls $b\in C$ we have $d(b) = d'(b)$. Then for all $b\in\B_w$ we have $d(b) = d'(b)$. 
\end{prop}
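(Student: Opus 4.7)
My plan is to fix an arbitrary ball $b \in \B_w$ and show $d(b) \leq d'(b)$; the reverse inequality will follow by the same argument with the roles of $d$ and $d'$ exchanged.

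First I would use the continuity of $d$ to build an infinite sequence $b = b_0, b_1, b_2, \ldots$ in $\B_w$ with $d(b_{i+1}) = d(b_i) - 1$ for all $i$. Monotonicity of $d$, combined with the fact that distinct balls of a (partial) permutation lie in distinct rows and distinct columns, forces each $b_{i+1}$ to be strictly northwest of $b_i$. Because $\B_w$ has only finitely many $(n,n)$-translation classes, the pigeonhole principle produces indices $i < j$ and an integer $k \geq 1$ with $b_j = b_i - k(n,n)$.

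The key step is to recognize that the $(n,n)$-translates of $b_i, b_{i+1}, \ldots, b_{j-1}$ assemble into a channel $C$ containing $b_j$. By Proposition \ref{prop: period of proper numbering} the period of $d$ equals the width $m$ of the Shi poset, so on the one hand $d(b_j) - d(b_i) = -km$, while telescoping along the path gives $d(b_j) - d(b_i) = -(j-i)$; hence $j - i = km$. Since the path is strictly northwest at every step, the balls $b_i, \ldots, b_{j-1}$ represent $j-i$ distinct translation classes, so the $(n,n)$-invariant SE-chain they generate has density exactly $j - i$. Channels have the maximum possible density $m$, so we must have $j - i \leq m$; combined with $j - i = km$ this forces $k = 1$, $j - i = m$, and the chain is a channel.

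With this established, the conclusion is automatic: $b_j \in C$ gives $d(b_j) = d'(b_j)$ by hypothesis, and monotonicity of $d'$ along the strict northwest sequence $b_0, b_1, \ldots, b_j$ yields
\[
d'(b) = d'(b_0) \;\geq\; d'(b_j) + j \;=\; d(b_j) + j \;=\; d(b).
\]
The main obstacle in this plan is the middle step: showing that continuity plus pigeonhole forces the NW-path to close up into a channel rather than merely some periodic loop of balls. This is precisely where one has to match the density of channels against the period of any proper numbering, both of which equal $m$.
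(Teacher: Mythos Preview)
Your overall strategy matches the paper's: build a continuity path $(b_0,b_1,\ldots)$, argue that it reaches a channel, and then conclude by monotonicity. The final step is fine. The gap is exactly where you flagged it yourself, and it is a real one.

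The sentence ``Since the path is strictly northwest at every step, the balls $b_i,\ldots,b_{j-1}$ represent $j-i$ distinct translation classes'' is not justified: a strictly northwest step is perfectly compatible with returning to the same translation class (indeed $b$ and $b-(n,n)$ are always related this way). Even if you repair this by choosing the \emph{first} repeat, the next assertion --- that the $(n,n)$-translates of $b_i,\ldots,b_{j-1}$ form an SE-chain --- fails when $k>1$. For the translates to interleave into a single chain you need $b_{j-1}+(n,n)\geq_{SE} b_i$, i.e.\ $b_{j-1}\geq_{SE} b_i-(n,n)$; but all you know is $b_{j-1}\geq_{SE} b_j=b_i-k(n,n)$. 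In fact $d(b_{j-1})=d(b_i)-(j-i-1)<d(b_i)-m=d(b_i-(n,n))$ whenever $k\geq 2$, so monotonicity actually \emph{forbids} $b_{j-1}$ from being strictly SE of $b_i-(n,n)$. Thus the density bound $j-i\le m$ is unavailable, and the deduction $k=1$ collapses.

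The paper does not try to force $k=1$. Instead it invokes Lemma~\ref{lem:high flow implies channel}: once you know $j-i=km$ (which you correctly derived from Proposition~\ref{prop: period of proper numbering}), that lemma --- proved via Dilworth's theorem and an induction on $k$ --- guarantees that every ball on the segment $b_i,\ldots,b_j$ already lies on some channel. From there the paper observes that $d(b)$ equals the maximal worth of a path from $b$ to the union of channels, and the same characterization applies to $d'$, so $d=d'$. Your argument becomes correct if you replace the flawed SE-chain paragraph by an appeal to Lemma~\ref{lem:high flow implies channel}.
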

The proof is found on page \pageref{pf:proper determined by channels}. Though it is not necessary for our main goals in the paper, after the proof we describe precisely which numberings on the channels can be extended to a proper numbering of $w$.

Now we describe how to choose certain distinguished channels which are present for any $w$. Consider the following partial order on $\C_w$, which we will refer to as the \emph{southwest partial ordering}. 

\begin{defn}
\label{def:channel sw ordering}
A channel $C_1$ \emph{is southwest of} a channel $C_2$ if for every element of $C_1$ there exists an element of $C_2$ weakly northeast of it. 
\end{defn}

The definition has an apparent asymmetry. It is easy to see that an equivalent definition would be: $C_1$ is southwest of $C_2$ if each element of $C_2$ has at least one element of $C_1$ southwest of it.

\begin{prop}
\label{prop:sw channel exists}
The southwest partial ordering on $\C_w$ has a least element and a greatest element.
\end{prop}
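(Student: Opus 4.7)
The plan is to reduce the statement to the classical fact that the set of maximum antichains of a finite poset forms a distributive lattice, and in particular has a unique minimum and a unique maximum element. By the remark immediately following the definition of the projection map $\varphi_w$, taking $\varphi_w$-images establishes a bijection between $\C_w$ and the set of maximum antichains of the Shi poset $P_w$, which is a finite poset on $[n]$.

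The key step will be to verify that, under this bijection, the southwest order on channels corresponds to the natural order on antichains of $P_w$: namely, $A \leqslant A'$ iff every $a \in A$ satisfies $a \leqslant_S a'$ for some $a' \in A'$. The forward direction is immediate, since $\leqslant_S$ is by construction the transport of $\leqslant_{SW}$ on translate classes of balls. For the converse, the crucial feature is $(n,n)$-invariance: given any concrete pair of witnesses $b_0 \in C_1$, $b_0'' \in C_2$ realizing an abstract relation $i \leqslant_S j$ between classes, and any other ball $b \in C_1$ of class $i$, one may translate both $b_0$ and $b_0''$ simultaneously by the unique $(kn,kn)$ that carries $b_0$ to $b$, producing a translate $b'' \in C_2$ of class $j$ weakly northeast of $b$, as required.

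Once the order correspondence is in place, the classical theorem (essentially due to Dilworth; see, e.g., Stanley's \emph{Enumerative Combinatorics}) asserts that the maximum antichains of any finite poset form a distributive lattice under $\leqslant$ and hence, being finite, have a unique minimum and maximum. Pulling these back via the bijection yields the desired least and greatest elements of $(\C_w, \leqslant_{SW})$.

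The main obstacle is the careful bookkeeping in the order correspondence, where one must track the distinction between individual balls and their translate classes, with $(n,n)$-invariance of channels playing a crucial role in the converse direction. A self-contained alternative that avoids any appeal to the classical theorem would be to construct the meet and join of two channels directly: define $C_1 \wedge C_2$ by selecting, at each position along the chain, the southwestmost ball of $C_1 \cup C_2$, and $C_1 \vee C_2$ analogously; then verify these are again channels southwest (respectively, northeast) of both inputs, and iterate over the finite set $\C_w$ to extract the extremal elements.
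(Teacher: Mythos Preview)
Your proof is correct. Both you and the paper reduce to the Shi poset via the bijection $\varphi_w$ between channels and maximum antichains, and both use that the southwest order on channels corresponds to the natural order on antichains (the paper simply asserts this as an observation; you give a more careful argument using $(n,n)$-invariance).

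The difference lies in how the antichain statement is established. You invoke the classical theorem that the set of maximum antichains of a finite poset forms a distributive lattice. The paper instead proves the specific lemma it needs: for two maximum antichains $A_1, A_2$, the set of minimal elements of $A_1 \cup A_2$ is again a maximum antichain. Its proof uses Dilworth's theorem---decompose $P_w$ into $m$ chains, each meeting $A_1$ and $A_2$ exactly once, so the minimal elements pick out one element per chain---and then iterates over all channels. Your ``self-contained alternative'' (take the southwestmost balls of $C_1 \cup C_2$) is exactly this argument, phrased on the channel side rather than the antichain side. So your alternative is the paper's approach; your main approach trades a short explicit argument for a citation. The explicit version has the advantage that it yields the concrete description of the least element (the southwest balls of the union of all channels), which the paper uses later.
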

The proof is found on page \pageref{pf:sw channel exists}; in fact we prove that for any two channels, the set of southwest balls of their union is a channel and the set of northeast balls of their union is a channel. Thus there exists a southwest channel and a northeast channel. 

In AMBC we will be using the southwest channel numbering of balls. Since the southwest channel numbering will play a key role in the paper, we introduce special notation for it.
\begin{defn}
Suppose $w$ is a partial permutation. We write $d_w^{SW}$ for the channel numbering with respect to the southwest channel.
\end{defn}

Next we introduce a notion of distance between channels. To make sure that our definition is consistent we need the following lemma:
\begin{lemma}
\label{lem:proper numbers channels}
Any proper numbering of $w$ restricted to any channel $C$ gives a proper numbering of $C$. 
\end{lemma}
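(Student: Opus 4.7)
The plan is to verify the two defining properties of a proper numbering for the restricted function $d|_C$ directly.

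Monotonicity is immediate and requires no work: if $b, b' \in C$ with $b$ strictly northwest of $b'$, then since $b, b' \in \B_w$ and $d$ is a proper numbering of $w$, we have $d(b) < d(b')$.

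The substantive step is continuity. First I would enumerate the balls of $C$ along the chain as $(c_i)_{i\in\Z}$ with $c_i$ strictly southeast of $c_{i-1}$, and normalize the indexing so that $c_{i+m} = c_i + (n,n)$, where $m$ is the density of $C$. The key observation is that the density $m$ coincides with the period of \emph{any} proper numbering of $w$: by the reformulation of channels as preimages of longest antichains of the Shi poset under $\varphi_w$, the density of a channel equals the width of $P_w$, and by Proposition \ref{prop: period of proper numbering}, this width equals the period of $d$. Consequently $d(c_{i+m}) = d(c_i) + m$ for every $i$.

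Combining this with monotonicity of $d$ along the chain $c_i, c_{i+1}, \ldots, c_{i+m}$, we get a strictly increasing sequence of $m+1$ integers whose total increase is exactly $m$; hence every step must increase by exactly $1$, giving $d(c_{i+1}) = d(c_i) + 1$ for all $i$. In particular, for any $c_{i+1} \in C$, the ball $c_i$ is in $C$, lies strictly northwest of $c_{i+1}$, and satisfies $d(c_i) = d(c_{i+1}) - 1$, which is exactly the continuity requirement for the restriction $d|_C$.

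I do not anticipate a serious obstacle here: the only nontrivial input is the matching between the density of a channel and the period of the proper numbering, both of which are already identified with the width of the Shi poset by earlier results. The rest is just the pigeonhole observation that a strictly increasing integer sequence can only grow by its minimum amount if every step is a unit step.
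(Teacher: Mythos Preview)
Your argument is essentially identical to the paper's: both use monotonicity along the $m$ consecutive channel balls between $c_i$ and $c_{i+m}=c_i+(n,n)$, match the total increase to $m$, and conclude by pigeonhole that every step is a unit step. The paper records this as Corollary~\ref{cor:proper numbers channels contiguosly}.

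There is, however, a circularity in your citation. You invoke Proposition~\ref{prop: period of proper numbering} to get $d(c_{i+m})=d(c_i)+m$. In the paper's logical order, the proof of Proposition~\ref{prop: period of proper numbering} passes through Lemma~\ref{lem:finitely many gaps}, which in turn explicitly uses Corollary~\ref{cor:proper numbers channels contiguosly} (i.e., the very statement you are proving) to find a channel ball with a prescribed $d$-value. So citing Proposition~\ref{prop: period of proper numbering} here is circular.

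The fix is immediate: replace the citation by the one-sided Lemma~\ref{lem:half of periodicity}, which is proved independently and gives $d(c_{i+m})\leq d(c_i)+m$. Monotonicity along the channel already gives $d(c_{i+m})\geq d(c_i)+m$, so equality follows without appealing to full semi-periodicity. With that substitution your proof coincides with the paper's.
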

The proof is found on page \pageref{pf:proper numbers channels}.

\begin{defn}
\label{def:distance between channels}
Let $C_1, C_2\in \C_w$ be two channels. Let choose the shifts of $d^{C_1}$ and $d^{C_2}$ so the two numberings coincide on $C_1$. Define the \emph{distance between $C_1$ and $C_2$} by 
$$\h(C_1, C_2) = \abs{d^{C_2}(b) - d^{C_1}(b)},$$
where $b$ is any ball of $C_2$ (the quantity is independent of $b$ by the above lemma). 
\end{defn}

\begin{prop}
\label{prop:h is a pseudometirc}
For any partial permutation $w$, the function $\h$ defined above is a pseudometric on $C_w$.
\end{prop}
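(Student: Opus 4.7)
The plan is to verify the three axioms of a pseudometric ($h(C,C)=0$, symmetry, triangle inequality), all from a single auxiliary fact I would establish first: the channel numbering $d^C_w$ is the \emph{pointwise minimum} proper numbering of $w$ whose restriction to $C$ equals $\tilde{d}$. This follows at once from Definition \ref{def:channel numbering}, because monotonicity of any proper $d$ that agrees with $\tilde{d}$ on $C$ forces $d(b)\geqslant d(b_k)+k=\tilde{d}(b_k)+k$ along any path $(b=b_0,\dots,b_k)$ from $b$ to $C$, and taking the supremum gives $d\geqslant d^C_w$.

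Next I would set up some bookkeeping. For each channel $C\in\mathcal{C}_w$, fix once and for all a canonical proper numbering $\tilde{d}^C$ of $C$ and the resulting canonical channel numbering $d^C_w$. By Lemma \ref{lem:proper numbers channels}, for any two channels $C,C'$, the restriction $d^C_w|_{C'}$ is a proper numbering of $C'$ and hence differs from $\tilde{d}^{C'}$ by a unique integer $\sigma(C,C')$, so that $d^C_w|_{C'}=\tilde{d}^{C'}+\sigma(C,C')$. A direct computation from the definition of $h$ (shift $d^{C_2}_w$ by $-\sigma(C_2,C_1)$ to make it agree with $d^{C_1}_w$ on $C_1$, then evaluate on $C_2$) yields
\[
h(C_1,C_2)=\bigl|\sigma(C_1,C_2)+\sigma(C_2,C_1)\bigr|.
\]
This identity is already symmetric in $(C_1,C_2)$, which gives the symmetry axiom.

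The crucial step is the \emph{super-additivity}
\[
\sigma(C_1,C_3)\geqslant \sigma(C_1,C_2)+\sigma(C_2,C_3)\qquad \text{for all }C_1,C_2,C_3\in\mathcal{C}_w.
\]
Its proof uses the minimality characterization from the first paragraph: the numbering $d^{C_1}_w-\sigma(C_1,C_2)$ is proper and agrees with $\tilde{d}^{C_2}$ on $C_2$, so by minimality of $d^{C_2}_w$ we get $d^{C_2}_w\leqslant d^{C_1}_w-\sigma(C_1,C_2)$ on all of $\mathcal{B}_w$. Restricting this to $C_3$ and comparing with $\tilde{d}^{C_3}$ gives the claimed inequality.

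Finally, I would combine the pieces. Taking $C_3=C_1$ in super-additivity gives $\sigma(C_1,C_2)+\sigma(C_2,C_1)\leqslant \sigma(C_1,C_1)=0$, which together with the displayed formula for $h$ also shows $h(C,C)=0$. Adding super-additivity applied to $(C_1,C_2,C_3)$ and $(C_3,C_2,C_1)$ gives $\sigma(C_1,C_3)+\sigma(C_3,C_1)\geqslant \bigl[\sigma(C_1,C_2)+\sigma(C_2,C_1)\bigr]+\bigl[\sigma(C_2,C_3)+\sigma(C_3,C_2)\bigr]$; since both sides are non-positive, negating converts this into the triangle inequality $h(C_1,C_3)\leqslant h(C_1,C_2)+h(C_2,C_3)$. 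The main obstacle is cleanly establishing the minimality interpretation of $d^C_w$ and leveraging it to compare channel numberings built from \emph{different} channels; once that machinery is in place, all three pseudometric axioms follow by bookkeeping with integer shifts.
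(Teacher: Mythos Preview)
Your proof is correct and takes a genuinely different route from the paper.  The paper's argument is geometric and case-based: it first disposes of the case where one of the three pairwise distances vanishes (using Lemma \ref{lem:zero distance implies same channel numberings}), then invokes Lemma \ref{lem:if nonzero distance then southwest comparable} to conclude the three channels are totally ordered in the southwest order, and finally, after arranging $C_1\leqslant_{SW}C_3$ and splitting on the position of $C_2$, shows via Lemma \ref{lem:channel numbering determined by closest channel} that the triangle inequality is actually an \emph{equality} when $C_2$ sits between the other two and a strict inequality otherwise.

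Your approach, by contrast, is purely algebraic: the minimality characterization of $d^C_w$ (essentially the content of the paper's Remark \ref{rmk:distance direction}) converts the problem into super-additivity of the integer shifts $\sigma(C_i,C_j)$, from which all three axioms drop out by bookkeeping.  This bypasses the southwest-ordering lemmas entirely and avoids any case analysis.  What the paper's approach buys is the sharper information that equality holds in the triangle inequality whenever the middle channel is sandwiched between the outer two in the southwest order; your argument yields only the inequality, but that is all the proposition claims.  One small wording quibble: your derivation of $h(C,C)=0$ really just uses $\sigma(C,C)=0$ plugged into the displayed formula, not super-additivity per se; super-additivity is what tells you the quantity inside the absolute value is always $\leqslant 0$, which you need for the final negation step.
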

The proof is found on page \pageref{pf:h is a pseudometirc}. As a pseudometric, $\h$ naturally partitions $C_w$ into equivalence classes.
\begin{defn}
A \emph{river} is a maximal collection of channels such that the distance between any pair of them is $0$.
\end{defn}

\begin{ex}
The channels of $[4, 1, 11, 6,14,9,3,7,17]$ are shown in Figure \ref{fig:river} (any channel has a blue curve going through it). This permutation has two rivers. 
\end{ex}

\begin{figure}
\centering
\resizebox{.7\textwidth}{!}{\input{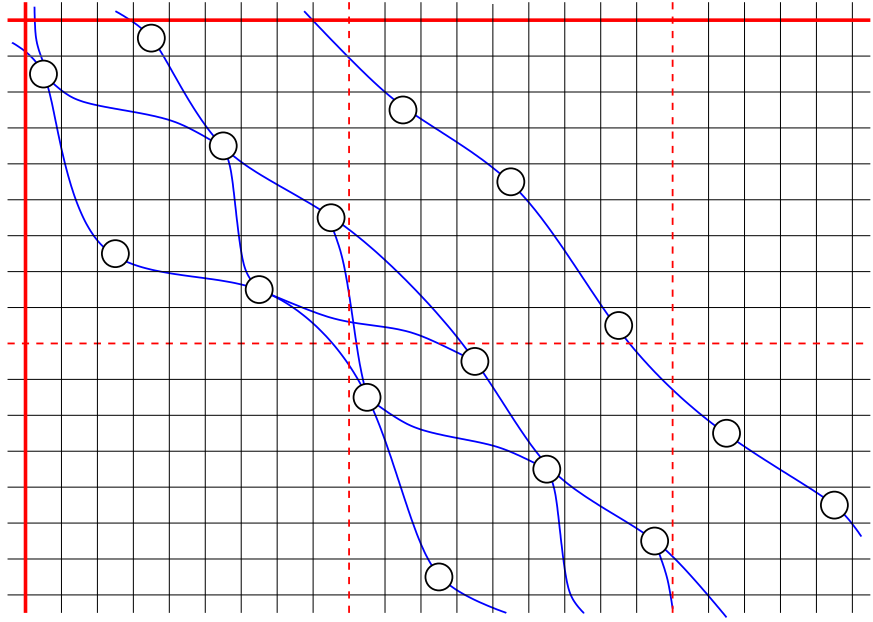}}
\caption{The channels of $[4, 1, 11, 6,14,9,3,7,17]$. This permutation has two rivers.}
\label{fig:river}
\end{figure}

\subsection{Streams}

 In this section we introduce the concept of streams. In MBC, the data necessary to invert a step of the algorithm consisted of two ordered subsets of $[n]$ of the same size (the rows of the insertion and recording tableaux). The streams encode the additional data necessary to invert a step of AMBC.

\begin{defn}
\label{defn:class}
\begin{enumerate}
\item A \emph{stream} is a collection of cells which is invariant under translation by $(n,n)$ and which forms a chain in the partial ordering $\leqslant_{SE}$.
\item Let $A = \{\ol{a_1}, \ldots, \ol{a_k}\}$ and $B = \{\ol{b_1}, \ldots, \ol{b_k}\}$ be two subsets of $[\ol{n}]$ of the same size. Let $\st(A,B)$ be the collection of all streams $S$ satisfying $A = \{\ol{i}:(i,j)\in S\}$ and $B = \{\ol{j}:(i,j)\in S\}$. An element of $\st(A,B)$ will be called an \emph{$(A,B)$-stream}.
\item The \emph{flow} of an $(A,B)$-stream is the number $\abs{A} = \abs{B}$.
\item For a stream $S$, the \emph{class} of $S$ is the collection $\st(A,B)$ with $S\in\st(A,B)$.
\end{enumerate}
\end{defn}

There are infinitely many $(A,B)$-streams. We classify them in the following way.

\begin{lemma}
 There is a unique stream $\str{0}{A,B}$ such that there are exactly $k = |A| = |B|$ elements $(i,j)$ of $\str{0}{A,B}$ such that $1 \leq i,j \leq n$.
\end{lemma}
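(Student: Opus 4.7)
Let $a_1 < a_2 < \cdots < a_k$ and $b_1 < b_2 < \cdots < b_k$ be the representatives in $[n]$ of the residue classes in $A$ and $B$, respectively. I would produce a candidate stream explicitly, show that it satisfies the claimed property, and then argue that any $(A,B)$-stream with $k$ cells in $[1,n]\times[1,n]$ must coincide with it.

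\textbf{Construction.} The plan is to define
\[
\str{0}{A,B} \;:=\; \bigl\{(a_i + mn,\, b_i + mn) : 1 \leqslant i \leqslant k,\ m \in \Z\bigr\}.
\]
Then I would check the three conditions. Invariance under $(n,n)$-translation is immediate from the definition. The fact that $A = \{\ol{i} : (i,j)\in \str{0}{A,B}\}$ and $B = \{\ol{j}:(i,j)\in \str{0}{A,B}\}$ is immediate as well. For the chain property under $\leqslant_{SE}$, inside a single fundamental domain the cells $(a_1, b_1), \ldots, (a_k, b_k)$ already lie on a chain because both coordinates are strictly increasing with $i$. To compare across periods, I would use $a_k \leqslant n < a_1 + n$ and $b_k \leqslant n < b_1 + n$ to conclude that $(a_k + mn, b_k + mn) \leqslant_{SE} (a_1 + (m+1)n, b_1 + (m+1)n)$, which concatenates the per-period chains into a single chain. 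Finally, the cells in $[1,n]\times[1,n]$ are precisely $(a_1, b_1), \ldots, (a_k, b_k)$, giving exactly $k$ cells.

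\textbf{Uniqueness.} The main point will be to show that the required $k$ cells in $[1,n]\times[1,n]$ force the pairing of row and column residues. Suppose $S$ is an $(A,B)$-stream with exactly $k$ cells in $[1,n]\times[1,n]$. Every cell of $S$ whose row lies in $[1,n]$ must have row equal to one of $a_1,\ldots,a_k$ (since these are the unique representatives of classes in $A$ that fall in $[1,n]$); and similarly for columns. Because $S$ is a chain, no two of its cells share a row or share a column (cells with the same row or same column are not $\leqslant_{SE}$-comparable), so each of the residues $\ol{a_i}$ is used by at most one cell in any fundamental region. Having $k$ cells in $[1,n]\times[1,n]$ therefore forces the rows of those cells to be \emph{all} of $\{a_1,\ldots,a_k\}$, and likewise the columns to be \emph{all} of $\{b_1,\ldots,b_k\}$. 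Since the $k$ cells form a chain in $\leqslant_{SE}$, sorting them by row must agree with sorting them by column, so the cell whose row is $a_i$ must have column $b_i$. Thus $S\cap([1,n]\times[1,n]) = \{(a_i,b_i)\}_{i=1}^k$, and by $(n,n)$-invariance $S = \str{0}{A,B}$.

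\textbf{Main obstacle.} None of the steps is truly difficult; the most delicate point to write clearly is the uniqueness argument, where one must combine the chain condition (forbidding repeated rows or columns within a period) with the counting condition (exactly $k$ cells in $[1,n]\times[1,n]$) to nail down both the set of cells and the bijection between row and column residues.
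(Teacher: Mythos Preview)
Your proof is correct and follows exactly the paper's approach: write down the explicit stream $\{(a_i+mn,\,b_i+mn):1\leqslant i\leqslant k,\ m\in\Z\}$ with the $a_i,b_i$ the increasing representatives in $[n]$. The paper's proof is much terser---it simply records this formula without spelling out the chain verification or the uniqueness argument you supply.
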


\begin{proof}
Let $(a_1,\dots, a_k)\in [n]^k$ be the representatives of the classes in $A$ in increasing order; similarly for $B$. The unique $(A,B)$-stream satisfying the condition of the lemma is 
$$\{(a_r + tn, b_r + tn) \mid 1 \leq r \leq k, t \in \mathbb Z\}.$$
\end{proof}

Now, let 
$$\st_0(A,B) = \bigg\{\ldots >_{SE} (i_{-1},j_{-1}) >_{SE} (i_{0},j_{0}) >_{SE} (i_{1},j_{1}) >_{SE} \ldots \bigg\}.$$

\begin{prop}
 For each $r \in \Z$, let $$\st_r(A,B) = \{(i_t, j_{t+r}) \mid t \in\Z\}.$$ Then each $\st_r(A,B)$ is an $(A,B)$-stream, and all $(A,B)$-streams arise this way.
\end{prop}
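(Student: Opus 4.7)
The plan is to exploit two elementary facts. First, any $(n,n)$-translation-invariant chain in $\leqslant_{SE}$ can be enumerated as a doubly-infinite sequence $(x_t,y_t)_{t\in\Z}$ with both coordinate sequences strictly increasing. Second, any two strictly increasing $\Z$-indexed enumerations of the same doubly-unbounded subset of $\Z$ coincide after a global shift of the index. Applying the first fact to $\st_0(A,B)$ gives us the sequence $(i_t,j_t)$ used in the proposition, and moreover $(i_{t+k},j_{t+k})=(i_t+n,j_t+n)$, where $k=|A|=|B|$ is the flow; also $\{\ol{i_t}:t\in\Z\}=A$ and $\{\ol{j_t}:t\in\Z\}=B$, with each residue hit exactly once per period.

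To see that each $\st_r(A,B)$ is an $(A,B)$-stream, I would verify the three defining properties directly from the formula. Translation invariance is immediate: $(i_{t+k},j_{t+r+k})=(i_t+n,j_{t+r}+n)$. The chain property in $\leqslant_{SE}$ follows from strict monotonicity of $(i_t)_t$ and $(j_{t+r})_t$ in $t$; consecutive terms are strictly NW--SE comparable, hence the whole set is totally ordered. Finally the row-residues form $\{\ol{i_t}:t\in\Z\}=A$ and the column-residues form $\{\ol{j_{t+r}}:t\in\Z\}=\{\ol{j_t}:t\in\Z\}=B$.

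For the converse, the main step is to produce $r$ from an arbitrary $S\in\st(A,B)$. Since $S$ is an $(n,n)$-invariant chain in $\leqslant_{SE}$, it is a countable totally-ordered set with no maximum and no minimum, so it is order-isomorphic to $\Z$; write $S=\{(i'_t,j'_t):t\in\Z\}$ with both coordinate sequences strictly increasing in $t$. The sequences $(i'_t)_t$ and $(i_t)_t$ are then both strictly increasing enumerations of $\{x\in\Z:\ol{x}\in A\}$, so by the second fact above they differ by an index shift; after reindexing the enumeration of $S$, we may assume $i'_t=i_t$ for every $t$. Applying the same reasoning to the columns, $(j'_t)_t$ and $(j_t)_t$ are both strictly increasing enumerations of $\{y\in\Z:\ol{y}\in B\}$, hence $j'_t=j_{t+r}$ for some $r\in\Z$. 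This gives $S=\st_r(A,B)$.

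The only real subtlety is the bookkeeping claim that a strictly increasing $\Z$-indexed enumeration of a doubly-unbounded subset of $\Z$ is unique up to an index shift; this follows because any strictly increasing bijection $\Z\to\Z$ is a translation. Everything else amounts to unwinding definitions, so I expect the proof to be quite short once this observation is in place.
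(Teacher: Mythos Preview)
Your proof is correct and takes a somewhat different route from the paper's. The paper argues element by element: it picks the cell of $S$ in row $i_0$, which must be $(i_0,j_r)$ for some $r$, and then shows by contradiction that $(i_1,j_{1+r})\in S$ as well---if the cell in row $i_1$ were any further east, a cascade would force every subsequent cell further east too, leaving column $j_{1+r}$ empty. Your approach instead observes that the row-set and column-set of any $(A,B)$-stream are forced (namely $\{x:\ol x\in A\}$ and $\{y:\ol y\in B\}$), and that a strictly increasing $\Z$-indexed enumeration of a fixed doubly-unbounded subset of $\Z$ is unique up to index shift; this cleanly decouples the row- and column-matching and avoids the cascade. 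Both arguments are short; yours is more structural, the paper's more hands-on. You also verify explicitly that each $\st_r(A,B)$ is an $(A,B)$-stream, which the paper leaves implicit.

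One small point: the sentence ``it is a countable totally-ordered set with no maximum and no minimum, so it is order-isomorphic to $\Z$'' is not a valid implication in general (consider $\mathbb Q$). What actually gives the $\Z$-enumeration here is that projection to the first coordinate is an order-isomorphism from $S$ onto a doubly-unbounded subset of $\Z$, and any such subset is order-isomorphic to $\Z$. You use exactly this in the next line anyway, so this is only a wording issue, not a real gap.
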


\begin{proof}
Let $S\in\st(A,B)$. Let $(i_0,j) \in S$, then clearly $j = j_r$ for some $r \in\Z$. Then we claim $(i_1, j_{1+r}) \in S$. Indeed, if not, then the element of $S$ in row $i_1$ is weakly east of $(i_1,j_{2+r})$, which implies the element of $S$ in row $i_2$ is weakly east of $(i_1, j_{3+r})$, etc. This means however that $S$ is missing an element in column $j_{1+r}$, which is impossible.
\end{proof}

\begin{defn}
\label{def:altitude}
The \emph{altitude} of the stream $\st_r(A,B)$ is the number $r$. For a stream $S$, we denote its altitude by $a(S)$.
\end{defn}	

\begin{ex}
 Let $n=6$, $A = \{\ol{1},\ol{3},\ol{6}\}$ and $B = \{\ol{2},\ol{4},\ol{5}\}$. Then the streams
$\st_{-1}(A,B)$, $\st_{0}(A,B)$ and $\st_{1}(A,B)$ are shown in
Figure \ref{fig:streams}.
\end{ex}

\begin{figure}
\centering
\resizebox{.4\textwidth}{!}{\input{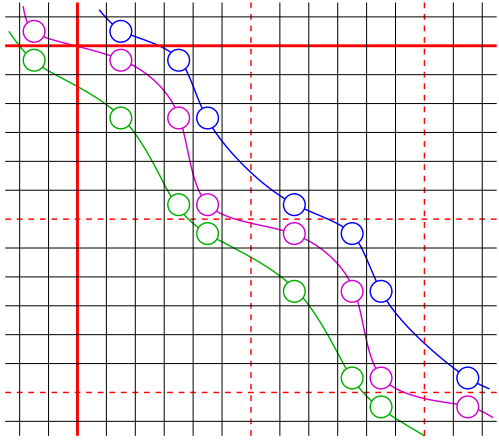}}
\caption{Streams of altitudes $-1, 0$, and $1$ for $A = \{\ol{1},\ol{3},\ol{6}\}$ and $B = \{\ol{2},\ol{4},\ol{5}\}$.}
\label{fig:streams}
\end{figure}


\begin{defn}
The \emph{defining data} of a stream $S$ is the triple $(A,B,r)$ such that $S = \mathfrak{st}_r(A,B)$.
\end{defn}

\subsection{The map}

Let $\lambda = (\lambda_1, \ldots, \lambda_{\ell})$ be a partition of size $\sum_i \lambda_i \leq n$. A \emph{tabloid} $P$ of shape $\lambda$ is an equivalence class of fillings of the Young diagram of shape $\lambda$ with elements of $[\ol{n}]$ under identification of fillings that differ by reordering  elements within rows. We think of the $i$-th row of a tabloid $P$ as a set $P_i\subseteq[\ol{n}]$. Let $\Omega$ be the set of all triples $(P,Q,\rho)$, where $P = (P_1,\ldots, P_\ell)$ and $Q = (Q_1,\ldots, Q_\ell)$ are tabloids of the same shape $\lambda$ of length $\ell$ and size $n$ filled with distinct residue classes, and $\rho = (\rho_1, \ldots, \rho_{\ell}) \in \Z^{\ell}$ (a \emph{weight} of size $\ell$). Note that the $i$-th row in an element of $\Omega$ describes a stream, namely $\mathfrak{st}_{\rho_i}(Q_i, P_i)$.

\begin{defn}
Suppose $w$ is a partial permutation and $d:\B_w\to\Z$ a proper numbering. For each $i\in\Z$ let $Z_i$ be the unique zig-zag whose inner corner-posts are the balls labeled $i$. Define $\fwC{d}{w}$ to be the permutation whose balls are at the outer corner-posts of the zig-zags $Z_i$. Define $\str{d}{w}$ to be the stream whose cells are the back corner-posts of the zig-zags $Z_i$ (it is obvious from the definitions that the back corner-posts do, in fact, form a stream). 

If $C$ is a channel, we write $\fwC{C}{w}$ for  $\fwC{d_w^C}{w}$ and $\str{C}{w}$ for $\str{d_w^C}{w}$. If $C$ is the southwest channel then we write $\fw{w}$ for $\fwC{C}{w}$ and $\st(w)$ for $\str{C}{w}$.
\end{defn}

We are ready to define the map $\Phi:\widetilde{W}\to\Omega$ that is the affine analog of the Robinson-Schensted correspondence. We refer to the algorithm below as the Affine Matrix-Ball Construction (AMBC).

\begin{itemize}
 \item Input $w\in \widetilde{W}$.
 \item Output: $(P,Q,\rho)\in\Omega$.
 \item Initialize $(P,Q,\rho)$ to $(\varnothing,\varnothing,\varnothing)$.
 \item Repeat until $w$ is the empty partial permutation:
\begin{itemize} 
 \item Record the defining data of $\st(w)$ in the next row of $P$, $Q$, and $\rho$.
 \item Reset $w$ to $\fw{w}$. 
\end{itemize}
\end{itemize}

\begin{thm}
\label{thm:AMBC produces valid element}
This algorithm produces a valid element of $\Omega$, i.e. the rows decrease in size and each tabloid is filled with distinct residue classes.
\end{thm}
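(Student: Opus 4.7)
My plan is to verify three properties of the AMBC output: (a) the row lengths weakly decrease; (b) the total number of cells is $n$; and (c) within each tabloid the residue classes are distinct. I would dispose of (b) and (c) first, since both reduce to direct bookkeeping.

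For (b), by Proposition \ref{prop: period of proper numbering} the numbering $d_w^{SW}$ is semi-periodic with period $m$ equal to the width of $P_w$, so a single fundamental $(n,n)$-block contains exactly $m$ zig-zags $Z_i$. Each $Z_i$ with $k_i$ inner corner-posts contributes $k_i-1$ outer corner-posts to $\B_{\fw{w}}$ and exactly one back corner-post to $\st(w)$. Consequently the flow of $\st(w)$ equals $m$ and the number of balls of $\fw{w}$ per fundamental block is $m$ fewer than that of $w$. Iterating, the flows sum to $n$ and the algorithm terminates in finitely many steps.

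For (c), the back corner-post of $Z_i$ lies in the row of the NE-most inner corner-post of $Z_i$ and in the column of the SW-most one. A direct inspection of the zig-zag structure shows that the outer corner-posts of $Z_i$ occupy exactly the rows of the inner corner-posts of $Z_i$ other than the NE-most, and the columns other than the SW-most. Hence the row residues of balls of $\fw{w}$ are disjoint from the $A$-set of $\st(w)$, and symmetrically for columns and the $B$-sets. Induction on the iteration count then yields pairwise disjointness across all rows of $P$ and of $Q$.

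The hard part is (a): I need $m' := $ width of $P_{\fw{w}}$ to satisfy $m' \leqslant m$. My approach is to equip $\B_{\fw{w}}$ with the candidate numbering that sends each outer corner-post $b'$ to the label $i$ of the unique zig-zag $Z_i$ of $w$ containing it; this is automatically semi-periodic of period $m$. The crucial claim is monotonicity: if $b'_1$ is strictly northwest of $b'_2$ in $\B_{\fw{w}}$, then $i_1 < i_2$. The case of equal labels is immediate since the outer corner-posts within a single zig-zag form an NE/SW chain. For strict inequality, I would analyze the rectangle spanned by the two inner corner-posts $b^N, b^W$ of $Z_{i_2}$ adjacent to $b'_2$: any ball of $w$ strictly northwest of $b'_2$ is either strictly northwest of $b^N$ or of $b^W$ (in which case monotonicity of $d_w^{SW}$ forces its label to be at most $i_2-1$), or else it lies strictly inside the open rectangle. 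The main obstacle is ruling out a ball of label $\geqslant i_2$ inside that rectangle; I would do this by combining monotonicity with continuity of $d_w^{SW}$ to produce a hypothetical ball of label $i_2$ strictly northwest of such a ball, then show this hypothetical ball would have to coincide with $b^N$ or $b^W$ or sit strictly northwest of $b^N$, all geometrically incompatible with lying in the rectangle. Once monotonicity is established, any $\leqslant_{SE}$-chain of $m'$ balls of $\fw{w}$ within one fundamental block receives $m'$ distinct labels in an interval of length $m$, yielding $m' \leqslant m$ and completing the argument.
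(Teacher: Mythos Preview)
Your proposal is correct, and parts (b) and (c) are exactly the bookkeeping the paper dismisses as ``clear.'' For (a) you and the paper make the same final move: take a channel of $\fw{w}$, note that its $m'$ balls per $(n,n)$-period receive zig-zag labels lying in an interval of length $m$, and conclude $m'\leqslant m$ by pigeonhole.

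The difference is in how much you prove to feed into that pigeonhole. The paper (Lemma~\ref{lem:width decreases}) only needs that two $\leqslant_{SE}$-comparable balls of $\fw{w}$ cannot share a zig-zag, which is immediate: the outer corner-posts of a single zig-zag form a $\leqslant_{NE}$-chain. You instead aim for full monotonicity of the induced numbering $\fw{d}$, which is strictly stronger. Your sketch for the ``rectangle'' case is essentially right, though the phrase ``sit strictly northwest of $b^N$'' is a slip: the label-$i_2$ ball $b^{**}$ you produce by continuity must lie on the $\leqslant_{NE}$-chain of inner corner-posts of $Z_{i_2}$, hence is $b^N$, $b^W$, northeast of $b^N$, or southwest of $b^W$; each of these contradicts $b^{**}$ being strictly northwest of a point in the open rectangle. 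So the argument closes, just not quite as you described it.

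Your detour is not wasted: monotonicity of $\fw{d}$ is exactly what the paper later asserts without proof in Remark~\ref{rmk:induced is monotone but not proper}. But for the present theorem, the one-line observation about outer corner-posts of a single zig-zag suffices.
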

The proof is found on page \pageref{pf:AMBC produces valid element}.
\begin{ex}
Consider $w=[1,2,17,5,14,18,20]$ for $n=7$. The first step of AMBC is shown in Figure \ref{fig:forward ex 1}. Here the back corner-posts of the zig-zags form the stream $\str{3}{\{\ol{3},\ol{6},\ol{7}\}, \{\ol{1},\ol{2},\ol{5}\}}$, so we set the first row of $P$ to $\{\ol{1},\ol{2},\ol{5}\}$, the first row of $Q$ to $\{\ol{3},\ol{6},\ol{7}\}$, and the first row of $\rho$ to $3$. The second step of AMBC is shown in Figure \ref{fig:forward ex 2}. Here the back corner-posts of the zig-zags form the stream $\str{3}{\{\ol{2},\ol{4},\ol{5}\}, \{\ol{4},\ol{6},\ol{7}\}}$, so we set the second row of $P$ to $\{\ol{4},\ol{6},\ol{7}\}$, the second row of $Q$ to $\{\ol{2},\ol{4},\ol{5}\}$, and the second row of $\rho$ to $3$. In the third step the permutation consists of all the translates of the ball $(1,10)$, and all the zig-zags will be singletons which will form the stream $\str{1}{\{\ol{1}\},\{\ol{3}\}}$.

The resulting triple $(P,Q,\rho)$ is
$$\tableau[sY]{\ol{1}&\ol{2}&\ol{5}\\\ol{4}&\ol{6}&\ol{7}\\\ol{3}}\quad,\qquad\tableau[sY]{\ol{3}&\ol{6}&\ol{7}\\\ol{2}&\ol{4}&\ol{5}\\\ol{1}}\quad,\qquad\tableau[sY]{3\\3\\1}$$

\end{ex}

\begin{figure}
\centering
\resizebox{.7\textwidth}{!}{\input{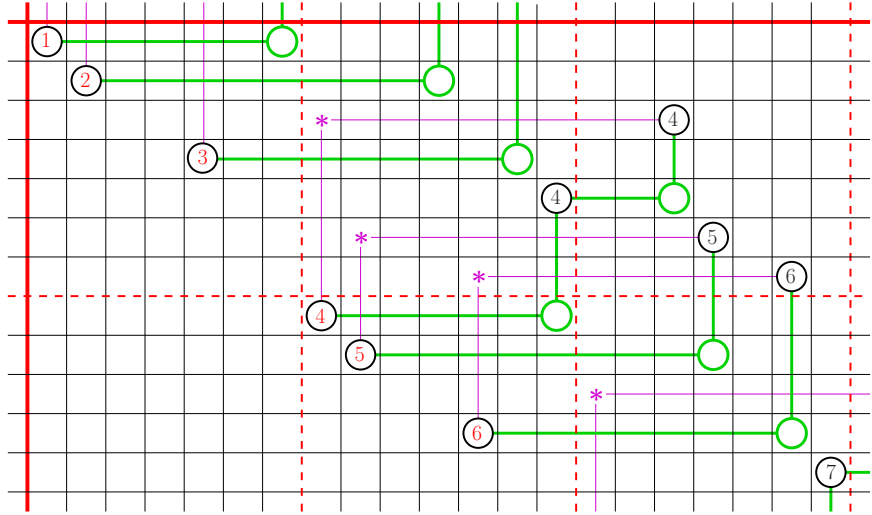}}
\caption{First step of AMBC for $[1,2,17,5,14,18,20]$. The numbering shown is the SW channel numbering; the balls in the channel are numbered in red. The balls of $\fw{w}$ are in green. The cells of $\st(w)$ are denoted by *'s.}
\label{fig:forward ex 1}
\end{figure}

\begin{figure}
\centering
\resizebox{.7\textwidth}{!}{\input{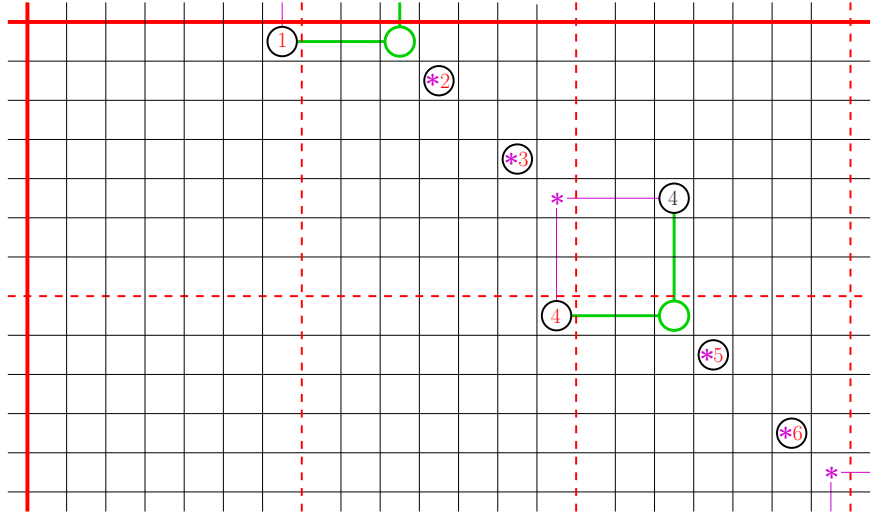}}
\caption{Second step of AMBC for $w=[1,2,17,5,14,18,20]$.}
\label{fig:forward ex 2}
\end{figure}

\begin{rmk}
 This map will be shown to be an injection, but not a surjection. In Section \ref{sec:dominant weights} we shall describe its image, while in Section \ref{sec:backward algorithm} we shall describe a map whose restriction to the image is its inverse.
\end{rmk}

In principle one could choose other proper numberings at each step of the algorithm; it is natural to ask whether we would get different results.

\begin{prop}
\label{prop:irrelevant which proper}
The two tabloids $(P,Q)$ in the outcome do not change if we use an \emph{arbitrary} proper numbering of balls at each step.
\end{prop}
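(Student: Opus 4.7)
The plan is to prove the statement by induction on the number of iterations of AMBC, reducing to a single-step invariance claim: for any partial permutation $w$ and any two proper numberings $d, d'$ of $w$, we have $\fwC{d}{w} = \fwC{d'}{w}$ as partial permutations, and $\str{d}{w}$ and $\str{d'}{w}$ lie in the same class $\mathfrak{st}(A, B)$. Since a stream class $\mathfrak{st}(A, B)$ determines exactly the rows added to $P$ and $Q$ at a given step, with only the altitude (and hence $\rho$) sensitive to the specific stream within the class, this single-step claim suffices to yield the proposition.

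I would begin by comparing an arbitrary proper numbering $d$ against the southwest channel numbering $d_w^{SW}$. By Lemma \ref{lem:proper numbers channels}, any proper numbering restricts on each channel $C$ to consecutive integers, so the difference $\delta(C) := d(b) - d_w^{SW}(b)$ (for $b \in C$) is a well-defined integer. By Proposition \ref{prop:proper determined by channels}, the proper numbering $d$ is entirely determined by its values on channels, so the function $\delta : \C_w \to \Z$ fully parameterizes $d$ relative to $d_w^{SW}$. Using Definition \ref{def:distance between channels} and Proposition \ref{prop:h is a pseudometirc}, one checks that $\delta$ must be constant on each river (channels at distance zero share a $\delta$-value), so $\delta$ descends to an integer-valued function on the set of rivers.

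The next step is to decompose the transition from $d_w^{SW}$ to $d$ into elementary moves: a global shift by a constant, combined with unit shifts on one river at a time. A global shift $d' = d + c$ preserves everything, since it merely reindexes the level sets: $L'_i = L_{i-c}$, so the zig-zags (and hence the sets of outer and back corner-posts) are literally unchanged as subsets of $\Z\times\Z$.

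The crux of the argument, and the main obstacle, is to show that a unit shift of the labels on the balls of a single river preserves both $\fwC{}{w}$ and the class of $\str{}{w}$. When labels in a river $R$ are all incremented by $1$, the level-set decomposition is restructured: balls in $R$ move from $L_i$ to $L_{i+1}$, and zig-zags that previously mixed balls from $R$ with balls outside $R$ can be rearranged. I expect the key observation to be that, although individual zig-zags change, each local piece of outer corner-post data is replaced by another outer corner-post at the same geometric location, and similarly each back corner-post is replaced by one with the same row and column residues modulo $n$ (so the stream class is preserved even though the altitude may shift). Making this rigorous should rely on the detailed structural theory of proper numberings developed in Section \ref{sec: proper numberings structure}, in particular on understanding how zig-zags from different rivers can intersect and how inner and outer corner-posts are redistributed when the numbering is locally perturbed.
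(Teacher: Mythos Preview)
Your reduction to a single-step invariance claim is fatally flawed: the assertion that $\fwC{d}{w} = \fwC{d'}{w}$ for any two proper numberings $d,d'$ is simply false. Take $w = [6,1,8,3,10,5]$ with $n=6$ (the paper's own example of multiple proper numberings, Figure~\ref{fig:proper nonunique}). For the southwest channel numbering $d_1$, the zig-zag $Z_1$ has inner corners $(2,1)$ and $(1,6)$, giving outer corner $(2,6)$; similarly $Z_2,Z_3$ yield $(4,8),(6,10)$. For the northeast channel numbering $d_3$, the zig-zag $Z_1$ has inner corners $(2,1)$ and $(-3,2)$, giving outer corner $(2,2)$; the others yield $(4,4),(6,6)$. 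Thus $\fwC{d_1}{w}$ and $\fwC{d_3}{w}$ are genuinely different partial permutations. The miracle is that they nonetheless produce the same second row of the tabloids, but this cannot be seen by your proposed local zig-zag bookkeeping: the intermediate objects really change, and only the tabloid-level data survives.

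The paper's proof takes a completely different route, going through the asymptotic Robinson--Schensted interpretation (Section~\ref{sec: independence of proper numbering}). For each numbering sequence $\mathfrak D$, one associates a sequence of finite partial permutations (PNAPs) and shows that all such sequences are \emph{asymptotically alike} to the basic sequence $(a_w^k)_k$; a metric estimate (Lemma~\ref{lem:asymptotically the same}) then forces their $P$-stabilizations to coincide row by row. The argument never compares $\fwC{d}{w}$ to $\fwC{d'}{w}$ directly; it instead shows that whatever partial permutation appears at a given stage, its contribution to the stabilized tabloid is already determined by the original $w$. If you want a direct argument avoiding asymptotics, you would need to track how a river shift transforms the \emph{entire remaining AMBC computation}, not just one step --- essentially the kind of analysis done much later in Sections~\ref{sec:moving streams proofs}--\ref{sec:moving channels proofs} for the Weyl symmetry, which is considerably more delicate than what you have sketched.
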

The proof is found on page \pageref{pf:irrelevant which proper}. The weight $\rho$, however, does depend on the choices, so we consistently choose the \emph{southwest channel} numbering in AMBC.

\section{Backward algorithm}
\label{sec:backward algorithm}

We have introduced AMBC which gives a map $\Phi:\widetilde{W}\to\Omega$. In this section we will introduce an algorithm, which we will call the \emph{backward algorithm}, that gives a map $\Psi:\Omega\to\widetilde{W}$. The restriction of $\Psi$ to the image of $\Phi$ will be precisely the inverse of $\Phi$. The behavior of $\Psi$ on the rest of $\Omega$ will be explained in Section \ref{sec:weyl group action} (including a description of the fibers $\Psi^{-1}(w)$); essentially, applying the backward algorithm followed by the forward algorithm preserves the two tabloids and maps the weight into a certain (shifted) dominant chamber.

\subsection{Backward numberings}

First we explain a way to construct a numbering of the balls of a partial permutation with respect to a stream. This numbering will mimic the numbering of balls in the inverse MBC algorithm. 

\begin{defn}
We say that a stream $S$ is \emph{compatible} with a partial permutation $w$ if 
\begin{itemize}
\item $\B_w\cup S$ has at most one cell in each row and at most one cell in each column, and 
\item the flow of $S$ is greater than or equal to the width of $P_w$.
\end{itemize}
\end{defn}

The numbering will be defined by an algorithm:
\begin{itemize}
\item Input: a partial permutation $w$, a compatible stream $S$, and a proper numbering $f:S\to\Z$. 
\item Output: a numbering $d:\B_w\to\Z$.
\item For $b\in\B_w$, let $d_0(b)=\max\limits_{b'\leqslant_{NW} b} f(b')$. Initialize $d$ to $d_0$.
\item Repeat until $d$ is monotone:
\begin{itemize}
\item Choose $b\in\B_w$ such that there are balls southeast of it with the same value of $d$, but no such balls northwest of it. 
\item Define $d'$ to be the semi-periodic numbering which coincides with $d$ on all balls which are not translates of $b$ and has $d'(b) = d(b) -1$.
\item Set $d = d'$.
\end{itemize}
\item Output $d$.
\end{itemize}

\begin{defn}
We refer to the numbering $d_0$ constructed in the above algorithm as the \emph{stream numbering} of $w$. 
\end{defn}

\begin{prop}
\label{prop: backward numbering defined}
 The above algorithm terminates on any valid input and the numbering $d$ does not depend on the choices made. 
\end{prop}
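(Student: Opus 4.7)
The plan is to characterize the output of the algorithm as the pointwise-maximum element of a suitable set of admissible numberings; independence of choices will then follow immediately, and termination will follow from a potential-function argument.

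Let $\mathcal{D} = \mathcal{D}(w, S, f)$ denote the set of numberings $d' : \B_w \to \Z$ that are (i) semi-periodic with period $m$ equal to the flow of $S$, (ii) bounded, $d'(b) \leq d_0(b)$ for all $b$, and (iii) strictly monotone with respect to $<_{NW}$. A direct check shows that $\mathcal{D}$ is closed under pointwise maxima; combined with the upper bound (ii), this implies that whenever $\mathcal{D}$ is nonempty it possesses a unique pointwise-maximal element, which I shall call $d^\ast$.

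Next I would verify that $d \geq d^\ast$ is preserved by every decrement. If $b$ is the ball chosen by the algorithm, then some $b' >_{SE} b$ satisfies $d(b) = d(b')$, and strict monotonicity of $d^\ast$ gives $d^\ast(b) < d^\ast(b') \leq d(b')$, whence $d^\ast(b) \leq d(b) - 1$, as required. A parallel argument, using that the selection rule forbids any $b'' <_{NW} b$ from also being a violator, shows that the weak monotonicity $b'' <_{NW} b \Rightarrow d(b'') \leq d(b)$ is preserved throughout; combined with the halting condition this upgrades to strict monotonicity at termination, placing the terminal $d$ in $\mathcal{D}$. Together with $d \geq d^\ast$ this forces $d = d^\ast$, giving independence from the choices. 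Termination is then immediate: the potential $\Phi(d) := \sum (d(b) - d^\ast(b))$, summed over one fundamental domain of $\B_w$, is a nonnegative integer (by $d \geq d^\ast$) that decreases by exactly one with each decrement.

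The step I expect to be the main obstacle is verifying that $\mathcal{D}$ is nonempty, which is precisely where the compatibility hypothesis enters. My approach would be to construct an explicit element of $\mathcal{D}$ by a semi-periodic greedy procedure: process the translate classes of $\B_w$ from SE to NW, and for each ball $b$ assign the largest integer $\leq d_0(b)$ lying strictly below every value already assigned on balls $>_{SE} b$. The row/column condition that $\B_w \cup S$ has at most one cell per row and per column, together with the inequality ``flow of $S$ at least the width of $P_w$,'' should ensure that enough admissible integers remain available at each stage for the greedy choice to succeed, producing a strictly monotone, semi-periodic numbering bounded above by $d_0$.
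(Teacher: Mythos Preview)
Your approach is correct and arrives at the same endpoint as the paper, but the two executions differ in complementary ways.

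For the lower bound (your nonemptiness of $\mathcal{D}$), the paper gives a one-line construction that avoids any greedy bookkeeping: take an arbitrary proper numbering $d_p$ of $w$ (already known to exist via the channel-numbering construction), stretch its period from the width $m_1$ of $P_w$ to the flow $m_2 \geq m_1$ of $S$ by setting $d_p'\bigl((i,w(i)) + k(n,n)\bigr) := d_p(i,w(i)) + km_2$, and then shift down by $z := \max_b\bigl(d_p'(b) - d_0(b)\bigr)$. Monotonicity of $d_p'$ is immediate from $m_2 \geq m_1$, which is exactly where the compatibility hypothesis enters. This is cleaner than the greedy procedure you sketch, and you correctly flagged that step as the main obstacle; your greedy idea can be made to work, but the rescaled proper numbering is the natural construction here.

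For independence of choices, the paper takes a different route: rather than identifying the output with $d^\ast$, it invokes the diamond lemma directly---any two single-step choices $b,\tilde b$ commute (each remains a legal choice after the other is decremented), so local confluence plus termination gives global confluence. Your variational argument is arguably more direct and yields, as a byproduct, the alternative characterization of the backward numbering as the largest monotone numbering bounded by $d_0$; the paper records this separately as a remark following its proof. The trade-off is that you must verify weak monotonicity is preserved at every step (which it is, by the minimality clause in the selection rule), whereas the diamond-lemma route sidesteps this invariant entirely.
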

The proof is found on page \pageref{pf:backward numbering defined}. 
\begin{defn}
We refer to the common numbering $d$ above as \emph{the backward numbering} corresponding to $w$ and $S$, and denote it $d_w^{\jbk,S}$.
\end{defn}

\begin{ex}
Consider the partial permutation $[6,1,\varnothing,8,5,7,\varnothing,\varnothing]$ and the stream 
$\str{0}{\{\ol{3},\ol{7},\ol{8}\}, \{\ol{2},\ol{3},\ol{4}\}}$. Number $S$ so that the cell labeled $1$ is the northwest cell both of whose coordinates are positive. The process of obtaining the corresponding backward numbering is illustrated in Figure \ref{fig: backward numbering example}.
\end{ex}

\begin{figure}
\centering
\resizebox{.45\textwidth}{!}{\input{figures/backward_numbering_1.pspdftex}}\quad\resizebox{.45\textwidth}{!}{\input{figures/backward_numbering_2.pspdftex}}
\resizebox{.45\textwidth}{!}{\input{figures/backward_numbering_4.pspdftex}}
\caption{The backward numbering of $[6,1,\varnothing,8,5,7,\varnothing,\varnothing]$ with respect to the stream $\str{0}{\{\ol{3},\ol{7},\ol{8}\}, \{\ol{2},\ol{3},\ol{4}\}}$. The cells of the stream have magenta numbers; these numbers correspond to the values of $f$ on the cells. The top-left figure shows the initial numbering $d_0$, the top-right figure shows the next iteration of $d$, and the bottom figure shows the final backward numbering $d$. }
\label{fig: backward numbering example}
\end{figure}

\begin{defn}
Suppose $S$ is a stream and $f$ is a proper numbering of it. Then by $S^{(i)}$ we mean the cell of $S$ numbered $i$. 
\end{defn}

Often when dealing with the backward algorithm, the proper numbering $f$ is implicit since the overall shift is of no interest. We will only need to talk about it explicitly when we compare the results of the backward step with different streams and hence care about their relative numberings.

\subsection{The map}

Now we are ready to describe the backward algorithm (map $\Psi$):
\begin{itemize}
\item Input: $(P,Q,\rho)\in\Omega$.
\item Output: $w\in \widetilde{W}$.
\item Initialize $w$ to the empty partial permutation.
\item Repeat until $(P,Q,\rho) = (\varnothing,\varnothing,\varnothing)$:
\begin{itemize}
\item Remove the last row of $P$ and call it $A$; remove the last row of $Q$ and call it $B$; remove the last row of $\rho$ and call it $r$. 
\item Let $S = \mathfrak{st}_r(A,B)$; give $S$ some proper numbering.
\item Let $d= d_w^{\jbk,S}$.
\item For each $i\in\Z$, consider the zig-zag $Z_i$ with back corner-post at $S^{(i)}$ and outer corner-posts at the balls of $\B_w$ labeled $i$ (see Figure \ref{fig:backward ex}).
\item Consider the partial permutation $\bk{S}{w}$ whose balls are at the inner corner-posts of all the $Z_i$'s. 
\item Let $w = \bk{S}{w}$.
\end{itemize}
\item Output $w$.
\end{itemize}

\begin{ex}
\label{ex:backward algorithm}
We will apply the backward algorithm to the following element of $\Omega$:
$$(P,Q,\rho) = \left(\quad
\tableau[sY]{\ol{1}&\ol{4}&\ol{5}&\ol{7}\\\ol{3}&\ol{6}\\\ol{2}},\quad
\tableau[sY]{\ol{2}&\ol{3}&\ol{5}&\ol{7}\\\ol{1}&\ol{4}\\\ol{6}},\quad 
\tableau[sY]{2\\0\\1}\quad\right).$$
Taking off the last row gives us a stream $S_1$ of flow $1$. Since the partial permutation at the beginning of the step is empty, each zig-zag will have only one cell. Thus the partial permutation at the end of the step, $w_1 = \bk{S_1}{\varnothing}$ will simply consist of the cells of $S_1$. This is shown in the top-left part of Figure \ref{fig:backward ex}. We are left with
$$\left(\quad
\tableau[sY]{\ol{1}&\ol{4}&\ol{5}&\ol{7}\\\ol{3}&\ol{6}},\quad 
\tableau[sY]{\ol{2}&\ol{3}&\ol{5}&\ol{7}\\\ol{1}&\ol{4}},\quad 
\tableau[sY]{2\\0}\quad\right).$$

Taking off the last row gives the stream $S_2$ shown by magenta numbers in the top-right part of Figure \ref{fig:backward ex}. The black balls show the partial permutation $w_1$; the numbering is the backward numbering with respect to $S_2$. The blue balls show the new partial permutation $w_2 = \bk{S_2}{w_1}$. Finally, the bottom of Figure \ref{fig:backward ex} shows the last step of the backward algorithm; the blue balls give the permutation 
$$\Psi(P,Q,\rho) = [1,6,9,7,10,5,11]\in\widetilde{S_7}.$$
\end{ex}

\begin{figure}
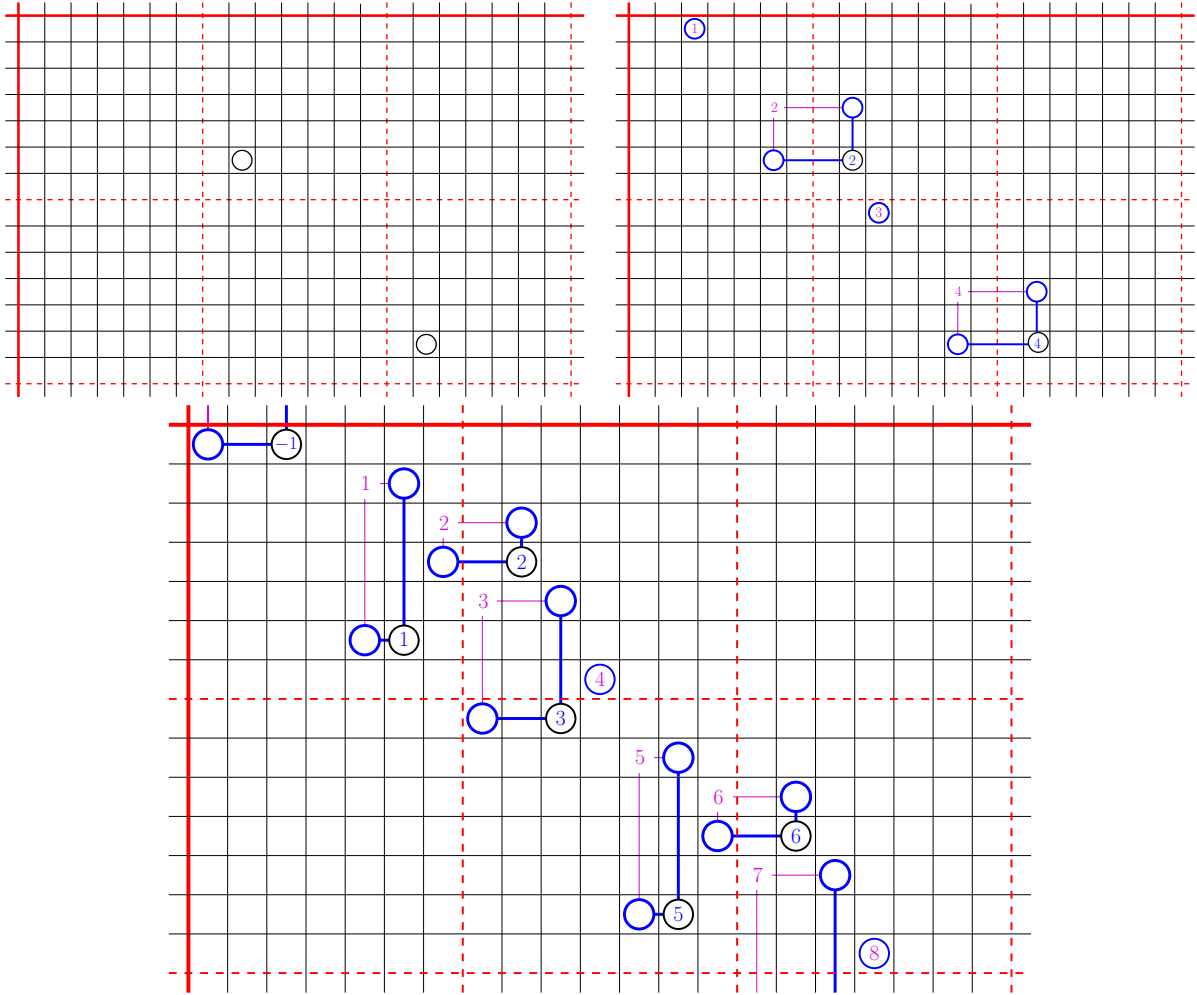

\centering
\resizebox{.47\textwidth}{!}{\input{figures/backward_ex_1.pspdftex}}\quad \resizebox{.47\textwidth}{!}{\input{figures/backward_ex_2.pspdftex}}
\resizebox{.7\textwidth}{!}{\input{figures/backward_ex_3.pspdftex}}
\caption{The steps in the application of the backward algorithm described in Example \ref{ex:backward algorithm}.}
\label{fig:backward ex}
\end{figure}

\section{Bijectivity}

In this section we briefly discuss injectivity of $\Phi$ and then describe the image of $\Phi$ as a subset of $\Omega$. We describe it as a collection of triples $(P,Q,\rho)\in\Omega$, where $\rho$ satisfies certain inequalities depending on $P$ and $Q$. The primary goal is to describe the inequalities explicitly.

The first result implies injectivity:
\begin{thm}
\label{thm:forward backward id}
Suppose $w$ is a partial permutation. Then $\Psi(\Phi(w)) = w$.
\end{thm}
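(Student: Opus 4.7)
\emph{Plan.} The strategy is to reduce the iterative statement to a single-step inverse and then chain. Define $w_0 = w$ and $w_{k+1} = \fw{w_k}$. A separate structural argument (using that each forward step strictly decreases some natural invariant of the Shi poset, e.g.\ the width) shows $w_m = \varnothing$ for some finite $m$. The forward algorithm records the defining data of the streams $\st(w_0), \ldots, \st(w_{m-1})$ into the rows of $(P,Q,\rho)$ top-to-bottom, and the backward algorithm peels them off bottom-to-top. Therefore the theorem reduces to the following \textbf{Key Lemma}: for any non-empty partial permutation $w$, with $S = \st(w)$ and $w' = \fw{w}$, the stream $S$ is compatible with $w'$ and $\bk{S}{w'} = w$. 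Applying the lemma to $w_{m-1}, w_{m-2}, \ldots, w_0$ in turn recovers $w$ from the empty partial permutation.

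\emph{Compatibility.} The zig-zags $Z_i$ producing $\fw{w}$ and $\st(w)$ are defined relative to the SW channel numbering $d_w^{SW}$. Within a single $Z_i$, the back corner-post and the outer corner-posts lie in distinct rows and columns by construction; and no row or column is shared between distinct $Z_i$ and $Z_j$, as a direct consequence of the monotonicity of the proper numbering $d_w^{SW}$ together with the locations of back and outer corner-posts. By Proposition \ref{prop: period of proper numbering} the flow of $S$ equals the width of $P_w$, which dominates the width of $P_{w'}$, so compatibility holds.

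\emph{The equality $\bk{S}{w'} = w$.} The crux is to show that, after an appropriate overall shift, the backward numbering $d_{w'}^{\jbk,S}$ assigns the value $i$ to every outer corner-post of $Z_i$. Granted this, the zig-zag of the backward step with back corner-post $S^{(i)}$ and outer corner-posts the balls of $\B_{w'}$ labeled $i$ coincides with $Z_i$, so its inner corner-posts are precisely the balls of $\B_w$ labeled $i$; uniting over $i$ yields $w$. To establish the numbering match, define the candidate $d$ on $\B_{w'}$ by $d(b) := i$ whenever $b$ is an outer corner-post of $Z_i$, and verify:
\begin{enumerate}
\item $d(b) \leqslant d_0(b)$, since $S^{(i)} \leqslant_{NW} b$ by the zig-zag structure, which forces $d_0(b) \geqslant i = d(b)$.
\item $d$ is monotone on $\B_{w'}$: if $b_1 \leqslant_{NW} b_2$ with $b_k \in Z_{i_k}$, then $b_2$ lies northeast of some actual ball of $\B_w$ labeled $i_1$ (by how the outer corner-post $b_1$ sits inside $Z_{i_1}$), and monotonicity of $d_w^{SW}$ then forces $i_2 > i_1$.
\item The iterative decrement procedure defining $d_{w'}^{\jbk,S}$ can be carried out so as to arrive at $d$ from $d_0$ without overshooting.
\end{enumerate}
Combined with the uniqueness of the output of the backward-numbering algorithm (Proposition \ref{prop: backward numbering defined}), these yield $d_{w'}^{\jbk,S} = d$, completing the lemma.

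\emph{Main obstacle.} Item (iii) is the heart of the proof. One must track the local geometry of how adjacent zig-zags $Z_i$ and $Z_{i+1}$ interlock under the SW channel numbering, and verify that the decrement procedure from $d_0$ lands exactly at $d$. This is the one place where the choice of the \emph{SW} channel (rather than an arbitrary proper numbering) appears to matter, and where genuinely new analysis beyond the classical MBC picture is required.
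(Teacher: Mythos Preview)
Your reduction to a single-step inverse (the Key Lemma) and your items (i) and (ii) are correct and match the paper's approach. Indeed, (i) and (ii) together with Remark~\ref{rmk:lower bound for backward numbering} give $d \leqslant d_{w'}^{\jbk,S}$, which is exactly the easy half of the paper's proof of Theorem~\ref{thm:forward backward pretheorem}.

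The gap is in item (iii). As stated it is not an argument but a restatement of what must be proved: the decrement procedure always terminates at the \emph{largest} monotone numbering below $d_0$, so ``arriving at $d$ without overshooting'' is equivalent to showing $d_{w'}^{\jbk,S} \leqslant d$, and you have not supplied a mechanism for this. The paper's mechanism is the notion of \emph{$Nr$/$Wr$-terminal} and \emph{terminating} balls (Definition~\ref{defn:terminal and terminating}): a ball $b \in Z_i$ is $Nr$-terminal if $Z_{i+1}$ lies entirely south of $b$, and in that case $d_0(b) = i = d(b)$, forcing $d_{w'}^{\jbk,S}(b) \leqslant d(b)$. One then propagates this inequality to all balls via reverse paths to terminal balls along which $d$ increases by exactly~$1$. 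The substantive content is Lemma~\ref{lem:all balls are terminating}, which shows every ball of $\fwC{C}{w}$ is $Nr$- or $Wr$-terminating; its proof uses semi-bounded reverse walks relative to paths of maximal worth to the channel $C$, and this is where the hypothesis that $d_w^C$ is a \emph{channel} numbering enters.

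On that last point: your claim that the SW channel specifically matters is slightly off. Proposition~\ref{prop:fw bk is id} holds for \emph{any} channel numbering, not just the southwest one; what fails is the extension to arbitrary proper numberings. The ``tracking of how adjacent zig-zags interlock'' that you allude to is not the route the paper takes, and it is unclear such a local analysis would suffice without the terminal-ball idea.
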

This is actually true at each step regardless of which channel numbering we use:
\begin{prop}
\label{prop:fw bk is id}
Let $w$ be a partial permutation and let $C$ be a channel. Then 
$$\bk{\str{C}{w}}{\fwC{C}{w}} = w.$$
\end{prop}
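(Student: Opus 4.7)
\medskip

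Let $d = d_w^C$ denote the channel numbering used in the forward step, and for each $i$ let $Z_i$ be the zig-zag whose inner corner-posts are the balls of $w$ labeled $i$. Set $S = \str{C}{w}$ (so that $S^{(i)}$ is the back corner-post of $Z_i$) and $w' = \fwC{C}{w}$ (whose balls are the outer corner-posts of the $Z_i$'s). Give $S$ the proper numbering $f(S^{(i)}) = i$. Since a reverse zig-zag is determined by its back and outer corner-posts---these force the intervening inner corner-posts via the staircase pattern---it suffices to show that the backward numbering $d' := d^{\jbk,S}_{w'}$ assigns value $i$ to each outer corner-post of $Z_i$. Once this is established, the zig-zag $Z_i'$ reconstructed in the backward step coincides with $Z_i$, so their inner corner-posts---the balls of $w$ with $d$-value $i$---reassemble into $\B_w$, giving $\bk{S}{w'} = w$.

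\medskip

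Define a candidate $d^* : \B_{w'} \to \Z$ by $d^*(b) := i$ whenever $b$ is an outer corner-post of $Z_i$. The plan has three parts. First, verify that $d^*$ is semi-periodic and monotone: semi-periodicity is inherited from Proposition \ref{prop: period of proper numbering} applied to $d$ together with the $(n,n)$-equivariance of the zig-zag family $\{Z_i\}$, while monotonicity says that if $b_1, b_2 \in \B_{w'}$ are outer corner-posts of $Z_i, Z_j$ with $b_1$ strictly northwest of $b_2$, then $i < j$; this follows by transferring the comparison to the inner corner-posts of $w$ neighboring $b_1, b_2$ and invoking monotonicity and continuity of $d$. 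Second, $d^* \leqslant d_0$, where $d_0$ is the initial numbering of the backward algorithm, because $b$ outer of $Z_i$ lies weakly southeast of the back corner-post $S^{(i)}$, so $d_0(b) \geqslant f(S^{(i)}) = i = d^*(b)$. Third, I would establish by induction on the steps of the backward algorithm that $d'$ is the pointwise-largest monotone semi-periodic numbering bounded above by $d_0$---the inductive step uses that when the algorithm decreases $d$ at a ball $b$, there is a strictly southeast ball $b''$ with the same current value, forcing any competing monotone $\tilde d \leqslant d_0$ to satisfy $\tilde d(b) < \tilde d(b'') \leqslant d(b'') = d(b)$. Combining these three points gives $d^* \leqslant d'$.

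\medskip

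The main obstacle is the reverse inequality $d' \leqslant d^*$. Suppose for contradiction that $d'(b) \geqslant i+1$ for some $b$ outer of $Z_i$. Since $d' \leqslant d_0$, this forces $S^{(i+1)} \leqslant_{NW} b$. But $S^{(i+1)}$ is the back corner-post of $Z_{i+1}$, hence weakly northwest of every cell of $Z_{i+1}$; and $b$ lies in the cellular region of $Z_i$, which is strictly northwest of the entire zig-zag $Z_{i+1}$ in a suitable sense. I would make this precise by tracing the relative positions: the inner corner-posts of $Z_{i+1}$ (balls of $w$ labeled $i+1$) must each have a strictly-northwest $w$-ball labeled $i$ by continuity, and these balls labeled $i$ are inner corner-posts of $Z_i$, pinning the location of $S^{(i+1)}$ to lie weakly southeast of the outer corner-posts of $Z_i$. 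This geometric incompatibility between $S^{(i+1)} \leqslant_{NW} b$ and the forward-step zig-zag structure yields the contradiction. Verifying this relative placement of the back corner-posts of consecutive zig-zags cleanly---likely by a case analysis on where the staircase of $Z_{i+1}$ enters the bounding box of $Z_i$---is the crux of the argument and where detailed structural lemmas about channel numberings from the later sections of the paper would be invoked.
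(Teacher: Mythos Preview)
Your first two parts are fine and match the paper: the numbering $d^* = \fw{d_w^C}$ is monotone, bounded above by the stream numbering $d_0$, and hence $d^* \leqslant d'$ by the maximality characterization of the backward numbering (Remark~\ref{rmk:lower bound for backward numbering}). This direction works for \emph{any} proper numbering, not just channel numberings.

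The gap is in the reverse inequality. Your proposed contradiction---that $S^{(i+1)}$ must lie weakly southeast of every outer corner-post of $Z_i$---is simply false. Take inner corner-posts of $Z_i$ at $(5,1)$ and $(1,10)$, so the unique outer corner-post is $b=(5,10)$; take inner corner-posts of $Z_{i+1}$ at $(6,3)$ and $(3,11)$ (each has a strictly-northwest $i$-ball, so continuity holds). Then $S^{(i+1)}=(3,3)$ is strictly northwest of $b$, so $d_0(b)\geqslant i+1$ with no contradiction from zig-zag geometry alone. Indeed, the paper explicitly warns that the proposition \emph{fails} for general proper numberings, so any argument for $d' \leqslant d^*$ that does not invoke the channel hypothesis cannot succeed.

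The paper's actual mechanism (Theorem~\ref{thm:forward backward pretheorem}) is different: it introduces the notion of $Nr$- and $Wr$-\emph{terminal} balls of $\fwC{C}{w}$---balls $b\in Z_i$ for which $Z_{i+1}$ lies entirely south (resp.\ east) of $b$---and shows directly that $d'(b)\leqslant d^*(b)$ at such balls. The substantive lemma (Lemma~\ref{lem:all balls are terminating}) is that when $d$ is a \emph{channel} numbering, every ball of $\fwC{C}{w}$ admits a reverse path through consecutive zig-zags ending at a terminal ball; this is proved by taking paths of maximal worth from the two ends of a zig-zag down to $C$ and running a semi-bounded reverse walk against them. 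The inequality then propagates back along such a path. This is where the channel structure enters, and it is the piece your outline is missing.
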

The proof is found on page \pageref{pf:fw bk is id}. Notice that the proposition is valid only for channel numberings, not for general proper numberings.

\subsection{Concurrent streams}
\label{sec:concurrent}

The inequalities defining dominance will restrict the altitudes of streams encoded by some rows to be sufficiently large. The description of the minimal allowed altitudes uses the notion of concurrent streams, which we will describe in this section. 

For a stream $T$, we will sometimes consider the \emph{partial permutation $t:= \bk{T}{[\varnothing,\dots,\varnothing]}$ corresponding to $T$}. Thus $\B_t$ is the set of cells of $T$. We will say that $S$ is \emph{compatible} with $T$ if $S$ is compatible with $t$.

\begin{lemma}
\label{lem:pre-concurrent}
Let $S, T$ be two streams of the same flow with $S$ compatible to $T$. Then $\bk{S}{t}$ can be partitioned into two disjoint channels. 
\end{lemma}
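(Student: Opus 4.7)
The plan is to describe the balls of $\bk{S}{t}$ explicitly through the structure of the zig-zags $Z_i$ in the backward step and then exhibit two channels by partitioning the inner corner-posts. Write $k$ for the common flow of $S$ and $T$, set $d := d_t^{\jbk,S}$, and note that $\B_t = T$ itself forms a strictly SE-ordered chain of density $k$. The first key observation is that $d$ restricted to $T$ is a bijection $T \to \Z$: since $d$ is monotone and semi-periodic of period $k$, the $k$ balls of $T$ in one $(n,n)$-translate strip receive $k$ strictly increasing integers whose total range between consecutive periods is exactly $k$, so the values must be consecutive. Let $\tau_i \in T$ denote the unique ball with $d(\tau_i) = i$, so that each zig-zag $Z_i$ has back corner-post $S^{(i)}$ and exactly one outer corner-post, namely $\tau_i$.

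Next I would verify that $S^{(i)}$ is \emph{strictly} NW of $\tau_i$. This is an induction through the iterative corrections of the backward-numbering algorithm: initially the definition of $d_0$ places $S^{(d_0(\tau_i))}$ weakly NW of $\tau_i$, and whenever a correction decreases $d(\tau_i)$ from $v$ to $v-1$ the matching $S$-cell slides one stream-step NW (from $S^{(v)}$ to $S^{(v-1)}$), which preserves the weak NW relationship by transitivity; compatibility of $S$ with $T$ (no shared rows, no shared columns) then promotes weak to strict. Once this is in hand, a direct inspection of a reverse zig-zag with a single outer corner-post shows that $Z_i$ has exactly two inner corner-posts,
\[
\alpha_i := (\mathrm{row}(\tau_i),\,\mathrm{col}(S^{(i)})),
\qquad
\beta_i := (\mathrm{row}(S^{(i)}),\,\mathrm{col}(\tau_i)).
\]

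Setting $C_1 := \{\alpha_i : i \in \Z\}$ and $C_2 := \{\beta_i : i \in \Z\}$, I would conclude by verifying the following: (i) $C_1 \cup C_2 = \B_{\bk{S}{t}}$, directly from the corner-post description; (ii) $C_1 \cap C_2 = \emptyset$, because the rows of $\alpha_i$ reduce mod $n$ into the row-residues of $T$ while those of $\beta_i$ reduce into the row-residues of $S$, and these residue sets are disjoint by compatibility; (iii) each of $C_1$ and $C_2$ is a strictly SE-ordered, $(n,n)$-invariant chain of density $k$, since $\mathrm{row}(\tau_i)$, $\mathrm{col}(\tau_i)$, $\mathrm{row}(S^{(i)})$, and $\mathrm{col}(S^{(i)})$ are all strictly increasing in $i$ and semi-periodic with period $k$; and (iv) this density is maximal, so that $C_1$ and $C_2$ really are channels. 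For (iv) the decisive point is that for each $i$ the pair $\{\alpha_i, \beta_i\}$ is incomparable in $\leq_{SE}$ (in fact $\alpha_i$ sits strictly SW of $\beta_i$, from the strict NW inequality above), so any chain in $\B_{\bk{S}{t}}$ meets each of the $k$ pairs per $(n,n)$-period in at most one element, forcing its density to be at most $k$. The step I expect to demand the most care is the inductive bookkeeping in paragraph two that $S^{(i)} <_{NW} \tau_i$ persists through every correction of the backward-numbering algorithm; with that in hand, the remaining items become short combinatorial consequences.
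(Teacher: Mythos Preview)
Your proof is correct and follows the same skeleton as the paper's: each zig-zag $Z_i$ has a single outer corner-post $\tau_i$ and hence two inner corner-posts, and the southwest/northeast inner corner-posts give the two channels. The difference is one of packaging. The paper reaches the conclusion by citing three earlier structural results---Proposition~\ref{prop:backward numbers channel consecutively} to get that $d$ hits each integer exactly once on $T$, Lemma~\ref{lem:zigzags dont intersect} to see that the southwest (respectively northeast) inner corner-posts form an $\leqslant_{SE}$-chain, and Corollary~\ref{cor:with of backward Shi poset} to identify the width of $P_{\bk{S}{t}}$ as $k$ and hence conclude maximality of density---whereas you bypass this machinery and work directly from the explicit coordinates $\alpha_i=(\mathrm{row}(\tau_i),\mathrm{col}(S^{(i)}))$ and $\beta_i=(\mathrm{row}(S^{(i)}),\mathrm{col}(\tau_i))$. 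Your argument for maximality via the incomparability of $\alpha_i$ and $\beta_i$ is a pleasant elementary alternative to invoking the width formula. One remark: the step you flag as requiring the most care, that $S^{(i)}$ is strictly northwest of $\tau_i$, is actually immediate from Remark~\ref{rmk:lower bound for backward numbering} (the backward numbering is the largest monotone numbering with $S^{(d(b))}$ northwest of $b$) together with compatibility; no inductive bookkeeping through the correction steps is needed.
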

The proof is found on page \pageref{pf:pre-concurrent}.

\begin{defn}
\label{def:concurrent}
Let $S, T$ be two streams having the same flow with $S$ compatible to $T$. Then $T$ is \emph{concurrent to $S$} if the two disjoint channels of $\bk{S}{t}$ are part of the same river (i.e. the distance between them is zero). 
\end{defn}

\begin{rmk}
Note that the definition is not symmetric; $T:=\str{0}{\{\ol{3},\ol{4},\ol{5}\}, \{\ol{1},\ol{3},\ol{5}\}}$ is concurrent to $S:=\str{0}{\{\ol{1},\ol{2},\ol{6}\}, \{\ol{2},\ol{4},\ol{6}\}}$, but $S$ is not concurrent to $T$. 
\end{rmk}

\begin{prop}
\label{prop:unique concurrent}
Consider four sets $A, A', B, B'\subsetneq [n]$, all of the same size, with $A\cap A' = \varnothing$ and $B\cap B' = \varnothing$. Then there exists a unique $r\in\Z$ such that $\mathfrak{st}_r(A',B')$ is concurrent to $\mathfrak{st}_0(A,B)$. Moreover, for any $l$, $\mathfrak{st}_{r+l}(A',B')$ is the unique stream from the class $\st(A',B')$ that is concurrent to $\mathfrak{st}_l(A,B)$.
\end{prop}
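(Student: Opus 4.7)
The plan is to fix $S = \mathfrak{st}_0(A,B)$ and study the two channels of $\bk{S}{t_r}$ as $r$ varies, where $t_r$ denotes the partial permutation corresponding to $T_r = \mathfrak{st}_r(A',B')$. Because $A\cap A' = \varnothing$ and $B\cap B' = \varnothing$, no cell of $S$ shares a row or column modulo $n$ with any cell of $T_r$, and the flows of the two streams agree, so $S$ is compatible with $t_r$ for every $r$. Lemma \ref{lem:pre-concurrent} then gives a partition $\bk{S}{t_r} = C_r^- \sqcup C_r^+$ into two disjoint channels, which I label so that $C_r^-$ is SW of $C_r^+$ in the sense of Definition \ref{def:channel sw ordering} (the SW/NE channel within each river is available via the argument behind Proposition \ref{prop:sw channel exists}). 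As preparation, I would give an explicit combinatorial description of the cells of $C_r^-$ and $C_r^+$ in terms of the zig-zags produced by the backward construction, whose back corner-posts lie on $S$ and outer corner-posts on $T_r$, so as to keep track of how each channel responds to perturbations of $T_r$.

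Next I would analyze how the pair $(C_r^-, C_r^+)$ evolves as $r$ increments. Incrementing $r$ by one slides each cell $(i_t, j_{t+r}) \in T_r$ to $(i_t, j_{t+r+1})$, pushing the $T_r$-cells one notch forward along the SE-ordering of the underlying stream-lattice while leaving $S$ fixed. Using the description from the previous step, I would argue that this perturbation admits a canonical identification of channels across $r$ and $r+1$ under which one channel is fixed while the other is translated by a single step relative to the numbering $d^{C_r^-}$ of Definition \ref{def:channel numbering}; Lemma \ref{lem:proper numbers channels} legitimizes restricting these numberings to either channel for comparison. Consequently the signed quantity $d^{C_r^-}(b) - d^{C_r^+}(b)$, for $b \in C_r^+$ with the shifts matched on $C_r^-$ as in Definition \ref{def:distance between channels}, is strictly monotone in $r$. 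Since $\h(C_r^-, C_r^+) = 0$ exactly when this signed quantity vanishes, there is a unique value of $r$ for which $T_r$ is concurrent to $S$, giving existence and uniqueness in the first assertion.

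For the ``moreover'' clause, simultaneously replacing $\mathfrak{st}_0(A,B)$ by $\mathfrak{st}_l(A,B)$ and $\mathfrak{st}_r(A',B')$ by $\mathfrak{st}_{r+l}(A',B')$ corresponds to the rigid reindexing $t \mapsto t+l$ of cells along the ambient SE-chain. Such a reindexing preserves the $\leqslant_{SE}$-geometry and therefore preserves zig-zags, inner corner-posts, the partition into channels, and the pseudometric $\h$, so the concurrent relation is invariant under it and the uniqueness for altitude $l$ follows from uniqueness for altitude $0$. The main obstacle is the monotonicity claim in the second paragraph: proving that incrementing $r$ causes exactly one channel to translate by a single unit while the other remains fixed under the canonical identification requires a careful local analysis of how zig-zags reorganize when a single $T_r$-cell slides past a neighboring cell of $S$ in SE-order, together with a verification that the aggregate effect on channel numberings is exactly one step.
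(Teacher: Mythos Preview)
Your plan is close in spirit to the paper's: both track how the situation evolves as the altitude of $T$ increments by one. The paper does this at the level of the backward numbering $d_{t_r}^{\jbk,S}$ rather than the channels of $\bk{S}{t_r}$. Lemma~\ref{lem:backward inequalities} shows that passing from $t_r$ to $t_{r+1}$ makes the backward numberings agree either on rows or on columns, and Lemma~\ref{lem:higher streams} shows that once they agree on rows they continue to do so for all larger $r$; this gives a unique transition value $r_0$, and Lemma~\ref{lem:sw iff higher} identifies concurrency with $r=r_0$ by showing that the numbering induced on $\bk{S}{t_r}$ by the backward step is the southwest channel numbering precisely when $r\geqslant r_0$.

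There are two concrete gaps in your version. First, the quantity $d^{C_r^-}(b)-d^{C_r^+}(b)$ with shifts matched on $C_r^-$ and $b\in C_r^+$ is always nonpositive by Remark~\ref{rmk:distance direction}, so it cannot be strictly monotone in $r$ and pass through zero; the pseudometric $\h(C_r^-,C_r^+)$ is nonnegative on both sides of the concurrent altitude. What actually flips is \emph{which} of the two channel numberings coincides with the numbering induced by the backward step, and proving that this flips exactly once is precisely the content of Lemma~\ref{lem:higher streams}, which you have deferred. Your ``one channel fixed, the other translates by one'' picture is correct once that lemma is in hand, but it is the substance of the argument, not a consequence of softer facts.

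Second, the ``rigid reindexing'' argument for the moreover clause does not work. Passing from the pair $(\str{0}{A,B},\str{r}{A',B'})$ to $(\str{l}{A,B},\str{r+l}{A',B'})$ keeps the same row set and column set for each stream but re-pairs rows with columns; it is not induced by any order-automorphism of $\Z\times\Z$ for $\leqslant_{SE}$ (a column map sending $b_s\mapsto b_{s+1}$ and $b'_s\mapsto b'_{s+1}$ would have to be a nontrivial cyclic shift on each of two interleaved column sets, hence not monotone). The relative positions of $S$-cells and $T$-cells genuinely change. The paper handles this by a direct comparison: assuming $T$ is concurrent to $S$, it verifies that $T\sh{1}$ is concurrent to $S\sh{1}$ by exhibiting witnesses for both the ``no higher'' and ``no lower'' conditions for the shifted pair, using the row-agreement established in the first part.
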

The proof is found on page \pageref{pf:unique concurrent}.

\begin{ex}
Suppose $n = 6$, $S = \str{0}{\{\ol{1},\ol{3},\ol{5}\},\{\ol{1},\ol{2},\ol{6}\}}$. Then the stream from the class $\st(\{\ol{2},\ol{4},\ol{6}\},\{\ol{3},\ol{4},\ol{5}\})$ concurrent to $S$ is $\str{1}{\{\ol{2},\ol{4},\ol{6}\},\{\ol{3},\ol{4},\ol{5}\}}$. The two streams are shown in Figure \ref{fig:concurrent streams}.
\end{ex}

\begin{figure}
\centering
\resizebox{.4\textwidth}{!}{\input{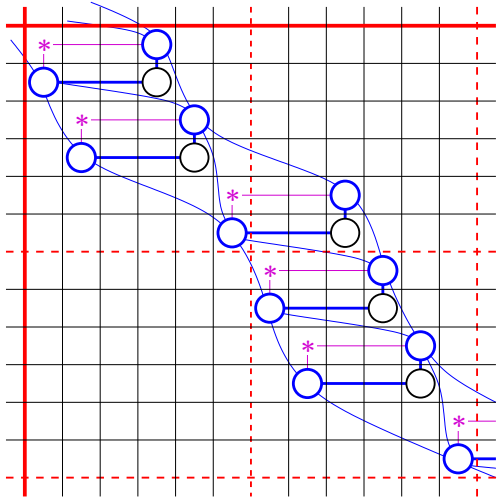}}
\caption{The stream $\str{1}{\{\ol{2},\ol{4},\ol{6}\},\{\ol{3},\ol{4},\ol{5}\}}$, denoted by black balls, is concurrent to $\str{0}{\{\ol{1},\ol{3},\ol{5}\},\{\ol{1},\ol{2},\ol{6}\}}$, denoted by magenta $*$'s, since the backward step results in one river.}
\label{fig:concurrent streams}
\end{figure}

\subsection{Dominant weights}
\label{sec:dominant weights}

Suppose $(P,Q,\rho)\in\Omega$ and $P$ and $Q$ have shape $\lambda$. Let $r_1 = 0$. For each $2\leqslant i\leqslant\ell(\lambda)$: if $\lambda_i < \lambda_{i-1}$ then let $r_i = 0$, otherwise let $r_i$ be the unique integer such that $\mathfrak{st}_{r_i}(P_i, Q_i)$ is concurrent to $\mathfrak{st}_{0}(P_{i-1}, Q_{i-1})$. We will refer to the numbers $r_i$ as the \emph{offset constants}.

\begin{defn}
Consider $(P,Q,\rho)\in\Omega$. Suppose $P$ and $Q$ have shape $\lambda$, and the the offset constants are $r_i$ for $1\leqslant i \leqslant \ell(\lambda)$. Then $\rho$ is \emph{dominant} if for each $1\leqslant i < \ell(\lambda)$ one of the following holds:
\begin{itemize}
\item $\lambda_i > \lambda_{i+1}$, or
\item $\lambda_i = \lambda_{i+1}$ and $\rho_{i+1} - r_{i+1}\geqslant \rho_i$.
\end{itemize}
\end{defn}

It is sometimes convenient to use a slightly different version of the offset constants; define $s_i:=\sum_{j=i_0}^i r_j$, where $i_0$ is the smallest integer with $\lambda_i = \lambda_{i_0}$. We will refer to these as the \emph{symmetrized offset constants}. It is clear that in the language of the above definition, $\rho$ is dominant if for each $1\leqslant i < \ell(\lambda)$ one of the following holds:
\begin{itemize}
\item $\lambda_i > \lambda_{i+1}$, or
\item $\lambda_i = \lambda_{i+1}$ and $(\rho - \mathbf{s})_{i+1}\geqslant (\rho - \mathbf{s})_i$,
\end{itemize}
where $\mathbf{s} = (s_i)_{1\leqslant i\leqslant \ell(\lambda)}$.

\begin{rmk}
In the standard presentation of the Lie algebra $\mathfrak{sl}_n$, the dominant integral weights are increasing sequences of integers. Thus our dominant weights are actually located in some shift of the dominant chamber of a product of special linear Lie algebras.
\end{rmk}

We introduce special notation for the dominant part of $\Omega$.
\begin{defn}
Let $\Omega_{dom}:=\{(P,Q,\rho)\in\Omega: \rho\text{ is dominant}\}$
\end{defn}

\begin{thm}
\label{thm:image1}
$\Phi(\widetilde{W})\subseteq\Omega_{dom}$.  
\end{thm}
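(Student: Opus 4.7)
The plan is to reduce dominance to a single-step comparison between consecutive AMBC iterations, and then exploit the southwest-maximality of the channels we pick at each step. Let $w_i$ denote the partial permutation at the start of the $i$-th AMBC iteration (so $w_1 = w$ and $w_{i+1} = \fw{w_i}$), let $C_i \in \C_{w_i}$ be the southwest channel used at step $i$, and let $S_i = \st(w_i) = \mathfrak{st}_{\rho_i}(Q_i, P_i)$. Because the row length $\lambda_i$ equals the width of the Shi poset of $w_i$ (this is the content used to prove Theorem~\ref{thm:AMBC produces valid element}: the streams shrink in flow by exactly one each time the shape drops), the hypothesis $\lambda_i = \lambda_{i+1}$ in the dominance definition is exactly the condition that the channels $C_i$ and $C_{i+1}$ have equal flow. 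The entire theorem reduces to showing that, whenever this happens, $\rho_{i+1} - r_{i+1} \geqslant \rho_i$ holds.

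Next I would restate this inequality geometrically via Proposition~\ref{prop:unique concurrent}: within the class $\st(Q_{i+1}, P_{i+1})$ there is a unique altitude at which a stream is concurrent to $S_i$ (modulo the transposition convention reconciled in the definition of the offset constants), and the inequality is equivalent to $S_{i+1}$ sitting at an altitude no smaller than that distinguished stream, i.e., $S_{i+1}$ is not strictly southwest of the ``concurrent'' representative. The bridge between AMBC and this concurrency criterion is the identity $\bk{S_{i+1}}{w_{i+1}} = w_i$ from Proposition~\ref{prop:fw bk is id}, which lets me translate comparisons among channels of $w_{i+1}$ into comparisons among channels of $w_i$, where the southwest channel is pinned down to $C_i$ by construction.

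The technical core and the main obstacle is converting altitude shifts of streams into southwest shifts of channels. The argument I plan to carry out is by contradiction: assume $\rho_{i+1} < \rho_i + r_{i+1}$, so that $S_{i+1}$ is strictly southwest of the concurrent altitude stream. Using Lemma~\ref{lem:pre-concurrent} together with a comparison of the backward step $\bk{S_{i+1}}{w_{i+1}}$ against the auxiliary backward step applied with the concurrent-altitude stream in the same class, one obtains a decomposition that exposes a second channel in $w_i$. A careful zig-zag comparison, combined with the pseudometric $\h$ of Proposition~\ref{prop:h is a pseudometirc} and the southwest partial ordering of Definition~\ref{def:channel sw ordering}, should show that the altitude deficit translates into a strict southwest shift, producing a channel of $w_i$ lying strictly southwest of $C_i$. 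This contradicts Proposition~\ref{prop:sw channel exists} (and the choice of $C_i$), completing the proof. The hard part is verifying that altitude strictness passes faithfully through the backward step to $\leqslant_{SW}$-strictness on channels; the shift freedom in channel numberings makes this somewhat delicate and will require invoking Proposition~\ref{prop:proper determined by channels} to align numberings consistently before the comparison.
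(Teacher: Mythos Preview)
Your proposal has a genuine gap, and in fact two. First, a factual slip: Proposition~\ref{prop:fw bk is id} gives $\bk{S_i}{w_{i+1}} = w_i$, not $\bk{S_{i+1}}{w_{i+1}} = w_i$; the stream you undo is the one extracted at step $i$, not step $i+1$.

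More seriously, even after correcting the index, your contradiction argument conflates two different backward steps. The concurrency criterion (Definition~\ref{def:concurrent}, Lemma~\ref{lem:pre-concurrent}, Lemma~\ref{lem:sw iff higher}) concerns $\bk{S_i}{t}$, where $t$ is the partial permutation whose balls are exactly the cells of the stream $T = S_{i+1}$. The forward--backward identity concerns $\bk{S_i}{w_{i+1}}$. These have the same stream but different partial permutations: $t$ is a single chain of density $\lambda_{i+1}$, whereas $w_{i+1}$ is the full output of the forward step. Your sentence ``a careful zig-zag comparison \ldots\ should show that the altitude deficit translates into a strict southwest shift, producing a channel of $w_i$ lying strictly southwest of $C_i$'' is exactly where the work has to happen, and you have supplied no mechanism for transferring the two-river structure of $\bk{S_i}{t}$ (which is what ``below concurrent'' gives you) into a channel of $\bk{S_i}{w_{i+1}} = w_i$ strictly southwest of $C_i$. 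Nothing in Propositions~\ref{prop:h is a pseudometirc} or~\ref{prop:proper determined by channels} does this; those results compare numberings of a \emph{fixed} permutation, not backward steps on different inputs.

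The paper's argument is entirely forward and never invokes Proposition~\ref{prop:fw bk is id}. It compares the induced numbering $\fw{d_w^{SW}}$ with $d_{\fw{w}}^{SW}$ on $\fw{w}$, shows they agree on all channels (Proposition~\ref{prop:induced and southwest coincide on channels}), and then proves the key positional Lemma~\ref{lem: plus above next star}: for some $i$ the cell $T^{(i)}$ lies north of $S^{(i+1)}$. This is obtained by a reverse funnel walk between a maximal-worth path and $C_k$, followed by a semi-bounded reverse walk northeast of $C_k$, tracking the first zig-zag that fails to extend past the walk. From that one positional fact, a short check identifies $d_T$ with the backward numbering $d_t^{\jbk,S}$, and Lemma~\ref{lem:sw iff higher} then converts ``$T^{(i)}$ north of $S^{(i+1)}$'' directly into ``$T$ is no lower than $S$''. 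The missing idea in your outline is precisely this positional lemma; without it there is no link between the geometry of $w$ and the relative altitude of $T$ over $S$.
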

The proof is found on page \pageref{pf:image1}.

\begin{thm}
\label{thm:image2}
Suppose $(P,Q,\rho)\in\Omega_{dom}$. Then $\Phi(\Psi(P,Q,\rho)) = (P,Q,\rho)$.
\end{thm}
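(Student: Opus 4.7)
I proceed by induction on the number of rows $\ell=\ell(\lambda)$. The base case $\ell\leqslant 1$ is immediate: when $\ell=0$ both sides are empty, and when $\ell=1$ the permutation $w:=\Psi(P,Q,\rho)$ consists exactly of the cells of the single stream $S_1$, which forms the only (hence southwest) channel, so one pass of $\Phi$ recovers its defining data. For the inductive step, let $(P',Q',\rho')$ be the triple obtained from $(P,Q,\rho)$ by deleting the top row; the dominance inequalities indexed by the pairs $(2,3),\ldots,(\ell-1,\ell)$ are inherited verbatim, so $(P',Q',\rho')\in\Omega_{dom}$. Unwinding the definition of $\Psi$, which peels rows off the bottom but otherwise processes the remaining data identically, yields $w=\bk{S_1}{w'}$, where $S_1$ is the stream encoded by the top row of $(P,Q,\rho)$ and $w':=\Psi(P',Q',\rho')$.

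The crux is to show that the first step of $\Phi$ on $w$ produces exactly $S_1$ and $w'$, i.e.\ $\st(w)=S_1$ and $\fw{w}=w'$. Proposition~\ref{prop:fw bk is id} gives $\bk{\st(w)}{\fw{w}}=w=\bk{S_1}{w'}$, so once I establish $\st(w)=S_1$, the equality $\fw{w}=w'$ follows automatically from the deterministic nature of the backward step on a fixed pair $(S,v)$. With this identification, the first row recorded by $\Phi(w)$ is precisely the defining data $(P_1,Q_1,\rho_1)$ of $S_1$, and the algorithm then recurses on $\fw{w}=w'$, which by the inductive hypothesis yields $(P',Q',\rho')$; concatenating rows completes the proof.

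The whole argument thus reduces to identifying the southwest channel of $w=\bk{S_1}{w'}$ as the channel $C_1\subseteq\B_w$ naturally induced by the last backward step (namely, the inner corner-posts of the zig-zags whose back corner-posts lie on $S_1$), together with the assertion $\str{C_1}{w}=S_1$. The second of these is a direct unraveling of the definitions of $d_w^{C_1}$ and $\str{C_1}{w}$. The first is the main obstacle. If $\lambda_1>\lambda_2$ then $C_1$ has strictly greater flow than any other channel of $w$, so it is automatically the southwest channel. If $\lambda_1=\lambda_2$, the competitor channel $C_2$ induced by $S_2$ inside $w'$ has equal flow, and I invoke Proposition~\ref{prop:unique concurrent} to interpret the offset constant $r_2$ as the concurrency altitude of the pair; the dominance inequality $\rho_2-r_2\geqslant\rho_1$ is precisely the condition that $C_2$ lies weakly northeast of $C_1$ in the partial order of Definition~\ref{def:channel sw ordering}. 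This comparison must then be iterated down the tabloid using the symmetrized offsets $\mathbf{s}$ from Section~\ref{sec:dominant weights} to rule out that deeper layers $S_3,S_4,\ldots$ could, after compounded backward steps, produce channels competing with $C_1$; this propagation of pairwise dominance inequalities through the successive backward steps is the step requiring the most care.
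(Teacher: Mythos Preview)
Your inductive framework matches the paper's, but the inductive step has a genuine gap and an unnecessary detour.

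\textbf{The detour.} You propose to deduce $\fw{w}=w'$ from $\st(w)=S_1$ via ``the deterministic nature of the backward step''. Determinism means $\bk{S}{-}$ is a well-defined function, not that it is injective; nowhere in the paper is injectivity of $v\mapsto\bk{S}{v}$ established. More to the point, this maneuver is superfluous: once you show that the numbering $d$ induced on $w=\bk{S_1}{w'}$ by the backward step equals $d_w^{SW}$, both $\st(w)=S_1$ and $\fw{w}=w'$ follow simultaneously, since the forward step with $d$ simply reverses the zig-zag construction. (Your description of ``the channel $C_1$'' as ``the inner corner-posts of the zig-zags whose back corner-posts lie on $S_1$'' is also confused: \emph{every} ball of $w$ is such an inner corner-post. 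What you want is the channel $C$ for which $d=d_w^C$, whose existence is Proposition~\ref{prop:backward numbering is channel}.)

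\textbf{The gap.} Your plan to establish $d=d_w^{SW}$ by ``iterating down the tabloid'' and comparing channels coming from $S_2,S_3,\ldots$ is both unnecessary and, as you concede, left undone. You already hold the key: your inductive hypothesis says $\Phi(w')=(P',Q',\rho')$, so in particular the \emph{first} forward step on $w'$ recovers the stream $S_2$, i.e.\ $\st(w')=S_2$. The paper's key lemma (the one immediately preceding the proof of Theorem~\ref{thm:image2}) then says: if $T:=\st(w')$ has the same flow as $S_1$ and is no lower than $S_1$---which is precisely the single dominance inequality $\rho_2-r_2\geqslant\rho_1$ between rows $1$ and $2$---then the induced numbering on $\bk{S_1}{w'}$ is the southwest channel numbering. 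No propagation through $S_3,S_4,\ldots$ is needed; the inductive hypothesis has already absorbed those layers. (For the case $\lambda_1>\lambda_2$, the correct reason is not that ``$C_1$ has strictly greater flow than any other channel''---all channels of $w$ have the same density by definition---but rather that $\bk{S_1}{w'}$ then has only one river, hence a unique proper numbering up to shift; see the first paragraph of the proof of Proposition~\ref{prop:backward numbering is channel}.)
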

The proof is found on page \pageref{pf:image2}.

\section{Weyl group action}
\label{sec:weyl group action}

Theorem \ref{thm:image2} describes the composition $\Phi\circ\Psi$ when the weight is dominant. In this section we describe what happens if we started with a non-dominant weight.

\begin{defn}
Let $(P,Q,\rho)\in\Omega$ with $\mathop{\mathrm{sh}}(P) = \mathop{\mathrm{sh}}(Q) = \lambda$ and for $1\leqslant i\leqslant\ell(\lambda)$ let $s_i$ be the symmetrized offset constants. Let $\mathbf{s} = (s_i)_{1\leqslant i\leqslant \ell(\lambda)}$. Let $i_1$ be the smallest index for which $\lambda_{i_1} < \lambda_1$, $i_2$ be the smallest index for which $\lambda_{i_2} < \lambda_{i_1}$, etc.; suppose $i_l$ is the last one of these. Define the \emph{dominant representative} of $\rho$ to be the weight $\rho'$ obtained as follows:
\begin{itemize}
\item Let $\mathbf{x} = \rho - \mathbf{s}$.
\item For $1\leqslant j < l$, let $(y_{i_j}, y_{i_j+1}, \dots, y_{i_{j+1}-1})$ be the increasing rearrangement of $(x_{i_j}, x_{i_j+1}, \dots, x_{i_{j+1}-1})$.
\item Let $\rho' = \mathbf{y} + \mathbf{s}$.
\end{itemize}
\end{defn}
Thus the dominant representative of $\rho$ is precisely what one would get by mapping the weight $\rho$ into the (shifted) dominant chamber by reflections in the (shifted) hyperplanes defining it.

\begin{ex}
Consider the following element of $\Omega$:
$$(P,Q,\rho) = \left(\quad\tableau[sY]{\ol{1}&\ol{2}&\ol{3}\\\ol{5}&\ol{7}&\ol{10}\\\ol{4}&\ol{8}&\ol{11}\\\ol{12}&\ol{13}\\\ol{6}&\ol{9}},\quad \tableau[sY]{\ol{1}&\ol{2}&\ol{5}\\\ol{3}&\ol{8}&\ol{12}\\\ol{4}&\ol{10}&\ol{11}\\\ol{6}&\ol{13}\\\ol{7}&\ol{9}},\quad \tableau[sY]{2\\1\\0\\0\\0}\quad\right).$$
We will find the dominant representative of $\rho$. The vector $\mathbf{s}$ in this case is $\mathbf{s} = (s_i) =(0,0,0,0,1)$. So the vector $\rho-\mathbf{s}$ is $(2,1,0,0,-1)$. The first three entries of $\rho-\mathbf{s}$ and the last two entries of $\rho-\mathbf{s}$ need to be rearranged so each group is increasing, yielding $(0,1,2,-1,0)$. To get $\rho'$ we add $\mathbf{s}$: $\rho' = (0,1,2,-1,1)$.
\end{ex}

\begin{thm}
\label{thm:weyl}
Let $(P,Q,\rho)\in\Omega$. Then $\Phi(\Psi(P,Q,\rho)) = (P,Q,\rho')$, where $\rho'$ is the dominant representative of $\rho$.
\end{thm}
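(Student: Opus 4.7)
The plan is to reduce the statement to the invariance claim $\Psi(P,Q,\rho) = \Psi(P,Q,\rho')$. Once this is established, applying Theorem \ref{thm:image2} to the triple $(P,Q,\rho')\in\Omega_{dom}$ gives $\Phi(\Psi(P,Q,\rho')) = (P,Q,\rho')$, and combining with the invariance yields the theorem immediately.

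To prove that $\Psi(P,Q,\rho) = \Psi(P,Q,\rho')$, I would work block-by-block in the shape $\lambda$. By construction, $\rho'$ is obtained by sorting the entries of $\rho - \mathbf{s}$ into increasing order within each maximal run of indices where $\lambda_i$ is constant, and then adding $\mathbf{s}$ back. Since the symmetric group on each such block is generated by adjacent transpositions, it suffices to prove the following local statement: whenever $\lambda_i = \lambda_{i+1}$ and $\tilde\rho$ is obtained from $\rho$ by setting $\tilde\rho_i = \rho_{i+1} - r_{i+1}$ and $\tilde\rho_{i+1} = \rho_i + r_{i+1}$, which is precisely the swap $x_i \leftrightarrow x_{i+1}$ for $x = \rho - \mathbf{s}$, one has $\Psi(P,Q,\rho) = \Psi(P,Q,\tilde\rho)$.

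The backward algorithm run on $\rho$ agrees step-by-step with the backward algorithm run on $\tilde\rho$ up to the moment of processing rows $i$ and $i+1$. At that stage, for $\rho$ we perform a backward step with $S_{i+1} = \mathfrak{st}_{\rho_{i+1}}(P_{i+1}, Q_{i+1})$ followed by one with $S_i = \mathfrak{st}_{\rho_i}(P_i, Q_i)$; for $\tilde\rho$, we use the altitude-shifted streams $\tilde S_{i+1}, \tilde S_i$ in place of $S_{i+1}, S_i$. The key local claim is that these two-step compositions produce identical partial permutations. The altitude shift $r_{i+1}$ is the unique value for which $\tilde S_i$ and $\tilde S_{i+1}$ are concurrent in the sense of Proposition \ref{prop:unique concurrent}; by Lemma \ref{lem:pre-concurrent}, after performing one backward step, the resulting partial permutation decomposes into two channels lying in a single river. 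This river structure should be symmetric under the swap, causing the second backward step to produce the same output regardless of the order.

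The main obstacle is precisely establishing this local swap-invariance. Although the intuition, that concurrent streams of equal flow are \emph{interchangeable} in the backward process, is transparent, the detailed argument requires carefully tracing how zig-zags behave across two successive backward steps, since the nonempty intermediate partial permutation constrains which cells become inner corner-posts in the next round. I anticipate the argument will proceed via an auxiliary lemma asserting that for a partial permutation $w$ compatible with both streams, the composite $\bk{S_{i+1}}{\bk{S_i}{w}}$ equals $\bk{S_i}{\bk{S_{i+1}}{w}}$ after swapping altitudes by $r_{i+1}$, with the uniqueness in Proposition \ref{prop:unique concurrent} rigidly dictating that only this altitude shift makes the exchange valid. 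Once this local swap is settled, the general theorem follows by iterating adjacent transpositions that carry $\rho$ to $\rho'$ within each equal-length block of $\lambda$.
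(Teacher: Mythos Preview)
Your high-level strategy matches the paper's exactly: reduce to $\Psi(P,Q,\rho)=\Psi(P,Q,\rho')$, reduce further to adjacent transpositions within each constant-$\lambda$ block, prove a local two-step swap identity, and finish with Theorem~\ref{thm:image2}. Your formulas $\tilde\rho_i=\rho_{i+1}-r_{i+1}$, $\tilde\rho_{i+1}=\rho_i+r_{i+1}$ also agree with the paper's key theorem $\bk{S}{\bk{T}{w}}=\bk{\widetilde S}{\bk{\widetilde T}{w}}$.

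The gap is in your proposed mechanism for the local swap. You invoke Lemma~\ref{lem:pre-concurrent} to say that after one backward step the partial permutation ``decomposes into two channels lying in a single river,'' but that lemma applies to $\bk{S}{t}$ where $t$ is the partial permutation of a \emph{stream}, not to $\bk{S_{i+1}}{w}$ for a general $w$ coming from rows $i+2,\ldots,\ell$. In the actual algorithm the intermediate partial permutation is arbitrary, and no two-channel decomposition is available. (Also, your auxiliary lemma is stated as $\bk{S_{i+1}}{\bk{S_i}{w}}=\bk{S_i}{\bk{S_{i+1}}{w}}$, which reads as swapping the \emph{order} of the streams; the correct statement keeps the order $\bk{\text{row }i}{\bk{\text{row }i+1}{w}}$ and changes only the altitudes.)

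The paper's route to the local identity is entirely different and occupies most of Section~17. One shows first (Theorem~\ref{thm:moving streams}) that replacing $S$ by $S\langle 1\rangle$ changes $\bk{S}{w}$ by shifting a single specific river---the \emph{indexing} river---by~$1$. One then shows (Propositions~\ref{prop:shifting channel right}, \ref{prop:shifting channel by -1}, \ref{prop:moving channels}) that shifting a river of $w$ by $\pm 1$ changes $\bk{S}{w}$ by shifting a corresponding river one level deeper. Two short lemmas establish that when $T$ is concurrent to $S$, the indexing river of $\bk{T}{w}$ is both $N$- and $W$-terminating with respect to $S$, which makes these shifts compatible. Chaining these facts lets one move altitude from $T$ to $S$ one unit at a time, proving $\bk{S}{\bk{T}{w}}=\bk{\widetilde S}{\bk{\widetilde T}{w}}$ for general $w$. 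Your symmetry intuition does not substitute for this incremental shifting argument.
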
 
The proof is found on page \pageref{pf:weyl}. As an easy corollary we can describe the fibers $\Psi^{-1}(w)$.
\begin{cor}
\label{cor:weyl}
Suppose $P$ and $Q$ are two tabloids with the same shape $\lambda$; let $\mathbf{s} = (s_i)_{1\leqslant i\leqslant \ell(\lambda)}$ be the vector of symmetrized offset coefficients. Let $G$ be the parabolic subgroup of the permutation group $S_{\ell({\lambda})}$ generated by transpositions $s_i$ for which $\lambda_i = \lambda_{i+1}$. Then $\Psi(P,Q,\rho) = \Psi(P,Q,\rho')$ if and only if $\rho-\mathbf{s} = a(\rho'-\mathbf{s})$ for some $a\in G$.
\end{cor}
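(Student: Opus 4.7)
The strategy is to reduce the statement to Theorem~\ref{thm:weyl} together with the injectivity of $\Phi$ provided by Theorem~\ref{thm:forward backward id}, after which the claim becomes a purely combinatorial statement about how $G$ permutes coordinates.

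First I would record the injectivity: by Theorem~\ref{thm:forward backward id} we have $\Psi(\Phi(w)) = w$ for any partial permutation $w$, so $\Phi$ is injective on $\widetilde{W}$. Hence for any two triples $(P,Q,\rho), (P,Q,\rho') \in \Omega$,
$$
\Psi(P,Q,\rho) = \Psi(P,Q,\rho') \quad \Longleftrightarrow \quad \Phi(\Psi(P,Q,\rho)) = \Phi(\Psi(P,Q,\rho')).
$$
Next I would apply Theorem~\ref{thm:weyl} to each side: $\Phi(\Psi(P,Q,\rho)) = (P,Q,\rho^{\mathrm{dom}})$ and $\Phi(\Psi(P,Q,\rho')) = (P,Q,(\rho')^{\mathrm{dom}})$, where superscripts denote dominant representatives. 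Thus the corollary reduces to the equivalence
$$
\rho^{\mathrm{dom}} = (\rho')^{\mathrm{dom}} \quad \Longleftrightarrow \quad \rho - \mathbf{s} = a(\rho' - \mathbf{s}) \text{ for some } a\in G.
$$

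The remaining step is to unpack the definition of the dominant representative. By construction $\rho^{\mathrm{dom}} = \mathbf{y} + \mathbf{s}$, where $\mathbf{y}$ is obtained from $\mathbf{x} := \rho - \mathbf{s}$ by sorting the entries of $\mathbf{x}$ in increasing order on each of the index blocks $[i_j, i_{j+1})$ on which $\lambda$ is constant. The generators of $G$ are precisely the adjacent transpositions $s_i$ with $\lambda_i = \lambda_{i+1}$, i.e.\ those that swap two indices lying in a common block; hence $G$ is exactly the Young (parabolic) subgroup $S_{\lambda_1^{(1)}} \times \cdots \times S_{\lambda_1^{(l)}}$ preserving the block decomposition. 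Two vectors sort to the same thing on each block if and only if they lie in the same $G$-orbit, so $\rho^{\mathrm{dom}} = (\rho')^{\mathrm{dom}}$ is equivalent to $\rho - \mathbf{s}$ and $\rho' - \mathbf{s}$ lying in the same $G$-orbit, which is the stated condition.

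Combining the three displays above completes the proof. I do not anticipate a serious obstacle here: all the substantive work is carried by Theorems~\ref{thm:forward backward id} and~\ref{thm:weyl}, and the remaining claim about $G$-orbits is simply a restatement of the sorting procedure that defines the dominant representative. The only thing to be slightly careful about is that the symmetrized offsets $\mathbf{s}$ depend only on $P$ and $Q$ (not on $\rho$), so that translating by $\mathbf{s}$ converts the rearrangement step in the definition of $\rho^{\mathrm{dom}}$ into an honest linear action of $G$ on $\Z^{\ell(\lambda)}$ by coordinate permutation.
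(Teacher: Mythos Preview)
Your proof is correct and is exactly the argument the paper has in mind: the corollary is stated immediately after Theorem~\ref{thm:weyl} with the remark that it is ``an easy corollary,'' and no separate proof is given. Your derivation via injectivity of $\Phi$ (Theorem~\ref{thm:forward backward id}) plus Theorem~\ref{thm:weyl}, followed by the observation that the dominant-representative map is precisely blockwise sorting of $\rho-\mathbf{s}$ and hence has fibers equal to $G$-orbits, fills in those details cleanly.
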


\section{Asymptotic realization via the Robinson-Schensted insertion}
\label{sec:noproofs asymptotic rsk}

In this section we describe how AMBC is related to the classical bumping algorithm (see \cite{ec2} for an introduction). 
Any $w\in\widetilde{A}_{n-1}$ can be
presented as an infinite semi-periodic sequence of integers: $$\ldots,
w(-1), w(0), w(1), w(2), \ldots,$$ where
$w(i+n) = w(i) + n$. One can then apply the usual RSK algorithm to
insert this sequence, choosing the initial place arbitrarily, for
example at $w(1)$.

\begin{ex}
 Let $n=6$ and let $w = [-4,5,-2,7,3,6]$. Then the sequence of $w(i)$, for $i \geq 1$ is
 $$-4,5,-2,7,3,6,2,11,4,13,9,12,8,\ldots.$$
 Inserting this sequence, we obtain a sequence of tableaux
\def\Tscale{1.3}

$$
\hspace{-1cm}\tableau[Y]{-4},\quad\tableau[Y]{-4&5}, \quad\tableau[Y]{-4&-2\\5},\quad\tableau[Y]{-4&-2&7\\5},\quad
\tableau[Y]{-4&-2&3\\5&7},\quad\tableau[Y]{-4&-2&3&6\\5&7},
$$
\bigskip
$$
\hspace{2cm}
\tableau[Y]{-4&-2&2&6\\3&7\\5},\quad\tableau[Y]{-4&-2&2&6&11\\3&7\\5},\quad\tableau[Y]{-4&-2&2&4&11\\3&6\\5&7}, \ldots
$$
\end{ex}

Denote by $P_i(w)$ the tableau obtained by inserting $w(1), \ldots, w(i)$. One can then create the associated tabloid $\overline P_i(w)$ by passing from each number to its residue modulo $n$ and then forgetting the order of elements in each row.

\begin{ex}
 The above sequence of tableaux results in the following sequence of tabloids:
$$
\hspace{-3cm}
\tableau[sY]{\ol{2}}, \tableau[sY]{\ol{2}&\ol{5}}, \tableau[sY]{\ol{2}&\ol{4}\\\ol{5}},
\tableau[sY]{\ol{1}&\ol{2}&\ol{4}\\\ol{5}},
\tableau[sY]{\ol{2}&\ol{3}&\ol{4}\\\ol{1}&\ol{5}}, \tableau[sY]{\ol{2}&\ol{3}&\ol{4}&\ol{6}\\\ol{1}&\ol{5}},
$$
\bigskip
$$
\hspace{2cm}\tableau[sY]{\ol{2}&\ol{2}&\ol{4}&\ol{6}\\\ol{1}&\ol{3}\\\ol{5}}, \tableau[sY]{\ol{2}&\ol{2}&\ol{4}&\ol{5}&\ol{6}\\\ol{1}&\ol{3}\\\ol{5}},
\tableau[sY]{\ol{2}&\ol{2}&\ol{4}&\ol{4}&\ol{5}\\\ol{3}&\ol{6}\\\ol{1}&\ol{5}}, \ldots
$$
\end{ex}

\begin{thm}
\label{thm: asymptotic}
 For large enough $i$ we have 
$$\overline P_{i+n}(w) = \overline P_i(w) + P(w)$$ 
where $P(w)$ is the tabloid given by the Shi correspondence, and each row of the sum of two tabloids is, by definition, the union (as multisets) of the corresponding rows of the two tabloids.
\end{thm}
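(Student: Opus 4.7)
The strategy is to reinterpret the RSK insertion of the periodic sequence $w(1), w(2), \ldots$ via the Matrix-Ball Construction applied to long finite truncations, and then to identify this truncated MBC with AMBC in the interior of a long window.

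By the Viennot--Fulton equivalence between RSK and MBC, $P_i(w)$ coincides with the $P$-tableau produced by MBC applied to the partial non-affine permutation $\sigma_i := [w(1), \ldots, w(i)]$. In particular, the $k$-th row of $\overline{P}_i(w)$ consists, modulo $n$, of the columns of the back corner-posts of the MBC zig-zags arising at the $k$-th iteration of the $\fw{\cdot}$ operation applied to $\sigma_i$. To prove the theorem it therefore suffices to show that for each row index $k$, as $i$ grows by $n$, the $k$-th row gains exactly $\lambda_k$ new cells whose column residues modulo $n$ form the $k$-th row of $P(w)$.

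The central claim is that for all sufficiently large $i$, the MBC numbering of $\sigma_i$ agrees in the interior of the window $[1, i]$ with the southwest channel numbering $d_w^{SW}$ of $w$, up to an additive shift. Indeed, MBC is defined recursively from the NW corner and therefore assigns to each ball the smallest integer consistent with monotonicity; by Proposition \ref{prop:proper determined by channels} combined with Proposition \ref{prop:sw channel exists}, the pointwise smallest proper numbering of $w$ is characterized by agreement with $d_w^{SW}$. Given this agreement, Proposition \ref{prop: period of proper numbering} (semi-periodicity of period $\lambda_1$) forces the MBC numbering in the slab of rows $[i+1, i+n]$ to equal its value in $[i-n+1, i]$ shifted by $\lambda_1$; hence exactly $\lambda_1$ new MBC levels appear in the first iteration when passing from $\sigma_i$ to $\sigma_{i+n}$, and the new back corner-posts added to the first row are precisely those of $\st(w)$ shifted by one period. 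Their column residues modulo $n$ are, by construction, the first row of $P(w)$. The rows $k \geq 2$ follow by induction, applied to $\fwC{SW}{w}$ in place of $w$, since MBC's iteration via $\fw{\cdot}$ intertwines with AMBC's iteration by the identical definition.

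The principal technical obstacle is the interior stabilization claim, namely that the boundary balls near rows $1$ and $i$ do not perturb the MBC numbering in the interior. The resolution should use the southwest-extremality of the SW channel (Definition \ref{def:channel sw ordering}) together with the recursive NW-minimality of MBC: for large $i$, the MBC numbering is forced to track the southwest-most densest chain of $w$, so no other element of $\C_w$ can serve as its reference channel. Once this agreement is established in the interior, the increments described above are controlled entirely by semi-periodicity and the definition of $\st(w)$, completing the proof.
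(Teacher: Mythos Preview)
Your overall strategy---interpret RSK via MBC on truncations, invoke a stabilization phenomenon, then induct on rows---is the same as the paper's. However, there are two genuine gaps.

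\textbf{First gap: the specific proper numbering.} You assert that the MBC numbering of $\sigma_i$ stabilizes in the interior to $d_w^{SW}$, and justify this by saying MBC produces the pointwise minimal monotone numbering, hence agrees with $d_w^{SW}$ by Propositions~\ref{prop:proper determined by channels} and~\ref{prop:sw channel exists}. But those propositions do not say that $d_w^{SW}$ is pointwise minimal among proper numberings; they only say a proper numbering is determined by its channel values and that a southwest channel exists. The paper's Theorem~\ref{thm:stabilization to proper} shows the cutoff numbering eventually becomes semi-periodic, i.e.\ agrees with \emph{some} proper numbering, but never identifies which one. The paper sidesteps this entirely: it proves Theorem~\ref{thm: asymptotic} and Proposition~\ref{prop:irrelevant which proper} simultaneously, showing that $(a_w^k)_k$ $P$-stabilizes to $P(\mathfrak{D})$ for \emph{any} numbering sequence $\mathfrak{D}$, so it never matters which proper numbering the cutoff produces. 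Your argument, by contrast, needs the stabilized numbering to be exactly $d_w^{SW}$ (since that is how $\st(w)$ and $P(w)$ are defined), and you have not established this.

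\textbf{Second gap: the inductive step.} You write that rows $k\ge 2$ follow by induction applied to $\fw{w}$, since ``MBC's iteration via $\fw{\cdot}$ intertwines with AMBC's iteration by the identical definition.'' This is the crux, and it is false as stated: $\fw{\sigma_i}$ is \emph{not} a truncation $a_{\fw{w}}^k$ of $\fw{w}$; the two differ near both the top and bottom boundaries, and these discrepancies do not vanish as $i$ grows (they remain of bounded size). The paper handles this by introducing the notion of \emph{asymptotically alike} sequences of PNAPs and proving, via a jeu-de-taquin argument (Lemma~\ref{lem:asymptotically the same} and Corollary~\ref{cor:criterion for same stabilization}), that such sequences $P$-stabilize to the same sets. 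This is precisely the work needed to push the induction through, and your proposal contains no substitute for it.

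In short: your sketch correctly identifies stabilization as the key, but the identification with $d_w^{SW}$ is unproven and the induction over rows requires controlling boundary effects through all MBC iterations, which you have not done.
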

The proof is found on page \pageref{pf:asymptotic}.

 \begin{ex}
  In the example above we have
	\def\Tscale{1.3}

  $$
  P_{13}(w) = \tableau[Y]{-4&-2&2&4&8&12\\3&6&9&13\\5&7&11},
  $$
  which gives
  $$
  \ol P_{13}(w) = \tableau[sY]{\ol{2}&\ol{2}&\ol{2}&\ol{4}&\ol{4}&\ol{6}\\\ol{1}&\ol{3}&\ol{3}&\ol{6}\\\ol{1}&\ol{5}&\ol{5}} =
\tableau[sY]{\ol{2}&\ol{2}&\ol{4}&\ol{6}\\\ol{1}&\ol{3}\\\ol{5}} + \tableau[sY]{\ol{2}&\ol{4}\\\ol{3}&\ol{6}\\\ol{1}&\ol{5}} = \overline P_7(w) +
P(w).
  $$
  \end{ex}

  This allows one to obtain the following asymptotic version of Shi's insertion.

 \begin{cor}
  The number of $\ol{j}$'s in the $k$-th row of $P(w)$ (which may only be $0$ or $1$) is equal to
  $$\lim_{i \to \infty} \frac{\text{ number of $\ol{j}$'s in $k$-th row of
$\ol P_i(w)$}}{i/n}.$$
 \end{cor}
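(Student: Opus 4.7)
The plan is to derive this corollary as a direct bookkeeping consequence of Theorem \ref{thm: asymptotic}. First I would iterate the theorem: once $i$ is large enough, for every positive integer $m$ we have
$$\overline P_{i+mn}(w) = \overline P_i(w) + m\cdot P(w),$$
where $m\cdot P(w)$ denotes the $m$-fold row-wise union. Writing $N_{k,j}(T)$ for the number of $\overline{j}$'s in row $k$ of a tabloid $T$, this gives
$$N_{k,j}\bigl(\overline P_{i+mn}(w)\bigr) = N_{k,j}\bigl(\overline P_i(w)\bigr) + m\cdot N_{k,j}\bigl(P(w)\bigr).$$

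Next I would handle the fact that $i$ in the limit ranges over all integers rather than a single residue class modulo $n$. Fix $N_0$ large enough for Theorem \ref{thm: asymptotic} to apply to every $i\geqslant N_0$. For $i > N_0$ write $i = N_0 + mn + s$ with $0\leqslant s < n$ and $m = \lfloor (i-N_0)/n\rfloor$. By the iterated relation,
$$N_{k,j}\bigl(\overline P_i(w)\bigr) = N_{k,j}\bigl(\overline P_{N_0+s}(w)\bigr) + m\cdot N_{k,j}\bigl(P(w)\bigr).$$
Only finitely many values of $\overline P_{N_0+s}(w)$ (one for each $s\in\{0,1,\dots,n-1\}$) occur, so the first summand is bounded in $i$.

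Finally, dividing by $i/n$ and letting $i\to\infty$, the bounded term contributes $O(1/i)$, while $mn/i \to 1$, so the ratio tends to $N_{k,j}(P(w))$. Since every row of $P(w)$ is a \emph{set} of residue classes, $N_{k,j}(P(w))\in\{0,1\}$, which matches the parenthetical remark in the statement. No real obstacle arises here; the only point requiring any thought is ensuring the limit is taken through all integers rather than only through the subsequence $N_0 + mn$, and this is what the mod-$n$ bookkeeping in the second paragraph resolves.
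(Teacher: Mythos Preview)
Your proof is correct and is exactly the natural bookkeeping argument the paper has in mind; the paper states this corollary without proof, treating it as an immediate consequence of Theorem~\ref{thm: asymptotic}, and your iteration-plus-mod-$n$ argument is the standard way to spell out that implication.
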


\begin{rmk}
 Note that this can be used to compute the Shi insertion $P(w)$ in
practice. Indeed, once $i$ is sufficiently large, $\ol P_i(w)$ becomes a
multiple of $P(w)$ plus small ``noise''. One can easily tell this
noise apart and remove it to find $P(w)$. 
\end{rmk}

\begin{rmk}
 One can also get the tabloid $Q(w)$ via the limit of the usual Robinson-Schensted insertion by inserting $w^{-1}$.
\end{rmk}

\section{Distances between channels}

Suppose $w$ is a partial permutation which has $k\geqslant 2$ pairwise disjoint channels. We mentioned with regard to Proposition \ref{prop:sw channel exists} that the collection of southwest (or northeast) balls of a union of two channels is itself a channel. Take any collection of $k$ pairwise disjoint channels. Repeatedly replace a pair of channels by the pair with one having the southwest elements of their union and the other having the northeast elements of their union. This will lead to a maximal disjoint collection $\{C_1,C_2,\dots,C_k\}$ of channels of $w$ such that for all $i$, $C_i$ is southwest of $C_{i+1}$. As we will see in Section \ref{sec:apps to channels}, there exists a maximal disjoint collection $\{D_1,D_2,\dots,D_{k-1}\}$ of channels of $\fw{w}$ which interlace $\{C_1,C_2,\dots,C_k\}$ as shown in Figure \ref{fig:interlacing}.

\begin{figure}
\centering
\resizebox{.4\textwidth}{!}{\input{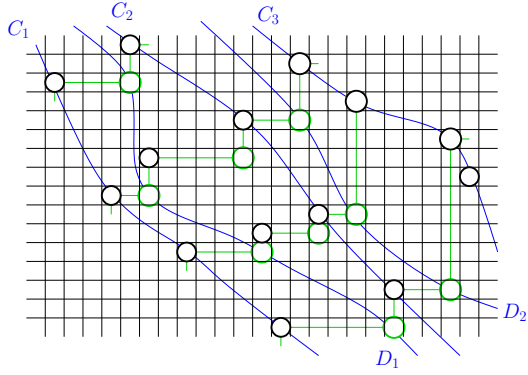}}
\caption{The collection $\{D_1,D_2\}$ of channels of $\fw{w}$ interlaces the collection $\{C_1,C_2,C_3\}$ of channels of $w$.}
\label{fig:interlacing}
\end{figure}

\begin{thm}
\label{thm:dist and alt} 
Suppose $w$ is a partial permutation which has $k\geqslant 2$ disjoint channels. Let $\{C_1,C_2,\dots,C_k\}$ and $\{D_1,D_2,\dots,D_{k-1}\}$ be interlacing collections of channels of $w$ and $\fw{w}$ (as described in Corollary \ref{cor:interlacing channels}). Let $S = \st(w)$ and $T = \st(\fw{w})$. Let $\rho_1 = a(S)$, $\rho_2 = a(T)$, and $r_2$ be the relevant offset constant. Then 
$$\h(C_1, C_2) = \rho_2-\rho_1-r_2,$$
and
$$\h(C_i, C_{i+1}) = \h(D_{i-1}, D_i), \text{ for } 1 < i < k. $$
\end{thm}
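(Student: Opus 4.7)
The plan is to prove the two equalities separately, using Proposition~\ref{prop:irrelevant which proper} (freedom to pick any proper numbering for the forward step) and Proposition~\ref{prop:fw bk is id} (the backward step with $\str{C}{w}$ inverts the forward step with $C$) as the main tools. Both equalities come down to tracking how channel numberings transform under the forward step.

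For the interior equalities $\h(C_i, C_{i+1}) = \h(D_{i-1}, D_i)$ with $1 < i < k$, I would align the channel numberings $d^{C_i}$ and $d^{C_{i+1}}$ of $w$ against a common reference channel (say $C_1$). By Proposition~\ref{prop:irrelevant which proper}, either numbering yields the same forward-step output; in particular, each ball of $\fw{w}$ arises as the outer corner-post of some zig-zag $Z_j$ and naturally inherits the number $j$ from the ambient numbering. Lemma~\ref{lem:proper numbers channels} implies that restricting these inherited numberings to $D_{i-1}$ and $D_i$ gives proper numberings of those channels, i.e., shifted copies of $d^{D_{i-1}}$ and $d^{D_i}$; the shifts can be read off from the interlacing geometry, and subtracting them produces the equality of distances.

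For the boundary equality $\h(C_1, C_2) = \rho_2 - \rho_1 - r_2$, the strategy is to identify the unique stream $T^{\ast}$ in the class of $T$ that is concurrent to $S$. By Proposition~\ref{prop:unique concurrent} and the definition of $r_2$, this $T^{\ast}$ has altitude $\rho_1 + r_2$, so that $\rho_2 - \rho_1 - r_2$ is precisely the altitude separation between $T$ and $T^{\ast}$. I would construct $T^{\ast}$ explicitly by applying the forward step with $C_2$ in place of the southwest channel $C_1$, producing $\str{C_2}{w}$. By Proposition~\ref{prop:irrelevant which proper} this stream lies in the class of $T$, and a direct calculation of back-corner-posts — using the fact that $d^{C_2}$ and $d^{C_1}$ differ on $C_2$ by exactly $\h(C_1, C_2)$ — shows that the altitude of $\str{C_2}{w}$ differs from $\rho_2$ by the channel distance, as required.

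The main obstacle is verifying that $\str{C_2}{w}$ is indeed the concurrent stream $T^{\ast}$. By the definition of concurrency, one must show that $\bk{S}{t'}$, where $t'$ is the partial permutation of $\str{C_2}{w}$, consists of two channels lying in the same river. The plan is to combine the identity $\bk{\str{C_2}{w}}{\fwC{C_2}{w}} = w$ from Proposition~\ref{prop:fw bk is id} with Lemma~\ref{lem:pre-concurrent}, and to use the controlled shift between $d^{C_1}$ and $d^{C_2}$ to identify the two channels produced by the backward step with subsets of $\B_w$ sitting in a common river, thereby establishing concurrency and completing the proof.
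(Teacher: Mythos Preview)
There are genuine gaps in both halves.

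For the boundary equality, your construction is based on a misidentification of stream classes. The stream $\str{C_2}{w}$ is the collection of back corner-posts of the zig-zags of $w$ with respect to $d_w^{C_2}$; by Proposition~\ref{prop:irrelevant which proper}, the row and column residue sets of this stream are those of $S = \st(w)$, not of $T = \st(\fw{w})$. So $\str{C_2}{w}$ lies in the class of $S$, not of $T$, and cannot be the concurrent stream $T^\ast$ you seek. In particular, $S$ is not even compatible with the partial permutation of $\str{C_2}{w}$ (same row and column residues), so the expression $\bk{S}{t'}$ in your final paragraph is undefined. The paper instead argues directly from the characterization ``$a(T) = a_0 + l$ iff some $S^{(j)}$ lies east of $T^{(j-l-1)}$'' and produces explicit paths between $C_1$ and $C_2$ (via funnel and semi-bounded walks along the zig-zags) to bound $\h(C_1,C_2)$ from both sides by $l$; see Theorem~\ref{thm:dist first channels}.

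For the interior equalities, the inherited numbering $\fw{d}$ is only monotone, not proper (Remark~\ref{rmk:induced is monotone but not proper}), so Lemma~\ref{lem:proper numbers channels} does not apply to it. It is true that $\fw{d}$ restricted to a channel of $\fw{w}$ numbers by consecutive integers, but this only tells you the restriction is some shift of a proper numbering of that channel; it does not tell you \emph{which} shift, and in particular does not identify the restriction with the intrinsic channel numbering $d_{\fw{w}}^{D_j}$. That identification is exactly the content of Proposition~\ref{prop:induced and southwest coincide on channels}, whose proof requires the funnel-walk and bounded-walk machinery. Your phrase ``the shifts can be read off from the interlacing geometry'' hides this entire argument. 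The paper's Theorem~\ref{thm:dist nonfirst channels} proves the equality by building, for each direction of inequality, an explicit path in $\fw{w}$ between $D_{i-1}$ and $D_i$ from a given path in $w$ between $C_{i+1}$ and $C_{i+2}$ (and vice versa), using funnel reverse walks and bounded walks to control the worth step by step.
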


The theorem will be split into two cases; see Theorems \ref{thm:dist nonfirst channels} and \ref{thm:dist first channels} for proofs. 


\section{Shi's algorithm and Kazhdan-Lusztig cells}
\label{sec:Shi algorithm and KL cells}

In this section we describe Shi's algorithm (\cite{Shi}) to find a tabloid $P'(w)$ from a (partial) permutation $w$, and state that, in fact, $P'(w)$ coincides with the tabloid $P(w)$ obtained using AMBC. The algorithm consists of doing manipulations to $w$ which preserve the AMBC tabloid $P(w)$ to arrive at a particularly nice partial permutation from which Shi reads off a row of $P'(w)$. Thus it will be sufficient to show that for these special permutations, the row of $P'(w)$ he reads off coincides with the the row of $P(w)$ we read off.

There will be three types of operations involved. We refer to the first type as \emph{repositioning empty rows}; in window notation, this amount to rearranging the $\varnothing$ signs. For example, $[\varnothing,\varnothing,3,5,9]$ is obtained from $[3,5,\varnothing,9,\varnothing]$ by repositioning empty rows. It is clear from the construction of AMBC that this has no effect on the $P$-tabloid of the partial permutation.

The second type of operation is called \emph{shifting the window}. For a partial permutation with window notation $[w_1, \ldots, w_n]$ shifting the window by $1$ gives the partial permutation $[w_2, \ldots, w_n, w_1+n]$ (where $\varnothing + n = \varnothing$). In general, shifting the window is the $\Z$-action generated by the above operation. Since the effect on the matrix for the partial permutation is to shift all the balls vertically by the same amount, this also clearly does not affect the $P$-tabloid.

The third type of operation is a little more involved.
\begin{defn}
Suppose $w$ is a permutation. A permutation $w'$ is obtained from $w$ via a \emph{Knuth move} if for some $i\in\Z$,
\begin{itemize}
\item $w'(i) = w(i+1)$, $w'(i+1) = w(i)$, and for $j\notin\ol{i}\cup\ol{i+1}$, $w'(j) =w(j)$, and
\item $w(i-1)$ or $w(i+2)$ is (numerically) between $w(i)$ and $w(i+1)$.
\end{itemize}
\end{defn}
Lemma \ref{lem:knuth preserves p} shows that these moves do not change the $P$-tabloid.

While we want to start with $w\in\widetilde{A}_{n-1}$, it is convenient to work in slightly greater generality: a partial permutation $w$ such that in the window notation all the $\varnothing$'s are at the beginning. We will call such partial permutations \emph{$\varnothing$-forward} permutations. Consider the following \emph{combing} procedure, which is applicable to a $\varnothing$-forward permutation $w = [w_1,\ldots,w_n]$. 
\begin{itemize}
 \item Let $w_i$ be the first element such that $w_{i-1}>w_i$ (if no such element exists, the procedure does nothing);
 \item Let $w_j$ be the first element with $j<i$ and $w_i < w_j$;
 \item Replace $w$ with
 $$[w_1, \ldots, w_{j-1}, w_i, w_{j+1}, \ldots, w_{i-1},w_{i+1}, \ldots, w_n, w_j+n].$$ 
\end{itemize}
It is not difficult to see that $w'$ is obtained from $w$ by a combination of Knuth moves, window shifts, and repositioning of empty rows. 

Now we can describe Shi's algorithm for affine insertion. By the Greene-Kleitman invariants of $w$ we mean the numbers $\ell_1, \ell_2, \ldots, \ell_m$ where $\ell_1$ is the size of the largest antichain of the Shi poset $P_w$, $\ell_1 + \ell_2$ is the maximal size of union of two antichains, etc.
\begin{itemize}
\item Input: a $\varnothing$-forward partial permutation $w$ with Greene-Kleitman invariants $\ell_1, \ell_2, \ldots,\ell_m$.
\item Output: a tabloid $P'(w)$.
\item Initialize $P'$ to the empty tabloid.
\item For $i$ from $1$ to $m$ repeat:
\begin{itemize}
 \item (C) Comb $w$ repeatedly until the non-$\varnothing$ elements of the window appear in increasing order (this will happen as mentioned in \cite[\textsection 4.3]{Shi}). 
 \item If $w=[\varnothing,\dots,\varnothing, w_1, \ldots, w_k]$ with $w_{\ell_i} < w_1 + n$, then:
  \begin{itemize}
   \item remove $w_1,\ldots, w_{\ell_i}$ from the window and put $\varnothing$'s in their place,
	 \item let the $i$-th row of $P'$ be $\{\ol{w_1}, \ldots, \ol{w_{\ell_1}}\}$,
	 \item proceed to the next value of $i$.
\end{itemize}
 \item Otherwise, set $w$ to $[\varnothing,\dots,\varnothing,w_2, \ldots, w_k, w_1+n]$ and go to step (C) without increasing $i$.
\end{itemize}
\item Output $P'(w) = P'$.
\end{itemize}

\begin{thm} \label{thm:shi} \cite[\textsection 7]{Shi}
The algorithm described gives a bijection between the left Kazhdan-Lusztig cells in type $\widetilde{S}_{n}$ and tabloids of size $n$ filled with $[\ol{n}]$.
\end{thm}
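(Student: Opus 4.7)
The plan is to deduce Shi's theorem from the AMBC machinery developed earlier in the paper, by showing that Shi's algorithm computes the same tabloid $P(w)$ as the AMBC insertion tabloid, and then invoking the bijection $\Phi\colon\widetilde{W}\to\Omega_{dom}$ together with the Kazhdan--Lusztig characterization of left cells via $P$.

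First I would verify that the algorithm terminates and returns a valid tabloid. Each combing step, viewed as a sequence of Knuth moves plus a window shift plus repositioning of empty rows, preserves the Shi poset's underlying relation and hence its Greene--Kleitman invariants $\ell_1,\ell_2,\ldots,\ell_m$. Termination of the inner combing loop is Shi's observation in \cite[\textsection 4.3]{Shi}: after finitely many combings the non-$\varnothing$ entries of the window are increasing. The outer loop strips off $\ell_i$ entries at the $i$-th iteration, so it halts after $m$ iterations. Because $(\ell_1,\ell_2,\ldots,\ell_m)$ is a partition (a standard property of Greene--Kleitman invariants), the rows decrease in size; because the stripped block satisfies $w_{\ell_i}<w_1+n$, its entries lie in an interval of length less than $n$, so their residues are distinct, as required for a tabloid.

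The crux is to identify the output with the AMBC tabloid $P(w)$. All three move types involved in Shi's algorithm preserve $P(w)$: repositioning empty rows and shifting the window shift balls by uniform vertical translations, which does not alter the southwest-channel numbering, while Knuth moves preserve $P(w)$ by Lemma \ref{lem:knuth preserves p}. It therefore suffices to check a single normal form: when $w=[\varnothing,\ldots,\varnothing,w_1,\ldots,w_k]$ with $w_1<\cdots<w_k$ and $w_{\ell_1}<w_1+n$, the first row of $P(w)$ produced by AMBC equals $\{\ol{w_1},\ldots,\ol{w_{\ell_1}}\}$. In this normal form the balls form a single increasing staircase, so the southwest channel consists of the $(n,n)$-translates of $(i_0,w_{i_0}),\ldots$ with maximal density $\ell_1$; a direct inspection of zig-zags shows $\st(w)$ is supported in precisely the columns $w_1,\ldots,w_{\ell_1}$ (via the back corner-posts of the $\ell_1$ zig-zags obtained at this step). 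This matches the row Shi reads off. By induction on $m$, Shi's tabloid $P'(w)$ agrees with the AMBC tabloid $P(w)$.

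Having identified the two tabloids, the bijectivity claim follows from the results already assembled: Theorem \ref{thm:AMBC produces valid element} together with Theorems \ref{thm:image1} and \ref{thm:image2} show that $\Phi\colon\widetilde{W}\to\Omega_{dom}$ is a bijection, so in particular the induced map $w\mapsto P(w)$ is surjective onto tabloids of size $n$ filled with distinct residues from $[\ol{n}]$, and its fibers are precisely the subsets of $\widetilde{W}$ sharing a fixed $P$-tabloid. To conclude that these fibers are exactly the left Kazhdan--Lusztig cells, I would invoke the Lusztig--Xi characterization (as used in \cite{lusztig,xibook,honey}) that two affine permutations lie in the same left cell iff they have the same insertion tabloid; this is the input from Kazhdan--Lusztig theory proper.

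The main obstacle is the normal-form computation in the middle paragraph: one must show that in the increasing, compact ($w_{\ell_1}<w_1+n$) configuration the southwest channel has density exactly $\ell_1$ and that the first AMBC step produces the stream whose column residues are $\{\ol{w_1},\ldots,\ol{w_{\ell_1}}\}$. Everything else is either bookkeeping (termination, validity of the tabloid) or a direct appeal to the AMBC bijection theorems and to the cited Kazhdan--Lusztig characterization of left cells.
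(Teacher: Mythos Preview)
Theorem~\ref{thm:shi} is a \emph{cited} result: the paper does not prove it, but quotes it from Shi \cite[\S7]{Shi} as the bridge between combinatorics and Kazhdan--Lusztig theory. What the paper \emph{does} prove is the adjacent Theorem~\ref{thm:mbshi}, that Shi's tabloid $P'(w)$ equals the AMBC tabloid $P(w)$. Most of your middle paragraph is in fact a sketch of Theorem~\ref{thm:mbshi}, and there your outline matches the paper's approach (invariance under Knuth moves, window shifts, and repositioning of $\varnothing$'s, plus an explicit computation in the combed normal form).

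The genuine gap is in the last step. To pass from ``the map $w\mapsto P(w)$ has the right image and the right fibers'' to ``left Kazhdan--Lusztig cells are in bijection with tabloids'', you need to know that two affine permutations lie in the same left cell iff they have the same insertion tabloid. You attribute this to a ``Lusztig--Xi characterization'', but that statement \emph{is} Shi's theorem: Lusztig's conjecture and Xi/Honeywill concern the enumeration of elements within a cell by dominant weights, taking Shi's description of the cells as input. So your argument is circular --- it reduces Theorem~\ref{thm:shi} to itself. Nothing in the AMBC machinery of this paper touches the Hecke-algebra definition of $\leq_L$, so no amount of combinatorics from Sections~11--17 can substitute for Shi's original argument; the paper treats Theorem~\ref{thm:shi} as a black box for exactly this reason.
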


\begin{ex}
 Let $n=7$ and take $w = [7,8,18,5,2,3,13]$. The Greene-Kleitman invariants of $w$ are $3,2,1,1$. 
We start with $i=1$. Repeatedly combing $w$ yields:
$$\begin{array}{lclcl}
[7,8,18,5,2,3,13] &\mapsto& [5,8,18,2,3,13,14]   &\mapsto& [2,8,18,3,13,14,12]\\ 
                  &\mapsto& [2,3,18,13,14,12,15] &\mapsto& [2,3,13,14,12,15,25]\\  
                  &\mapsto& [2,3,12,14,15,25,20] &\mapsto& [2,3,12,14,15,20,32].
\end{array}$$
Since $2+7<12$, we are in the second case; the $2$ from the beginning moves to become a $9$ at the end, and we continue combing:
$$\begin{array}{lclcl}
[2,3,12,14,15,20,32] &\mapsto& [3,12,14,15,20,32,9] &\mapsto& [3,9,14,15,20,32,19]\\ 
                     &\mapsto& [3,9,14,15,19,32,27] &\mapsto& [3,9,14,15,19,27,39].
\end{array}$$
Again, $3+7<14$, so we continue:
$$\begin{array}{lclcl}
[3,9,14,15,19,27,39] &\mapsto& [9,14,15,19,27,39,10] &\mapsto& [9,10,15,19,27,39,21]\\
                     &\mapsto& [9,10,15,19,21,39,34] &\mapsto& [9,10,15,19,21,34,46].
\end{array}$$
Now $9+7>15$, so we are in the first case. We set the first row of $P'$ to $\{\ol{9},\ol{10},\ol{15}\} = \{\ol{1},\ol{2},\ol{3}\}$,  reset $w$ to $[\varnothing,\varnothing,\varnothing,19,21,34,46]$, and proceed to $i=2$. In this case we are lucky and end up in the first case right away since $19 + 7 > 21$. We set the second row of $P'$ to $\{\ol{5}, \ol{7}\}$, reset $w$ to $[\varnothing,\varnothing,\varnothing,\varnothing,\varnothing,34,46]$, and proceed to $i = 3$. In the steps $i=3$ and $i=4$ we trivially end up in the first case of the fork (since $\ell_3 = \ell_4 = 1$), so we set the third row to $\{\ol{6}\}$, and the fourth row to $\{\ol{4}\}$. Thus
$$P'(w) = \tableau[sY]{\ol{1},\ol{2},\ol{3}\\\ol{5},\ol{7}\\\ol{6}\\\ol{4}}.$$
\end{ex}

\begin{thm} \label{thm:mbshi}
For $w\in\widetilde{S_n}$, $P(w) = P'(w)$.
\end{thm}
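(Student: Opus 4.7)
The plan is to show that each operation Shi uses preserves the AMBC tabloid $P(w)$, so that we may replace $w$ by the combed terminal form; then to compute the first step of AMBC on this terminal form and see that it reads off exactly Shi's recorded row; finally, to induct on the number of balls per window.

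\emph{Invariance and reduction to the terminal form.} Repositioning empty rows leaves $\B_w$ unchanged. Shifting the window translates $\B_w$ diagonally by $(1,1)$, so all channels, proper numberings, zig-zags, streams, and in particular the column/row residues recorded in $P$ and $Q$ are unaffected. Knuth moves preserve $P(w)$ by Lemma~\ref{lem:knuth preserves p}. Since combing is a composition of these three types of operations, it too preserves $P(w)$. Hence we may assume $w$ is already in Shi's terminal form $w=[\varnothing,\ldots,\varnothing,w_1,\ldots,w_k]$ with $w_1<\cdots<w_k$ and $w_{\ell_1}<w_1+n$.

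\emph{Agreement of the first row.} Set $b_i=(n-k+i,w_i)$. By Proposition~\ref{prop:irrelevant which proper} any proper numbering will do, and the chain numbering $d(b_i)=i$ extended semi-periodically via $d(b+(n,n))=d(b)+\ell_1$ is such a numbering (it is monotone because $b_1,\ldots,b_k$ form a strictly SE-chain, and its period $\ell_1$ matches the width of the Shi poset as Proposition~\ref{prop: period of proper numbering} demands). The maximality of $\ell_1$ forces $w_{j+\ell_1}\geqslant w_j+n$ for every valid $j$, since otherwise $b_j,b_{j+1},\ldots,b_{j+\ell_1}$ together with their $(n,n)$-translates would give a chain of density $\ell_1+1$. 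For $j\in\{1,\ldots,\ell_1\}$, the balls labeled $j$ are $\{b_{j+p\ell_1}-p(n,n):p\geqslant 0,\ j+p\ell_1\leqslant k\}$. Because $\ell_1<n$, the row $(n-k+j)-p(n-\ell_1)$ is maximized at $p=0$; by induction on $p$ from the bound above, $w_{j+p\ell_1}-pn\geqslant w_j$, so the column is also minimized at $p=0$. Hence $b_j$ is the southwest-most inner corner-post of $Z_j$, and the back corner-post of $Z_j$ shares its column $w_j$. The first row of $P(w)$ is therefore $\{\ol{w_1},\ldots,\ol{w_{\ell_1}}\}$, precisely the row Shi records.

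\emph{Induction and main obstacle.} We induct on the number of balls per window. Let $w'$ denote Shi's reduced permutation (the terminal form with $w_1,\ldots,w_{\ell_1}$ replaced by $\varnothing$'s). A direct analysis of the outer corner-posts of each zig-zag $Z_j$, grouping them across periods via semi-periodicity, identifies $\fw{w}$ with a partial permutation obtained from $w'$ by a sequence of window shifts (including the $\Z$-shift that subtracts $n$ from every value) and empty-row repositionings; hence $P(\fw{w})=P(w')$ by the invariance step. Applying the inductive hypothesis to $w'$ gives $P(w')=P'(w')$, and combined with the first-row match this yields $P(w)=P'(w)$. The heart of the argument is the geometric claim in the second stage that $b_j$ is the southwest-most ball labeled $j$, which rests on the density bound $w_{j+\ell_1}\geqslant w_j+n$; this in turn is a genuine consequence of the definition of $\ell_1$ as the Shi-poset width. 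A secondary technicality is the explicit description of the outer corner-posts when the zig-zags have many bends, which is needed to identify $\fw{w}$ with a Shi-shifted version of $w'$.
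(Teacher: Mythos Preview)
Your approach is essentially the paper's: reduce via the $P$-preserving operations to the combed terminal form, read off the first row from the explicit numbering $d(b_j)=j$, and induct by identifying $\fw{w}$ with Shi's truncated permutation $w'$ up to shifts. Two small corrections: window shifting translates $\B_w$ vertically (rows only), not by $(1,1)$---a genuine $(1,1)$-shift would change column residues and hence $P$, so your stated reason is wrong even though the conclusion holds---and your claim that $d$ is proper needs more than ``monotone because the $b_i$ form an SE-chain'' (you must check monotonicity across periods and continuity, the latter using precisely the terminal condition $w_{\ell_1}<w_1+n$); the paper handles this by proving in a separate lemma that $d$ is in fact the southwest channel numbering.
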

The proof is found on page \pageref{pf:mbshi}.

\section{The groups \tps{$S_n\subsetneq \ol{W}\subsetneq \widetilde{W}$}{Sn inside affine W inside extended affine W}}
\label{sec:nonext}
The finite group of permutations $S_n$ injects into $\ol{W}$ via the map $w_1w_2\ldots w_n\mapsto [w_1,w_2,\ldots, w_n]$ (slightly abusing notation, we will think of $S_n$ as actually sitting inside of $\ol{W}$). Also, by definition, $\ol{W}\subsetneq \widetilde{W}$. The purpose of this section is to describe, in terms of conditions on the AMBC image, when an extended affine permutation belongs to one of the smaller groups.

\begin{defn}
A tabloid $T$ is \emph{standardizable} if the tableau of the same shape formed by
\begin{enumerate}
\item replacing each residue class $\ol{i}$ by its representative $i\in [n]$, and then
\item sorting each row in increasing order,
\end{enumerate}
is a standard Young tableau.
\end{defn}

\begin{thm}
Suppose $w\in\widetilde{W}$ and $\Phi(w) = (P,Q,\rho)$. Then $w\in S_n$ if and only if $P$ and $Q$ are standardizable and $\rho = 0$.
\end{thm}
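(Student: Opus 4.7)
The plan is to handle the two directions separately; the forward direction is substantive, and the converse is a quick appeal to injectivity.

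For the forward direction, suppose $w\in S_n$ with window $[w_1,\dots,w_n]$ and $\{w_i\}=[n]$. Then the balls of $w$, viewed as an element of $\widetilde{W}$, are all the $(n,n)$-translates of $\{(i,w_i):i\in[n]\}\subseteq[n]\times[n]$. The second clause of the Shi poset relation, $w(j)>w(i)+n$, is vacuous since each $w_i\leqslant n$, so $P_w$ reduces to the classical inversion-type poset on $[n]$ whose width $\lambda_1$ is the length of a longest increasing subsequence of the underlying finite permutation. In particular every channel of $w$ is contained in $[n]\times[n]$ modulo $(n,n)$-translation and comes from a longest increasing subsequence of length $\lambda_1$. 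The key structural claim is that the southwest channel numbering $d_w^{SW}$, when normalized so that the $\leqslant_{SE}$-minimal element of the southwest channel inside $[n]^2$ receives the label $1$, coincides on $\B_w\cap([n]\times[n])$ with the classical MBC numbering of $w$ viewed as a non-affine permutation. I would verify this by observing that the periodic extension of the classical MBC numbering via $d(b+(n,n))=d(b)+\lambda_1$ is itself a proper numbering of $w$ (monotonicity and continuity hold within one translate by MBC, and across translates because every ball of one translate is strictly NW of every ball of the next), and that the two numberings agree on every channel: on any length-$\lambda_1$ increasing subsequence, the MBC labels are forced to be $1,2,\dots,\lambda_1$ in SE order, since the first ball has no NW neighbor in $[n]^2$ (otherwise the subsequence could be extended). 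Proposition~\ref{prop:proper determined by channels} then forces the two numberings to coincide everywhere.

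With this identification, each zig-zag at the first iteration of AMBC coincides, up to $(n,n)$-translation, with a classical MBC zig-zag, so its inner, outer, and back corner-posts all lie in $[n]\times[n]$. Consequently $\st(w)=\st_0(A,B)$ for some $A,B\subseteq[\ol{n}]$, giving $\rho_1=0$, and $\fw{w}$ is again a partial element of $\widetilde{W}$ whose balls all lie in $[n]^2$ and its translates, i.e.\ it has the same ``$S_n$-shape''. Inducting on the number of iterations yields $\rho=0$ and the rows of $P$ and $Q$ being the residue-class images of the rows of the classical MBC tableaux of $w$. Those MBC tableaux are standard Young tableaux with entries in $[n]$, so applying the standardization procedure (representatives plus sort within each row) returns those very tableaux, and hence $P$ and $Q$ are standardizable.

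For the converse, suppose $\Phi(w)=(P,Q,\rho)$ with $\rho=0$ and $P,Q$ standardizable, with standardizations $P^{st}$ and $Q^{st}$. Let $\sigma\in S_n$ be the unique permutation produced by classical inverse MBC applied to $(P^{st},Q^{st})$, and view $\sigma$ as an element of $\widetilde{W}$. By the forward direction applied to $\sigma$, $\Phi(\sigma)=(P,Q,0)=\Phi(w)$, and Theorem~\ref{thm:forward backward id} (the identity $\Psi\circ\Phi=\operatorname{id}$) gives injectivity of $\Phi$, so $w=\sigma\in S_n$.

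The main obstacle is the identification of $d_w^{SW}$ with the classical MBC numbering on $[n]^2$-balls; the agreement on the southwest channel is essentially immediate, but extending it to all channels (to invoke Proposition~\ref{prop:proper determined by channels}) requires the LIS-extension observation above. An alternative route, avoiding Proposition~\ref{prop:proper determined by channels}, is to argue directly that the SW channel numbering takes values in $\{1,\dots,\lambda_1\}$ on $[n]^2$ (using monotonicity, continuity, and the $\lambda_1$-periodicity) and that its zig-zags therefore have inner corner-posts only in $[n]^2$; either approach suffices to conclude $\rho_1=0$ at each step.
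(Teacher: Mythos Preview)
Your proof is correct and follows essentially the same approach as the paper, which handles the forward direction by asserting (without detailed justification) that for block-diagonal $w$ there is a unique proper numbering up to shift, namely the periodic extension of the classical MBC numbering, so AMBC simply replays MBC on each $[n]\times[n]$ block; the converse is identical to yours.

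One small gap in your main argument: to invoke Proposition~\ref{prop:proper determined by channels} you need $d_w^{SW}$ and the MBC-extension to agree on \emph{every} channel, not just the southwest one. You show MBC assigns $1,\dots,\lambda_1$ to each longest increasing subsequence, but you only normalize $d_w^{SW}$ to do so on the SW channel; you do not argue that $d_w^{SW}$ also takes the values $1,\dots,\lambda_1$ on other channels in $[n]^2$. This is easy to fix (and your alternative route already does it): since every ball in one $[n]^2$-translate is strictly northwest of every ball in the next, monotonicity plus $\lambda_1$-periodicity forces any proper numbering to take exactly the values $\{1,\dots,\lambda_1\}$ on each block, and then Lemma~\ref{lem:proper numbers channels} pins down the values on each channel. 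In fact this argument, combined with continuity restricted to a block, directly yields uniqueness of the proper numbering (matching the paper's one-line claim) without needing Proposition~\ref{prop:proper determined by channels} at all.
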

\begin{proof}
Suppose $w\in S_n$. Then the matrix for $w$ is block diagonal with $n\times n$ blocks. It is easy to see that in this case there is only one proper numbering which, up to a shift, coincides with the MBC numbering on the block $[n]\times [n]$. So in this case AMBC will just mimic MBC on the block $[n]\times [n]$, so the tabloids $P$ and $Q$ will be standardizable. Moreover, the back corner-posts of the zig-zags with balls in $[n]\times [n]$ will also be in $[n]\times [n]$, so the altitudes of the streams will all be $0$.

If we instead assume that $P$ and $Q$ are standardizable and $\rho = 0$, then we can find a finite permutation $w'$ whose image under the finite Robinson-Schensted correspondence is the pair of standardizations of $P$ and $Q$. By the above argument, viewing $w'$ as an affine permutation we get $\Phi(w') = (P,Q,0)$. Theorem \ref{thm:forward backward id} then shows that $w = w'$.
\end{proof}

Now we discuss when an extended affine permutation actually belongs to the non-extended affine Weyl group.

\begin{thm}
\label{thm:gravity}
Suppose $w\in\widetilde{W}$ and $\Phi(w) = (P,Q,\rho)$. Then $w\in\ol{W}$ if and only if $\sum\limits_i\rho_i = 0$.
\end{thm}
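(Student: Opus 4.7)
The plan is to track the numerical invariant
$$\delta(w) := \sum_{i=1}^n (w(i) - i),$$
for any partial permutation $w$, and observe that $w\in\ol{W}$ if and only if $\delta(w)=0$.  Since $w(i+n)=w(i)+n$, the quantity $\mathrm{col}(b)-\mathrm{row}(b)$ is constant on each $(n,n)$-translate class of balls, so equivalently $\delta(w) = \sum_{b}(\mathrm{col}(b)-\mathrm{row}(b))$, where $b$ ranges over any set of representatives for the translate classes in $\B_w$.  We define $\delta$ on streams by the same formula (summing one cell per translate class).

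The central step is the single-step identity
$$\delta(w) \;=\; \delta(\fw{w}) + \delta(\str{d}{w})$$
for any proper numbering $d$ of $\B_w$.  I would prove it zig-zag by zig-zag: for a zig-zag $Z$ with inner corner-posts $(r_1,c_1),\dots,(r_m,c_m)$ (with $r_1>\cdots>r_m$ and $c_1<\cdots<c_m$), the outer corner-posts are $(r_j,c_{j+1})$ for $1\leqslant j\leqslant m-1$ and the back corner-post is $(r_m,c_1)$, so a direct telescoping gives
$$\sum_{j=1}^m (c_j-r_j) \;=\; \sum_{j=1}^{m-1}(c_{j+1}-r_j) \;+\; (c_1-r_m)$$
(trivially true also for $m=1$).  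Summing this over the $m$ zig-zags $Z_1,\dots,Z_m$ corresponding to one full period of $d$ — where $m$ is the period given by Proposition~\ref{prop: period of proper numbering} — the left side gives $\delta(w)$ and the right side gives $\delta(\fw{w}) + \delta(\str{d}{w})$, because those $m$ zig-zags furnish exactly one translate-class representative for each of $\B_w$, $\B_{\fw{w}}$, and $\str{d}{w}$.

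Iterating over all steps of AMBC, $\delta(w) = \sum_{i=1}^{\ell} \delta(\str{\rho_i}{Q_i,P_i})$.  I would then compute $\delta$ of a single stream.  For $\str{0}{A,B}$ with $A=\{\ol{a_1},\dots,\ol{a_k}\}$, $B=\{\ol{b_1},\dots,\ol{b_k}\}$ (representatives chosen in $[n]$), one period consists of the cells $(a_r,b_r)$, whence $\delta(\str{0}{A,B}) = \sum_r b_r - \sum_r a_r$.  Using $\str{\rho}{A,B} = \{(i_t,j_{t+\rho}):t\in\Z\}$ together with the semi-periodicity $j_{t+k}=j_t+n$, shifting the altitude by $\rho$ advances each column coordinate by $\rho$ positions in the $j$-sequence, changing the period sum of columns by exactly $\rho n$.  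Hence
$$\delta(\str{\rho}{A,B}) \;=\; \sum_r b_r - \sum_r a_r + \rho n.$$

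Finally, summing over $i$ and using that $\bigsqcup_i P_i = \bigsqcup_i Q_i = [\ol{n}]$ (each tabloid has exactly $n$ cells filled with distinct residue classes), the terms $\sum_r b_r$ and $\sum_r a_r$ each collapse to $1+2+\cdots+n$ and cancel.  What remains is
$$\delta(w) \;=\; n\sum_{i=1}^{\ell} \rho_i,$$
so $\delta(w)=0$ precisely when $\sum_i\rho_i=0$, which is the claim.  The only substantive obstacle is the bookkeeping in the one-step identity — choosing one zig-zag per period of $d$ so that inner corner-posts, outer corner-posts, and back corner-posts each hit every translate class exactly once — but this is immediate from semi-periodicity of $d$ with period $m$ (Proposition~\ref{prop: period of proper numbering}) and the fact that each ball of $w$ is an inner corner-post of a unique zig-zag.
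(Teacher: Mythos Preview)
Your proof is correct and follows essentially the same approach as the paper: define the diagonal-sum invariant (the paper's ``center of gravity'' $G_w$ is your $\delta(w)/n$), establish the one-step additivity $\delta(w)=\delta(\fw{w})+\delta(\st(w))$ by the same zig-zag column/row bookkeeping, iterate, and then reduce to a computation on streams. The only cosmetic differences are that you compute $\delta(\str{\rho}{A,B})$ directly whereas the paper argues by the base case $\rho=0$ plus an increment, and you overload the symbol $m$ (using it both for the number of inner corner-posts of a single zig-zag and for the period of $d$) --- distinct letters there would make the write-up cleaner.
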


The rest of the section is devoted to the proof. For a cell $c = (a,b)$ define the \emph{diagonal} of $c$ to be $b-a$. Of course, any translate of $c$ will have the same diagonal.
\begin{defn}
Consider a partial permutation $w$. Let $\B$ be the collection of balls of $w$ in rows $1,\dots, n$. The \emph{center of gravity of $w$} is
$$G_w = \frac{1}{n}\sum_{(i,w(i))\in \B} w(i)-i.$$
Equivalently, $G_w$ is the average diagonal of translate classes of balls of $w$.
\end{defn}

The same definition works for any collection of cells which is preserved by translation by $(n,n)$, for example a stream. Note that by definition, for $w\in\widetilde{W}$ we have $w\in\ol{W}$ if and only if $G_w = 0$.

\begin{lemma}
Suppose $w$ is a partial permutation. Then $G_w = G_{\fw{w}} + G_{\st(w)}.$
\end{lemma}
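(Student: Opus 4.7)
The plan is to reduce the lemma to a purely local identity about one zig-zag, then sum that identity over a full period of zig-zags and divide by $n$.

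First I would establish the following single-zig-zag identity: for any zig-zag $Z$,
$$\sum_{c \in \mathrm{inner}(Z)} \mathrm{diag}(c) \;=\; \sum_{c \in \mathrm{outer}(Z)} \mathrm{diag}(c) \;+\; \mathrm{diag}(\mathrm{back}(Z)),$$
where $\mathrm{diag}(i,j):=j-i$. Labeling the inner corner-posts $(R_0,C_0),(R_1,C_1),\ldots,(R_q,C_q)$ from southwest to northeast (so $R_0>R_1>\cdots>R_q$ and $C_0<C_1<\cdots<C_q$), one checks directly from the shape of a zig-zag that the outer corner-posts are exactly $(R_{i-1},C_i)$ for $i=1,\ldots,q$ and the back corner-post is $(R_q,C_0)$. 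The identity is then the telescoping rearrangement $\sum_{i=0}^{q}(C_i-R_i)=\sum_{i=1}^{q}(C_i-R_{i-1})+(C_0-R_q)$. The degenerate case $q=0$, where the zig-zag is a single cell which is simultaneously its unique inner corner-post and its back corner-post, is trivial.

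Next I would sum this identity over $Z_1,\ldots,Z_m$, where $m$ is the period of the SW channel numbering (equal to the width of $P_w$ by Proposition~\ref{prop: period of proper numbering}). The main point here is semi-periodicity: translation by $(n,n)$ sends $Z_i$ to $Z_{i+m}$ and carries inner, outer, and back corner-posts respectively to their counterparts. Hence the inner corner-posts of $Z_1,\ldots,Z_m$ contain exactly one element from each $(n,n)$-translate class of $\B_w$; likewise the outer corner-posts give one representative per translate class of $\B_{\fw{w}}$, and the back corner-posts give one representative per translate class of cells of $\st(w)$. Since $\mathrm{diag}$ is constant on $(n,n)$-translate classes, the summed identity reads
$$\sum_{[b]\in\B_w/(n,n)} \mathrm{diag}(b) \;=\; \sum_{[b']\in\B_{\fw{w}}/(n,n)} \mathrm{diag}(b') \;+\; \sum_{[c]\in\st(w)/(n,n)} \mathrm{diag}(c).$$

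To finish, I would note that each $(n,n)$-translate class has exactly one representative in rows $1,\ldots,n$, so each of the three sums above coincides with $n$ times the corresponding center of gravity. Dividing by $n$ yields $G_w = G_{\fw{w}}+G_{\st(w)}$.

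The main obstacle is the bookkeeping in the second step: confirming that $Z_1,\ldots,Z_m$ really exhaust the translate classes of all three objects simultaneously, without over- or undercounting. This is forced by semi-periodicity once the local identity is in place, so the argument should go through cleanly.
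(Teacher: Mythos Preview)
Your proof is correct and follows essentially the same approach as the paper: establish the per-zig-zag identity that the sum of diagonals of inner corner-posts equals that of outer corner-posts plus the back corner-post, then sum over one period $Z_1,\ldots,Z_m$. The only cosmetic difference is that the paper phrases the local identity as ``the multiset of columns (resp.\ rows) of inner corner-posts equals that of outer corner-posts together with the back corner-post,'' whereas you write out the equivalent telescoping explicitly.
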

\begin{proof}
Let $Z_i$ be a zig-zag whose inner corner-posts are the balls of $w$ numbered $i$ by the southwest channel numbering. 
The columns of the balls in $\B_\fw{w}\cap Z_i$ 
together with the column of the back corner-post of $Z_i$ make up the same set as the columns of balls in $\B_w\cap Z_i$. 
Similarly for the sets of rows. Hence the sum of diagonals of $\B_w\cap Z_i$ is equal to the sum of diagonals of $\B_\fw{w}\cap Z_i$ plus the diagonal of the back corner-post of $Z_i$. Summing over $i$ from $1$ to $m$ finishes the proof.
\end{proof}


Suppose $\Phi(w) = (P,Q,\rho)$, where the tabloids have $\ell$ rows. Repeated application of the above lemma shows that $G_w = \sum\limits_{i=1}^\ell G_{\str{\rho_i}{P_i, Q_i}}$. Thus to prove Theorem \ref{thm:gravity} it suffices to prove the following lemma.

\begin{lemma}
Suppose $w\in\widetilde{W}$, $\Phi(w) = (P,Q,\rho)$, and the tabloids have $\ell$ rows. Then 
$$\sum\limits_{i=1}^\ell G_{\str{\rho_i}{P_i, Q_i}} = \sum_{i=1}^\ell \rho_i.$$
\end{lemma}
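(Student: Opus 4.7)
The plan is to reduce the claim to two easy computations: first, a formula $G_{\str{r}{A,B}} = G_{\str{0}{A,B}} + r$ expressing altitude as the ``extra'' contribution to the center of gravity; and second, the observation that summing $G_{\str{0}{Q_i,P_i}}$ over $i$ gives zero because the rows of $P$ (resp.\ of $Q$) partition $[\ol{n}]$.

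\medskip

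First I would compute $G_S$ for a single stream $S = \str{r}{A,B}$ with flow $k$. Write the SE-ordered enumerations of the relevant residue classes as $\ldots <_{SE} (i_0,j_0) <_{SE} (i_1,j_1) <_{SE} \ldots$, so that $S = \{(i_t, j_{t+r}) : t\in\Z\}$. A fundamental domain for $(n,n)$-translation consists of any $k$ consecutive cells, say $(i_t, j_{t+r})$ for $t=0,\ldots,k-1$, and so
$$G_{\str{r}{A,B}} \;=\; \frac{1}{n}\sum_{t=0}^{k-1}\bigl(j_{t+r}-i_t\bigr).$$
Since $j_{t+k}=j_t+n$ for all $t$, the function $F(r):=\sum_{t=0}^{k-1} j_{t+r}$ satisfies $F(r+1)-F(r) = j_{k+r}-j_r = n$, hence $F(r)=F(0)+rn$. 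Subtracting the $i$-sum (which is independent of $r$) and dividing by $n$ gives
$$G_{\str{r}{A,B}} \;=\; G_{\str{0}{A,B}} + r.$$

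\medskip

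Applying this to each row of the output and summing yields
$$\sum_{i=1}^{\ell} G_{\str{\rho_i}{Q_i,P_i}} \;=\; \sum_{i=1}^{\ell}\rho_i \;+\; \sum_{i=1}^{\ell} G_{\str{0}{Q_i,P_i}}.$$
It therefore remains to show that $\sum_i G_{\str{0}{Q_i,P_i}} = 0$. Unwinding the definition, if we choose representatives $a_{i,s},b_{i,s}\in[n]$ for the elements of $Q_i$ and $P_i$ respectively, then
$$\sum_{i=1}^{\ell} G_{\str{0}{Q_i,P_i}} \;=\; \frac{1}{n}\sum_{i,s}\bigl(b_{i,s}-a_{i,s}\bigr).$$
Because $P$ (respectively $Q$) is a tabloid of size $n$ whose entries are distinct residues from $[\ol{n}]$, the rows $P_1,\ldots,P_\ell$ partition $[\ol{n}]$, and hence $\{b_{i,s}\} = \{1,2,\ldots,n\}$ as a set. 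The same holds for the $a_{i,s}$. Thus both double sums equal $\tfrac{n(n+1)}{2}$, and their difference vanishes.

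\medskip

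The only substantive step is the first computation $G_{\str{r}{A,B}} = G_{\str{0}{A,B}} + r$; after that the argument is purely bookkeeping. I do not expect any real obstacle, since the semi-periodicity $j_{t+k}=j_t+n$ built into the definition of a stream makes the shift-by-$r$ effect precisely $rn$ on the column sum.
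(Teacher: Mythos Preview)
Your proof is correct and follows essentially the same two-step structure as the paper's: establish the identity when $\rho=0$, then show that increasing any $\rho_i$ by $1$ increases both sides by $1$. The paper phrases the $\rho=0$ case more conceptually (the union $\bigcup_i \st_0(Q_i,P_i)$ is the ball set of a non-affine permutation, hence has center of gravity $0$), whereas you unpack this by directly summing the representatives in $[n]$; and where the paper asserts without detail that ``the sum of diagonals increases by $n$,'' you supply the explicit telescoping computation $F(r+1)-F(r)=j_{k+r}-j_r=n$.
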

\begin{proof}
Suppose $\rho = (0,0,\dots, 0)$. Then the cells of $\bigcup_i \str{\rho_i}{P_i, Q_i}$ coincide with the balls of some non-affine permutation (not $w$). Hence the left-hand side is also $0$ in this case.

Now if we have any weight $\rho$ and we increase $\rho_i$ by 1, then the right hand side increases by $1$. Also, the left hand side increases by $1$ since the sum of diagonals of cells of $\str{\rho_i}{P_i, Q_i}$ increases by $n$ while the other streams are unaffected. This finishes the proof.
\end{proof}

\part{Proofs.}

\section{Channels, rivers, and proper numberings}

First we will prove a few basic results about channels. Next we will prove that channel numberings are well-defined and proper. Then we will describe the structure theory of general proper numberings, and in particular show that the period of a proper numbering is equal to the width of the Shi poset. We conclude by showing how to obtain proper numberings by using an ``asymptotic'' version of the numbering rule in the non-affine Matrix-Ball Construction.

\subsection{Southwest and northeast channels}

In this section we show that the southwest partial ordering on channels has a least element and a greatest element. Let $\A_w$ be the set of longest antichains in $P_w$. There is a natural partial ordering on $\A_w$: $A_1\leqslant A_2$ if for every $a\in A_1$ there exists $b\in A_2$ such that $a\leqslant b$. 

Suppose $C_1, C_2\in \C_w$ are channels, and $A_1, A_2\in\A_w$ are the corresponding maximal antichains. Observe that in this case $C_1$ is southwest of $C_2$ (recall Definition \ref{def:channel sw ordering}) if and only if $A_1\leqslant A_2$.	

\begin{lemma}
\label{lem:minimal elements of maximal antichains}
Let $A_1, A_2\in \A_w$. Then the set of minimal elements $A_{\mathrm{min}}$ of $A_1\cup A_2$ forms a longest antichain of $P_w$, and likewise for the set of maximal elements.
\end{lemma}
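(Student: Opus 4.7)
The plan is to prove this for any pair of maximum antichains in any finite poset; no special feature of $P_w$ will enter. Write $m$ for the width of $P_w$, and $A_{\min}, A_{\max}$ for the sets of minimal and maximal elements of $A_1 \cup A_2$. Both are antichains (minima and maxima of any subset of a poset always are), so each has size at most $m$ by maximality of $m$. I would show instead that $|A_{\min}| + |A_{\max}| = 2m$, which forces $|A_{\min}| = |A_{\max}| = m$.

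The first step is to classify each element of $A_1 \setminus A_2$ by its interaction with $A_2$. Given $x \in A_1 \setminus A_2$, the set $A_2 \cup \{x\}$ cannot be an antichain (else it would be an antichain larger than $A_2$, contradicting the maximality of $m$), so $x$ is comparable with some element of $A_2$. On the other hand, because $A_2$ is itself an antichain, no two elements of $A_2$ can lie strictly on opposite sides of $x$. Hence each such $x$ is of exactly one of two types: \emph{low} (every element of $A_2$ comparable with $x$ is strictly above $x$) or \emph{high} (every such element is strictly below $x$). Classify $A_2 \setminus A_1$ analogously.

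The second step reads off $A_{\min}$ and $A_{\max}$ directly from this classification. Any element of $A_1 \cap A_2$ belongs to both $A_{\min}$ and $A_{\max}$, since a strict comparison with it in $A_1 \cup A_2$ would violate the antichain property of one of $A_1, A_2$. A low element of $A_1 \setminus A_2$ lies in $A_{\min}$ but not in $A_{\max}$, while a high element lies in $A_{\max}$ but not in $A_{\min}$; symmetrically for $A_2 \setminus A_1$. Summing contributions,
\[
|A_{\min}| + |A_{\max}| = 2|A_1 \cap A_2| + |A_1 \setminus A_2| + |A_2 \setminus A_1| = |A_1| + |A_2| = 2m.
\]

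The only real obstacle is justifying the type dichotomy in the first step: that every $x \in A_1 \setminus A_2$ is comparable with some element of $A_2$ (using maximality of $|A_2|$), and that no two elements of $A_2$ can sandwich $x$ strictly from below and strictly from above (using that $A_2$ is an antichain). Once these are in hand the remainder is bookkeeping, and in particular Dilworth's theorem is not needed.
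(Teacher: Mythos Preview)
Your proof is correct and takes a genuinely different route from the paper's. The paper invokes Dilworth's theorem: it decomposes $P_w$ into $m$ disjoint chains, observes that each chain meets each of $A_1$ and $A_2$ exactly once, and then argues that the smaller of the (one or two) elements of $A_1\cup A_2$ in each chain belongs to $A_{\min}$, giving $m$ distinct elements of $A_{\min}$.

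Your argument instead classifies each element of the symmetric difference as ``low'' or ``high'' and derives the identity $|A_{\min}|+|A_{\max}|=|A_1|+|A_2|=2m$, which together with $|A_{\min}|,|A_{\max}|\le m$ forces both to equal $m$. This is more elementary in that it avoids Dilworth entirely, and it handles the minimal and maximal cases simultaneously rather than by symmetry. The paper's approach, on the other hand, makes the structure more explicit: it actually identifies which element of each Dilworth chain lands in $A_{\min}$, which is closer in spirit to how the lemma is used downstream (translating back to channels as preimages of antichains). Both are short; yours is self-contained, the paper's is slightly more structural.
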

\begin{proof}
Suppose $m=\abs{A_1} = \abs{A_2}$. The set of minimal elements of a subset of a poset is by construction an antichain, so one only needs to check that $A_{\mathrm{min}}$ has $m$ elements. By Dilworth's Theorem, $P_w$ is a disjoint union of $m$ chains; each of those intersect $A_1$ precisely once, and similarly for $A_2$. It is easy to see that for each chain, $A_{\mathrm{min}}$ will contain the smaller of the two elements of $A_1\cup A_2$ in it.
\end{proof}

The lemma implies that taking the southwest elements of the union of two channels gives another channel. Doing this repeatedly we see that the southwest elements of the union of all the channels form a channel.  Thus $\C_w$ has a least element with respect to the southwest partial ordering. Similarly, it has a greatest element. This proves Proposition \ref{prop:sw channel exists}.\label{pf:sw channel exists}

\subsection{Channel numberings}

In this section we prove that channel numberings are well-defined. Let $w$ be a partial permutation and let $C\in\C_w$. Let $m$ be the width of $P_w$. Number the balls of $C$ consecutively; denote this numbering by $\tilde{d}:C\to\Z$. Recall that the channel numbering was defined for any $b\in\B_w$ by 
$$d_w^C(b) := \sup_{(b_0=b, b_1, \dots, b_k)} \tilde{d}(b_k) + k,$$
where the supremum is over all paths from $b$ to $C$ (Definition \ref{def:channel numbering}).

\begin{defn}
 Given a path $(b_0, b_1, \dots, b_k)$ from a ball $b_0$ to $C$ (i.e. $b_k\in C$), we will refer to the number $\tilde{d}(b_k) + k$ as the \emph{worth} of the path with respect to $C$ (see Figure \ref{fig:channel numbering} for an example). 
\end{defn}

First we will do a special case; namely we will show that the channel numbering coincides with $\widetilde{d}$ on the channel. The original statement will easily follow from this special case.

\begin{prop}
\label{prop:channel numbering defined on channel}
Suppose $C$ is a channel of a partial permutation $w$ and $\tilde{d}:C\to\Z$ is a proper numbering of $C$.  Then for any ball $b\in C$ we have $\tilde{d}(b) = d_w^C(b)$, where $d_w^C$ is obtained from this particular fixed shift of $\tilde{d}$. 
\end{prop}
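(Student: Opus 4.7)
The lower bound $d_w^C(b) \geqslant \tilde{d}(b)$ is immediate: the length-zero path $(b)$ has worth $\tilde{d}(b)$. For the upper bound, write $b = c_j$, where $(c_t)_{t \in \Z}$ denotes the northwest-to-southeast enumeration of $C$ normalized so that $\tilde{d}(c_t) = t$, and consider an arbitrary path $(c_j = b_0, b_1, \ldots, b_k = c_i)$. Strict northwest steps force $i < j$ whenever $k \geqslant 1$, so establishing $\tilde{d}(c_i) + k \leqslant \tilde{d}(c_j)$ is the same as showing $k \leqslant j - i$.

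I would prove this by strong induction on $k$; the cases $k \leqslant 1$ are immediate. For $k \geqslant 2$: if any intermediate $b_t$ (with $1 \leqslant t \leqslant k - 1$) lies on $C$, say $b_t = c_r$ with $i < r < j$, then applying the inductive hypothesis to the two subpaths $(c_j, \ldots, c_r)$ and $(c_r, \ldots, c_i)$---both of length strictly less than $k$---yields $t \leqslant j - r$ and $k - t \leqslant r - i$, which sum to $k \leqslant j - i$.

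The substantive case is when no intermediate ball lies on $C$. Here I would derive a contradiction from the maximality of $m$. Set $K = \lfloor (j - i)/m \rfloor + 1$, so that $Km > j - i$, and build an explicit $(Kn, Kn)$-invariant chain $D \subseteq \B_w$ by periodically inserting a translate of the reversed path into $C$: within block $s$, $D$ consists (in southeast order) of $c_{i + sKm}$, $b_{k-1} + s(Kn, Kn), \ldots, b_1 + s(Kn, Kn)$, $c_{j + sKm}, c_{j + sKm + 1}, \ldots, c_{i + (s + 1)Km - 1}$. The strict inequality $Km > j - i$ guarantees the $C$-tail of each block is well-defined and that consecutive blocks glue correctly in $\leqslant_{SE}$, and verifying that $D$ is a chain reduces to the observation that every intermediate $b_t$ lies strictly inside the open rectangle $\{x : c_i <_{SE} x <_{SE} c_j\}$, so path-translates in one block never conflict with $C$-elements in another.

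A direct count gives $Km + (k - (j - i))$ balls per $(Kn, Kn)$-period of $D$. Under the assumption $k > j - i$ this exceeds $Km$, and the contradiction follows from the fact that every $(Kn, Kn)$-invariant chain in $\B_w$ has at most $Km$ balls per period. Equivalently, this says that no chain (even a non-periodic one) can sustain an asymptotic density exceeding $m/n$, strengthening the definition of a channel from the $(n,n)$-invariant setting to general chains. Establishing this asymptotic density bound is the main obstacle; I would approach it via a Dilworth decomposition of $P_w$ into $m$ chains and an analysis of how each chain lifts into $\B_w$ to bound the contribution it can make to any chain in $\leqslant_{SE}$.
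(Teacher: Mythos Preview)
Your approach is correct and, at its core, uses the same Dilworth-plus-pigeonhole idea as the paper. The paper's proof is more direct in two respects. First, it skips the induction on $k$ and the case split entirely: given any path of worth exceeding $\tilde{d}(b)$, it simply extends that path along $C$ to the nearest $(n,n)$-translate of $b$, producing a single path of length $l > tm$ from $b$ to $b - t(n,n)$. Second, it applies Dilworth directly to that finite path inside the rectangle $R$ (strictly northwest of $b$, weakly southeast of $b - t(n,n)$): pigeonhole forces at least $t+1$ of the balls $b_1,\ldots,b_l$ to project to a single chain of $P_w$, while only $t$ translates of the lifted $\leq_{SW}$-chain can meet $R$. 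Your $(Kn,Kn)$-periodic chain $D$ is just the infinite repetition of one such block, and your asymptotic density bound is established by exactly this rectangle count, so the periodic construction and the induction are extra scaffolding around what is essentially the paper's argument. The sketch you give for the density bound via Dilworth is the right one; carrying it out amounts to reproducing the paper's region argument verbatim.
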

\begin{proof}
\label{pf:channel numbering defined on channel}
Taking the trivial path of length $0$ shows that $d_w^C(b)\geqslant \tilde{d}(b)$. Suppose, toward a contradiction, that $d_w^C(b) > \tilde{d}(b)$. Then there is a path $p'$ from $b$ to $C$ of higher worth than the path to the same ball of $C$ which follows the channel. Extend $p'$ by following the channel to the nearest $(n,n)$-translate of $b$. Denote the result by $p = (b_0,\dots, b_l)$; its worth is $\tilde{d}(b_l)+l$. Then $b_0 = b$ and $b_l = b - t\cdot (n,n)$ for some $t> 0$. Moreover, the worth of the path along the channel is $\tilde{d}(b_l) + tm$, so $l > tm$.

By Dilworth's Theorem, the Shi poset is a union of $m$ chains. Consider the projections of the balls $b_1, b_2, \ldots, b_l$ to $P_w$. By the pigeonhole principle, at least $t+1$ of them lie in the same chain $c$ of $P_w$. The preimage of $c$ under the projection consists of all the translates of a collection $c'$ of balls which form a chain with respect to $\leqslant_{SW}$. Each translate of $c'$ contains at most one ball of $p$, since any pair of balls in $p$ is incomparable with respect to $\leqslant_{SW}$. Let $R$ be the collection of cells strictly northwest of $b$ and weakly southeast of $b_l$. We will now show that only $t$ translates of $c'$ can possibly intersect $R$ (see Figure \ref{fig:channel numbering defined} for an illustration). This will contradict the fact that $t+1$ of the balls $b_1, b_2, \ldots, b_l$ project into $c$. 

Let $b'\in R$ be a ball of the northwest-most translate of $c'$ which intersects $R$. Then $b'+t(n,n)$ is weakly southeast of $b$, and hence the translate of $c'$ containing $b'+t(n,n)$ does not intersect $R$. Thus the translates of $c'$ which could intersect $R$ are those containing $b', b'+(n,n),\ldots, b'+(t-1)(n,n)$; there are only $t$ of them.
\end{proof}

\begin{figure}
\centering\resizebox{.4\textwidth}{!}{\input{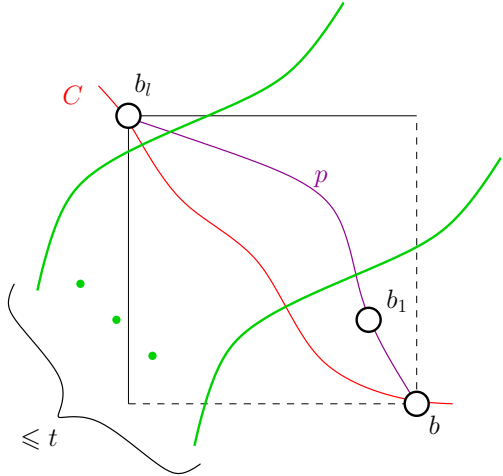}}
\caption{The preimage of $c$ (green) in the proof of Proposition \ref{prop:channel numbering defined on channel}.}
\label{fig:channel numbering defined}
\end{figure}

Now it is easy to finish the general case.

\begin{proof}[Proof of Proposition \ref{prop:channel numbering defined}]
\label{pf:channel numbering defined}
Notice that if a ball $b$ of $w$ is northwest of a ball $b'$, then for any path from $b$ to $C$ there exists a path from $b'$ to $C$ of higher worth. Thus if $d(b')$ is finite then $d(b) < d(b')$ is finite. Now every ball is northwest of some ball of $C$. Hence the proposition follows from Proposition \ref{prop:channel numbering defined on channel}.
\end{proof}

\begin{rmk}
While a priori computing a channel numbering is an infinite problem, one can, in fact, reduce it to a finite one. A path containing two translates of the same ball can be shortened without decreasing its worth. Hence we only need to consider finitely many paths. 
\end{rmk}

For future convenience we give the following two results.
\begin{cor}
\label{cor:reinterpret m}
Suppose $b$ and $b'$ are two balls of $w$ and $b' = b + t(n,n)$ for some $t\geqslant 0$. Suppose $(b = b_0,\dots,b_k =b')$ is a reverse path. Then $k\leqslant tm$.
\end{cor}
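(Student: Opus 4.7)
My approach is to apply the southwest channel numbering $d := d_w^{SW}$ and to exploit two of its properties: strict monotonicity along $\leqslant_{NW}$-increasing sequences, and the semi-periodicity $d(b + (n,n)) = d(b) + m$, where $m$ denotes the density of a channel, i.e.\ the width of the Shi poset $P_w$.

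The first step is to invoke monotonicity. Since the channel numbering is proper (and hence strictly monotone by definition), the reverse path $b = b_0, b_1, \ldots, b_k = b'$ satisfies $d(b_0) < d(b_1) < \cdots < d(b_k)$, which immediately gives $d(b_k) - d(b_0) \geqslant k$.

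The second step is to establish the semi-periodicity directly from the definition of the channel numbering. Since the southwest channel $C$ is $(n,n)$-invariant, paths from $b + (n,n)$ to $C$ are in bijection (via $(n,n)$-translation) with paths from $b$ to $C$; meanwhile the consecutive-integer numbering $\tilde{d}$ of $C$ satisfies $\tilde{d}(c + (n,n)) = \tilde{d}(c) + m$ since $C$ has density $m$. Taking the supremum of the worth over paths then gives $d(b + (n,n)) = d(b) + m$, and iterating, $d(b') = d(b + t(n,n)) = d(b) + tm$. Combining the two bounds yields $tm = d(b_k) - d(b_0) \geqslant k$, as required. The case $t = 0$ is degenerate: $b = b'$ forces $k = 0$, since a reverse path has strictly $\leqslant_{SE}$-increasing cells.

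There is no real obstacle here; the corollary is a direct consequence of the two properties above. In particular, the argument does \emph{not} appeal to Proposition~\ref{prop: period of proper numbering} (which establishes semi-periodicity for arbitrary proper numberings, and is presumably proved later in the paper, possibly using this very corollary as an input). The only point worth stating carefully is the semi-periodicity of $d_w^{SW}$, and this follows purely from the construction of channel numberings together with the fact that a channel has density exactly $m$.
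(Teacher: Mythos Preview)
Your proof is correct and takes a genuinely different route from the paper's. The paper's proof simply says ``the argument is exactly the same as in the proof of Proposition~\ref{prop:channel numbering defined on channel}'', i.e.\ it reruns the direct pigeonhole/Dilworth argument: decompose $P_w$ into $m$ chains, note that the projections of $b_1,\dots,b_k$ must hit some chain at least $\lceil k/m\rceil$ times, and then count how many translates of (the preimage of) that chain can meet the region strictly northwest of $b'$ and weakly southeast of $b$.

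Your argument instead uses the channel numbering $d_w^{SW}$ as a black box: monotonicity along a reverse path gives $d(b')-d(b)\geqslant k$, and the semi-periodicity $d(b+(n,n))=d(b)+m$ (which you correctly derive directly from the translation bijection on paths and the density of the channel, without appealing to Proposition~\ref{prop: period of proper numbering}) gives $d(b')-d(b)=tm$. This is logically sound at this point in the paper, since Propositions~\ref{prop:channel numbering defined on channel} and~\ref{prop:channel numbering defined} have already established that $d_w^{SW}$ is well-defined and proper. Your approach is cleaner as a write-up, packaging the combinatorics into properties of the numbering; the paper's approach is more self-contained, exposing the underlying pigeonhole mechanism again rather than invoking the numbering it was used to construct. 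Both ultimately rest on the same Dilworth decomposition.
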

\begin{proof}
The argument is exactly the same as in the proof of Proposition \ref{prop:channel numbering defined on channel}.
\end{proof}

\begin{lemma}
\label{lem:high flow implies channel}
 Suppose for some $t>0$ we have a path $(b_0,\dots, b_{tm})$ such that $b_{tm} = b_0 - t\cdot (n,n)$. Then for each $i$ there exists a channel $C$ such that $b_i\in C$. Moreover, $C$ may be chosen so that every ball of $C$ is a translate of some element of the path.
\end{lemma}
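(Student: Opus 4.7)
The plan is to produce, for each $b_i$ on the path, a maximum antichain $A \subseteq P_w$ containing $\varphi_w(b_i)$ all of whose elements are of the form $\varphi_w(b_j)$ for some path ball $b_j$. Then $C := \varphi_w^{-1}(A) \cap \B_w$ is a channel (the preimage of a maximum antichain) containing $b_i$, and every ball of $C$ is an $(n,n)$-translate of some path ball. The construction of $A$ is reduced to a decomposition question for the multiplicity function $\mu : P_w \to \Z_{\geqslant 0}$ defined by $\mu(\bar x) := \#\{i \in \{0,1,\dots,tm-1\} : \varphi_w(b_i) = \bar x\}$, after first extending the given path bi-infinitely via the rule $b_{i+tm} := b_i - t(n,n)$.

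First I would establish the bound $\sum_{\bar x \in c} \mu(\bar x) \leqslant t$ for every chain $c$ in $P_w$. The pigeonhole argument from the proof of Proposition \ref{prop:channel numbering defined on channel} applies essentially verbatim: the preimage of $c$ in $\B_w$ is a union of $(n,n)$-translates of a single ball chain $c'$ that is totally ordered in $\leqslant_{SW}$, and the region $R$ consisting of cells strictly northwest of $b_0$ and weakly southeast of $b_{tm}$ meets at most $t$ such translates. Since the path balls $b_1,\dots,b_{tm}$ all lie in $R$ and are pairwise $\leqslant_{SW}$-incomparable, each translate of $c'$ contains at most one of them. Using $\varphi_w(b_{tm}) = \varphi_w(b_0)$ to recast the count as one over $\mu$, and then summing over any Dilworth decomposition of $P_w$ into $m$ chains while using $\sum \mu = tm$, one forces the equality $\sum_{\bar x \in c_j} \mu(\bar x) = t$ along every Dilworth chain $c_j$.

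The remaining step is to decompose $\mu = \mathbf{1}_{A_1} + \dots + \mathbf{1}_{A_t}$ with each $A_k$ a maximum antichain of $P_w$. Such a decomposition exists because the chain polytope $\{\nu \geqslant 0 : \sum_{\bar x \in c} \nu(\bar x) \leqslant 1 \text{ for every chain } c\}$ is integral with vertices exactly the antichain indicators, and satisfies the integer decomposition property; the count $\sum_k |A_k| = tm$ combined with $|A_k| \leqslant m$ forces each $A_k$ to be maximum. Once the decomposition is in hand, every $\varphi_w(b_i)$ lies in the support of $\mu$ and hence in some $A_k$, and $\varphi_w^{-1}(A_k) \cap \B_w$ is the required channel. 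The main obstacle is the integer decomposition itself. A self-contained proof proceeds by induction on $t$: at each stage one extracts a maximum antichain $A_1 \subseteq \mathrm{supp}(\mu)$ meeting every \emph{saturated} chain (those attaining $\sum_c \mu = t$) and then applies the inductive hypothesis to $\mu - \mathbf{1}_{A_1}$. The existence of such $A_1$, which is the hardest combinatorial step, can be established by a Hall/Dilworth-type argument applied to an auxiliary bipartite incidence between saturated chains and elements of the support.
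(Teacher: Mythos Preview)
Your proposal is correct, but it is organized differently from the paper's proof.

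The paper first handles the case where the projections $\varphi_w(b_1),\dots,\varphi_w(b_{tm})$ are \emph{distinct}: these $tm$ elements then form a subposet of $P_w$ in which every chain has size at most $t$, so by Mirsky's theorem they decompose into $t$ antichains, each necessarily of size $m$. The general case is reduced to this by induction on $t$: if $\varphi_w(b_i)=\varphi_w(b_j)$ for some innermost pair $i<j$, then $b_j=b_i-t'(n,n)$ with $j-i=t'm$, the inner segment $(b_i,\dots,b_j)$ is itself a path of the required shape (with distinct projections), and splicing it out and retranslating yields a path with parameter $t-t'$, to which the inductive hypothesis applies.

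Your approach instead absorbs repetitions into the multiplicity function $\mu$ and appeals to the integer decomposition property of the chain polytope to write $\mu$ as a sum of $t$ antichain indicators. This is valid --- the chain polytope does have IDP; equivalently, Mirsky's theorem holds with multiplicities, as one sees by inflating each $x\in P_w$ into a chain of $\mu(x)$ copies and applying ordinary Mirsky to the inflated poset. Your route is more uniform (no case split, no induction on $t$) but leans on a heavier external fact, whereas the paper's induction is entirely elementary and self-contained. Your own sketch of a direct proof of the decomposition (extracting $A_1$ via a ``Hall/Dilworth-type argument'') is the vaguest part of your write-up; if you want to stay self-contained, the inflated-poset-plus-Mirsky argument is a cleaner way to close that step than the bipartite-incidence idea you outline.
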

\begin{proof}
First suppose that the projections of $b_1,\dots, b_{tm}$ are distinct elements of $P_w$. No more than $t$ of these may be in a single chain (as we have seen in the proof of Proposition \ref{prop:channel numbering defined on channel}), and each antichain has at most $m$ elements. Hence these $tm$ elements may be partitioned into $t$ disjoint antichains, each of size $m$. Then $b_0, \ldots, b_{tm}$ lie in the channels corresponding to these maximal antichains.

It remains to get rid of the restriction that the projections of $b_1,\dots, b_{tm}$ are distinct. The proof is by induction on $t$. The case $t=1$ works by the definition of a channel. Suppose $t>1$. By the previous paragraph, we can restrict to the case when there exist $1\leqslant i < j\leqslant tm$ such that $\varphi_w(b_i) = \varphi_w(b_j)$. Fix an innermost such pair of indices $i, j$, i.e. such that $b_{i+1},\dots, b_j$ project to distinct elements. Since $\varphi_w(b_i) = \varphi_w(b_j)$, we have $b_j = b_i-t'(n,n)$ for some $t'$. One can see that $j = i + t'm$; otherwise the path $p=(b_0,\dots, b_i, b_{j+1}+t'(n,n),\dots, b_{tm}+t'(n,n))$ would have enough elements to contradict Corollary \ref{cor:reinterpret m}. Thus the argument from the first paragraph shows that $b_i,\dots, b_j$ all lie on channels. That path $p$ has fewer than $tm$ steps, so by inductive assumption all elements of $p$ lie on channels. This finishes the proof.
\end{proof}

\subsection{General proper numberings}
\label{sec: proper numberings structure}

In this section we describe the structure of general proper numberings. First we point out that not every proper numbering is a channel numbering.

\begin{ex}
For the permutation $[6,1,8,3,10,5]$ there are only two channels. However, up to an overall shift, there are three proper numberings, shown Figure \ref{fig:proper nonunique}). The one on the left (resp. right) is the southwest (resp. northeast) channel numbering. The one in the middle is not a channel numbering.
\end{ex}

\begin{figure}
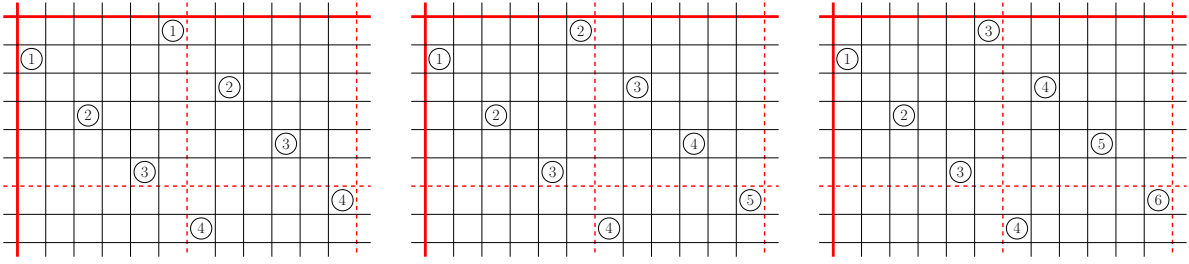

\centering
\resizebox{.3\textwidth}{!}{\input{figures/proper_nonunique1.pspdftex}}
\quad \resizebox{.3\textwidth}{!}{\input{figures/proper_nonunique2.pspdftex}}
\quad \resizebox{.3\textwidth}{!}{\input{figures/proper_nonunique3.pspdftex}}
\caption{Three proper numberings of $[6,1,8,3,10,5]$.}
\label{fig:proper nonunique}
\end{figure}

\begin{rmk}
\label{rmk:monotone is sometimes channel}
We can give a criterion of when a monotone numbering is a channel numbering. Consider a partial permutation $w$, a channel $C$, and its corresponding channel numbering $d_w^C$. Suppose we have another monotone numbering $d$ which coincides with $d_w^C$ on $C$. We will now show that for a ball $b$ we have $d(b) = d_w^C(b)$ precisely when there is a path $(b=b_0, b_1,\dots, b_l)$ to $C$ such that $d(b_{i+1}) = d(b_i)-1$. Indeed, since there necessarily exists a path $(b=b'_0, b'_1,\dots, b'_{l'})$ to $C$ such that $d_w^C(b_{i+1}) = d_w^C(b_i) - 1$, both inequalities $d_w^C(b)\leqslant d(b)$ and $d_w^C(b)\geqslant d(b)$ follow from the fact that the two numberings are monotone.
\end{rmk}

The next goal is to show that any proper numbering is semi-periodic with period equal to the width of the Shi poset. In this series of lemmas we assume that $w$ is a partial permutation, $m$ is the width of $P_w$, and $d$ is an arbitrary proper numbering. We begin with a lemma which proves one side of the inequality. 

\begin{lemma}
\label{lem:half of periodicity}
For any $b \in \B_w$ we have $$d(b - (n,n)) \geq d(b) - m.$$ 
\end{lemma}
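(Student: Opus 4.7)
The plan is to proceed by contradiction: suppose $d(b) - d(b - (n,n)) \geq m+1$. By iteratively applying the continuity property starting from $b$, I would construct a strictly-northwest path $b = b_0, b_1, \ldots, b_{m+1}$ with $d(b_i) = d(b) - i$ at every step. The standing assumption then becomes $d(b_{m+1}) = d(b) - (m+1) \leq d(b - (n,n))$, which I will combine with monotonicity and Corollary \ref{cor:reinterpret m} to derive a contradiction.

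I would then case-split on the position of $b_{m+1}$ relative to $b - (n,n)$. If $b_{m+1}$ is strictly northwest of $b - (n,n)$, monotonicity gives $d(b_{m+1}) < d(b - (n,n))$, which together with $d(b - (n,n)) \leq d(b_{m+1})$ yields the impossible $d(b_{m+1}) < d(b_{m+1})$. If $b_{m+1} = b - (n,n)$, then the full path is a forward path of length $m+1$ from $b$ to $b - (n,n)$, contradicting the length bound of $m$ in Corollary \ref{cor:reinterpret m} (applied to its reverse).

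The substantive case is when $b_{m+1}$ is strictly southeast of $b - (n,n)$. Then every $b_i$ for $0 \leq i \leq m+1$ is strictly southeast of $b_{m+1}$ (hence of $b - (n,n)$) and strictly northwest of or equal to $b$, so all $m+2$ balls sit in the closed $n \times n$ box $[\operatorname{row}(b) - n + 1, \operatorname{row}(b)] \times [\operatorname{col}(b) - n + 1, \operatorname{col}(b)]$. Their rows form $m+2$ strictly decreasing integers in a span of $n$ consecutive values, hence are pairwise distinct modulo $n$, so the projections $\varphi_w(b_0), \ldots, \varphi_w(b_{m+1})$ are $m+2$ distinct elements of $P_w$. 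For any $i < j$ one checks $r_i - r_j \leq n-1$ and $c_i - c_j \leq n-1$, both strictly less than $n$; a direct translation-class computation (writing $\leq_S$ in terms of $\leq_{SW}$ on translates and analyzing when $k n \in [r_j - r_i, c_j - c_i]$ admits an integer solution) then rules out both conditions in the definition of the Shi partial order, so the projections form an antichain of size $m+2$ in $P_w$, contradicting the fact that $P_w$ has width $m$.

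The hardest part will be handling the residual case in which $b_{m+1}$ is incomparable with $b - (n,n)$ — i.e.\ strictly northeast or strictly southwest of it. My plan here is to extend the continuity path beyond $b_{m+1}$ until the first index $i^*$ at which $b_{i^*}$ becomes weakly northwest of $b - (n,n)$ (such an index exists because rows and columns strictly decrease along the path), and then reapply the monotonicity/Corollary dichotomy at $b_{i^*}$. The obstacle is that monotonicity alone does not yet close the case when $b_{i^*}$ is a strict-NW non-translate of $b$; to finish, I would identify a subsequence of the extended path that still lies in an $n \times n$ box around some translate of $b$ and rerun the Shi-antichain contradiction there, using that any translates appearing among the $b_i$ are governed by Corollary \ref{cor:reinterpret m} so cannot collapse the distinctness of projections below $m+1$.
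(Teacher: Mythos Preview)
Your cases (1), (2), and (3) are correct: when $b_{m+1}$ is strictly northwest of $b-(n,n)$, equal to it, or strictly southeast of it, the argument closes cleanly (modulo the inequality direction typo in your ``standing assumption'' line---you want $d(b-(n,n)) \le d(b_{m+1})$, which you do use correctly afterward).

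The genuine gap is Case~(4), the incomparable case, and your proposed fix does not work. First, extending to the first index $i^*$ with $b_{i^*}$ weakly northwest of $b-(n,n)$ and then ``reapplying the dichotomy'' fails in the strict-NW subcase: monotonicity gives only $d(b)-i^* < d(b-(n,n)) \le d(b)-(m+1)$, i.e.\ $i^* > m+1$, which you already knew. Second, your plan to ``identify a subsequence of the extended path that lies in an $n\times n$ box'' is not generally possible: the continuity path can alternate between large row jumps and large column jumps, so no window of $m+2$ consecutive balls need have \emph{both} row span and column span at most $n-1$---and you need both to run the antichain argument (if only rows are confined, some pair can still have column difference $\ge n$ and become comparable in $P_w$). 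Third, your invocation of Corollary~\ref{cor:reinterpret m} to control translates among the $b_i$ does not deliver what you claim: it gives $j-i \le tm$ when $b_j = b_i - t(n,n)$, which does not prevent translates from appearing among any $m+1$ consecutive balls, so ``distinctness of projections below $m+1$'' is unjustified.

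The paper's proof avoids this by \emph{iterating} the violation rather than arguing in one shot. From the assumed gap at $b$, the continuity path of length $M^{(0)}>m$ lands at $b^{(1)}_0$ with $d(b^{(1)}_0)=d(b-(n,n))$; translating that path by $-(n,n)$ and using monotonicity shows the same gap persists at $b^{(1)}_0$. Repeating produces an infinite sequence $b^{(j)}_0$, each with a gap $>m$, and each $b^{(j)}_0$ lies strictly NE or strictly SW of $b^{(j-1)}_0-(n,n)$. A boundedness argument on the diagonals $a_j-b_j$ forces the NE/SW side to switch infinitely often; at a switch point one can splice together a path from $b^{(j)}_0$ to $b^{(j)}_0-(n,n)$ of length $>m$, contradicting Corollary~\ref{cor:reinterpret m}. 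This iteration-and-splice idea is precisely what is missing from your approach.
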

\begin{proof}
Toward a contradiction, assume that for some $b$ we have $d(b - (n,n)) < d(b) -
m.$ 
By continuity, one can find a path
 $$\left(b = b^{(0)}_0, b^{(0)}_1, \ldots, b^{(0)}_{M^{(0)}} = b^{(1)}_0\right)$$ 
such that $d\left(b^{(1)}_0\right) = d(b-(n,n))$ and for $0\leqslant i < M^{(0)}$ we have $d\left(b^{(0)}_{i+1}\right) = d\left(b^{(0)}_i\right)-1$. According to our assumption, $M^{(0)} > m$. 
\begin{figure}
\centering
\resizebox{.6\textwidth}{!}{\input{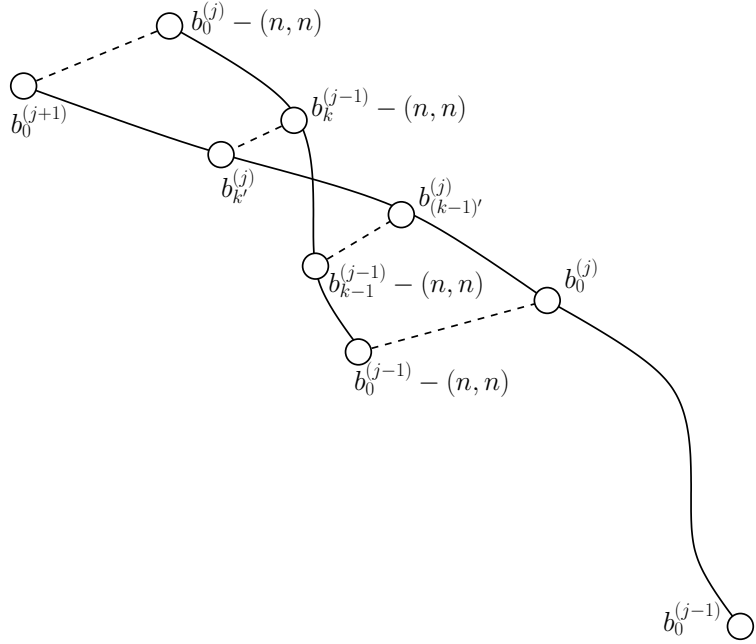}} 
\caption{Constructing a long path between $b^{(j)}_0$ and $b^{(j)}_0-(n,n)$.}
\label{fig:extra_proper_axiom}
\end{figure}

 Consider the path 
$$\left(b^{(0)}_0 -(n,n), b^{(0)}_1 - (n,n), \ldots,  b^{(1)}_0 - (n,n)\right).$$
By monotonicity, for each $i$ we have $d\left(b^{(0)}_{i+1} - (n,n)\right) \leq d\left(b^{(0)}_i - (n,n)\right) - 1$. Therefore,
\[d\left(b^{(1)}_0\right) - M^{(0)} = d\left(b^{(0)}_0 -(n,n)\right) - M^{(0)} \geq d\left(b^{(1)}_0 - (n,n)\right),\]
and
\[d\left(b^{(1)}_0\right)-d\left(b^{(1)}_0 -(n,n)\right)\geqslant M^{(0)} > m.\]

Thus, starting with a violation of the claim between balls $b^{(0)}_0$ and $b^{(0)}_0 -(n,n)$ we constructed another violation of the claim between balls $b^{(1)}_0$ and $b^{(1)}_0 -(n,n)$. Now we can repeat the same construction building a new pair
$b^{(2)}_0$ and $b^{(2)}_0 - (n,n)$ with $d\left(b^{(2)}_0\right)-d\left(b^{(2)}_0 -(n,n)\right) > m$, etc. 

 Let us compare locations of $b^{(j)}_0$ and  $b^{(j-1)}_0 - (n,n)$. First, they must be distinct. Indeed, otherwise the path we built between $b^{(j-1)}_0$ and $b^{(j-1)}_0 - (n,n)$ has length strictly greater than $m$, contradicting Corollary \ref{cor:reinterpret m}. Furthermore, $b^{(j)}_0$ and $b^{(j-1)}_0 - (n,n)$ must be incomparable in the $NW$ order, since the monotone numbering $d$ takes the same value at these balls. Therefore, $b^{(j)}_0$ is either strictly northeast or strictly southwest of  $b^{(j-1)}_0 - (n,n)$. We claim that there are infinitely many values of $j$ where it is strictly northeast and infinitely many where it is strictly southwest. Indeed, if this were not the case and $b^{(j)}_0 = (a_j, b_j)$, then the sequence $(a_j - b_j)$ would eventually be either increasing or decreasing. However, those differences take only a finite set of values since they are constant on translate classes of balls.

Therefore, there exists $j$ such that $b^{(j)}_0$ is northeast of $b^{(j-1)}_0 - (n,n)$ while $b^{(j+1)}_0$ is southwest of $b^{(j)}_0 - (n,n)$ (see Figure \ref{fig:extra_proper_axiom}). Consider the paths
$$p=\left(b^{(j)}_0, b^{(j)}_1, \ldots,  b^{(j)}_{M^{(j)}} = b^{(j+1)}_0 \right) \text{  and  }$$ 
$$q=\left(b^{(j-1)}_0-(n,n), b^{(j-1)}_1-(n,n), \ldots, b^{(j-1)}_{M^{(j-1)}} = b^{(j)}_0 - (n,n)\right).$$ 
Note that the balls of $p$ are numbered (by $d$) by consecutive integers. For each $0\leqslant i\leqslant M^{(j-1)}$, let $i'$ be such that $d\left(b^{(j-1)}_i - (n,n)\right) = d\left(b^{(j)}_{i'}\right)$. Let $k$ be the first index such that $b^{(j-1)}_k - (n,n)$ lies strictly northeast of $b^{(j)}_{k'}$. Now notice that $b^{(j)}_{(k-1)'}$ is strictly south of $b^{(j)}_{k'}$ which is strictly south of $b^{(j-1)}_k - (n,n)$. Also, by the choice of $k$, $b^{(j)}_{(k-1)'}$ is east of $b^{(j-1)}_{k-1} - (n,n)$ which is strictly east of $b^{(j-1)}_k - (n,n)$. So $b^{(j)}_{(k-1)'}$ is strictly southeast of $b^{(j-1)}_k - (n,n)$ and we have a path
$$\left(b^{(j)}_0, \ldots, b^{(j)}_{(k-1)'}, b^{(j-1)}_k -(n,n), \ldots, b^{(j-1)}_{M^{(j-1)}}-(n,n) = b^{(j)}_{0}-(n,n)\right).$$ 
The length of this path is at least equal to the length of $q$, which is strictly greater than $m$, contradicting Corollary \ref{cor:reinterpret m}.
\end{proof}

It is now easy to deduce the desired semi-periodicity on the channels.
\begin{cor}
\label{cor:proper numbers channels contiguosly}
For any channel $C \in \C_w$ and any $b\in C$ we have 
$$d(b - (n,n)) = d(b) - m.$$ 
Moreover, $d$ numbers $C$ by consecutive integers.
\end{cor}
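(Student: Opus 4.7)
My plan is to combine Lemma \ref{lem:half of periodicity} with a direct monotonicity count along the channel itself. The inequality from Lemma \ref{lem:half of periodicity} already gives $d(b-(n,n)) \geq d(b) - m$ for every ball of $w$; what remains is to prove the matching upper bound when $b$ lies on a channel, and to read off the consecutive-integer numbering as a free byproduct.

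First I would use the definition of a channel. Since $C$ has density $m$ and is a chain in $\leqslant_{SE}$ invariant under $(n,n)$-translation, the balls of $C$ lying strictly between $b-(n,n)$ and $b$ in the chain order are precisely one representative from each of the $m-1$ translation classes of $C$ other than the class of $b$. Listing them together with the endpoints gives a sequence $(b = c_0, c_1, \dots, c_{m-1}, c_m = b-(n,n))$ in which each $c_{i+1}$ lies strictly northwest of $c_i$. In other words, this is a path of length exactly $m$ from $b$ to $b-(n,n)$.

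Next I would invoke monotonicity of the proper numbering $d$ along this path: each northwest step strictly decreases $d$, so
\[ d(b-(n,n)) = d(c_m) \leqslant d(c_{m-1}) - 1 \leqslant \cdots \leqslant d(c_0) - m = d(b) - m. \]
Combined with Lemma \ref{lem:half of periodicity}, this forces $d(b-(n,n)) = d(b) - m$, which is the first claim.

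Finally, since the total drop from $d(b)$ to $d(b-(n,n))$ over the $m$ strictly monotone steps equals exactly $m$, each individual step must drop $d$ by exactly $1$. Hence $d(c_i) = d(b) - i$ for $0 \leqslant i \leqslant m$, and iterating the periodicity relation $d(b-(n,n)) = d(b)-m$ extends this to all translates. Therefore $d$ assigns consecutive integers to the balls of $C$ listed along the chain, proving the second claim. I do not anticipate a real obstacle here; the substantive work was carried out in Lemma \ref{lem:half of periodicity}, and the present argument is essentially a pigeonhole squeeze between that lemma and monotonicity.
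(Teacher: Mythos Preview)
Your proof is correct and follows essentially the same approach as the paper: both combine Lemma \ref{lem:half of periodicity} with the monotonicity of $d$ along the length-$m$ path in $C$ from $b$ to $b-(n,n)$, then read off the consecutive-integer claim from the forced equality at each step. Your version is slightly more explicit about why the path has exactly $m$ steps, but the argument is the same.
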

\begin{proof}
Indeed, $d(b - (n,n)) \leq d(b) - m$ since the path $p$ from $b$ to $b - (n,n)$ along $C$ has $m$ steps and the value of $d$ must decrease with each step. Now Lemma \ref{lem:half of periodicity} gives us the opposite inequality. This proves the equality and shows that, in fact, $d$ must decrease by exactly $1$ with every step of $p$. Since $b$ was an arbitrary ball of the channel, this implies that $d$ numbers $C$ by consecutive integers.
\end{proof}
The second part of this statement is exactly the assertion of Lemma \ref{lem:proper numbers channels}\label{pf:proper numbers channels}.

In the next result we show that the semi-periodicity is broken at most finitely many times. 

\begin{lemma}
\label{lem:finitely many gaps}
There are at most finitely many $b\in\B_w$ such that 
\[d(b - (n,n)) > d(b) - m.\]
\end{lemma}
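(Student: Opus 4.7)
The plan is to reduce to a per-translate-class statement, then exploit a telescoping identity together with a uniform lower bound supplied by a long reverse path through a channel. First I will observe that $\B_w$ contains at most $n$ distinct $(n,n)$-translate classes, so it suffices to fix a ball $b_0 \in \B_w$ and show that for $b_k := b_0 + k(n,n)$ the quantities
\[
f_k := d(b_{k-1}) - d(b_k) + m
\]
vanish for all but finitely many $k \in \Z$. Lemma~\ref{lem:half of periodicity} guarantees $f_k \geq 0$, and the telescoping identity
\[
\sum_{k=N_1+1}^{N_2} f_k = d(b_{N_1}) - d(b_{N_2}) + (N_2 - N_1)m
\]
reduces the task to establishing a uniform upper bound on these partial sums, that is, to proving $d(b_{N_2}) - d(b_{N_1}) \geq (N_2 - N_1)m - K$ for some constant $K$ independent of $N_1, N_2$.

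To produce this bound I will construct a long reverse path from $b_{N_1}$ to $b_{N_2}$ using any fixed channel $C \in \C_w$. Because $C$ is $(n,n)$-translation invariant, I can select $c_1 \in C$ strictly southeast of $b_{N_1}$ whose offset $c_1 - b_{N_1}$ lies in a bounded box depending only on $b_0$ and $C$. Letting $M$ be the largest integer for which $c_1 + M(n,n)$ is still strictly northwest of $b_{N_2}$, this choice forces $M \geq (N_2 - N_1) - O(1)$. By definition $C$ has density $m$, so traversing $C$ from $c_1$ through every intermediate channel ball up to $c_1 + M(n,n)$ produces a reverse path of exactly $Mm$ steps; prepending the single step from $b_{N_1}$ to $c_1$ and appending the step from $c_1 + M(n,n)$ to $b_{N_2}$ yields a reverse path of total length $Mm + 2 \geq (N_2 - N_1)m - K$. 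Since $d$ is monotone and integer valued, it strictly increases by at least $1$ along each step of a reverse path, so $d(b_{N_2}) - d(b_{N_1}) \geq Mm + 2 \geq (N_2 - N_1)m - K$, as required.

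The hard part will be the path construction: one must verify that the deficit between $Mm + 2$ and $(N_2 - N_1)m$ remains bounded uniformly in $N_1$ and $N_2$, so that the boundary steps near $b_{N_1}$ and $b_{N_2}$ do not scale with the separation. This amounts to checking that $c_1$ can always be chosen within a fundamental $(n,n)$-box southeast of $b_{N_1}$, which follows from translation invariance of $C$. Once that bound is in hand the rest is purely formal: a non-negative integer-valued sequence whose partial sums over arbitrary intervals stay bounded must vanish outside a finite subset of $\Z$, finishing the per-class argument and hence the lemma.
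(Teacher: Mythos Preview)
Your argument is correct and follows essentially the same strategy as the paper: both reduce to a single translate class and use a channel to produce the lower bound $d(b_{N_2}) - d(b_{N_1}) \geq (N_2 - N_1)m - O(1)$, which forces the nonnegative gap sequence $f_k$ to have bounded (hence finitely supported) total. The only cosmetic difference is that the paper invokes Corollary~\ref{cor:proper numbers channels contiguosly} (so that $d$ already has exact period $m$ on the channel) and then applies monotonicity once to compare $b+k(n,n)$ with a nearby channel ball, whereas you build an explicit reverse path through the channel and apply monotonicity step by step; your version thus avoids citing that corollary at the cost of a slightly longer path construction.
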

\begin{proof}
 For $b \in \B_w$, we say that there is a \emph{gap} before $b$ if we have $d(b - (n,n)) > d(b) - m$. We will show that for each $b$, only finitely many balls in its translation class have a gap before them. Since there are finitely many translation classes of balls in $w$, this is all we need for the lemma.

Toward a contradiction, assume that there are infinitely many gaps before balls in the translation class of $b$. We will assume that infinitely many of them lie southeast of $b$; the northwest case is similar. Consider an arbitrary channel $C \in \C_w$, and let $\bar b \in C$ be such that $d\left(\bar b\right) = d(b)$ (it exists by Corollary \ref{cor:proper numbers channels contiguosly}). There exists $\ell$ such that $b$ is southeast of $\bar b - \ell(n, n)$. Choose $k$ large enough that more than $\ell m$ translates of $b$ which lie strictly southeast of $b$ and northwest of $b + k(n,n)$ have gaps before them.  We arrive at a contradiction with monotonicity:
$$d(b + k(n, n)) < d(b) + km - \ell m = d\left(\bar b\right) + km - \ell m = d\left(\bar b  + (k-\ell)(n, n)\right),$$ 
while $b + k(n, n)$ is southeast of $\bar b  + (k-\ell)(n, n).$ 
\end{proof}

Now we can finish the proof of semi-periodicity of $d$.

\begin{proof}[Proof of Proposition \ref{prop: period of proper numbering}]
\label{pf:period of proper numbering}
Choose $b\in\B_w$. We want to prove that $d(b - (n,n)) = d(b) - m$. We know by Lemma \ref{lem:half of periodicity}
that $d(b - (n,n)) \geq d(b) - m.$ 

By continuity we can build an infinite path
$$\left(b_0 = b - (n,n), b_1, b_2, \dotsc\right)$$ 
with $d(b_{i+1}) = d(b_i)-1$. By Lemma \ref{lem:finitely many gaps}, there exists $K$ such that for all $i>K$ we have $d(b_i-(n,n)) = d(b_i)-m$. By the Pigeonhole Principle, there exist $j>i>K$ such that $b_i$ and $b_j$ are translates. So
$b_i = b_j + k(n,n)$, where $k = \frac{j-i}{m}$, and $d(b_i) - d(b_j) = j-i$. Consider the following path:
$$\left(b_0 + k(n, n), b_1 + k(n, n),  \dotsc,  b_j + k(n, n) = b_i.\right)$$
By monotonicity, we must have 
\[d(b_0 + k(n, n)) \geq j + d(b_i) = j + d(b_j) + (j-i) = d(b_0) + (j-i)= d(b_0) + km.\]
Now suppose we had $d(b_0) > d(b_0 + (n,n)) - m.$ Then repeatedly using Lemma \ref{lem:half of periodicity} would yield
\[d(b_0) > d(b_0 + (n,n)) - m\geqslant d(b_0 + 2(n,n)) - 2m\geqslant\ldots\geqslant d(b_0 + k(n,n)) - km.\]
This is a contradiction, so we must have $d(b_0) = d(b_0 + (n,n)) - m$.
\end{proof}

Continuing our study of the structure theory of proper numberings, we show that a proper numbering is determined by its value on the channels.
\begin{proof}[Proof of Proposition \ref{prop:proper determined by channels}]
\label{pf:proper determined by channels}
Choose a ball $b$, and form a path $p=(b_0=b,b_1,\dots)$ such that and $d(b_{i+1}) = d(b_i) - 1$. By pigeonhole principle, the projections of two balls of $p$ must coincide. Thus by Lemma \ref{lem:high flow implies channel}, $p$ necessarily leads to some channel. Thus $d(b)$ is the worth of a path from $b$ to some channel. However, since the value of $d$ must decrease at each step of a path, $d(b)$ is greater than or equal to the worth of any path from $b$ to any channel. Hence $d(b)$ is equal to the maximal worth of a path from $b$ to one of the channels. The same statement can be made about $d'$. Hence $d = d'$.
\end{proof}

\subsection{Distance between channels and rivers}

In this section we are concerned with the notion of distance between channels from Definition \ref{def:distance between channels}. The main result of the section is the proof of Proposition \ref{prop:h is a pseudometirc} which states that distance is a pseudometric on the set of channels. We then discuss when a numbering of all channels of $w$ extends to a proper numbering. We do not need this last discussion for future results in this paper, however it finishes our classification of proper numberings. The notion of distance will be more profoundly relevant when studying the weight $\rho(w)$.

We start with several preliminary lemmas.
\begin{lemma}
\label{lem:if nonzero distance then southwest comparable}
Suppose for some $C_1, C_2\in\C_w$, neither $C_1$ is southwest of $C_2$ nor $C_2$ is southwest of $C_1$. Then $\h(C_1, C_2) = 0$.
\end{lemma}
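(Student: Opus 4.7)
The plan is to exhibit a single ball $b'\in C_2$ at which the two channel numberings $d^{C_1}$ and $d^{C_2}$, once their shifts are normalized to coincide on $C_1$, already agree. Since both $d^{C_1}$ and $d^{C_2}$ are proper numberings, Corollary~\ref{cor:proper numbers channels contiguosly} implies that each numbers $C_2$ by consecutive integers, so agreement at a single ball of $C_2$ forces agreement on all of $C_2$, which is precisely $\h(C_1,C_2)=0$.

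The first step is to form the southwest channel $C_{SW}$ of $C_1\cup C_2$, which is an honest channel by the argument following Lemma~\ref{lem:minimal elements of maximal antichains}. Writing $A_i=\varphi_w(C_i)$ for $i=1,2$, the antichain corresponding to $C_{SW}$ is $A_{\min}$, the set of minimal elements of $A_1\cup A_2$. The key combinatorial observation is that, under the incomparability hypothesis, $A_{\min}$ meets both $A_1$ and $A_2$. Indeed, Dilworth's theorem writes $P_w$ as a disjoint union of $m$ chains, each $A_i$ selects exactly one element per chain, and the antichain ordering $A_1\leqslant A_2$ on $\A_w$ is equivalent to chainwise domination. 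Since neither $A_1\leqslant A_2$ nor $A_2\leqslant A_1$, some chain has an $A_1$-element strictly above the $A_2$-element (so $A_{\min}$ picks the $A_2$-element from that chain), while some other chain has the opposite inequality (so $A_{\min}$ picks an $A_1$-element there). Pulling back by $\varphi_w$ yields balls $b\in C_1\cap C_{SW}$ and $b'\in C_2\cap C_{SW}$.

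The final step is to compare the two numberings along $C_{SW}$. Since any proper numbering numbers any channel by consecutive integers, if $k$ denotes the signed number of steps from $b$ to $b'$ along $C_{SW}$, then
\[
d^{C_1}(b')-d^{C_1}(b)=k=d^{C_2}(b')-d^{C_2}(b).
\]
The normalization gives $d^{C_1}(b)=d^{C_2}(b)$ because $b\in C_1$, so $d^{C_1}(b')=d^{C_2}(b')$; as $b'\in C_2$, this yields $\h(C_1,C_2)=0$.

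The main delicate point is translating the geometric incomparability hypothesis on $C_1$ and $C_2$ into the combinatorial assertion that $A_{\min}$ intersects both $A_1$ and $A_2$. This is where the paper's remark that the two formulations of "southwest" really are equivalent has to be invoked (once in each direction), because the bare definition $\forall a\in C_1,\exists b\in C_2$ with $b\geqslant_{NE}a$ does not on its face give what we need. Once this is in hand, the remainder of the argument is essentially formal manipulation with proper numberings and Corollary~\ref{cor:proper numbers channels contiguosly}.
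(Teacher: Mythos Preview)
Your proof is correct and follows essentially the same approach as the paper: form the southwest channel $C_{SW}$ of $C_1\cup C_2$, observe that the incomparability hypothesis forces $C_{SW}$ to intersect both $C_1$ and $C_2$, and then use that any proper numbering numbers any channel by consecutive integers to conclude. The paper's proof is terser, but the content is the same; your version spells out more carefully why $A_{\min}$ meets both $A_1$ and $A_2$ and makes the final numbering computation explicit.
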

\begin{proof}
Let $A_1$ and $A_2$ be the maximal antichains corresponding to $C_1$ and $C_2$, respectively. By Lemma \ref{lem:minimal elements of maximal antichains}, the set of minimal elements of $A_1\cup A_2$ is a maximal antichain. Hence the set $C$ of southwest-most elements of $C_1\cup C_2$ forms a channel. Since neither of the channels is southwest of the other, $C$ intersects both $C_1$ and $C_2$. Since each channel is numbered by consecutive integers in any proper numbering, it follows easily that $\h(C_1, C_2) = 0$.
\end{proof}
\begin{lemma}
\label{lem:zero distance implies same channel numberings}
Suppose for some $C_1, C_2\in\C_w$, $\h(C_1, C_2) = 0$. If we choose the shifts of $d^{C_1}$ and $d^{C_2}$ so they coincide on $C_1$, then $d^{C_1} = d^{C_2}$.
\end{lemma}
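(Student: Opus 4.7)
My plan is to prove the two-sided inequality $d^{C_1}(b) = d^{C_2}(b)$ for every ball $b$, using max-worth paths to $C_1$ and $C_2$ respectively, together with monotonicity.

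First, I would verify the hypothesis is sensible: both $d^{C_1}$ and $d^{C_2}$ are proper numberings (by the proposition stating channel numberings are proper), and by Lemma~\ref{lem:proper numbers channels} their restrictions to $C_1$ are proper numberings of the channel $C_1$, hence differ by a global shift. So one can indeed choose shifts making $d^{C_1}=d^{C_2}$ on all of $C_1$. Under this choice, the hypothesis $\h(C_1,C_2)=0$ plus the fact that $d^{C_1}|_{C_2}$ and $d^{C_2}|_{C_2}$ also differ by a constant (again by Lemma~\ref{lem:proper numbers channels}) gives the stronger statement $d^{C_1}=d^{C_2}$ on all of $C_2$.

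Next, the core step. Fix any ball $b\in\B_w$ and pick a path $(b=b_0,b_1,\dots,b_k)$ from $b$ to $C_1$ of maximal worth with respect to $d^{C_1}$; such a path exists by Proposition~\ref{prop:channel numbering defined}. Monotonicity of $d^{C_1}$ forces $d^{C_1}(b_i)\geqslant d^{C_1}(b_0)-i$ along any path, while maximality forces equality, so $d^{C_1}(b_{i+1})=d^{C_1}(b_i)-1$ for all $i$. Applying monotonicity of the second numbering step-by-step yields $d^{C_2}(b_k)\leqslant d^{C_2}(b)-k$. Since $b_k\in C_1$ and $d^{C_1}=d^{C_2}$ on $C_1$, we get
\[
d^{C_1}(b) \;=\; d^{C_1}(b_k)+k \;=\; d^{C_2}(b_k)+k \;\leqslant\; d^{C_2}(b).
\]
Now repeat the argument with the roles of $C_1$ and $C_2$ swapped, using a max-worth path from $b$ to $C_2$ and the fact that $d^{C_1}=d^{C_2}$ on $C_2$; this produces $d^{C_2}(b)\leqslant d^{C_1}(b)$. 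Combining the two inequalities gives the desired equality.

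The only subtle point is making sure the two agreement conditions ($d^{C_1}=d^{C_2}$ on $C_1$ and on $C_2$) are both available simultaneously after fixing a single shift for each numbering; this is essentially the content of the first paragraph, and is the main thing to be careful about when writing the proof. Once that is set up, the rest is a short symmetric ``sandwich'' using monotonicity applied to max-worth paths, with no need to invoke Proposition~\ref{prop:proper determined by channels}.
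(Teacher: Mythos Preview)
Your proof is correct, and it is a genuinely different route from the paper's. The paper argues one-sidedly: it builds a single concatenated path from $b$ first to $C_1$ (with $d^{C_1}$ decreasing by $1$) and then onward to $C_2$ (with $d^{C_2}$ decreasing by $1$), shows that $d^{C_1}$ also drops by exactly $1$ on the second leg by pinning down its values at both endpoints, and then invokes Remark~\ref{rmk:monotone is sometimes channel} for $d^{C_1}$ viewed as a monotone numbering agreeing with $d^{C_2}$ on $C_2$. You instead run a symmetric sandwich: one max-worth path to $C_1$ gives $d^{C_1}(b)\le d^{C_2}(b)$, and a second max-worth path to $C_2$ gives the reverse inequality. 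This avoids Remark~\ref{rmk:monotone is sometimes channel} entirely and uses only monotonicity, at the cost of constructing two paths rather than one. Your careful setup in the first paragraph---that after fixing shifts to agree on $C_1$, the hypothesis $\h(C_1,C_2)=0$ forces agreement on $C_2$ as well---is exactly what is needed to make the symmetric argument go through; in fact agreement on $C_2$ follows directly from Definition~\ref{def:distance between channels}, so you do not even need to invoke Lemma~\ref{lem:proper numbers channels} there.
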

\begin{proof}
Let $b\in\B_w$. Choose a path $(b_0 = b, b_1,\ldots, b_k)$ from $b$ to $C_1$ such that $d^{C_1}$ decreases by $1$ at each step. Now by choice of shifts of the channel numberings, we have $d^{C_2}(b_k) = d^{C_1}(b_k)$ so there exists a path $p=(b_k, b_{k+1},\ldots, b_l)$ from $b_k$ to $C_2$ with $d^{C_2}$ decreasing by $1$ at each step. But since $\h(C_1, C_2) = 0$, we have $d^{C_2}(b_l) = d^{C_1}(b_l)$. Now since $d^{C_1}$ must have decreased at each step of $p$ and the values at both endpoints coincide with those of $d^{C_2}$, we conclude that $d^{C_1}$ decreases by $1$ at each step of $p$. Now Remark \ref{rmk:monotone is sometimes channel} applied to the path $(b=b_0, \ldots, b_l)$ shows that $d^{C_1}(b) = d^{C_2}(b)$. Since $b$ was arbitrary, we have $d^{C_1} = d^{C_2}$.
\end{proof}
\begin{rmk}
\label{rmk:distance direction}
Suppose $C_1, C_2 \in\C_w$. Choose the shifts of $d^{C_1}$ and $d^{C_2}$ so the two numberings coincide on $C_1$. Then for any $b\in C_2$ we have $d^{C_2}(b)\geqslant d^{C_1}(b)$. In fact, for any proper numbering $d$ which coincides with $d^{C_1}$ on $C_1$ and for any ball $b$, we must have $d(b)\geqslant d^{C_1}(b)$. This follows from considering the path from $b$ to $C_1$ with $d^{C_1}$ decreasing by $1$ at each step and noting that $d$ must decrease by at least $1$ on each step. This implies that $\h(C_1,C_2) = d^{C_2}(b)-d^{C_1}(b)$.
\end{rmk}
\begin{lemma}
\label{lem:channel numbering determined by closest channel}
Suppose $C_1,C_2\in\C_w$ and $C_1$ is southwest of $C_2$. Choose shifts of $d^{C_1}$ and $d^{C_2}$ so that they coincide on $C_1$. Then for any $b\in B_w$ which is southwest of $C_1$, we have $d^{C_1}(b) = d^{C_2}(b)$.
\end{lemma}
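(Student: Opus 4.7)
The plan is to prove both inequalities $d^{C_1}(b) \leqslant d^{C_2}(b)$ and $d^{C_2}(b) \leqslant d^{C_1}(b)$.

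The first is immediate from Remark~\ref{rmk:distance direction}: since $d^{C_2}$ is a proper numbering coinciding with $d^{C_1}$ on $C_1$ (by the choice of shifts), we have $d^{C_2}(b') \geqslant d^{C_1}(b')$ for every ball $b'$, in particular for the given $b$. No hypothesis on the location of $b$ is needed here.

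For the reverse inequality $d^{C_1}(b) \geqslant d^{C_2}(b)$, I would apply Remark~\ref{rmk:monotone is sometimes channel}: since $d^{C_2}$ is monotone (being proper) and agrees with $d^{C_1}$ on $C_1$, the equality $d^{C_2}(b) = d^{C_1}(b)$ holds if and only if there exists a path $(b = b_0, b_1, \ldots, b_\ell)$ from $b$ to $C_1$ along which $d^{C_2}$ decreases by exactly $1$ at each step. To construct such a path, I would start with a path $p = (b = b_0, b_1, \ldots, b_k)$ that realizes $d^{C_2}(b)$, so that $b_k \in C_2$ and $d^{C_2}(b_j) = d^{C_2}(b) - j$. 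If some intermediate $b_j$ already lies on $C_1$, truncate $p$ at that step and we are done. Otherwise, since $b_0$ is southwest of $C_1$ while $b_k \in C_2$ is northeast of $C_1$ (because $C_1$ is southwest of $C_2$ and therefore every element of $C_2$ has some element of $C_1$ weakly southwest of it), there is a first index $i \geqslant 1$ at which $b_i$ fails to be southwest of $C_1$. At this transition, I would reroute the tail by replacing $b_i, \ldots, b_k$ with a continuation from $b_{i-1}$ aimed at $C_1$, using continuity of $d^{C_2}$ to pick successive balls with $d^{C_2}$-values $d^{C_2}(b_{i-1}) - 1, d^{C_2}(b_{i-1}) - 2, \ldots$, selecting a ball on $C_1$ whenever available.

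The main obstacle is verifying that this rerouting actually terminates on $C_1$. The crux is the geometric observation that $b_{i-1}$ being southwest of some $c \in C_1$, combined with the staircase structure of $C_1$ and the monotonicity and semi-periodicity of $d^{C_2}$, forces the ball of $C_1$ with $\tilde{d}^{C_1}$-value equal to $d^{C_2}(b_{i-1}) - 1$ (or a suitable $(n,n)$-translate) to lie strictly northwest of $b_{i-1}$. If a single step does not suffice, intermediate balls of $w$ obtained from continuity bridge the gap, and the extension eventually lands on $C_1$. The worth of the resulting path with respect to $C_1$ is then $\tilde{d}^{C_1}(b_\ell) + \ell = d^{C_2}(b)$, giving the desired inequality $d^{C_1}(b) \geqslant d^{C_2}(b)$ and completing the proof.
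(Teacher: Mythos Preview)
Your first inequality is fine and, as you note, is even cleaner via Remark~\ref{rmk:distance direction} than the paper's explicit concatenation of two paths.

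The second inequality has a genuine gap, which you yourself flag. Rerouting via continuity of $d^{C_2}$ gives you \emph{some} ball northwest with $d^{C_2}$-value one less, but nothing forces that ball to lie on $C_1$, or even to stay southwest of $C_1$; your ``geometric observation'' about the staircase structure forcing the appropriate $C_1$-ball to be northwest of $b_{i-1}$ is asserted rather than proved, and the final sentence about intermediate balls bridging the gap is not an argument.

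The paper's fix is a different and more robust idea: rather than rerouting toward $C_1$, it \emph{splices a full period of $C_1$ into the existing path}. At the transition step (where $b_l$ is southwest of $C_1$ and $b_{l+1}$ is northeast), one first checks that no ball of $C_1$ fits strictly between $b_l$ and $b_{l+1}$ in the northwest order (else it could be inserted into $p$, contradicting optimality). Hence the relevant balls $c', c$ of $C_1$ flanking $b_l$ are \emph{consecutive} along $C_1$. Then one walks from $c'$ along $C_1$ for $m$ steps to $c-(n,n)$ and continues with the $(n,n)$-shifted tail $b_{l+1}-(n,n),\ldots,b_k-(n,n)$. The $m$ extra steps are exactly compensated by the drop of $m$ in $\tilde d$ from the $(n,n)$-shift, so the worth is unchanged and the new path visibly meets $C_1$. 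This periodicity trick is the missing ingredient in your argument.
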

\begin{proof}
First we show that $d^{C_1}(b) \leqslant d^{C_2}(b)$. Choose a path $(b=b_0, \ldots, b_k)$ from $b$ to $C_1$ such that $d^{C_1}$ decreases by $1$ at each step. So $d^{C_2}(b_k) = d^{C_1}(b_k) = d^{C_1}(b) - k$. Now choose a path $(b_k,b_{k+1}, \ldots, b_l)$ from $b_k$ to $C_2$ such that $d^{C_2}$ decreases by $1$ at each step. So $d^{C_2}(b_l) = d^{C_2}(b_k) - l = d^{C_1}(b) - k - l$. The path $(b=b_0, \ldots, b_l)$ has $k+l$ steps and its end is numbered $d^{C_1}(b) - k - l$ by $d^{C_2}$. By definition, $d^{C_2}(b)\geqslant d^{C_1}(b)$.

Now we show that $d^{C_1}(b) \geqslant d^{C_2}(b)$. Suppose $p=(b=b_0, \ldots, b_k)$ from $b$ to $C_2$ such that $d^{C_2}$ decreases by $1$ at each step. If $p$ intersects $C_1$ then following $p$ from $b$ until the intersection gives a path of worth $d^{C_2}(b)$ (relative to $C_1$) thus proving the inequality. We will now show that one could, in fact, have chosen $p$ which intersects $C_1$.

Suppose $p$ does not intersect $C_1$. Choose $l$ such that $b_l$ is southwest of $C_1$ and $b_{l+1}$ is northeast of $C_1$. No ball of $C_1$ is located both southeast of $b_{l+1}$ and northwest of $b_l$ since such a ball could be added to $p$ to increase its worth with respect to $d^{C_2}$. Let $c$ be the northwest-most ball of $C_1$ which is northeast of $b_l$ and $c'$ be the southeast-most ball of $C_1$ which is southwest of $b_l$. Then $c$ and $c'$ are consecutive elements of $C_1$. Let $(c' = c_0, c_1, \ldots, c_m = c-(n,n))$ be the path along $C_1$. Then the path $(b=b_0,\ldots, b_l, c_0,\ldots,c_m, b_{l+1}-(n,n), \ldots, b_k-(n,n))$ is a path from $b$ to $C_2$ of worth $d^{C_2}(b)$ and it intersects $C_1$.
\end{proof}

Now we can show that distance is a pseudometric on channels.
\begin{proof}[Proof of Proposition \ref{prop:h is a pseudometirc}]
\label{pf:h is a pseudometirc}
It is clear that $\h$ is nonnegative and symmetric, so we only need to check the triangle inequality. Suppose $C_1,C_2$, and $C_3$ are channels. We want to show that $\h(C_1, C_3) \leqslant \h(C_1, C_2) + \h(C_2, C_3)$. 

First consider the case when one of the pairwise distances is $0$. If $\h(C_1, C_3) = 0$, then the statement is clear since $\h$ is non-negative. Since $C_1$ and $C_3$ are interchangeable, without loss of generality assume $\h(C_1, C_2) = 0$. By Lemma, \ref{lem:zero distance implies same channel numberings}, $d^{C_1} = d^{C_2}$. So $\h(C_1, C_3) = \h(C_2, C_3)$ since the quantities only depend on the respective channel numberings.

Now move on to the case when no pairwise distances are $0$; thus the channels $C_1, C_2, C_3$ are pairwise comparable in the southwest order (by Lemma \ref{lem:if nonzero distance then southwest comparable}). Transposing in the main diagonal does not change any of the relevant quantities, so without loss of generality we assume that $C_2$ is not the southwest of the three channels. Since $C_1$ and $C_3$ are interchangeable, again without loss of generality we assume that $C_1$ is southwest of $C_3$. Thus there are two cases depending on whether $C_2$ is southwest or northeast of $C_3$. Suppose $C_2$ is southwest of $C_3$.

Choose shifts of the three channel numberings so that $d^{C_1}$ and $d^{C_2}$ agree on $C_1$ while $d^{C_2}$ and $d^{C_3}$ agree on $C_2$. Let $b\in C_2$ and $b'\in C_3$. Then $\h(C_2,C_3) = d^{C_3}(b') - d^{C_2}(b')$ (see Remark \ref{rmk:distance direction}). Note that by Lemma \ref{lem:channel numbering determined by closest channel},  $d^{C_1}$ and $d^{C_3}$ agree on $C_1$. Thus $\h(C_1,C_3) = d^{C_3}(b') - d^{C_1}(b').$ Also note that $d^{C_2}$ coincides with $d^{C_1} + h(C_1,C_2)$ (i.e. the numbering which numbers a ball $b$ by $d^{C_1}(b)+ h(C_1,C_2)$) on $C_2$. Applying the transpose of Lemma \ref{lem:channel numbering determined by closest channel} shows that $d^{C_1}(b') + h(C_1,C_2) = d^{C_2}(b').$ Now if , then we have $h(C_1,C_2) = d^{C_2}(b) - d^{C_1}(b)$, $h(C_2,C_3) = d^{C_3}(b') - d^{C_2}(b')$. So  
\[\h(C_1,C_2) + \h(C_2,C_3) = \h(C_1,C_2) + d^{C_3}(b') - d^{C_2}(b') = d^{C_3}(b') - d^{C_1}(b') = \h(C_1,C_3),\]
and the desired inequality is actually an equality.

If $C_2$ is the northeast channel of the three, then the same argument as above shows that  $\h(C_1, C_2) = \h(C_1, C_3) + \h(C_3, C_2)$, and a strict inequality holds.
\end{proof}

A pseudometric on a set naturally partitions it into equivalence classes, which in this case we call rivers. Now we describe which numberings of channels can be extended to proper numberings of $w$.

\begin{defn}
\label{def:consistent}
Suppose $\tilde{d}:\bigcup\limits_{C\in\C_w} C \to\Z$ is a numbering which restricts to a proper numbering on each channel. We say that $\tilde{d}$ is \emph{consistent on channels} if for any $C_1, C_2\in\C_w$ and some (equiv. every) $b\in C_2$ we have 
\begin{equation}
\label{eqn:consistency}
d^{C_1}(b) \leqslant \tilde{d}(b)\leqslant d^{C_1}(b) + \h(C_1, C_2),
\end{equation}
where the shift of $d^{C_1}$ is chosen so that $d^{C_1}$ and $\tilde{d}$ coincide on $C_1$. 
\end{defn}

\begin{prop}
Suppose $w$ is a partial permutation and $\tilde{d}:\bigcup\limits_{C\in\C_w} C\to\Z$. Then there exists a proper numbering which restricts to $\tilde{d}$ if and only if $\tilde{d}$ is consistent on channels.
\end{prop}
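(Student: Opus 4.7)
The plan is to handle the two directions separately. For necessity, suppose $d$ is a proper numbering extending $\tilde{d}$. Fix channels $C_1, C_2$ and a ball $b \in C_2$, and choose the shift of $d^{C_1}$ so that $d^{C_1} = \tilde{d}$ on $C_1$. Remark \ref{rmk:distance direction} applied to $d$ with reference channel $C_1$ gives $d(x) \geqslant d^{C_1}(x)$ for all $x \in \B_w$, so $\tilde{d}(b) = d(b) \geqslant d^{C_1}(b)$, which is the lower bound in (\ref{eqn:consistency}). For the upper bound, shift $d^{C_2}$ so that $d^{C_2} = \tilde{d}$ on $C_2$; the same remark applied with reference channel $C_2$ yields $d^{C_2}(b') \leqslant d(b') = d^{C_1}(b')$ for $b' \in C_1$. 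Comparing with the ``standard'' shift of $d^{C_2}$ (the one making $d^{C_2}$ coincide with $d^{C_1}$ on $C_1$, under which $d^{C_2}_{\mathrm{std}} = d^{C_1} + \h(C_1, C_2)$ on $C_2$), our chosen shift is therefore at most the standard one, and hence $\tilde{d}(b) = d^{C_2}(b) \leqslant d^{C_1}(b) + \h(C_1, C_2)$.

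For sufficiency, assume $\tilde{d}$ is consistent on channels. For each channel $C \in \C_w$ let $d^C$ denote the channel numbering, with its shift chosen so that $d^C = \tilde{d}$ on $C$, and define
$$d(b) := \max_{C \in \C_w} d^C(b), \qquad b \in \B_w.$$
Three things must be verified: the maximum is attained, $d$ restricts to $\tilde{d}$ on channels, and $d$ is a proper numbering. For attainment, I would observe that if $C_1, C_2$ lie in the same river then $\h(C_1, C_2) = 0$, so the consistency condition forces $\tilde{d} = d^{C_1}$ on $C_2$; combined with Lemma \ref{lem:zero distance implies same channel numberings}, this shows $d^{C_1} = d^{C_2}$ as functions on $\B_w$, so the maximum effectively ranges over the finite set of rivers.

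To check the extension property, for $b' \in C_0$ the summand with $C = C_0$ contributes $d^{C_0}(b') = \tilde{d}(b')$, while applying consistency with $C_1 = C$ and $C_2 = C_0$ gives $d^C(b') \leqslant \tilde{d}(b')$ for every other channel $C$, so the maximum equals $\tilde{d}(b')$. For properness of $d$: monotonicity follows because each $d^C$ is monotone, and the uniform strict inequality $d^C(b) \leqslant d^C(b') - 1$ for $b$ strictly northwest of $b'$ survives the maximum; continuity at $b$ follows by selecting a channel $C_0$ achieving $d(b) = d^{C_0}(b)$, invoking properness of $d^{C_0}$ to obtain $b''$ strictly northwest of $b$ with $d^{C_0}(b'') = d^{C_0}(b) - 1$, and then sandwiching $d(b) - 1 = d^{C_0}(b'') \leqslant d(b'') \leqslant d(b) - 1$ via the already-established monotonicity.

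The main obstacle I anticipate is ensuring the maximum defining $d$ is actually attained: this requires extracting from consistency (specifically the case $\h(C_1, C_2) = 0$) the fact that the shifted channel numberings within a single river coincide. Once this finiteness is in hand, the remaining verifications are straightforward applications of Remark \ref{rmk:distance direction}, the properness of individual channel numberings, and basic manipulation of maxima.
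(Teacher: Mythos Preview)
Your proof is correct and follows essentially the same approach as the paper. For sufficiency, your construction $d(b) = \max_{C} d^C(b)$ is literally the same function as the paper's $d(b) = \sup_{(b_0=b,\ldots,b_k)} \tilde{d}(b_k) + k$ over paths to channels, since $d^C(b)$ is exactly that supremum restricted to paths ending in $C$; you simply package it differently and are more explicit than the paper about checking attainment (via finiteness of rivers), properness, and the extension property. For necessity, your use of Remark~\ref{rmk:distance direction} and the shift comparison is equivalent to the paper's direct path-worth argument, as the remark is itself proved via such paths. One tiny point: in your continuity check you invoke ``$b''$ strictly northwest of $b$'', whereas continuity of $d^{C_0}$ only gives weakly northwest; this is harmless here because distinct balls of a partial permutation never share a row or column, but it is worth making that step explicit.
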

\begin{proof}
First, we will show that the condition of being consistent on channels is necessary. Suppose $d$ is a proper numbering and $C_1, C_2\in\C_w$ and $b\in C_2$. Choose the shifts of $d^{C_1}$ and $d^{C_2}$ to coincide with $d$ on $C_1$. Consider a path of maximal worth (with respect to $d^{C_1}$) from $b$ to $C_1$. The value of $d$ must decrease at every step, and the two numberings coincide on $C_1$, so $d^{C_1}(b) \leqslant d(b)$. 

There exists $b'\in C_1$ and a path $p$ from $b'$ to $C_2$, ending at $b$, such that $d^{C_2}$ decreases by $1$ at each step. The value of $d$ must decrease at every step of $p$ and $d(b') = d^{C_2}(b')$, so $d(b) \leqslant d^{C_2}(b)$. By Remark \ref{rmk:distance direction}, $d^{C_2}(b) = d^{C_1}(b) + \h(C_1, C_2)$. This demonstrates that the restriction of $d$ to the union of channels is consistent on channels.

Now suppose $\tilde{d}$ is consistent on channels. Then for any $b$ let 
\[d(b) := \sup_{(b_0 = b, b_1, \dots, b_k)} \tilde{d}(b_k) + k,\]
where the supremum is taken over all paths from $b$ to channels of $w$. The resulting numbering is always well defined and proper regardless of whether $\tilde{d}$ satisfies \eqref{eqn:consistency}. The inequalities on $\tilde{d}$ ensure that for any $b$ in any channel we have $d(b) = \tilde{d}(b)$, i.e. that $d$ restricts to $\tilde{d}$ on the channels. 
\end{proof}

\begin{rmk}
The two inequalities in the Definition \ref{def:consistent} are superfluous in that one of them is obtained from the other by reversing the roles of $C_1$ and $C_2$. The reason we presented them in this way was to make an algorithmic construction of all proper numberings possible, as follows. Fix a proper numbering on the southwest channel $C_1$. Find the southwest channel $C_2$ that does not intersect $C_1$. Assign to it any of the proper numberings consistent with $C_1$. Find the southwest channel $C_3$ that does not intersect $C_1$ or $C_2$. Assign to it any numbering that is consistent with both $C_1$ and $C_2$. Repeat until no new channel can be found; this will define a numbering consistent on channels.
\end{rmk}

\subsection{Stabilization}
\label{sec:stabilization of numberings}

In this section we show that if we start from an arbitrary point numbering balls as in the non-affine Matrix-Ball Construction, then  eventually we will get a proper numbering.

First, we describe what we mean by the above numbering scheme. For each $z\in P_w$ pick a ball $b_z$ in $\varphi^{-1}(z)$; only the balls $\{b_z + k(n,n): z\in P_w, k\geqslant 0\}$ will be numbered. For example balls below a horizontal line between the $0$-th and $1$-st rows would do. Number these balls as in non-affine MBC; an example is shown in Figure \ref{fig:cutoff numbering example}. Denote the resulting numbering by $d_{cut}$.

\begin{figure}
\centering
\resizebox{.6\textwidth}{!}{\input{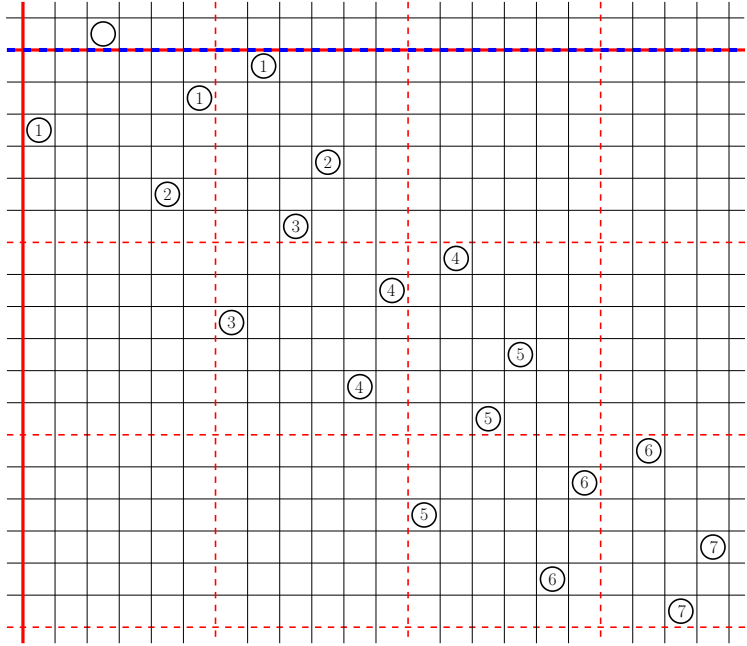}}
\caption{An example of the cutoff numbering $d_{cut}$. The cutoff line is the dashed blue line between the $0$-th and $1$-st rows. We observe semi-periodicity for balls numbered at least $c = 4$.}
\label{fig:cutoff numbering example}
\end{figure}

It is clear that the resulting numbering satisfies the Monotonicity and Continuity properties for balls for which $d_{cut} > 1$ (balls numbered $1$ do not have balls numbered $0$ northwest of them). In fact, even more is true.
\begin{thm}
\label{thm:stabilization to proper}
Given a choice of $d_{cut}$, there exists $c\in\Z^{> 0}$ such that for $b\in \B_w$ with $d_{cut}(b)\geqslant c$, we have $d_{cut}(b+(n,n)) = d_{cut}(b)+m$.
\end{thm}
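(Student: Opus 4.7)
The plan is to compare $d_{cut}$ to the southwest channel numbering $\bar d := d_w^{SW}$, which by Proposition \ref{prop: period of proper numbering} satisfies the exact semi-periodicity $\bar d(b+(n,n)) = \bar d(b) + m$. Shifting $\bar d$ so that the minimum of $\bar d$ over \emph{frontier balls} (numbered balls $c$ with $d_{cut}(c) = 1$) is $1$---well-defined because any translate $c+(n,n)$ of a frontier ball has $c$ itself as a numbered NW-predecessor and hence is not frontier---the standard monotonicity argument on a longest NW-chain of numbered balls ending at $b$ gives $d_{cut}(b) \leq \bar d(b)$, so $g(b) := \bar d(b) - d_{cut}(b) \geq 0$. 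The goal is to show that on each $(n,n)$-translation orbit of numbered balls, the sequence $g_k := g(b_0 + k(n,n))$ is eventually constant; then $d_{cut}(b+(n,n)) = d_{cut}(b) + m$ follows by subtracting the equation for $\bar d$.

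The central technical ingredient is the analog of Lemma \ref{lem:half of periodicity} for $d_{cut}$: for every numbered $b$, $d_{cut}(b+(n,n)) \leq d_{cut}(b) + m$. The proof mirrors the proof of Lemma \ref{lem:half of periodicity}, which uses only monotonicity and continuity of the numbering; $d_{cut}$ enjoys both through the MBC rule $d_{cut}(b) = 1 + \max_{b' <_{NW} b,\;\text{numbered}} d_{cut}(b')$, which automatically exhibits, for any numbered $b$ with $d_{cut}(b) \geq 2$, a numbered NW-predecessor whose $d_{cut}$-value is one less. Assuming the bound fails, the same iterative construction of NW-decreasing $d_{cut}$-paths used in the original lemma produces a chain between a ball and one of its $(n,n)$-translates of length strictly more than $m$, contradicting Corollary \ref{cor:reinterpret m}. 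All intermediate balls in these paths have $d_{cut}$-value at least $d_{cut}(b - (n,n)) \geq 1$, hence remain numbered throughout. Rearranging yields $g_{k+1} - g_k = m - (d_{cut}(b_0+(k+1)(n,n)) - d_{cut}(b_0+k(n,n))) \geq 0$, so $g_k$ is nondecreasing along each orbit.

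To conclude stabilization I also need $g_k$ bounded above for large $k$. For this I walk in the opposite direction: starting at $b$ and using continuity of $\bar d$, at each step I pick a NW-predecessor with $\bar d$-value one less. For $b$ sufficiently deep, every ball of $\B_w$ with $\bar d$-value $\bar d(b) - 1$ sits at a $(n,n)$-translate index $\geq 0$ within its class (because $\bar d$-values grow linearly in the translate index), so all such balls are numbered, and in particular the pred supplied by continuity is numbered. Iterating until the descent reaches a frontier ball yields $d_{cut}(b) \geq \bar d(b) - \bar d_{\max}^{\mathrm{frontier}} + 1$, i.e., $g(b) \leq \bar d_{\max}^{\mathrm{frontier}} - 1$. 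Combining everything: on each orbit $[b_0]$, $g_k$ is nonnegative, integer-valued, nondecreasing, and eventually bounded above, hence eventually constant at some index $K([b_0])$. Taking $K := \max_{[b_0]} K([b_0])$ (finite, as there are only finitely many orbits) and $c := 1 + \max\{d_{cut}(b_0 + k(n,n)) : [b_0],\; 0 \leq k < K([b_0])\}$ produces the required threshold: any numbered $b$ with $d_{cut}(b) \geq c$ lies at a stabilized depth and therefore satisfies $d_{cut}(b+(n,n)) = d_{cut}(b) + m$. The hard parts are the analog of Lemma \ref{lem:half of periodicity} and the guarantee that numbered $\bar d$-descending chains exist for deep $b$; the surrounding telescoping is routine.
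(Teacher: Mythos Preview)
Your approach is genuinely different from the paper's, but the central step has a real gap. You claim that the proof of Lemma~\ref{lem:half of periodicity} carries over verbatim to establish $d_{cut}(b+(n,n)) \leq d_{cut}(b)+m$ for every numbered $b$. It does not. That proof works by an unbounded iteration: from a violation at $b_0^{(0)}$ it passes to $b_0^{(1)}$ with $d(b_0^{(1)}) = d(b_0^{(0)}-(n,n))$, and the passage uses monotonicity along the \emph{translated} path $(b_i^{(0)}-(n,n))_i$ to produce a new violation; the contradiction only appears after arguing that infinitely many of the $b_0^{(j)}$ satisfy a certain NE/SW alternation. For $d_{cut}$ two things fail. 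First, the translated balls $b_i^{(j)}-(n,n)$ need not lie in the numbered region at all---nothing forces them to sit at translate index $\geq 0$---so $d_{cut}$ is undefined there and the monotonicity step collapses. Second, the values $d_{cut}(b_0^{(j)})$ strictly decrease and are bounded below by $1$, so the sequence must terminate before any infinite-alternation argument can be invoked. Your sentence ``all intermediate balls in these paths have $d_{cut}$-value at least $d_{cut}(b-(n,n))\geq 1$, hence remain numbered throughout'' conflates two issues: having $d_{cut}\geq 1$ is automatic for numbered balls, but the translated balls in the iteration may simply not be numbered.

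Without that inequality you lose monotonicity of $g_k$ along orbits, and your (correct) upper bound from $\bar d$-descending chains gives only boundedness, which does not force eventual constancy of an integer sequence. The paper avoids this entirely by a pigeonhole argument: each level set $d_{cut}^{-1}(i)$ is an NE-chain, there are only finitely many NE-chains of balls up to $(n,n)$-translation, hence two level sets $d_{cut}^{-1}(c)$ and $d_{cut}^{-1}(c')$ must be $t(n,n)$-translates of one another; the recursive MBC rule then forces $d_{cut}(b+t(n,n))=d_{cut}(b)+(c'-c)$ for all $d_{cut}(b)\geq c$, and extending this to a proper numbering lets Proposition~\ref{prop: period of proper numbering} identify the period as $m$. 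If you want to salvage your route, you would need an independent proof of $d_{cut}(b+(n,n))\leq d_{cut}(b)+m$ (or at least its eventual version) that does not rely on the infinite descent in Lemma~\ref{lem:half of periodicity}.
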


\begin{proof}

Notice that in the numbering $d_{cut}$, any pair of balls with the same number are related in the NE partial order. Modulo translation by multiples of $(n,n)$, there are finitely many collections of balls which form chains in the NE partial order. Hence there are numbers $c,c',t\in \Z^{>0}$ such that $d_{cut}^{-1}(c')$ is formed by translating $d_{cut}^{-1}(c)$ by $t(n,n)$. Let $m' = c'-c$. From the definition of the non-affine Matrix-Ball numbering, one sees that if $d_{cut}(b) \geqslant c$, then $d_{cut}(b + t(n,n)) = d_{cut}(b) + m'$. Consider a new numbering $\tilde{d}$ which coincides with $d_{cut}$ on balls numbered (by $d_{cut}$) $c$ or above, and is extended to the remaining balls according to the rule $\tilde{d}(b-t(n,n)) = \tilde{d}(b)-m'$. It is clear that $\tilde{d}$ is monotone and continuous, i.e. it is a proper numbering. By Proposition \ref{prop: period of proper numbering}, it is semi-periodic with period $m$.
\end{proof}

Thus while $d_{cut}$ may behave erratically at the beginning, eventually it coincides with a proper numbering. 

\section{Tabloids}
In this section we prove two interesting results about the tabloids obtained in AMBC. When describing the algorithm we used the southwest channel numbering at every step. The first result is that, in fact, if we use an arbitrary proper numbering at every step, then the tabloids $P(w)$ and $Q(w)$ will be the same. The second result is that the tabloid $P(w)$ is actually the same as the tabloid obtained from Shi's algorithm.

\subsection{Independence of the proper numbering choices}
\label{sec: independence of proper numbering}

As part of the argument, we will be inserting via regular Robinson-Schensted algorithm larger and larger pieces of an affine permutation. We start by introducing necessary notation.

\subsubsection{Partial non-affine permutations}
A \emph{partial non-affine permutations} (PNAP) is a sequence $(a_1,\dots, a_l)$ where each $a_i$ belongs to $\Z\cup\{\varnothing\}$ and all the integers present in the sequence are distinct. The integer $l$ will be referred to as the \emph{length} of the PNAP. We can apply the Robinson-Schensted bumping algorithm to such a sequence: every time we run into a $\varnothing$ we don't do anything to the tableaux, but increment the next entry to be added to the recording tableau. We can also view a PNAP as a matrix in our usual way and apply the (non-affine) MBC. The tableaux resulting from the two algorithms must coincide; this reduces to the statement that MBC provides an implementation of the Robinson-Schensted correspondence. For a partial permutation $w$, let $a_w^k$ be the PNAP $(w(1), w(2),\dots, w(kn))$.

We say that a sequence $(f_k)_k$ of PNAPs is \emph{uniformly bounded} if there exist a constant $N$ such that for every $k$, the diagonal of every ball of $f_k$ is between $-N$ and $N$. All the sequences of PNAPs we deal with (for example, $(a_w^k)_k$) will be uniformly bounded, so we will often not bother mentioning it. We next give a basic result about uniform boundedness which will be used repeatedly in the rest of the section.

\begin{lemma}
\label{lem:bounded height}
Suppose $(f_k)_k$ is a uniformly bounded sequence of PNAPs. Then the number of rows of $P(f_k)$ is bounded above by a constant independent of $k$.
\end{lemma}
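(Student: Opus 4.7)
The plan is to reduce to Schensted's classical theorem and then exploit the diagonal bound. First I would observe that the $\varnothing$ entries of a PNAP do not create any boxes in the insertion tableau; they merely advance the recording index. Hence $P(f_k)$ coincides with the ordinary Robinson--Schensted insertion tableau of the subsequence of integer entries of $f_k$, which is a finite sequence of distinct integers. By Schensted's theorem, the number of rows of this tableau equals the length $\ell_k$ of a longest strictly decreasing subsequence of the integer entries of $f_k$.

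Next I would translate this length into the matrix picture used throughout the paper. A strictly decreasing subsequence with indices $i_1 < i_2 < \cdots < i_{\ell_k}$ and values $f_k(i_1) > f_k(i_2) > \cdots > f_k(i_{\ell_k})$ corresponds to a chain of balls $b_j = (i_j, f_k(i_j))$, each strictly south and strictly west of the previous one. The key elementary observation is that the diagonal $f_k(i_j) - i_j$ strictly decreases along such a chain, and in fact drops by at least $2$ at each step, since $i_{j+1} - i_j \geqslant 1$ and $f_k(i_j) - f_k(i_{j+1}) \geqslant 1$ work in opposite directions.

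Finally, by the uniform boundedness hypothesis every ball of every $f_k$ has diagonal in $[-N,N]$. A sequence of integers in this interval that decreases by at least $2$ at each step has at most $N+1$ terms, so $\ell_k \leqslant N+1$ for every $k$, which is the desired uniform bound.

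I do not anticipate a real obstacle here: the whole argument is Schensted's theorem plus the trivial fact that strictly southwest motion decreases the diagonal. The only mild care needed is verifying that $\varnothing$ entries contribute nothing to $P(f_k)$, which is immediate from the bumping definition applied to a PNAP.
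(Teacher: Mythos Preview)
Your proof is correct and gives the same bound $N+1$, but it takes a different route from the paper. The paper argues entirely inside the Matrix-Ball Construction: each ball of $\fw{f_k}$ is strictly east of one ball of $f_k$ and strictly south of another, so the diagonal range shrinks from $[-N,N]$ to $[-N+1,N-1]$ after one pass, and hence to the empty range after $N+1$ passes. You instead invoke Schensted's theorem to identify the number of rows with the length of a longest decreasing subsequence, and then bound that length by the elementary observation that strictly southwest steps drop the diagonal by at least $2$.

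Both arguments are short and yield the same constant. Your approach has the advantage of being immediately recognizable to anyone who knows Schensted's theorem, and it avoids tracking what happens under iterated $\operatorname{fw}$. The paper's approach has the advantage of being self-contained within the MBC framework already set up, and of reinforcing the picture, used repeatedly later, that each forward step moves balls strictly inward in the diagonal direction. Either proof is perfectly acceptable here.
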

\begin{proof}
Let $N$ be as the uniform bound on the diagonals of $f_k$, as in the definition. Since every ball of $\fw{f_k}$ lies strictly directly east of some ball of $f_k$ and strictly directly south of some other ball, the balls of $\fw{f_k}$ must lie (weakly) between diagonals $- N+1$ and $N-1$. So after $N+1$ steps of MBC, no balls will be left. Hence $P(f_k)$ has at most $N+1$ rows.
\end{proof}

\subsubsection{Stabilization}
The idea of the proof of independence of the proper numbering choices is the following. We take $a_w^k$ for larger and larger values of $k$ and apply MBC. It turns out that the residue classes of the entries by which the tableaux grow with each additional step eventually stabilize (Lemma \ref{lem:stablizing}). However, we will also show that if we do AMBC with arbitrary proper numbering choices, then the resulting tabloids are the same as in the above stabilization. Thus the tabloids do not depend on the numbering choices. In this section we introduce the relevant stabilization phenomena.

\begin{defn}
A sequence $(f_k)_k$ of PNAPs is called \emph{stable} if for $k$ large enough, the balls in $\B_{f_{k+1}}\setminus\B_{f_k}$ are obtained from the balls in $\B_{f_{k}}\setminus\B_{f_{k-1}}$ by translation by $(n,n)$.
\end{defn}
Note that the sequence $(a_w^k)_k$ is manifestly stable. A stable sequence of PNAPs naturally defined an affine partial permutation; namely the one whose balls are the translates of $\B_{f_{k}}\setminus\B_{f_{k-1}}$ once $k$ is large. We will refer to this partial permutation as the \emph{underlying partial permutation} of $(f_k)_k$.

\begin{defn}
Suppose $(f_k)_{k}$ is a sequence of PNAPs and $i\in\Z^{>0}$. Let $x^{i,k}_1,x^{i,k}_2,\ldots x^{i,k}_l$ be the entries of the $i$-th row of $P(f_k)_i$. Define $S^{i,k}$ to be the multiset $\left\{\ol{x^{i,k}_1},\ldots,\ol{x^{i,k}_l}\right\}$. We say that $(f_k)_k$ \emph{$P$-stabilizes in row $i$ to a set $S\subseteq [\ol{n}]$} if for sufficiently large $k$ we have $S^{i,k-1}\subseteq S^{i,k}$ and  $S^{i,k}\setminus S^{i,k-1} = S$.
\end{defn}
While we could have let $S$ be a multiset in the above definition, it turns out that it will be a set in all cases of interest.

\begin{lemma}
\label{lem:stablizing}
A stable sequence of PNAPs $(f_k)_{k}$ $P$-stabilizes in every row.
\end{lemma}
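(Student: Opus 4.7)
The plan is to leverage the eventual semi-periodicity of the non-affine MBC numbering on a stable sequence, and then induct on the row index $i$ using the fact that the $i$-th row of $P(f_k)$ is the first row of $P$ applied to $\fw{}$ iterated $i-1$ times.

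First I would introduce the ``limiting'' ball-set $W := \bigcup_k \B_{f_k}$ together with its non-affine MBC numbering $d_\infty$. Because each ball has only finitely many balls of $W$ strictly to its northwest (which follows from stability together with the fact that balls in the same translate class form an NE-chain), $d_\infty$ is well-defined, and for sufficiently large $k$ it agrees with the usual MBC numbering $d_{f_k}$ on $\B_{f_k}$: adding further balls of $\B_{f_{k'}}\setminus \B_{f_k}$ for $k'>k$ does not change the number of any ball northwest of these new balls, and the stability hypothesis guarantees that eventually newly added balls lie in positions from which they cannot disturb the numbering of old ones. Then I would adapt the argument of Theorem~\ref{thm:stabilization to proper} to $W$: balls sharing a given $d_\infty$-value form an NE-chain, and modulo $(n,n)$-translation there are only finitely many such chains, so there exist constants $c$ and $m$ (the width of the Shi poset of the underlying partial permutation $w$) such that whenever $d_\infty(b)\geqslant c$ and $b+(n,n)\in W$ one has $d_\infty(b+(n,n))=d_\infty(b)+m$.

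Next, for $j\geqslant c$ the eventual periodicity forces the zig-zag $Z_{j+m}$ (whose inner corner-posts are the balls numbered $j+m$) to be the $(n,n)$-translate of $Z_j$, and hence its back corner-post and its outer corner-posts are $(n,n)$-translates of those of $Z_j$. Two conclusions follow. First, the outer corner-posts of the new zig-zags appearing when passing from $f_{k-1}$ to $f_k$ are $(n,n)$-translates of those appearing when passing from $f_{k-2}$ to $f_{k-1}$; since $\B_{\fw{f_k}}$ consists of outer corner-posts, this shows $(\fw{f_k})_k$ is again a stable sequence (its underlying partial permutation is $\fw{w}$). Second, the back corner-posts of the new zig-zags have column coordinates that are $(n,n)$-translates of those of the previous batch, so their residues modulo $n$ form a fixed multiset; this is exactly the statement that $S^{1,k}\setminus S^{1,k-1}$ eventually stabilizes, i.e.\ $(f_k)_k$ $P$-stabilizes in row $1$.

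Finally, since the $i$-th row of $P(f_k)$ is the first row of $P$ applied to $\fw^{\,i-1}(f_k)$, and since each iterate $(\fw^{\,i-1}(f_k))_k$ is stable by repeated application of the previous paragraph, $P$-stabilization in every row follows by induction on $i$. Lemma~\ref{lem:bounded height} provides a uniform bound on the number of nonempty rows, so the induction terminates after finitely many steps. The main obstacle is the first step: verifying that $d_\infty$ on the infinite set $W$ is genuinely semi-periodic past some threshold. The proof of Theorem~\ref{thm:stabilization to proper} was phrased for cutoff sets of the specific form $\{b_z+k(n,n): z\in P_w,\, k\geqslant 0\}$, and $W$ need not have this form; however, the pigeonhole argument (finitely many $(n,n)$-translate classes of NE-chains, applied to level sets of $d_\infty$) adapts without essential change, and this is what I would need to spell out with care.
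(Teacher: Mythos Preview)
Your approach is essentially the paper's: both invoke the pigeonhole argument behind Theorem~\ref{thm:stabilization to proper} to get eventual semi-periodicity of the MBC numbering, deduce row-$1$ stabilization, observe that $(\fw{f_k})_k$ is again stable, and induct. Your device of packaging the numberings into a single $d_\infty$ on $W=\bigcup_k\B_{f_k}$ is harmless (in fact $d_\infty$ agrees with $d_{f_k}$ on $\B_{f_k}$ for \emph{every} $k$, since newly added balls always lie in new southern rows and hence are never northwest of existing balls).

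There is, however, a gap in your row-$1$ argument. You write that the back corner-posts of the ``new zig-zags appearing when passing from $f_{k-1}$ to $f_k$'' are $(n,n)$-translates of the previous batch, and identify this with $S^{1,k}\setminus S^{1,k-1}$. But the new balls do not only create new zig-zags: they also join \emph{existing} zig-zags (those $Z_j$ with $j$ already attained in $\B_{f_{k-1}}$), and when they do they can become the new southwest ball of $Z_j$ and create new outer corner-posts there. So neither $S^{1,k}\setminus S^{1,k-1}$ nor $\B_{\fw{f_k}}\setminus\B_{\fw{f_{k-1}}}$ is captured by looking only at the newly created $j$-values. The same issue appears in your argument that $(\fw{f_k})_k$ is stable.

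The fix is to use periodicity more globally: once $k$ is large and $j$ exceeds a fixed threshold, the entire level set $d_\infty^{-1}(j)\cap\B_{f_k}$ equals $\bigl(d_\infty^{-1}(j-m)\cap\B_{f_{k-1}}\bigr)+(n,n)$, so the whole zig-zag $Z_j$ in $f_k$ is the $(n,n)$-translate of $Z_{j-m}$ in $f_{k-1}$. This gives a bijection between \emph{all} above-threshold zig-zags of $f_k$ and of $f_{k-1}$, matching southwest balls, back corner-posts, and outer corner-posts alike. From that bijection, both $S^{1,k-1}\subseteq S^{1,k}$ and the constancy of $S^{1,k}\setminus S^{1,k-1}$ follow, and so does stability of $(\fw{f_k})_k$. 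The paper's proof handles the containment $S^{1,k-1}\subseteq S^{1,k}$ by precisely this mechanism, phrased as: if the southwest ball $b$ of $Z_j$ in $f_{k-1}$ is displaced in $f_k$, then its translate $b+(n,n)$ reappears as the southwest ball of $Z_{j+m}$ in $f_k$.
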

\begin{proof}
For all $k$, let $d_k$ be the MBC numbering of $f_k$. Let $w$ be the underlying partial permutation of $(f_k)_k$. By the proof of Theorem \ref{thm:stabilization to proper} (the theorem itself technically does not apply since it was stated for cutoff numberings of affine permutations), there exists $M$ such that for $k$ sufficiently large, for every $b\in\B_{f_k}$ with $d_k(b) > M$, if $b + (n,n)$ also belongs to $\B_{f_k}$ then $d_k(b+(n,n)) = d_k(b)+m$, where $m$ is the channel density of $w$. Choose $k$ large enough that the above is true and $d_k$ takes values larger than $M$ on all balls of $\B_{f_k}\setminus \B_{f_{k-1}}$. Let $d$ be the proper numbering of $w$ such that for every $b\in\B_{f_k}$ with $d_k(b)>M$, we have $d(b) = d_k(b)$.

Recall that the first row of of $P(f_k)$ consists of the column indices of the southwest balls of each zig-zag. We can see that $S^{i,k-1}\subseteq S^{i,k}$. Indeed, if a the southwest ball $b$ of a zig-zag of $f_{k-1}$ is not the southwest ball of the corresponding zig-zag of $f_{k}$, then the translate of $b$ in $\B_{f_k}\setminus \B_{f_{k-1}}$ contributes the same element to $S^{i,k}$ as $b$ contributed to $S^{i,k-1}$. Furthermore, we can see that $S^{i,k}\setminus S^{i,k-1}$ is just the set of residue classes of columns of southwest balls of the zig-zags of $w$ corresponding to $d$.

Hence the sequence $P$-stabilizes in row $1$. Since the MBC numbering stabilizes to a proper numbering in the sense explained in the first paragraph, the sequence $(\fw{f_k})_k$ is stable. The fact that this sequence $P$-stabilizes in row $1$ is exactly the statement that $(f_k)_{k}$ $P$-stabilizes row $2$. By Lemma \ref{lem:bounded height}, the number of rows is uniformly bounded, so we can repeat this argument to show $P$-stabilization in all rows.
\end{proof}

\subsubsection{Criterion for equality of $P$-stabilizations}

We would like to show that the $P$-stabilization of $(a_w^k)_k$ coincides, row by row, with the tabloid we get from AMBC with arbitrary proper numbering choices. The way we will do this is interpret the AMBC tabloid as a $P$-stabilization of a sequence of PNAPs related to $(a_w^k)_{k}$, and then use a criterion for when two sequences of PNAPs $P$-stabilize to the same subset of $[\ol{n}]$.

\begin{defn}
\label{def:asymptotically alike}
Suppose $f = (f_k)_k$ and $g = (g_k)_k$ are two sequences of PNAPs. Moreover assume that for each $k$, the size of $f_k$ (i.e. the number of elements in the sequence) is the same as the size of $g_k$; call this number $l_k$. Then $f$ and $g$ are \emph{asymptotically alike} if 
there exists $M\in\Z^{\geqslant 0}$ (independent of $k$) such that for $M<i<l_k-M+1$ we have $f_k(i) = g_k(i)$.
\end{defn}
\begin{defn}
For skew tableaux (resp. tabloids) $T_1, T_2$, define $\h(T_1,T_2)$ to be the number of integers which lie in different rows in $T_1$ and $T_2$. If an integer is only in one of the tableaux (resp. tabloids) then it is considered to lie in different rows; if an integer is not in either of the tableaux (resp. tabloids) then it is considered to lie in the same row. 
\end{defn}
\begin{rmk}
One readily sees that $\h$ is a pseudometric on the set of skew tableaux and a metric on the set of tabloids. 
\end{rmk}

\begin{lemma}
\label{lem:asymptotically the same}
Let $(f_k)_k$, $(g_k)_k$ be uniformly bounded sequences of PNAPs which are asymptotically alike. Then $\h(P(f_k),P(g_k))$ is bounded above by a constant independent of $k$.
\end{lemma}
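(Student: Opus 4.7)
The strategy is to interpolate between $f_k$ and $g_k$ by a bounded number of single-entry modifications, and then to show that each modification changes $P$ by a bounded amount. By the hypothesis of asymptotic alikeness, there is a fixed constant $M$ (independent of $k$) such that $f_k(i) = g_k(i)$ for all $M < i \leq l_k - M$, so $f_k$ and $g_k$ differ in at most $2M$ positions, confined to the ``head'' $\{1,\ldots,M\}$ and the ``tail'' $\{l_k - M + 1, \ldots, l_k\}$. I would construct an interpolating sequence $f_k = w_k^{(0)}, w_k^{(1)}, \ldots, w_k^{(r)} = g_k$ with $r = O(M)$, where each $w_k^{(j+1)}$ is obtained from $w_k^{(j)}$ by changing a single entry (possibly passing through $\varnothing$ to avoid integer collisions). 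The intermediate PNAPs remain uniformly bounded, so by Lemma \ref{lem:bounded height} their insertion tableaux share a uniform bound $r_0$ on the number of rows. By the triangle inequality for $h$, it then suffices to bound $h(P(w), P(w'))$ whenever $w, w'$ differ at a single position.

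For the single-modification bound, write $w = \alpha \cdot x \cdot \beta$ and $w' = \alpha \cdot y \cdot \beta$ with $x, y \in \Z \cup \{\varnothing\}$. The classical product formula $P(uv) = P(u) \cdot P(v)$ (see \cite{fulton_yt}) yields two factorizations: $P(w) = P(\alpha x) \cdot P(\beta) = P(\alpha) \cdot P(x \beta)$, and likewise for $w'$. If the modified position lies in the head, then $|\alpha| \leq M - 1$ and the left factor $P(\alpha x)$ is of bounded size at most $M$; if the modification lies in the tail, I would use the right factorization so that the right factor is the bounded one. In either case the problem reduces to bounding $h(T \cdot S, T' \cdot S)$, where $T, T'$ are tableaux of bounded size (at most $M$) and $S$ is an arbitrary tableau.

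The key quantitative estimate is that multiplying an arbitrary $S$ by a bounded tableau $T$ on either side changes $h$ by at most $O(M r_0)$: indeed, $T \cdot S$ is computed by performing $|T|$ Robinson-Schensted row-insertions of the letters of the reading word of $T$ into $S$, and each insertion contributes one new cell together with a bumping chain of length at most $r_0$ that moves at most $r_0 - 1$ entries of $S$ one row downward. Thus $h(T \cdot S, S) \leq |T| + |T|(r_0 - 1) = O(M r_0)$, and applying this to both $T$ and $T'$ with triangle inequality gives $h(T \cdot S, T' \cdot S) = O(M r_0)$ for each single-entry modification. Summing over the $r = O(M)$ interpolation steps yields $h(P(f_k), P(g_k)) = O(M^2 r_0)$, a constant independent of $k$. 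The main technical subtlety is that a single entry of $S$ might be displaced by several consecutive insertions, so a naive argument could overcount; the resolution is that each displacement strictly moves an entry one row downward, so the number of distinct displaced entries is bounded by the total number of displacement events, which is at most $|T| \cdot r_0$.
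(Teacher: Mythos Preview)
Your overall strategy is sound and close in spirit to the paper's: factor each PNAP at the head/tail boundary and bound how much a small factor can perturb $h$. The tail case is handled correctly: $P(\alpha)\cdot P(x\beta)=S\cdot T$ with $|T|\le M$ is indeed obtained by row-inserting the reading word of $T$ into $S$, and each of the $|T|$ bumping chains has length at most $r_0$, so $h(S\cdot T,S)\le |T|\,r_0$.

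The head case, however, contains a genuine error. You assert that $T\cdot S$ (small factor $T$ on the \emph{left}) is computed by $|T|$ row-insertions of $T$'s reading word into $S$. This is false: row insertion appends on the right, so row-inserting $T$'s letters into $S$ produces $S\cdot T$, not $T\cdot S$. To build $T\cdot S$ from $S$ in $|T|$ steps one must use \emph{column} insertion (equivalently, jeu-de-taquin). Your desired bound $h(T\cdot S,S)=O(|T|\,r_0)$ is still true, but it now requires the extra observation that along a column-bumping path the rows of the displaced entries are weakly \emph{decreasing}; hence, although the path may traverse arbitrarily many columns, at most $r_0$ entries change row per insertion. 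This is precisely the mechanism the paper uses, phrased in jeu-de-taquin language: it bounds the number of slides by $M\cdot N$ and notes that each slide changes the row of at most $M+N$ entries (the height of the skew shape), which is the same row-monotonicity phenomenon. Once you replace ``row-insertion'' by ``column-insertion'' in the head case and supply this observation, your argument goes through; without it, the left-multiplication estimate is unjustified.
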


\begin{proof}
Let $M$ be as in Definition \ref{def:asymptotically alike}. For every $k$, let $s^f_{k}$ be the first $M$ elements of $f_k$, $e^f_{k}$ be the last $M$ elements of $f_k$, and $m^f_{k}$ be the middle $l_k - 2M$ elements of $f_k$. Analogously for $g_k$; so $m^f_k = m^g_k$.

First let us compare $P(s^f_{k}m^f_{k})$ and $P(s^g_{k}m^g_{k})$, where by putting two PNAPs next to each other we mean the concatenation of them as sequences. By the jeu-de-taquin theory (see, for example 
\cite[Corollary A1.2.6]{ec2} ) they are the unique straight-shaped tableaux of the jeu-de-taquin classes of the following skew tableaux
\[\resizebox{!}{.2\textheight}{\input{figures/insert_s1ak.pspdftex}}\qquad\resizebox{!}{.2\textheight}{\input{figures/insert_s2ak.pspdftex}}\] 
The distance $\h$ between the two skew tableaux is at most $2M$.

The number of jeu-de-taquin slides which need to be performed is equal to the area of the top-left rectangle in the figure. Let $N$ be the maximal number of rows of $P(m^f_k)$ (it exists by Lemma \ref{lem:bounded height}). The width of that rectangle is $\leqslant M$ and the height is $\leqslant N$. So at most $M\cdot N$ slides are necessary. The number of elements 
whose row is changed by a slide is at most the height of the skew tableau, which in our case is bounded by $M+N$. Thus 
$$\h(P(s^f_{k}m^f_{k}),P(s^g_{k}m^g_{k}))\leqslant 2M+2M\cdot N(M+N).$$

Now we analyze what happens as we insert the ending sections via the bumping algorithm. As each element is inserted, the number of rows increases by at most $1$. Since the number of rows of $P(s^f_{k}m^f_{k})$ (and $P(s^g_{k}m^g_{k})$) is at most $M+N$ and there are at most $M$ insertions, the number of rows during the process remains at most $N+2M$. Thus each insertion changes the row of at most $N+2M+1$ integers. Hence
$$\h(P(f_k),P(g_k))\leqslant 2M+2M\cdot N(M+N) + 2M(N+2M+1).$$
\end{proof}

If both sequences in the above lemma $P$-stabilized to different subsets in some row, then $\h(P(f_k),P(g_k))$ would grow linearly in $k$, so we have the following corollary.

\begin{cor}
\label{cor:criterion for same stabilization}
If $(f_k)_k$, $(g_k)_k$ are two uniformly bounded sequences of PNAPs which are asymptotically alike and, moreover, $P$-stabilize in row $i$ to $S_f$ and $S_g$, respectively, then $S_f = S_g$.
\end{cor}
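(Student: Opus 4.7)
The plan is to prove $S_f = S_g$ by contradiction, using Lemma~\ref{lem:asymptotically the same} as the main engine: $P$-stabilization converts a hypothetical disagreement of $S_f$ and $S_g$ into a linearly growing discrepancy between $P(f_k)$ and $P(g_k)$, which the uniform bound on $h(P(f_k),P(g_k))$ forbids. The key observation is that, beyond some threshold, each step contributes exactly the multiset $S_f$ of residues to row $i$ of $P(f_k)$ and exactly $S_g$ to row $i$ of $P(g_k)$, so a mismatch in these multisets gets amplified $k$-fold.

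First, I would invoke Lemma~\ref{lem:asymptotically the same} to fix a constant $C$ with $h(P(f_k),P(g_k))\leqslant C$ for every $k$. Then, assuming $S_f\neq S_g$, I would pick a residue $\ol{j}$ whose multiplicity in $S_f$ strictly exceeds its multiplicity in $S_g$ (swapping the roles of $f$ and $g$ if necessary), and call these multiplicities $a>b$. From the definition of $P$-stabilization, for all sufficiently large $k$ the multiplicity of $\ol{j}$ in $S^{i,k}$ grows by $a$ when one passes from $k-1$ to $k$ in the $f$-sequence, and by $b$ in the $g$-sequence. Hence the numbers $N^f_k$ and $N^g_k$ of actual integers of residue $\ol{j}$ sitting in row $i$ of $P(f_k)$ and $P(g_k)$ respectively satisfy $N^f_k - N^g_k = (a-b)k + O(1)$, which tends to $+\infty$.

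The last step is to convert this unbounded gap into a violation of Lemma~\ref{lem:asymptotically the same}. Let $A\subseteq\Z$ be the set of integers of residue $\ol{j}$ appearing in row $i$ of $P(f_k)$, and $B$ the analogous set for $P(g_k)$, so $|A|=N^f_k$ and $|B|=N^g_k$. Every element of $A\setminus B$ lies in row $i$ of $P(f_k)$ but either in a different row of $P(g_k)$ or not in $P(g_k)$ at all; by the convention in the definition of $h$ (an integer present in only one of the tableaux counts as being in different rows), each such element contributes to $h(P(f_k),P(g_k))$. Therefore $h(P(f_k),P(g_k))\geqslant |A|-|B| = N^f_k - N^g_k \to\infty$, contradicting the uniform bound $C$.

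I do not expect any real obstacle here; the argument is genuinely a corollary. The only point that requires minor care is ensuring that the convention in the definition of $h$ uniformly captures both the ``different row'' and ``not present'' cases, so that $|A\setminus B|$ really lower-bounds $h$ without any separate case analysis. Once that is checked, the contradiction is immediate from the linear-versus-constant growth comparison.
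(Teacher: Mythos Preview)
Your proposal is correct and takes essentially the same approach as the paper: the paper's proof is the single sentence preceding the corollary, observing that if the sequences $P$-stabilized to different sets in some row then $\h(P(f_k),P(g_k))$ would grow linearly in $k$, contradicting Lemma~\ref{lem:asymptotically the same}. You have simply spelled out that linear-growth estimate explicitly via the residue-counting argument, which is exactly what the paper leaves implicit.
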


\subsubsection{Another sequence of PNAPs}

This section connects the previous results to AMBC with arbitrary proper numbering choices. We call a sequence $\mathfrak{D} = (d_1,\dots, d_l)$ a \emph{numbering sequence} if $d_1$ is a proper numbering of $w$, $d_2$ is a proper numbering of $w_2:=\fwC{d_1}{w}$, $d_3$ a proper numbering of $w_3:=\fwC{d_2}{w_2}$, etc. For a numbering sequence $\mathfrak{D}$ we can form a tabloid $P(\mathfrak{D})$ as in AMBC except that during the $i$-th step we use the proper numbering $d_i$ instead of the southwest channel numbering.

\begin{defn}
Suppose $w$ is a partial permutation and $w'$ is a PNAP of length $l$ which satisfies $w(i) = w'(i)$ for every $1\leqslant i \leqslant l$. For a proper numbering $d$ of $w$, define $w'\vert_d$ to be the PNAP of the same length which consists of only those balls of $w'$ for which all the balls of $w$ with the same value of $d$ are also part of $w'$.
\end{defn}

\begin{ex}
An example is shown in Figure \ref{fig: ak and akp}. Here 
\[w = [7,2,15,4,19,20,13,16,18,11]\] 
and $w' = a_w^2$ and the proper numbering $d$ is given in the figure. The balls to be removed to get $w'\vert_d$ are circled in blue; for example, the balls $(1.7)$ and $(4,4)$ need to be removed since there is a ball of $w$, $(-1,8)$, which is also numbered $2$ by $d$ and is not part of $w'$.
\begin{figure}
\centering
\resizebox{.8\textwidth}{!}{\input{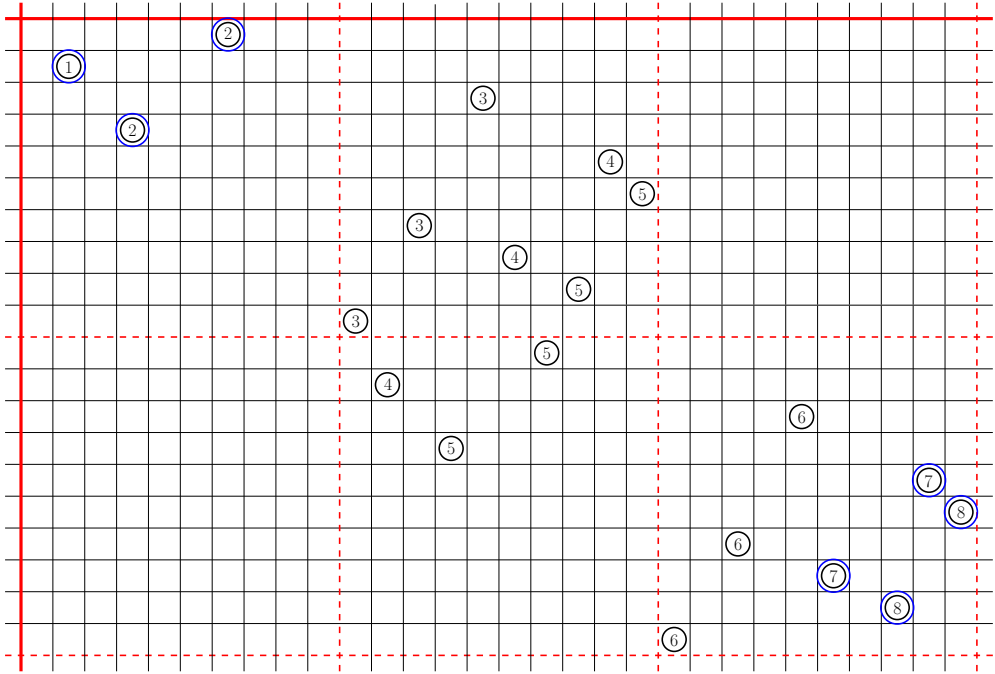}}
\caption{Cutting a PNAP according to a proper numbering}
\label{fig: ak and akp}
\end{figure}
\end{ex}

\begin{lemma}
\label{lem:asymptotics first step}
Suppose $d$ is a proper numbering of a partial permutation $w$. For each $k$, let $f_k = a_w^k$ and let $g_k = a_w^k\vert_d$. Then for each $i$, both sequences $(f_k)_k$ and $(g_k)_k$ $P$-stabilize in row $i$ to the same subset of $[\ol{n}]$. In row $1$,
they $P$-stabilize to the set $S$ of residue classes of columns of the back corner-posts of the zig-zags associated with $d$.
\end{lemma}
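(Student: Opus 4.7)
The plan has three stages: verify that both $(f_k)_k$ and $(g_k)_k$ satisfy the hypotheses of Lemma~\ref{lem:stablizing} so both $P$-stabilize in every row; show they are asymptotically alike so that Corollary~\ref{cor:criterion for same stabilization} forces the stabilizations to agree row by row; and identify the row-$1$ stabilization of $(g_k)_k$ directly. For the first stage, uniform boundedness follows from the $n$-periodicity of $w(i)-i$ (and then inherits to the subset $(g_k)_k$), and stability follows from the semi-periodicity of $d$ (Proposition~\ref{prop: period of proper numbering}), which once $k$ is large enough guarantees that the removal rule defining $g_k$ from $f_k$ commutes with $(n,n)$-translation inside the window.

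The heart of the argument is asymptotic alikeness. A ball $b\in f_k$ is removed in passing to $g_k$ iff some $b'\in\B_w$ with $d(b')=d(b)$ lies in a row outside $[1,kn]$. Since $d$ is semi-periodic of period $m$, each antichain $d^{-1}(v)$ has at most $m$ elements and the family $\{d^{-1}(v)\}_v$ is periodic up to $(n,n)$-translation with period $m$; in particular the row-spans of these antichains are uniformly bounded by some constant $N$ depending only on $w$. Only antichains whose row-span meets $[1,N]$ or $[kn-N+1,kn]$ can straddle the boundary, so all removed balls lie in rows $[1,M]\cup[kn-M+1,kn]$ for some constant $M$ independent of $k$. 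This is precisely asymptotic alikeness in the sense of Definition~\ref{def:asymptotically alike}, and Corollary~\ref{cor:criterion for same stabilization} yields agreement of the row-$i$ stabilizations for every $i$. I expect this step to be the main obstacle, as pinning down a uniform $M$ requires careful accounting of the semi-periodic structure of $d$ near the top and bottom boundaries of the window.

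For the third stage, observe that $g_k=\bigsqcup_{v\in V_k}d^{-1}(v)$, where $V_k=\{v:d^{-1}(v)\subseteq f_k\}$ is an interval of $\Z$ which grows by $m$ new values at its top each time $k$ grows by one (once $\min V_k$ has stabilized). On such a $g_k$, the MBC numbering coincides with $d$ shifted by $1-\min V_k$: monotonicity of $d$ gives one inequality, and the fact that $d^{-1}(v-1)\subseteq g_k$ whenever $v-1\in V_k$, combined with the continuity of $d$ on $w$, gives the other. Consequently the first-step zig-zags of MBC on $g_k$ are precisely the zig-zags associated with $d$, and the first row of $P(g_k)$ consists of the columns of their back corner-posts. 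The $m$ back corner-posts contributed per new period are $(n,n)$-translates of those from the previous period, so modulo $n$ they always add the same $m$ residues to $\ol{P}(g_k)$, which is the set $S$. These $m$ residues are distinct: two SW-most balls with equal column residue would be $(n,n)$-translates, forcing their $d$-values to differ by a nonzero multiple of $m$, impossible within one period of $m$ consecutive values.
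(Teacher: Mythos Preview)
Your proof is correct and follows essentially the same three-stage approach as the paper: establish that $(f_k)_k$ and $(g_k)_k$ are uniformly bounded, stable, and asymptotically alike (via the bounded row-span of each $d^{-1}(v)$, which is the paper's $t_w$ argument), invoke Lemma~\ref{lem:stablizing} and Corollary~\ref{cor:criterion for same stabilization}, and then identify the row-$1$ stabilization of $(g_k)_k$ by observing that the MBC numbering on $g_k$ coincides with $d$ up to shift. You give more detail than the paper in places (the explicit description of $V_k$, the continuity argument for the lower bound on the MBC numbering, and the verification that the $m$ column residues are distinct), but the architecture is identical.
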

\begin{proof}
We start by showing that $(f_k)$ and $(g_k)$ are asymptotically alike. Let $d_{max}$ (resp. $d_{min}$) be the largest (resp. smallest) index of a diagonal of a ball of $w$. Let $t_w:=d_{max}-d_{min}$. For each $i\in\Z$, the difference between the row of the southern ball numbered $i$ and the row of the northern ball numbered $i$ is at most $t_w$ (in fact, this is true for any pair of cells comparable with respect to $\leqslant_{NE}$). Hence to get from $f_k$ to $g_k$ we will not be discarding any balls south of row $t_w$ and north of row $kn-t_w$. Thus the two sequences are, indeed, asymptotically alike.  

Next we show that the sequences $P$-stabilize in every row. Since $(f_k)_k$ is manifestly stable, its $P$-stabilization follows by Lemma \ref{lem:stablizing}. We claim that $(g_k)_k$ is also stable. A ball $b\in\B_w$ south of row $2t_w$ is a ball of $g_k$ precisely when every $b'\in\B_w$ with $d(b') = d(b)$ lies north of row $kn$. But this is translation invariant, so provided $b$ is south of row $2t_w$, we have $b\in \B_{g_k}$ if and only if $b+(n,n)\in \B_{g_{k+1}}$. Choosing $k$ with $(k-1)n>2t_w$ finishes the claim. By Corollary \ref{cor:criterion for same stabilization}, $(f_k)_k$ and $(g_k)_k$ $P$-stabilize in every row and the $P$-stabilizations coincide.

Now the MBC numbering of $g_k$ agrees up to shift with $d$, so, according to the proof of Corollary \ref{cor:criterion for same stabilization}, $(g_k)$ $P$-stabilizes in row $1$ to $S$.
\end{proof}

Now we can finish the proof that for every numbering sequence $\mathfrak{D}$, $(a_w^k)_k$ $P$-stabilizes to $P(\mathfrak{D})$ (and hence $P(\mathfrak{D})$ depends only on $w$).

\begin{proof}[Proof of Theorem \ref{thm: asymptotic} and Proposition \ref{prop:irrelevant which proper}]
\label{pf:asymptotic}\label{pf:irrelevant which proper}
Let $\mathfrak{D} = (d_1,\ldots,d_l)$ be a numbering sequence for $w$. Then by Lemma \ref{lem:asymptotics first step}, the sequences $(a_w^k)_k$ and $(a_w^k\vert_{d_1})_k$ $P$-stabilize in row $1$ to $P(\mathfrak{D})_1$, and their $P$-stabilizations coincide. Now consider the sequence $(\fw{a_w^k\vert_{d_1}})_k$ which $P$-stabilizes in row 
$1$ to the second row of the $P$-stabilization of $(a_w^k)_k$. Now $\left(\fw{a_w^k\vert_{d_1}}\right)_k$ is asymptotically alike to $\left(a_\fwC{d_1}{w}^k\right)_k$. The remaining conditions of Corollary \ref{cor:criterion for same stabilization} are also satisfied, so $\left(\fw{a_w^k\vert_{d_1}}\right)_k$ and $\left(a_\fwC{d_1}{w}^k\right)_k$ have the same $P$-stabilizations. Again by Lemma \ref{lem:asymptotics first step}, $\left(a_\fwC{d_1}{w}^k\right)_k$ $P$-stabilizes in row $1$ to $P(\mathfrak{D})_2$, so this is the second row of the $P$-stabilization of $(a_w^k)_k$. This argument may be repeated to finish the inductive proof.
\end{proof}

\subsection{Equivalence to Shi's algorithm}

The first half of Shi's algorithm consisted of a series of Knuth moves, window shifts, and repositioning of empty rows (recall the definition from Section \ref{sec:Shi algorithm and KL cells}). It is obvious that window shifts and repositioning of empty rows preserve the tabloid $P(w)$. Of course they do affect $Q(w)$, but that is of no relevance in this section. We will show that Knuth moves also preserve $P(w)$. After that we will describe the southwest channel numbering of a permutation in the resulting special form. We will see that taking the initial maximal antichain and placing it in the row of the tabloid corresponds to the operation of forming a row of $P(w)$ in AMBC. Moreover, we will see that the step of replacing the initial maximal antichain with $\varnothing$'s corresponds exactly to a step of AMBC. 

\begin{lemma}
\label{lem:knuth preserves p}
Suppose $w$ differs from $w'$ by a Knuth move. Then $P(w) = P(w')$.
\end{lemma}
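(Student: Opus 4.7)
The plan is to reduce this lemma to the classical theorem that an elementary Knuth move preserves the RSK insertion tableau, and then lift the result to the affine setting via Theorem \ref{thm: asymptotic}. By the $P$-stabilization machinery developed in Section \ref{sec: independence of proper numbering}, the tabloid $P(w)$ is determined row-by-row as the $P$-stabilization of the sequence of partial non-affine permutations $a_w^k = (w(1), w(2), \ldots, w(kn))$ under classical RSK. Suppose $w'$ is obtained from $w$ by an affine Knuth move swapping the values at positions $i$ and $i+1$ (together with all their $(n,n)$-translates). Then $a_{w'}^k$ is obtained from $a_w^k$ by transposing the entries at positions $i+tn$ and $i+tn+1$ for each $t = 0, 1, \ldots, k-1$.

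First I would observe that the betweenness condition in the definition of the affine Knuth move translates nicely under the affineness property: since $w(j+n) = w(j)+n$ for every $j$, the value $w(i+tn-1)$ (respectively $w(i+tn+2)$) is numerically between $w(i+tn)$ and $w(i+tn+1)$ if and only if $w(i-1)$ (respectively $w(i+2)$) is between $w(i)$ and $w(i+1)$. Hence each individual transposition at positions $(i+tn, i+tn+1)$, taken in isolation, is a legal elementary Knuth move on a classical word, in the sense of Knuth's original definition.

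Next I would perform these transpositions sequentially (say from $t=0$ up to $t = k-1$) on $a_w^k$, and check that after each swap the remaining swaps are still classical Knuth moves. For $n \geq 3$ this is immediate, because the positions $\{i+tn, i+tn+1\}$ modified by the swap at index $t$ are disjoint from the positions $\{i+t'n-1, i+t'n, i+t'n+1, i+t'n+2\}$ consulted by the swap at any other index $t'$. The small-$n$ case and the boundary indices $t = 0$ and $t = k-1$ (where one of the consulted positions may fall outside the window) can be handled by reordering the swaps, or by enlarging the window and using that Theorem \ref{thm: asymptotic} allows us to start the RSK insertion at any position. Either way, $a_w^k$ and $a_{w'}^k$ are classically Knuth-equivalent partial non-affine permutations.

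By the classical theorem of Knuth, $P(a_w^k) = P(a_{w'}^k)$ for every sufficiently large $k$. Passing to the $P$-stabilizations row by row (and invoking Theorem \ref{thm: asymptotic} to identify them with $P(w)$ and $P(w')$) yields $P(w) = P(w')$. The main obstacle is the bookkeeping in verifying that the successive transpositions remain valid classical Knuth moves as they are applied; this is routine for $n\geq 3$ and needs a short direct check for small $n$.
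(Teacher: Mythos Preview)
Your plan is exactly the paper's: cut out the finite word $a_w^k$, invoke the classical Knuth theorem, and lift via the asymptotic description of $P(w)$. The difficulty is that your conclusion $P(a_w^k)=P(a_{w'}^k)$ is in general false for the standard window $[1,kn]$. Take $n=3$, $w=[1,3,5]$, and the affine Knuth move at $i=1$; the only witness is $w(0)=2$ (note $w(3)=5$ is not between $1$ and $3$). One computes that $P(a_w^2)$ has shape $(5,1)$ while $P(a_{w'}^2)$ has shape $(4,2)$, so the two finite words are not Knuth equivalent. The swap at positions $(1,2)$ is simply not an elementary Knuth move in the finite word, because its witness lies outside the window; reordering the remaining swaps does not change this.

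The paper avoids the problem by not insisting on exact equality. One performs only those transpositions that \emph{are} valid classical Knuth moves; the resulting word $v_k$ satisfies $P(v_k)=P(a_w^k)$ and differs from $a_{w'}^k$ at a bounded number of positions near the ends. Thus $(v_k)_k$ and $(a_{w'}^k)_k$ are asymptotically alike in the sense of Definition~\ref{def:asymptotically alike}, so Lemma~\ref{lem:asymptotically the same} gives a uniform bound on $h\bigl(P(a_w^k),P(a_{w'}^k)\bigr)$, and then Theorem~\ref{thm: asymptotic} forces $P(w)=P(w')$. Your ``enlarge the window'' workaround can be pushed through for $n\geq 3$ (shift so the missing witness is included, then argue separately that the shifted sequence still $P$-stabilizes to $P(w)$), but this ends up reusing the same asymptotically-alike machinery anyway; and for $n=2$ consecutive swaps genuinely interfere with one another's witnesses, so the uniform argument via Lemma~\ref{lem:asymptotically the same} is cleaner than any direct check.
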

\begin{proof}
Consider the PNAPs $a_w^k$ and $a_{w'}^k$. Then $a_w^k$ and $a_{w'}^k$ differ by a sequence of ordinary Knuth moves as well as possibly in the first and last entries. It is well known that non-affine Knuth moves preserve the insertion tableau \cite[Corollary A1.1.12]{ec2}, so Lemma \ref{lem:asymptotically the same} guarantees that the distance between $P(a_w^k)$ and $P(a_{w'}^k)$ is bounded above independently of $k$. The asymptotic interpretation of $P(w)$ given in Theorem \ref{thm: asymptotic} finishes the proof.
\end{proof}

Now we describe the the southwest channel numbering for a permutation in the special form that results after the combing stage.

\begin{lemma}
Suppose $w = [\varnothing, \ldots,\varnothing,w_1,\ldots, w_l]$ is a partial permutation, where $w_1,\ldots, w_l$ is an increasing sequence of integers and $w_m < w_1+n$ ($m$ is the width of $P_w$). For $1\leqslant i\leqslant l$, define $d( (n+1-i, w(n+1-i)) + k(n,n)) = (l+1-i) + km$ (for an example, see Figure \ref{fig:combed_perm}). Then $d$ is the southwest channel numbering of $w$.
\end{lemma}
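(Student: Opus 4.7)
The plan is to verify that the formula for $d$ agrees with $d_w^{SW}$ by first identifying $C^{SW}$, then showing $d$ realizes the maximum-worth-of-paths characterization from Definition~\ref{def:channel numbering}.

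First, I would identify the southwest channel. Set $b_j := (n-l+j, w_j)$ for $1 \le j \le l$; since rows are increasing and $w_1 < \cdots < w_l$, the $b_j$ form a strict SE-chain. The hypothesis $w_m < w_1 + n$ ensures that $\{b_1,\ldots,b_m\}$ together with their $(n,n)$-translates form a chain of density $m$, hence a channel $C$. Every size-$m$ antichain $\{j_1 < \cdots < j_m\} \subseteq [l]$ of the Shi poset $P_w$ satisfies $j_k \ge k$, so the corresponding channel lies weakly NE of $C$; thus $C = C^{SW}$. Reindexing by $j = l+1-i$, the lemma's formula reads $d(b_j + k(n,n)) = j + km$, which restricts to consecutive integers $1, 2, \ldots, m$ on the base block of $C^{SW}$, matching $\tilde{d}$ after fixing the shift.

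Next, I would show $d(b) = d_w^{SW}(b)$ for every ball $b = b_j + k(n,n)$. For achievability, the path $b_j + k(n,n) \to b_{j-1} + k(n,n) \to \cdots \to b_{\max(j,m)} + k(n,n)$ (trivial if $j \le m$) ends in $C^{SW}$, has length $\max(j-m, 0)$, and total worth $(m+km) + (j-m) = j+km$ when $j \ge m$. For the upper bound, consider any path $(c_0=b, c_1, \ldots, c_r)$ to $C^{SW}$ and write $c_i = b_{j_i}+k_i(n,n)$ with $j_i \in [l]$. Since each row strictly decreases and $j_i \le l \le n$, the sequence $k_i$ is non-increasing; set $\alpha_i := k_i - k_{i+1} \ge 0$. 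The strict-NW conditions at step $i$ translate to $j_{i+1} < j_i + \alpha_i n$ and $w_{j_{i+1}} < w_{j_i} + \alpha_i n$.

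The crux is a structural claim: each step satisfies the stronger inequality $j_{i+1} < j_i + \alpha_i m$. Granting this, the telescoping sum $\sum_i(j_i - j_{i+1} + \alpha_i m) \ge r$ gives $r \le (j - j_r) + (k - k_r)m$, so the worth is $\tilde{d}(c_r) + r = j_r + k_r m + r \le j + km$, as needed. The case $\alpha_i = 0$ is immediate since then $j_{i+1} < j_i$. For $\alpha_i \ge 1$, suppose toward contradiction that $j_{i+1} \ge j_i + \alpha_i m$. Since the width of $P_w$ is $m$, any $m+1$ consecutive indices contain a pair $a < b$ comparable in $P_w$, i.e.\ with $w_b > w_a + n$; applied to $\{j_i, j_i+1, \ldots, j_i + m\}$ this gives $w_{j_i + m} > w_{j_i} + n$, and iterating yields $w_{j_i + \alpha_i m} > w_{j_i} + \alpha_i n$. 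By monotonicity of $w$, $w_{j_{i+1}} \ge w_{j_i + \alpha_i m} > w_{j_i} + \alpha_i n$, contradicting the column condition of the step. This structural use of the width hypothesis — essentially unpacking the meaning of ``width $= m$'' in terms of $w$-spacing — is the main obstacle; once in hand, everything else is bookkeeping around paths and translates.
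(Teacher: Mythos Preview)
Your argument is essentially correct and takes a genuinely different route from the paper. The paper first shows that $d$ is \emph{proper} (checking continuity directly, and monotonicity by showing each $d^{-1}(i)$ is a $\le_{SW}$-chain, using that $m+1$ balls in $n$ consecutive rows cannot fit in $n$ consecutive columns), then invokes the criterion in Remark~\ref{rmk:monotone is sometimes channel}: since the row-by-row path to the southwest channel makes $d$ drop by $1$ each step, $d$ must equal $d_w^{SW}$. You instead verify the max-worth definition (Definition~\ref{def:channel numbering}) directly, proving achievability and then the upper bound via your telescoping inequality $j_{i+1} < j_i + \alpha_i m$. Both arguments rest on the same structural fact---width $m$ forces $w_{j+m} > w_j + n$ for every valid $j$---but you use it to bound \emph{every} path, while the paper uses it once to establish monotonicity and then gets the upper bound for free. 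Your route is more self-contained; the paper's is shorter because it leverages the general machinery already built.

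Two small points. First, your achievability path has a typo: the endpoint should be $b_{\min(j,m)} + k(n,n)$ (or simply $b_m + k(n,n)$ when $j>m$), not $b_{\max(j,m)}$; your worth computation is correct regardless. Second, your justification that $C$ is the southwest channel---``$j_k \ge k$, so the corresponding channel lies weakly NE of $C$''---is incomplete: knowing $j_k \ge k$ places $b_{j_k}$ southeast of $b_k$, not northeast. The claim is true, but the argument needs exactly your later width observation: for any $j_k > k$, iterating ``$m+1$ consecutive indices contain a comparable pair'' gives $w_{j_k} > w_{j_k - tm} + tn$ where $j_k - tm \in [m]$, so $b_{j_k - tm} + t(n,n)$ is a ball of $C$ strictly southwest of $b_{j_k}$. (The paper's own proof asserts this step equally briefly.)
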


\begin{figure}
\centering\resizebox{.8\textwidth}{!}{\input{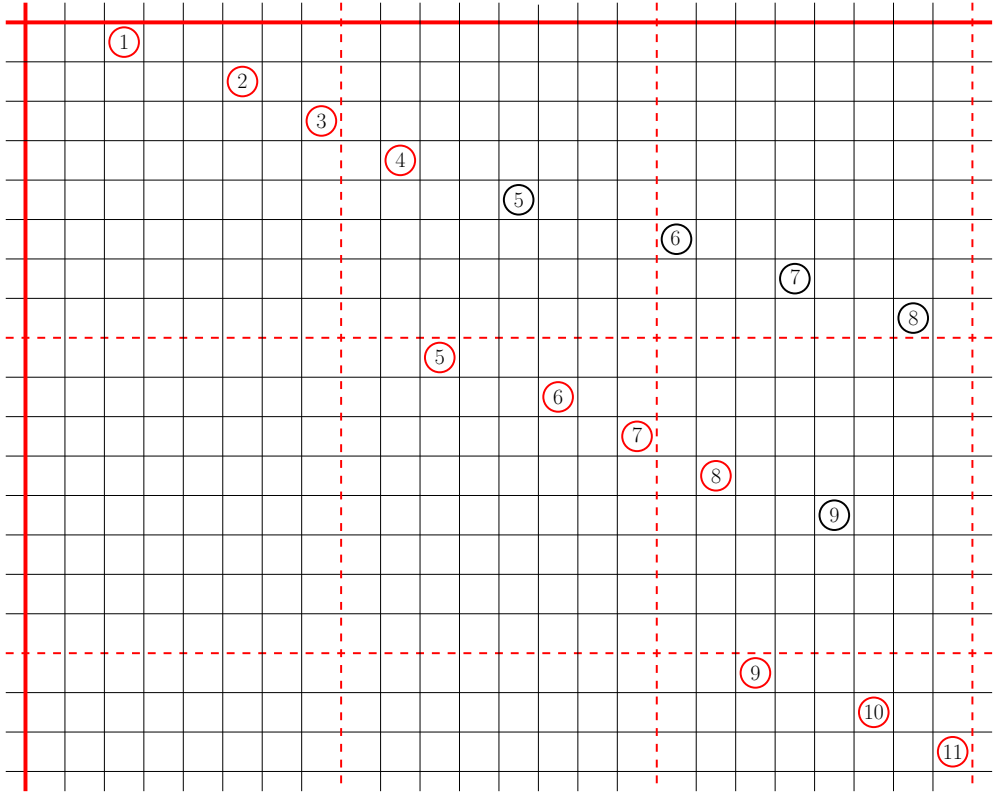}}
\caption{A combed permutation with southwest channel marked in red and with southwest channel numbering.}
\label{fig:combed_perm}
\end{figure}

\begin{proof}
First, let us show that $d$ is a proper numbering. It is easy to see that continuity holds (the non-trivial case follows from the inequality $w_m < w_1+n$). It is sufficient to show that for each $i$, we have $d^{-1}(i)$ is a chain in the $\leqslant_{SW}$ partial ordering. Indeed, if $b$ is northwest of $b'$ and $d(b)\geqslant d(b')$ then by Continuity we can find $b''$ northwest of $b$ with $d(b'') = d(b')$; of course $b''$ and $b'$ are incomparable with respect to $\leqslant_{SW}$. By periodicity, it is sufficient to consider $1\leqslant i\leqslant m$. 

Consider the set of balls $d^{-1}(i)$. A ball in this set is specified by two parameters $1\leqslant j\leqslant n$ and $k\in\Z$ satisfying one equation $j + km = i$; its coordinates are $(j + kn, w(j) + kn)$. Consider two balls in this set with adjacent values of the parameter $k$. We will show that the ball $b$ with parameters $j=i-km$ and $k$ is northeast of the ball $b'$ with parameters $i-(k+1)m$ and $k+1$. 
We have $b = (i+k(n-m), w(i-km)+kn)$ and $b' = (i+(k+1)(n-m), w(i-(k+1)m)+(k+1)n)$. It is clear that $b'$ is south of $b$ since 
$n\geqslant m$. 

Now since $i-(k+1)m, i-km\in [n]$ and the permutation is combed, the balls in rows $i-(k+1)m, \dots, i-km$ form a chain with respect to $\leqslant_{SE}$. Since there are $m+1$ balls and all of them lie within $n$ consecutive rows, they must not lie within $n$ consecutive columns (otherwise the width of $P_w$ would be at least $m+1$). Hence $w(i-(k+1)m) < w(i-km)-n$. Thus $w(i-(k+1)m)+(k+1)n < w(i-km)+kn$. Hence $b'$ is west of $b$, finishing the proof that $d$ is a proper numbering.

The condition $w_m < w_1 + n$ implies that the balls $(n-l+1, w_1),\ldots, (n-l+m, w_m)$ as well as their translates form a channel. This channel consists of the southwest balls of $w$, hence it is necessarily the southwest channel. It is clear that from every ball one can take a path to this channel so that the value of $d$ decreases by one on each step; namely go up one row at a time until you reach a translate of one of the above balls. So $d$ must be the southwest channel numbering.
\end{proof}

\begin{proof}[Proof of Theorem \ref{thm:mbshi}]\label{pf:mbshi}
It is easy to see from the above lemmas that the first row of the tabloid resulting from Shi's algorithm coincides with the first row of the tabloid from AMBC. Moreover, one can see that to get $\fw{w}$ from $w$ one needs to erase the southwest channel and shift the remaining balls south by $n-m$. Since an overall shift south does not affect the $P$-tabloid, taking $\fw{w}$ corresponds to replacing a segment of the window in Shi's algorithm with $\varnothing$'s. The position of $\varnothing$'s in the permutation has no effect on the $P$-tabloid, so we may as well shift them to the beginning.
\end{proof}

\section{A tale of two streams}
\label{sec:two streams}

In this section we prove some basic results concerning backward numberings and use them to work out in detail what happens in the backward algorithm (i.e. when building $w$ from $(P,Q,\rho)$) when the shape of the tabloids $P$ and $Q$ is a two-row rectangle.

\subsection{Basic results concerning backward numberings}

First we prove that the backward numbering is well-defined.
\begin{proof}[Proof of Proposition \ref{prop: backward numbering defined}]
\label{pf:backward numbering defined}
Consider a partial permutation $w$ and a stream $S$ compatible with it. We will first find a monotone numbering $d':\B_w\to\Z$ such that for every $b\in\B_w$ and every $i$, $d'(b)\leqslant d_i(b)$. 

Let $m_1$ be the width of the Shi poset of $w$ and let $m_2\geqslant m_1$ be the flow of the stream $S$. Consider an arbitrary proper numbering $d_p$ of $w$. Define another numbering $d_p':\B_w\to\Z$ by 
$$d_p'( (i,w(i))+k(n,n)) = d_p(i, w(i))+km_2,$$
where $1\leqslant i\leqslant n$ and $k\in\Z$. By construction, $d_p'$ is a semi-periodic numbering with period $m_2$. Since $m_2\geqslant m_1$, it is also monotone. We know that $d_0$ is a semi-periodic numbering with period $m_2$. So $z:=\max\limits_{b\in\B_w} d_p'(b)-d_0(b)$ is well defined. Define $d'$ by $d'(b) = d_p'(b) - z$. This is clearly a monotone, semi-periodic numbering of period $m_2$ which is no larger than $d_0$ on every ball.

Now suppose for some $i$ and for all $b\in\B_w$, $d'(b) \leqslant d_i(b)$. We will show that in this case, regardless of choice, $d'(b) \leqslant d_{i+1}(b)$. Choose a ball $b$ such that there are balls southeast of it with the same value of $d_i$, but no such balls northwest of it. Let $k = d_i(b)$. We will consider $d_{i+1}$ which results from decrementing the value on $b$. Choose $b'$ southeast of $b$ with $d_i(b') = k$. By assumption, $d'(b')\leqslant k$. But $d'$ is monotone, so $d'(b) < k $. Thus $d'(b)\leqslant k-1 = d_{i+1}(b)$. Therefore it is only possible to make finitely many steps in the algorithm, i.e. it terminates.

The only thing remaining is to show that the result of the backward numbering algorithm is independent of the choices. This is done via a standard diamond lemma argument (originally formalized in \cite{dimond}). Consider a numbering $d_i$ which resulted from some choices. Suppose we have two possible numberings $d_{i+1}$ and $\tilde{d}_{i+1}$ which result from making different choices in the next step of the numbering algorithm. Suppose $b$ and $\widetilde{b}$, respectively, are the balls whose number has to be decremented to get the new numberings. Since $d_i$ is weakly monotone, the ball $\widetilde{b}$ can be chosen in the next step for $d_{i+1}$ to produce $d_{i+2}$. 
Similarly, $b$ can be chosen in the next step for $\tilde{d}_{i+1}$, and this will produce the same numbering $d_{i+2}$. Thus we have local confluence which implies global confluence by the diamond lemma.
\end{proof}

\begin{rmk}
\label{rmk:lower bound for backward numbering}
Suppose $w$ is a partial permutation and $S$ is a compatible stream. Suppose we have a monotone numbering $d'$ which is nowhere greater than the stream numbering $d_0$ (equivalently, for any $b\in\B_w$, $S^{(d(b))}$ lies northwest of $b$). Then it follows from the above proof that $d'\leqslant d_w^{\jbk,S}$. Thus we get an alternative definition of the backward numbering: it is the largest monotone numbering $d$ such that for every $b\in\B_w$, $S^{(d(b))}$ lies northwest of $b$.
\end{rmk}

\begin{rmk}
\label{rmk:backward equals stream}
There must exist a ball $b$ whose numbering does not change during the algorithm; i.e. $d_0(b) = d_w^{\jbk,S}(b)$. Otherwise we could add 1 to the backward numbering of all balls and get a monotone numbering which is nowhere greater than $d_0$. This contradicts Remark \ref{rmk:lower bound for backward numbering}.
\end{rmk}

\begin{prop}
\label{prop:backward numbers channel consecutively}
Suppose $w$ is a partial permutation and $S$ is a compatible stream whose flow is equal to the width of the Shi poset of $w$. Then $d_w^{\jbk,S}(C) = \Z$, i.e. the backward numbering numbers channels by consecutive integers.
\end{prop}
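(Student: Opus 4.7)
The plan is to combine two observations about $d := d_w^{\jbk,S}$: it is semi-periodic with period equal to the flow $m$ of $S$, and a channel $C$ contains exactly $m$ balls per $(n,n)$-period on which $d$ is strictly increasing. Together these force $d\vert_C$ to increase by exactly $1$ at each step of $C$, hence to take all integer values.

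First I would establish semi-periodicity of $d$ with period $m$. The initial stream numbering $d_0(b) = \max_{b' \leqslant_{NW} b} f(b')$ is semi-periodic with period $m$: since $S$ is invariant under translation by $(n,n)$ and $f$ is semi-periodic with period $m$ (it is a proper numbering of the chain $S$, whose cells are indexed by consecutive integers with shift $m$ per $(n,n)$-translate), shifting $b$ by $(n,n)$ shifts the set $\{b' \leqslant_{NW} b\} \cap S$ by $(n,n)$ and thus shifts the max of $f$ by $m$. Next, each iteration of the algorithm replaces $d$ by the \emph{semi-periodic} numbering $d'$ that decrements $d(b)$ by $1$ on the entire translate class of $b$; so semi-periodicity with period $m$ is an invariant of the loop. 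Hence the output $d$ is semi-periodic with period $m$.

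Now let $C$ be a channel of $w$ and list its balls in $\leqslant_{SE}$ order as $\ldots <_{SE} c_{-1} <_{SE} c_0 <_{SE} c_1 <_{SE} \ldots$. By hypothesis, $m$ equals the width of the Shi poset of $w$, which by definition is the density of $C$, so $c_{i+m} = c_i + (n,n)$ for every $i$. Since $d$ is monotone and the $c_i$'s form a strict chain in $\leqslant_{NW}$ reversed (equivalently $\leqslant_{SE}$), we have $d(c_{i+1}) \geqslant d(c_i) + 1$ for every $i$. Summing these $m$ inequalities,
\[
d(c_i) + m \;=\; d(c_{i+m}) \;=\; \sum_{j=0}^{m-1}\bigl(d(c_{i+j+1}) - d(c_{i+j})\bigr) + d(c_i) \;\geqslant\; m + d(c_i),
\]
where the leftmost equality uses the semi-periodicity established above. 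Equality forces each increment $d(c_{i+1}) - d(c_i)$ to equal $1$.

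Consequently $d$ numbers $C$ by consecutive integers; since $C$ is infinite in both directions, $d(C) = \Z$, proving the proposition. The argument has no real obstacle: the only point requiring care is recording that the algorithm's semi-periodicity invariant matches the density of $C$ precisely because the hypothesis identifies the flow of $S$ with the width of the Shi poset.
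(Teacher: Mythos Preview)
Your proof is correct and follows essentially the same approach as the paper's: both use that the backward numbering is semi-periodic with period equal to the flow $m$, that a channel has exactly $m$ balls per $(n,n)$-period, and that monotonicity forces the $m$ increments along the channel to sum to exactly $m$, hence each equals $1$. The only difference is that you spell out in detail why semi-periodicity holds (checking the invariant through $d_0$ and each iteration), whereas the paper simply asserts it ``by construction''; this extra detail is fine and arguably clearer.
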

\begin{proof}
By construction, $d_w^{\jbk,S}$ is semi-periodic with period equal to the flow of $S$. Consider $b, b'\in C$ such that $b' = b + (n,n)$. Now $d_w^{\jbk,S}$ must increase by at least $1$ at every step of the reverse path from $b$ to $b'$ along $C$. The number of steps is the width of $P_w$. Since it is equal to the flow of $S$, $d_w^{\jbk,S}$ must have increased by exactly $1$ at each step. This finishes the proof.
\end{proof}

A step of the backward algorithm produces a partial permutation, and this permutation comes with an induced numbering; in the next two lemmas we will show that this numbering is proper.

\begin{lemma}
\label{lem:zigzags dont intersect}
Consider a partial permutation $w$ and a compatible stream $S$. A step of the backward algorithm involves a collection $\{Z_i\}_{i\in\Z}$ of zig-zags. These zig-zags pairwise do not intersect; moreover every cell of $Z_i$ is strictly south and strictly east of some cell of $Z_{i-1}$.
\end{lemma}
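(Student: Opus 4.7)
The ``moreover'' assertion implies pairwise non-intersection: within a single reverse zig-zag, two distinct cells are always comparable in the strict northeast/southwest partial order, so no cell is strictly south and strictly east of another cell of the same zig-zag. Iterating the moreover part across consecutive indices then forces $Z_i\cap Z_j=\varnothing$ for all $i\neq j$. So the focus is on the moreover claim.

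My strategy is a reduction followed by a case analysis. I would first record three key geometric facts: (a)~every cell of $Z_i$ lies weakly south and weakly east of the back corner-post $S^{(i)}$ (the SW endpoint in the column of $S^{(i)}$, the NE endpoint in the row of $S^{(i)}$, and all other cells strictly SE of $S^{(i)}$); (b)~$S^{(i-1)}$ is strictly NW of $S^{(i)}$ by the chain structure of $S$; and (c)~compatibility of $S$ with $w$ ensures that stream cells and balls are never in the same row or column. Combining (a) and (b), it suffices to exhibit a single cell $c^*\in Z_{i-1}$ that is strictly NW of $S^{(i)}$, since such a $c^*$ is then automatically strictly NW of every cell of $Z_i$.

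In the singleton case $Z_{i-1}=\{S^{(i-1)}\}$, which occurs exactly when no ball is labeled $i-1$, we simply take $c^*=S^{(i-1)}$. In the non-singleton case the natural candidates are the endpoints $c_1^{(i-1)}$ and $c_k^{(i-1)}$ of $Z_{i-1}$: one of them works provided the staircase $Z_{i-1}$ sits entirely in the NW quadrant of $S^{(i)}$. The crux is the residual ``wrap-around'' case in which $Z_{i-1}$ has cells both strictly SW of $S^{(i)}$ (along the east-going segment emanating from $c_1^{(i-1)}$) and strictly NE of $S^{(i)}$ (along the north-going segment terminating at $c_k^{(i-1)}$). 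Here no single cell of $Z_{i-1}$ is strictly NW of $S^{(i)}$, and the moreover claim must be verified cell-by-cell for $Z_i$. For a general $c=(R,C)\in Z_i$, the candidate I would use is the inner corner-post of $Z_{i-1}$ at the staircase turn in the column band just west of $C$, namely the cell at $(R_{j^*+1}^{(i-1)},C_{j^*}^{(i-1)})$ where $j^*$ is the unique index with $C_{j^*}^{(i-1)}<C\leqslant C_{j^*+1}^{(i-1)}$. Its column is strictly less than $C$ by construction.

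The main obstacle is the accompanying row comparison $R_{j^*+1}^{(i-1)}<R$ in this wrap-around case. I expect it to be resolved by combining the monotonicity of the backward numbering $d$ (which restricts the NE/SW positions of balls labeled $i-1$ relative to balls labeled $i$) with the NW condition $S^{(d(b))}\leqslant_{NW}b$ applied to $b_{j^*+1}^{(i-1)}$ and to the outer corner-post of $Z_i$ governing the row $R$; the alternating east/north structure of the two staircases then transfers this inequality to the inner corner-post itself, completing the argument.
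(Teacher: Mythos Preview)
Your reduction to the ``moreover'' clause is fine, and your observation that a single cell $c^*\in Z_{i-1}$ strictly northwest of $S^{(i)}$ would settle everything at once is a nice simplification not present in the paper's argument. The singleton case and the endpoint case are handled correctly.

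The gap is in the wrap-around case. First, a minor point: having $c_1^{(i-1)}$ south of $S^{(i)}$ and $c_k^{(i-1)}$ east of $S^{(i)}$ does not by itself force every cell of $Z_{i-1}$ to miss the strict northwest quadrant of $S^{(i)}$; an interior inner corner-post could still land there. But the genuinely hard residual case is the one you isolate, and there your argument is incomplete. You name the row comparison $R_{j^*+1}^{(i-1)}<R$ as the obstacle and gesture at monotonicity of $d$ together with the condition $S^{(d(b))}\leqslant_{NW}b$. Those two facts alone are not enough: they say only that $d$ is a monotone numbering bounded above by the stream numbering $d_0$, and there are many such numberings for which the desired inequality fails. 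What is needed is the \emph{maximality} of the backward numbering among all such (equivalently, the iterative description of the algorithm): whenever $d(b)<d_0(b)$, some ball strictly southeast of $b$ carries label $d(b)+1$. You do not invoke this, and without it I do not see how your cell-by-cell candidate can be verified.

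The paper's proof bypasses the case analysis entirely by arguing by contradiction. It first observes (from the positions of the endpoints of $Z_i$ relative to those of $Z_{i-1}$) that every cell of $Z_i$ is either weakly southeast or weakly northwest of some cell of $Z_{i-1}$. It then rules out the northwest option directly: if $a_0\in Z_i$ were weakly northwest of $a_1\in Z_{i-1}$, one locates an inner corner-post $c\in Z_i$ weakly northwest of $a_0$ and an outer corner-post $b\in Z_{i-1}\cap\mathcal B_w$ weakly southeast of $a_1$, so that $c$ is strictly northwest of $b$. Since $c$ is weakly southeast of $S^{(i)}$, the ball $b$ has $d_0(b)\geqslant i>i-1=d(b)$; the algorithmic fact above then produces a ball $b'$ labeled $i$ strictly southeast of $b$, and hence strictly southeast of $c$. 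But $c$ and $b'$ both lie in $Z_i$, and two cells of a single zig-zag are always comparable in the northeast/southwest order, so $c$ cannot be strictly northwest of $b'$. This handles all cases uniformly, including your wrap-around case. A short additional step then upgrades ``weakly southeast'' to ``strictly south and strictly east''.
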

\begin{proof} Let $d$ (resp. $d_0$) be the backward (resp. stream) numbering. By construction, the southwest cell $c_{i,SW}$ of $Z_i$ is east of the southwest cell $c_{i-1,SW}$ of $Z_{i-1}$. Similarly, $c_{i,NE}$ is south of $c_{i-1,SW}$. So, no cell of $Z_i$ lies southwest of $c_{i-1,SW}$ or northeast of $c_{i-1,NE}$. Thus every cell of $Z_i$ is either southeast or northwest of some cell $Z_{i-1}$. We will show that the second of these is impossible.

Suppose there exists a cell $a_0$ of $Z_i$ northwest of a cell $a_1$ of the $Z_{i-1}$ (see Figure \ref{fig:zigzags dont intersect} for a schematic illustration). There exists $c\in Z_i\cap\B_{\bk{S}{w}}$ either directly north or directly west of $a_0$, and $b\in Z_{i-1}\cap\B_w$ either directly south or directly east of $a_1$. So $c$ is northwest of $b$. By construction, the only balls of $w$ which share a row or column with $c$ must be numbered $i$, so $c$ is, in fact, strictly north and strictly west of $b$. Now $d(b) = i-1$ and $d_0(b)\geqslant i$, so $d(b)\neq d_0(b)$. Looking at the backward numbering algorithm, an easy induction shows that $d(b)\neq d_0(b)$ implies that there exists $b'\in\B_w$ southeast of $b$ with $d(b') = d(b)+1$. But then $c$ and $b'$ are both in $Z_i$, which is impossible given their relative positions. 
x`
Since no cell of $Z_i$ can be weakly northwest of a cell of $Z_{i-1}$, the two zig-zags do not intersect. Choose a cell $x\in Z_i$. From the first paragraph, we now conclude that $x$ is southeast of some cell $x'\in Z_{i-1}$. It remains to find a cell $z'\in Z_{i-1}$ which is strictly south and strictly east of $x$. Now $x$ must be either strictly south of strictly east of $x'$. Suppose, without loss of generality, that it is the first one. If it is also strictly east, then we are done. Otherwise, let $y'$ be the southern cell of $Z_{i-1}$ directly south of $x'$. Then $x$ is strictly south of $y'$ since otherwise it would be in the intersection of the zig-zags. Let $z'$ be the western cell of $Z_{i-1}$ directly west of $y'$. Since $c_{i-1,SW}$ is strictly west of $x$, $z'\neq y'$ and thus $x$ is strictly east of $z'$, finishing the proof. 
\end{proof}

\begin{figure}
\centering\resizebox{.2\textwidth}{!}{\input{figures/zigzags_dont_intersect.pspdftex}}
\caption{Cell positions in the proof of Lemma \ref{lem:zigzags dont intersect}.}
\label{fig:zigzags dont intersect}
\end{figure}

\begin{defn}
\label{def:numbering induced by backward step}
Let $w$, $S$, and $\{Z_i\}_{i\in\Z}$ be as in Lemma \ref{lem:zigzags dont intersect}. Define a numbering $d$ of $\bk{S}{w}$ by $d(b) = i$ if $b\in Z_i$. Then $d$ will be called \emph{the numbering induced by the backward step}.
\end{defn}

\begin{lemma}
\label{lem:backward is proper}
Let $w$, $S$, and $\{Z_i\}_{i\in\Z}$ be as in Lemma \ref{lem:zigzags dont intersect}. Then the numbering $d$ induced by the backward step is proper.
\end{lemma}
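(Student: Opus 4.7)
The plan is to verify the monotonicity and continuity axioms of a proper numbering directly from the geometric structure of reverse zig-zags, using Lemma~\ref{lem:zigzags dont intersect} as the main ingredient. The unifying geometric observation I will invoke throughout is that when one traverses a reverse zig-zag from $c_1$ to $c_k$ via east and north steps, rows only weakly decrease and columns only weakly increase, so no two cells of a single zig-zag can be strictly NW-comparable.

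For monotonicity, suppose $b\in Z_i$ and $b'\in Z_j$ are balls of $\bk{S}{w}$ with $b$ strictly NW of $b'$; I need to show $i<j$. The case $i=j$ is ruled out at once by the observation above applied to $Z_i=Z_j$. If $i>j$, I would iterate the ``strictly south and strictly east'' conclusion of Lemma~\ref{lem:zigzags dont intersect}: $b$ is strictly SE of some cell of $Z_{i-1}$, which is in turn strictly SE of some cell of $Z_{i-2}$, and so on. After $i-j$ iterations I produce a cell of $Z_j$ that is strictly NW of $b$, and hence, by transitivity, strictly NW of $b'\in Z_j$. This contradicts the observation applied to $Z_j$, so $i>j$ is impossible.

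For continuity, given $b'\in Z_j$ I must exhibit an inner corner-post of $Z_{j-1}$ strictly NW of $b'$. Lemma~\ref{lem:zigzags dont intersect} supplies some cell $c\in Z_{j-1}$ strictly NW of $b'$, but this $c$ need not itself be an inner corner-post, so I will ``slide $c$ along $Z_{j-1}$'' to a corner-post while preserving strict NW. I split into cases by whether $c$'s row of $Z_{j-1}$ contains additional cells. If it does (so $c$ lies in an east run), take $p$ to be the west end of that east run; this is always an inner corner-post, shares the row of $c$, and has column at most that of $c$, so $p$ remains strictly NW of $b'$. Otherwise $c$'s row contains only $c$, so $c$ lies in a north run; in this case take $p$ to be the top of that north run, which is an inner corner-post (either the start of a later east run, or, when the north run is the final one of $Z_{j-1}$, the cell $c_k^{(j-1)}$ itself), shares the column of $c$, and has row at most that of $c$. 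The degenerate case in which $c$ comprises all of $Z_{j-1}$ occurs only when $Z_{j-1}$ is a single cell, which is itself an inner corner-post by definition.

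The main subtlety requiring explicit verification is that $c_k$ of any reverse zig-zag qualifies as an inner corner-post: the cell directly north of $c_k$ is outside $Z$ because $c_k$ is the terminal cell, and the cell directly west of $c_k$ is in a row distinct from every east run of $Z$ (since the zig-zag ends with a north step), hence also outside $Z$. Once this is in place, the ``top of the north run'' branch of the continuity argument remains valid even when that top is the terminal cell of $Z_{j-1}$, and the rest of the proof reduces to the routine case analysis sketched above.
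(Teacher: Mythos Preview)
Your proof is correct and follows essentially the same approach as the paper's. For monotonicity, both you and the paper iterate Lemma~\ref{lem:zigzags dont intersect} to produce a cell of $Z_{d(c)}$ strictly northwest of a cell already in $Z_{d(c)}$, contradicting the observation that a single zig-zag contains no strictly NW-comparable pair. For continuity, the paper simply asserts that from any cell $a\in Z_{i-1}$ one can find an inner corner-post ``either directly north or directly west of $a$''; your case analysis (west end of an east run versus top of a north run, plus the verification that $c_k$ is always an inner corner-post) is exactly the content needed to justify that sentence, spelled out in full.
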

\begin{proof}
First we prove monotonicity. Suppose $b,c\in\B_{\bk{S}{w}}$, $b$ is northwest of $c$, and $d(b) > d(c)$. By repeated application of Lemma \ref{lem:zigzags dont intersect}, we can find a cell $b'\in Z_{d(c)}$ which is strictly northwest of $b$. Then $b'$ and $c$ lie in the same zig-zag and one of them is strictly northwest of the other, giving the desired contradiction.

Now we prove continuity; suppose $b\in\B_{\bk{S}{w}}$ with $d(b) = i$. By Lemma \ref{lem:zigzags dont intersect}, there exists some cell $a$ of $Z_{i-1}$ strictly northwest of $b$. Let $b'\in\B_{\bk{S}{w}}$ be a ball either directly north or directly west of $a$; then $b'$ is northwest of $b$ and $d(b') = d(b)-1$, as desired.
\end{proof}

\begin{rmk}
We will later see (Proposition \ref{prop:backward numbering is channel}) that the numbering in the above lemma is a channel numbering.
\end{rmk}

\begin{cor}
\label{cor:with of backward Shi poset}
Consider a partial permutation $w$ and a compatible stream $S$. The width of the Shi poset of $\bk{S}{w}$ is equal to the flow of $S$.
\end{cor}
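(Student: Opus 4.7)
The plan is to exhibit a proper numbering of $\bk{S}{w}$ whose period is the flow of $S$, and then invoke Proposition \ref{prop: period of proper numbering} to identify that period with the width of the Shi poset. The natural candidate is the numbering $d$ on $\bk{S}{w}$ induced by the backward step (Definition \ref{def:numbering induced by backward step}), which Lemma \ref{lem:backward is proper} already shows is proper. What remains is to verify semi-periodicity with the correct period.

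First I would trace the construction of the backward numbering $d_w^{\jbk, S}$ to see that it is semi-periodic of period equal to the flow $m_2$ of $S$. Since $S$ is a SE-chain invariant under $(n,n)$-translation with exactly $m_2$ translation classes, any proper numbering $f\colon S\to \Z$ satisfies $f(c+(n,n)) = f(c)+m_2$, so $S^{(i+m_2)} = S^{(i)}+(n,n)$. Hence the initial stream numbering $d_0(b) = \max_{b'\leqslant_{NW} b} f(b')$ is itself semi-periodic of period $m_2$. Each step of the backward numbering algorithm preserves this property by design, since, as stated in the algorithm, the decrement rule produces a semi-periodic numbering differing from the previous one on a full translation class of balls.

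Consequently the zig-zags used in the backward step are $(n,n)$-translates of each other: $Z_{i+m_2} = Z_i + (n,n)$, because the back corner-post $S^{(i)}$ and the outer corner-posts (the balls of $w$ with $d_w^{\jbk,S}$-value $i$) all shift appropriately under $(n,n)$-translation, forcing the inner corner-posts to shift with them. The inner corner-posts are precisely the balls of $\bk{S}{w}$ and carry the induced numbering $d$, so $d(b+(n,n)) = d(b) + m_2$. Combining this with Lemma \ref{lem:backward is proper} and applying Proposition \ref{prop: period of proper numbering} to $\bk{S}{w}$ identifies $m_2$ with the width of its Shi poset. I do not expect any real obstacle; the only point requiring care is confirming that every step of the backward numbering algorithm preserves semi-periodicity, which is immediate from its explicit definition on translation classes.
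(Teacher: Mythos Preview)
Your proposal is correct and follows essentially the same approach as the paper: both use the numbering induced by the backward step (shown proper in Lemma~\ref{lem:backward is proper}), observe it has period equal to the flow of $S$, and apply Proposition~\ref{prop: period of proper numbering}. The paper's proof simply asserts the semi-periodicity ``by construction,'' whereas you spell out the verification in detail.
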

\begin{proof}
Let $d$ be the proper numbering from Lemma \ref{lem:backward is proper}. By construction, the period of $d$ is equal to the flow of $S$.  Proposition \ref{prop: period of proper numbering} finishes the proof.
\end{proof}

\subsection{The case of two-row rectangles.}
\label{subsec:two streams}

In this section we explore in detail the notion of stream concurrency; equivalently we study what happens when the Greene-Kleitman shape of the Shi poset is a two-row rectangle. 

We begin by introducing the operations of \emph{shifting} streams and channels.

\begin{defn}
Suppose $S = \str{r}{A,B}$ is a stream. Then the \emph{shift} of $S$ by $k$ is the stream
$$S\sh{k}: = \str{r+k}{A,B}.$$
\end{defn}

\begin{defn}
Suppose $t$ is a permutation consisting of a single channel; say $t = \{(p_i, q_i):i\in\Z\}$ where for each $i$, $(p_i, q_i)$ is northwest of $(p_{i+1},q_{i+1})$. Then the \emph{shift} of $t$ by $k$ is the permutation $t\sh{k} = \{(p_i, q_{i+k}):i\in\Z\}$
\end{defn}

\begin{lemma}
\label{lem:backward inequalities}
Consider a partial permutation $t$ consisting of a single channel and a compatible stream $S$ whose flow is equal to the density of $t$. Let $t'=t\sh{1}$. Let $d = d_t^{\jbk,S}$ and $d' = d_{t'}^{\jbk,S}$. Then for every $b'\in\B_{t'}$, we have $d(b)\leqslant d'(b')\leqslant d(b'') = d(b) + 1$, where $b\in \B_t$ is the ball directly west of $b'$ and $b''\in \B_t$ is the ball directly south of $b'$.
\end{lemma}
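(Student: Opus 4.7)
The plan is to first handle the equality $d(b'') = d(b) + 1$, and then prove the two inequalities by exhibiting auxiliary monotone numberings and invoking the maximality characterization of backward numberings recorded in Remark \ref{rmk:lower bound for backward numbering}.

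Set up notation by writing $\B_t = \{b_j := (p_j, q_j)\}_{j\in\Z}$ and $\B_{t'} = \{b'_j := (p_j, q_{j+1})\}_{j\in\Z}$, where both $(p_j)$ and $(q_j)$ are strictly increasing. The ball $b'$ in the lemma is $b'_i$ for some $i$, and then $b = b_i$ is directly west of $b'$ while $b'' = b_{i+1}$ is directly south of $b'$. Since $t$ consists of a single channel whose density equals the flow of $S$, and this density is precisely the width of $P_t$, Proposition \ref{prop:backward numbers channel consecutively} tells us that $d$ numbers $\B_t$ consecutively along the channel. Hence $d(b'') = d(b_{i+1}) = d(b_i) + 1 = d(b) + 1$, giving the last equality.

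For the left inequality $d(b) \leqslant d'(b')$, define $e : \B_{t'} \to \Z$ by $e(b'_j) := d(b_j)$. Since $(p_j)$ and $(q_j)$ are strictly increasing, $b'_j$ is strictly NW of $b'_k$ if and only if $j<k$ if and only if $b_j$ is strictly NW of $b_k$, so the monotonicity of $d$ transfers to give monotonicity of $e$. Moreover, $S^{(e(b'_j))} = S^{(d(b_j))}$ lies NW of $b_j$ by the defining property of the backward numbering $d$, and hence lies NW of $b'_j$ because $b_j$ sits in the same row as $b'_j$ but in a strictly smaller column. Thus $e$ is a monotone numbering of $\B_{t'}$ satisfying the NW-stream condition, so Remark \ref{rmk:lower bound for backward numbering} applied to $d'$ yields $e \leqslant d'$ pointwise. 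Specializing at $b'_i$ gives $d(b) = e(b'_i) \leqslant d'(b'_i) = d'(b')$.

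For the right inequality $d'(b') \leqslant d(b'')$, the argument is symmetric: define $g : \B_t \to \Z$ by $g(b_k) := d'(b'_{k-1})$. As above, $b_k$ strictly NW of $b_l$ iff $k<l$ iff $b'_{k-1}$ strictly NW of $b'_{l-1}$, so monotonicity of $d'$ transfers to monotonicity of $g$. Moreover, $S^{(g(b_k))} = S^{(d'(b'_{k-1}))}$ lies NW of $b'_{k-1}$, and hence NW of $b_k$ since $b'_{k-1}$ sits in the same column as $b_k$ but in a strictly smaller row. Remark \ref{rmk:lower bound for backward numbering} applied to $d$ then gives $g \leqslant d$ pointwise, so specializing at $b_{i+1}$ yields $d'(b') = d'(b'_i) = g(b_{i+1}) \leqslant d(b_{i+1}) = d(b'')$. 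The only potential obstacle is checking that the auxiliary numberings $e$ and $g$ really do satisfy the NW-of-stream hypothesis, but this is immediate from the ``same row'' / ``same column'' relationships between the balls of $t$ and those of $t' = t\sh{1}$.
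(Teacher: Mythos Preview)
Your proof is correct and follows essentially the same approach as the paper's own proof. The paper's argument is terser—it defines the row-matching numbering $\widetilde{d'}$ (your $e$), invokes Remark~\ref{rmk:lower bound for backward numbering} for the first inequality, says the second ``follows by the same argument,'' and cites Proposition~\ref{prop:backward numbers channel consecutively} for the equality—but the underlying ideas are identical to yours.
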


\begin{figure}
\centering
\resizebox{.2\textwidth}{!}{\input{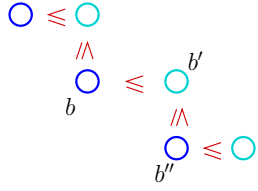}}
\caption{Inequalities for backward numberings of $t$ and $t'=t\sh{1}$ in Lemma \ref{lem:backward inequalities}. The balls of $t$ are dark blue while those for $t'$ are light blue.}
\label{fig:inequalities}
\end{figure}

\begin{proof} The situation is schematically illustrated in Figure \ref{fig:inequalities}. Let $\widetilde{d'}$ be the numbering of $t'$ which coincides with $d$ on rows (i.e. for every $b'\in\B_{t'}$, $\widetilde{d'}(b') = d(b)$). Then by Remark \ref{rmk:lower bound for backward numbering}, for every $b'\in\B_{t'}$, $\widetilde{d'}(b')\leqslant d'(b')$. This proves the first of the stated inequalities. The second one follows by the same argument. The equality $d(b'') = d(b) + 1$ follows from Proposition \ref{prop:backward numbers channel consecutively}.
\end{proof}

By the above lemma, $d$ and $d'$ coincide on either rows or columns. The next result shows that once they start coinciding on rows, they will keep doing so as we shift $t$ further.

\begin{lemma}
\label{lem:higher streams}
Consider a partial permutation $t$ consisting of a single channel and a compatible stream $S$ whose flow is equal to the density of $t$. Let $t'=t\sh{1}$, and $t'' = t'\sh{1} = t\sh{2}$. Let $d=d_t^{\jbk,S}$, and similarly for $d'$ and $d''$. Suppose that for some $b\in \B_t$, we have $d(b) = d'(b')$, where $b'\in\B_{t'}$ is directly east of $b$. Then $d'(b') = d''(b'')$, where $b''\in\B_{t''}$ is directly east of $b'$.
\end{lemma}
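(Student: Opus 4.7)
The plan is to reduce the claim, via Lemma \ref{lem:backward inequalities} and a uniformity argument, to a structural assertion about the stream $S$. By Lemma \ref{lem:backward inequalities} applied to the pair $(t', t'')$, we have $d'(b') \le d''(b'') \le d'(b'_S)$, where $b'_S \in \B_{t'}$ is the ball directly south of $b''$. Since $t'$ is a single channel of the same density as $t$, Proposition \ref{prop:backward numbers channel consecutively} gives $d'(b'_S) = d'(b') + 1$, so $d''(b'') \in \{d'(b'), d'(b')+1\}$.

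Next I index the balls of $t, t', t''$ along the SE-chain so that $b_i, b'_i, b''_i$ all lie in row $p_i$. I claim that the offsets $\epsilon_i := d'(b'_i) - d(b_i) \in \{0,1\}$ and $\epsilon'_i := d''(b''_i) - d'(b'_i) \in \{0, 1\}$ are each constant in $i$. The argument mirrors Proposition \ref{prop: period of proper numbering}: strict monotonicity of $d'$ (resp.\ $d''$) along the chain forces $\epsilon$ (resp.\ $\epsilon'$) to be non-decreasing in $i$, while semi-periodicity of all three numberings forces it to be periodic; hence constant. The hypothesis $d(b) = d'(b')$ then gives $\epsilon \equiv 0$, i.e., $d'(b'_i) = d(b_i) =: N_i$ for every $i$, and the conclusion reduces to $\epsilon'_i = 0$ for every $i$.

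Suppose, toward a contradiction, that $\epsilon'_i = 1$ for all $i$, so $d''(b''_i) = N_i + 1$. By the characterization of the backward numbering as the largest monotone NW-constrained numbering (Remark \ref{rmk:lower bound for backward numbering}), this forces $S^{(N_i + 1)}$ to be NW of $b''_i = (p_i, q_{i+2})$ for every $i$. On the other hand, the flatness of $(t, t')$ (i.e.\ the impossibility of taking the larger value $N_i + 1$ for $d'$) provides some $i_0$ with $S^{(N_{i_0} + 1)}$ \emph{not} NW of $b'_{i_0} = (p_{i_0}, q_{i_0 + 1})$. Together these locate the cell $S^{(N_{i_0} + 1)}$ in a row $\leq p_{i_0}$ but in a column strictly between $q_{i_0 + 1}$ and $q_{i_0 + 2}$.

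The remaining and main step is to obtain a contradiction from these constraints. The idea is to track, over one full period $i = 1, \dots, m$, how each of the $m$ consecutive stream cells $S^{(N_i + 1)}$ fits into the NW-region of $b''_i$ versus $b'_i$: since these cells form a strict SE-chain whose rows and columns are disjoint from those of $t$, and since the ``column-offset'' observed at $i_0$ must be compensated by analogous configurations at other indices due to semi-periodicity, a careful pigeonhole argument exhibits some index at which $S^{(N_i + 1)}$ cannot simultaneously be NW of $b''_i$ and respect the chain structure of $S$. I expect this final structural/pigeonhole step to be the main technical difficulty; the earlier reductions are essentially bookkeeping built on top of Lemma \ref{lem:backward inequalities}.
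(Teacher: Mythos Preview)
Your setup is sound: the constancy of the offsets $\epsilon_i$ and $\epsilon'_i$ is correct (and in fact follows immediately from Proposition~\ref{prop:backward numbers channel consecutively}, since both $d$ and $d'$ number their chains by consecutive integers), and you correctly isolate an index $i_0$ with $S^{(N_{i_0}+1)}$ not northwest of $b'_{i_0}=(p_{i_0},q_{i_0+1})$. But the proof then stalls on a vague pigeonhole argument, and the reason is that you have discarded the one constraint that finishes it in one line.

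You never use the backward numbering $d$ of $t$ beyond the hypothesis $\epsilon\equiv 0$. Look at the ball of $t$ directly south of $b'_{i_0}$, namely $(p_{i_0+1},q_{i_0+1})$: its $d$-value is $N_{i_0}+1$, so $S^{(N_{i_0}+1)}$ lies northwest of it and hence has column $\le q_{i_0+1}$. Combined with ``not northwest of $b'_{i_0}$'' this forces $S^{(N_{i_0}+1)}$ to have row $> p_{i_0}$, i.e.\ to lie strictly south of $b''_{i_0}=(p_{i_0},q_{i_0+2})$. This is an immediate contradiction to your hypothesis that $S^{(N_{i_0}+1)}$ is northwest of $b''_{i_0}$; no pigeonhole is needed. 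This is exactly the paper's argument (stated there as a direct proof rather than by contradiction): it shows $S^{(i+1)}$ is southwest of $b'$, hence of $b''$, so $d''(b'')\le i$. Your localization of $S^{(N_{i_0}+1)}$ to ``row $\le p_{i_0}$, column strictly between $q_{i_0+1}$ and $q_{i_0+2}$'' came only from $d'$ and the contradicted $d''$; bringing $d$ back in pins down the row instead and ends the argument.
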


\begin{proof}
Note that if for some $b$ we have $d(b) = d'(b')$, then it is true for every $b$ by Proposition \ref{prop:backward numbers channel consecutively}. Choose $b$ so that $d'(b') = d'_0(b')$ (as in Remark \ref{rmk:backward equals stream}). Let $i = d'(b')$; the above choice ensures that $S^{(i+1)}$ does not lie northwest of $b'$. By Proposition \ref{prop:backward numbers channel consecutively}, the ball of $t$ directly south of $b'$ is numbered $i+1$, so $S^{(i+1)}$ must be southwest of $b'$. Hence $S^{(i+1)}$ is southwest of $b''$. Thus $d''(b'')\leqslant i$. By the Lemma \ref{lem:backward inequalities} and Proposition \ref{prop:backward numbers channel consecutively}, however, $i\leqslant d''(b'')\leqslant i+1$. Hence $d''(b'')= i$, as desired.
\end{proof}

\begin{rmk}
The above lemma can be reflected in the main diagonal. Let $t$ and $S$ and $d$ be as above; let $t'=t\sh{-1}$ and $t''=t\sh{-2}$ with backward numberings $d'$ and $d''$, respectively. Suppose for some $b\in\B_t$, $d(b) = d'(b')$, where $b'\in\B_{t'}$ is directly south of $b$. Then $d'(b') = d''(b'')$, where $b''\in\B_{t''}$ is directly south of $b'$.
\end{rmk}

\begin{rmk}
Consider the collection of streams $\st(A,B)=\left\{\str{r}{A,B}\right\}_{r\in\Z}$. For $r\in\Z$ let $t_r$ be the partial permutation associated to $\str{r}{A,B}$. Fix a stream $S$ compatible with some (equivalently, every) $t_r$. From Lemma \ref{lem:higher streams} and its reflection we conclude that there exists a unique $r_0 = r_0(S, A, B)$ such that for any $k>0$ the backward numberings of $t_{r_0}$ and $t_{r_0+k}$ coincide on rows while the backward numberings of $t_{r_0}$ and $t_{r_0-k}$ coincide on columns. 
\end{rmk}

The above remark allows us to extend the notion of altitude to streams of the same flow but in different classes.

\begin{defn}
\label{def:no lower}
Suppose $S$ and $T\in\st(A,B)$ are streams of the same flow and $S$ is compatible with $T$. Then $T$ is \emph{no lower than} $S$ if $a(T)\geqslant r_0$ (recall Definition \ref{def:altitude}), where $r_0$ was defined in the above remark. Similarly, we can define when $T$ is \emph{no higher than} $S$. If $T=\str{r_0}{A,B}$, we say that $T$ \emph{is at the same height as} $S$.
\end{defn}

In the next series of results we prove that the notion of $T$ having the same height as $S$ is precisely the notion of $T$ being concurrent to $S$ from Definition \ref{def:concurrent}. In particular there is the same terminological problem as with the definition of concurrency; namely that the definition is not symmetric and it can happen that $S$ is at the same height as $T$ but $T$ is not at the same height as $S$. We begin by proving a lemma necessary to make sense of the definition of concurrency, namely that $\bk{S}{t}$ partitions into two disjoint channels.

\begin{proof}[Proof of Lemma \ref{lem:pre-concurrent}]
\label{pf:pre-concurrent}
By Corollary \ref{cor:with of backward Shi poset}, the width of the Shi poset of $\bk{S}{t}$ is the flow of $S$. By Proposition \ref{prop:backward numbers channel consecutively}, each of the zig-zags involved in the step of the backward algorithm will have a single outer corner-post and two inner corner-posts. By Lemma \ref{lem:zigzags dont intersect}, the southwest (resp. northeast) corner-posts form a chain with respect to $\leqslant_{SE}$, and this chain has the correct density to form a channel.
\end{proof}

\begin{lemma}
\label{lem:sw iff higher}
Let $T$ be a stream with corresponding permutation $t$. Consider a compatible stream $S$ with flow equal to that of $T$. Let $\widetilde{d}$ be the numbering induced by the backward step. Then $\widetilde{d}$ is the southwest channel numbering if any only if $T$ is no lower than $S$. 
\end{lemma}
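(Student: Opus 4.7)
The plan is to reduce both sides of the equivalence to a common geometric condition
\[
\text{(A)}\qquad \exists\, k\in\Z\text{ with } \mathrm{row}(\beta_{k-1}) < \mathrm{row}(S^{(k)}),
\]
where $\beta_j\in t$ denotes the outer corner-post of the zig-zag $Z_j$ in the backward step (the unique ball of $t$ whose backward numbering value is $j$, uniqueness coming from Proposition \ref{prop:backward numbers channel consecutively}).

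First I would prove $\widetilde{d}=d^{SW}_{\bk{S}{t}}$ iff (A). By Lemma \ref{lem:pre-concurrent}, $\bk{S}{t}$ has exactly two channels, consisting of the inner SW corner-posts $b^{SW}_i=(\mathrm{row}(\beta_i),\mathrm{col}(S^{(i)}))$ and the inner NE corner-posts $b^{NE}_i=(\mathrm{row}(S^{(i)}),\mathrm{col}(\beta_i))$; both receive value $i$ from $\widetilde{d}$. After choosing shifts so that $\widetilde{d}$ and $d^{SW}$ agree on $C_{SW}$, Remark \ref{rmk:distance direction} applied to $\widetilde{d}$ gives $\widetilde{d}(b)\geq d^{SW}(b)$ everywhere, and by Proposition \ref{prop:proper determined by channels} equality holds iff it holds on $C_{NE}$. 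In turn, $d^{SW}(b^{NE}_i)=i$ is equivalent to the existence, for every $i$, of a reverse path from $b^{NE}_i$ to $C_{SW}$ along which $\widetilde{d}$ drops by exactly $1$ at each step. Since only two channels are available, such a path must switch from $C_{NE}$ into $C_{SW}$ at some step $b^{NE}_k\to b^{SW}_{k-1}$, forcing $b^{SW}_{k-1}$ to lie strictly northwest of $b^{NE}_k$. The column inequality $\mathrm{col}(S^{(k-1)})<\mathrm{col}(\beta_k)$ is automatic from $S^{(k-1)}$ being northwest of $\beta_{k-1}$ and $\beta_{k-1}$ being northwest of $\beta_k$, so the only nontrivial condition is (A). Conversely, (A) together with semi-periodicity furnishes switching points at arbitrarily small indices.

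Second, I would show (A) is equivalent to ``$T$ is no lower than $S$.'' Writing $t=t_r$, Lemma \ref{lem:higher streams} and its reflection imply that $\mathrm{row}(\beta_k^{(r)})$ is independent of $r$ for $r\geq r_0$ (row-stability), while for $r\leq r_0$ it strictly decreases by $n$ per unit increase in $r$ (column-stability). Condition (A) is therefore monotone: if (A) fails at $r_0$ then it fails for all $r\leq r_0$, and if it holds at $r_0$ then it holds (with the same witness $k$) for all $r\geq r_0$. It thus suffices to establish that (A) holds at $r=r_0$ and fails at $r=r_0-1$.

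The most delicate part is the base case at $r_0$. By Remark \ref{rmk:backward equals stream}, there is some $\beta_j\in t_{r_0}$ with $d(\beta_j)=d_0(\beta_j)=j$, so $S^{(j+1)}$ fails to be strictly northwest of $\beta_j$: either $\mathrm{row}(S^{(j+1)})\geq \mathrm{row}(\beta_j)$, which immediately yields (A) with $k=j+1$ (strictness from compatibility of $S$ with $t$), or $\mathrm{col}(S^{(j+1)})\geq \mathrm{col}(\beta_j)$. The crux will be to show that at $r=r_0$ the row case must occur for at least one such witness $\beta_j$; dually, at $r=r_0-1$ only the column case can occur, ruling out (A). I expect to obtain the row-case witness by arguing that if every witness of Remark \ref{rmk:backward equals stream} produced only a column obstruction, then east-shifting $t_{r_0}$ to $t_{r_0+1}$ would push $S^{(j+1)}$ strictly northwest of the shifted ball, causing $d_0$ to jump and forcing a cascade of decrements incompatible with row-stability of $t_{r_0}\to t_{r_0+1}$; the dual contradicts column-stability of $t_{r_0-1}\to t_{r_0}$. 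The main obstacle will be executing this cascade argument cleanly, i.e., showing that row-versus-column obstructions at the ``stable'' balls of Remark \ref{rmk:backward equals stream} align precisely with the row- versus column-coincide dichotomy characterizing $r_0$.
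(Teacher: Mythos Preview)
Your reduction in Part~1 to the geometric condition~(A) is correct and matches the paper's use of Remark~\ref{rmk:monotone is sometimes channel}: the only nontrivial requirement for a one-step path $b^{NE}_k\to b^{SW}_{k-1}$ is precisely $\mathrm{row}(\beta_{k-1})<\mathrm{row}(S^{(k)})$. (Small slip: you wrote ``reverse path'' where you meant ``path''.)

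Part~2, however, is taking a detour, and the piece you flag as the ``main obstacle'' (the cascade argument distinguishing row versus column obstructions among the witnesses of Remark~\ref{rmk:backward equals stream} applied to $t$) is genuinely awkward to carry out as written. The clean route is to bypass the monotonicity/base-case scaffolding entirely and prove directly that
\[
\text{(A)}\quad\Longleftrightarrow\quad d \text{ and } d':=d_{t\langle 1\rangle}^{\jbk,S} \text{ coincide on rows},
\]
which is exactly the condition ``$T$ is no lower than $S$''. For $(\Rightarrow)$: if $\beta_{k-1}$ lies north of $S^{(k)}$, then the ball $b'$ of $t\langle 1\rangle$ in the same row as $\beta_{k-1}$ has $S^{(k)}$ to its south, so $d_0'(b')\le k-1$, hence $d'(b')\le k-1$; combined with $d'(b')\ge d(\beta_{k-1})=k-1$ from Lemma~\ref{lem:backward inequalities}, this forces $d'(b')=k-1$, i.e.\ row-coincidence. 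For $(\Leftarrow)$: this is where you should apply Remark~\ref{rmk:backward equals stream} to $t':=t\langle 1\rangle$, \emph{not} to $t$. The witness $b'\in t'$ with $d'(b')=d_0'(b')=i$ has, by row-coincidence, the ball $\beta_i$ of $t$ in its row; the ball of $t$ directly south of $b'$ is $\beta_{i+1}$, so $S^{(i+1)}$ is northwest of $\beta_{i+1}$ and in particular west of $b'$. Since $S^{(i+1)}$ is not northwest of $b'$, it must fail the row condition, giving $\mathrm{row}(S^{(i+1)})>\mathrm{row}(\beta_i)$, which is~(A).

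The point is that working with $t'$ automatically produces the row obstruction, because the ball of $t$ south of the witness in $t'$ already pins down the column of $S^{(i+1)}$; applying Remark~\ref{rmk:backward equals stream} to $t$ itself loses this leverage and forces you into the case analysis you were dreading. Your monotonicity observation (that $\mathrm{row}(\beta_j^{(r)})$ is nonincreasing in $r$) is correct in direction, though the ``by $n$'' is wrong, but it becomes unnecessary once you have the direct equivalence above.
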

\begin{proof}
By Lemma \ref{lem:pre-concurrent}, $\bk{S}{t}$ consists of a southwest channel, call it $C_1$, and a northeast channel, $C_2$. Let $t' = t\sh{1}$. Let $d$ (resp. $d'$) be the backward numbering of $t$ (resp. $t'$). Similarly, let $d_0$ (resp. $d_0'$) be the stream numberings. 

Suppose $T$ is no lower than $S$, i.e. $d$ and $d'$ coincide on rows. By Remark \ref{rmk:backward equals stream}, there exists $b'\in\B_{t'}$ satisfying $d'(b') = d_0'(b')$; let $i = d'(b')$. Then $S^{(i+1)}$ cannot lie northwest of $b'$ and must lie northwest of the ball of $t$ directly south of $b'$. So $S^{(i+1)}$ lies southwest of $b'$. The ball of $C_1$ numbered $i$ (by $\widetilde{d}$) lies directly west of $b'$. The ball of $C_2$ labeled $i+1$ lies directly east of $S^{(i+1)}$ and directly south of $b'$. Thus there exists a path (with one step) from $C_2$ to $C_1$ such that $\widetilde{d}$ decreases by $1$. By Remark \ref{rmk:monotone is sometimes channel}, is the southwest channel numbering.

The other implication involves the same steps in reverse order.
\end{proof}

Now we are ready to show that the notions of having the same height and concurrency are, in fact, the same. Moreover, we will conclude that concurrency is preserved by shifting the streams simultaneously. These statements comprise the main result in Section \ref{sec:concurrent}. 

\begin{proof}[Proof of Proposition \ref{prop:unique concurrent}]
\label{pf:unique concurrent}
Recall the notation; for sets $A, B, A' ,B'$ of the same size, we need to show that there exists a unique $r$ such that $T:=\str{r}{A',B'}$ is concurrent to $S:=\str{0}{A,B}$. This follows immediately, with $r = r_0$, from the previous lemma and its reflection in the main diagonal. In particular it shows that $T$ is concurrent to $S$ if and only if $T$ is at the same height as $S$.

Let us now show that $T':=\str{r+1}{A',B'}$ is concurrent to $S':=\str{1}{A,B}$; repeating the argument and reflecting about the main diagonal will finish the proof. Let $t$ (resp. $t'$) be the partial permutation corresponding to $T$ (resp. $T'$). Consider $S'$ with the proper numbering which matches that of $S$ on rows. Let $d$ (resp. $d'$) be the backward numbering of $t$ (resp. $t'$) with respect to $S$ (resp. $S'$). Let $\widetilde{d'}$ be the numbering of $t'$ which coincides with $d$ on rows. 

We will now show that $d' = \widetilde{d'}$. By Remark \ref{rmk:lower bound for backward numbering}, for every $b'\in\B_{t'}$, $\widetilde{d'}\leqslant d'$.  Since $T$ is no higher than $S$, $\widetilde{d'} = d_{t'}^{\jbk,S}$ (not $S'$!).
By Remark \ref{rmk:backward equals stream}, there exists $b'\in \B_{t'}$ with $\widetilde{d'}(b')$ equal to the stream numbering (with respect to $S$). As in the proof of Lemma \ref{lem:sw iff higher}, we conclude that $S^{(\widetilde{d'}(b')+1)}$ lies south of $b'$. By construction, $S'^{(\widetilde{d'}(b')+1)}$ also lies south of $b'$. So $d'(b')\leqslant \widetilde{d'}(b')$. Hence $d'(b')=\widetilde{d'}(b')$, and since both numberings number $t'$ by consecutive integers, they must always coincide.

As noted above, $b'$ lies north of $S'^{(d'(b')+1)}$, so $T'$ is no lower than $S'$. Similarly, let $b\in \B_t$ be the ball such that $S^{(d(b)+1)}$ is east of $b$. Then $b''\in\B_{t'}$ which lies directly north of $b$ has $d'(b'') = d(b)-1$ and $S'^{(d(b))}$ is east of it. Hence $T'$ is at the same height as $S'$. As mentioned in the first paragraph, this means $T'$ is concurrent to $S'$.
\end{proof}

\section{Weights}
\label{sec:weights}

\subsection{A key lemma}

In this section we will present a key (trivial) lemma, and then give examples of constructions resulting from it and several applications. The constructions will be used over and over again in the rest of the paper. 

\begin{lemma}
\label{lem: key lemma}
Consider a cell $c$ and two cells $b_1, b_2$ such that $b_1$ is strictly west (and possibly south or north) of $c$, $b_2$ is strictly north (and possibly east or west) of $c$, and $b_1$ is southwest of $b_2$. Consider a forward zig-zag $Z$ (recall Definition \ref{def:zig-zag}) from $b_1$ to $b_2$ which has no cell southeast of $c$. Then there exists an outer corner-post $c'$ of $Z$ such that $c'$ is northwest of $c$. The situation is illustrated on the left side of Figure \ref{fig:positioning}.
\end{lemma}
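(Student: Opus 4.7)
The plan is to trace the forward zig-zag $Z = (c_1, c_2, \ldots, c_k)$ from $c_1 = b_1$ to $c_k = b_2$, exploiting the fact that along $Z$ the row coordinate weakly decreases and the column coordinate weakly increases. Writing $c = (r_c, k_c)$, my first step would be to pinpoint the moment at which $Z$ crosses the vertical line at column $k_c$: let $i^*$ be the smallest index for which $c_{i^*}$ has column $\geqslant k_c$. Since $c_1 = b_1$ has column strictly less than $k_c$ and the zig-zag starts with a north step (so $c_2$ has the same column as $c_1$), we have $i^* \geqslant 3$; and since the column jumps past $k_c$ between $c_{i^*-1}$ and $c_{i^*}$, the step $c_{i^*-1}\to c_{i^*}$ must be east. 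In particular $c_{i^*-1}$ and $c_{i^*}$ share a row, say $r$, and $c_{i^*-1}$ has column strictly less than $k_c$. The hypothesis that no cell of $Z$ is southeast of $c$, applied to $c_{i^*}$, forces $r < r_c$; thus $c_{i^*-1}$ is strictly northwest of $c$.

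The cell $c_{i^*-1}$ need not itself be an outer corner-post if it was entered via an east step. To remedy this, I would backtrack along the horizontal run containing $c_{i^*-1}$: let $j^*$ be the smallest index for which $c_{j^*}, c_{j^*+1}, \ldots, c_{i^*-1}$ all lie in row $r$. Because the first step of $Z$ is north (so $c_1$ and $c_2$ are in different rows) and $i^*-1 \geqslant 2$, we obtain $j^* \geqslant 2$, and the step $c_{j^*-1}\to c_{j^*}$ is necessarily north. I claim $c_{j^*}$ is an outer corner-post of $Z$. The verification is a short case analysis using the monotonicity along $Z$: cells $c_l$ with $l < j^*$ have strictly larger row than $c_{j^*}$, hence lie neither directly west nor directly north of $c_{j^*}$; cells $c_l$ with $j^* < l \leqslant i^* - 1$ share row $r$ but have strictly larger column; and cells $c_l$ with $l \geqslant i^*$ have column $\geqslant k_c$, strictly larger than the column of $c_{j^*}$, and row $\leqslant r$, ruling out both directly north and directly west positions.

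Finally, $c_{j^*}$ has row $r < r_c$ and column at most that of $c_{i^*-1}$, which is strictly less than $k_c$, so $c_{j^*}$ is northwest of $c$, which is what we wanted. The argument is essentially bookkeeping once the transition index $i^*$ is identified; the only mildly delicate step is sliding the outer corner-post back along a horizontal run in order to handle the case that $c_{i^*-1}$ was reached via an east step.
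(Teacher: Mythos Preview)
Your argument follows essentially the same line as the paper's proof: locate where the zig-zag crosses the column of $c$, observe that the no-cell-southeast hypothesis forces that crossing to occur strictly north of $c$, then slide back along the horizontal run to an outer corner-post. The paper compresses all of this into three sentences by picking the \emph{largest} $l$ with $z_l$ strictly west of $c$ and then invoking ``by definition of outer corners''; your version unpacks the same idea.

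There is one small omission. Your index $i^*$ is defined as the smallest index with column $\geqslant k_c$, but the hypotheses allow $b_2$ to be strictly west of $c$, in which case every cell of $Z$ has column $< k_c$ and no such $i^*$ exists. The paper's choice of the largest $l$ strictly west avoids this issue because such an $l$ always exists (namely $l=k$ in this degenerate case, where $z_k=b_2$ is already strictly northwest of $c$, and the backtracking along the final east run produces the outer corner-post). Your argument is easily patched by treating this case separately, or by switching to the paper's ``largest strictly-west'' index.
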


\begin{figure}
\centering
\resizebox{.4\textwidth}{!}{\input{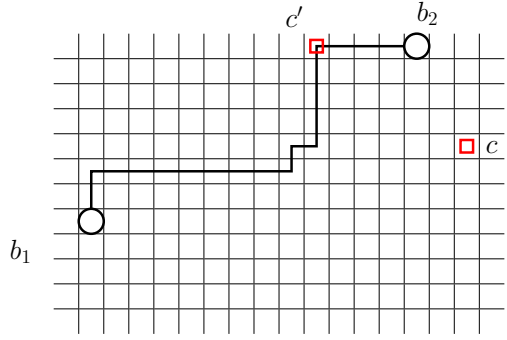}}
\caption{Cell positioning in Lemma \ref{lem: key lemma}.}
\label{fig:positioning}
\end{figure}

\begin{proof}
Let $Z=(z_1,\ldots, z_k)$. Since $z_1 = b_1$, we know that it is strictly west of $c$. Choose the largest $l$ such that $z_l$ is strictly west of $c$. By the assumptions, $z_l$ must be strictly north of $c$. By definition of outer corners, there must be an outer corner of $Z$ northwest of $z_l$.
\end{proof}

Of course, there is a version of this lemma for reverse zig-zags; it is obtained by reflection in an anti-diagonal.

\begin{prop}
Consider a partial permutation $w$, a proper numbering $d$, and the corresponding collection $\{Z_j\}_{j\in\Z}$ of zig-zags. Suppose we have two paths on the set of balls of $\fwC{d}{w}$: $p=(b_0, b_1, \dots, b_k)$ and $q=(c_0, c_1, \dots, c_k = b_k)$. Moreover assume that for all $i$, $b_i$ lies southwest of $c_i$, and they both belong to the zig-zag $Z_{j_i}$. Then for any ball $a_0\in Z_{j_0}$ of $w$ between $b_0$ and $c_0$, there exists a path $(a_0,a_1,\dots, a_{k-1})$ such that $a_i\in Z_{j_i}$, and $a_i$ is between $b_i$ and $c_i$. 
An example is shown on the left hand side of Figure \ref{fig:funnel walk}.
\end{prop}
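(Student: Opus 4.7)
I plan to prove this by induction on $i$, constructing $a_{i+1}$ from $a_i$. Let $e_1 <_{NE} \cdots <_{NE} e_l$ and $f_1 <_{NE} \cdots <_{NE} f_{l'}$ denote the inner corner-posts (balls of $w$) of $Z_{j_i}$ and $Z_{j_{i+1}}$ respectively; these form NE-chains because balls of $w$ with the same $d$-value are pairwise incomparable under strict $<_{NW}$ by monotonicity. Writing the outer corner-posts in their standard form $(\mathrm{row}\,e_s, \mathrm{col}\,e_{s+1})$, the hypothesis $b_i \leq_{SW} a_i \leq_{SW} c_i$ pins down $a_i = e_m$ for some $r+1 \leq m \leq t$, where $b_i, c_i$ correspond to indices $r < t$; similarly $b_{i+1}, c_{i+1}$ correspond to indices $r' < t'$ in the $f$-chain. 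I would look for $a_{i+1}$ of the form $f_s$ with $r'+1 \leq s \leq t'$ that is strictly northwest of $a_i$.

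The key preliminary step is to show $j_{i+1} < j_i$. Since $f_{r'+1}$ is directly north of $b_{i+1}$, it is strictly northwest of $b_i$, so it suffices to prove that any ball $g$ of $w$ strictly northwest of $b_i$ satisfies $d(g) < j_i$. A case analysis shows that such a $g$ is either strictly northwest of $e_r$, strictly northwest of $e_{r+1}$, or lies strictly inside the rectangle spanned by $e_r$ and $e_{r+1}$. The first two cases give $d(g) < j_i$ by monotonicity. In the third, applying continuity iteratively to $g$ would eventually yield some $e_s$ strictly northwest of $g$; but since $e_r$ and $e_{r+1}$ are consecutive in the NE-chain, direct inspection shows no $e_s$ can lie strictly northwest of a point in the strict rectangle, a contradiction.

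With $j_{i+1} < j_i$ in hand, iterated continuity from $a_i$ produces some $f_{s_0}$ with $d(f_{s_0}) = j_{i+1}$ strictly northwest of $a_i$, so the set $\{s : f_s \text{ is strictly NW of } a_i\}$ is a nonempty interval $[s_*, s^*]$ by the NE-chain structure of the $f_s$'s. The strict-NW conditions on the paths $p$ and $q$ translate into $\mathrm{col}\,f_{r'+1} = \mathrm{col}\,b_{i+1} < \mathrm{col}\,b_i \leq \mathrm{col}\,a_i$ and $\mathrm{row}\,f_{t'} = \mathrm{row}\,c_{i+1} < \mathrm{row}\,c_i \leq \mathrm{row}\,a_i$, which force $s^* \geq r'+1$ and $s_* \leq t'$. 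Hence $[r'+1, t'] \cap [s_*, s^*]$ is nonempty, and any $s$ in this intersection gives a valid $a_{i+1} = f_s$. The main obstacle is the careful bookkeeping of coordinates and index ranges in the second step; conceptually the argument uses only monotonicity and continuity of $d$ together with the standard corner-post structure of zig-zags.
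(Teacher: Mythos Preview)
Your proof is correct. The paper's proof is literally one sentence—``This statement follows by a repeated application of Lemma~\ref{lem: key lemma}''—where that lemma says a forward zig-zag running from a cell strictly west of $c$ to one strictly north of $c$, with no cell southeast of $c$, must have an outer corner-post northwest of $c$. Applied with $c = a_i$ and the forward sub-zig-zag of $Z_{j_{i+1}}$ from $b_{i+1}$ to $c_{i+1}$ (whose outer corner-posts are exactly the $f_s$ with $r'+1 \le s \le t'$), the lemma hands you $a_{i+1}$ directly.

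Your argument reaches the same conclusion without isolating the key lemma: you first establish $j_{i+1} < j_i$ from scratch via a case analysis, then use iterated continuity of $d$ to guarantee that $[s_*, s^*]$ is nonempty, and finally do coordinate bookkeeping to show $[r'+1, t']$ meets $[s_*, s^*]$. This last step \emph{is} the key lemma's proof, specialized to the inner/outer corner-post indexing rather than to general zig-zag cells. The paper's packaging pays off because the identical lemma is reused verbatim for bounded walks, semi-bounded walks, and their reverse variants; your version is self-contained but would have to be redone for each of those. One small imprecision: in your third case for $g$ you should also dispose of the possibility $d(g) = j_i$ explicitly (then $g$ is itself some $e_s$ lying in the open rectangle between $e_r$ and $e_{r+1}$, impossible for the same index reason), since ``applying continuity iteratively'' as phrased presumes $d(g) > j_i$.
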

\begin{proof}
This statement follows by a repeated application of Lemma \ref{lem: key lemma}.
\end{proof}

\begin{defn}
The path $(a_0,a_1,\dots, a_{k-1})$ in the above proposition will be called a \emph{funnel walk}.  The proposition may be reflected in an anti-diagonal; in this case the resulting path is called a \emph{reverse funnel walk} (as example is shown in the right hand side of Figure \ref{fig:funnel walk}).
\end{defn}

\begin{figure}
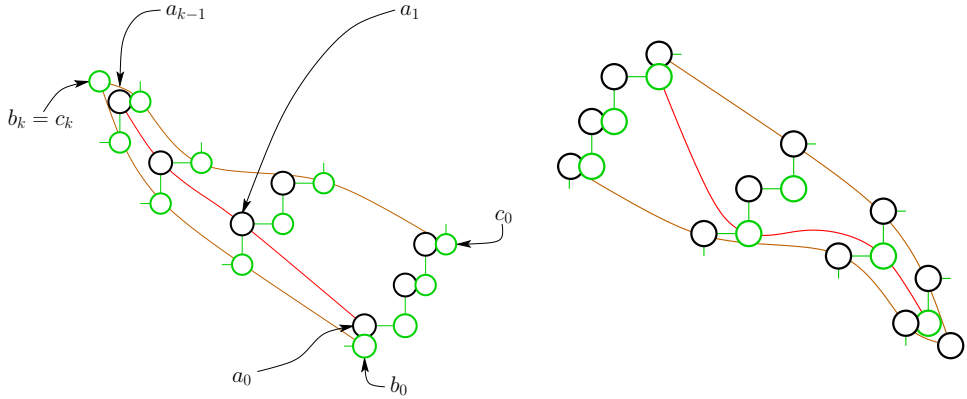

\centering
\resizebox{.4\textwidth}{!}{\input{figures/funnel_walk_ng.pspdftex}}\qquad\raisebox{.032\textwidth}{\resizebox{.329\textwidth}{!}{\input{figures/funnel_walk_rev_ng.pspdftex}}}
\caption{A funnel walk and a reverse funnel walk. The balls of $w$ are shown in black; the balls of $\fw{w}$ are shown in green. The paths $p$ and $q$ are shown with brown curves. The walk $(a_0,a_1,\dots, a_{k-1})$ is indicated by the red line.}
\label{fig:funnel walk}
\end{figure}

\begin{prop}
Consider a partial permutation $w$, a proper numbering $d$, and the corresponding collection $\{Z_j\}_{j\in\Z}$ of zig-zags. Suppose we have two infinite paths on the set of balls of $\fwC{d}{w}$: $p=(b_0, b_1, \dots)$ and $q=(c_0, c_1, \dots)$. Moreover assume that for all $i$, $b_i$ lies southwest of $c_i$, and they both belong to the zig-zag $Z_{j_i}$. Then for any ball $a_0\in Z_{j_0}$ of $w$ between $b_0$ and $c_0$, there exists a path $(a_0,a_1,\dots)$ such that $a_i\in Z_{j_i}$, and $a_i$ is between $b_i$ and $c_i$. 
\end{prop}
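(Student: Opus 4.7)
The plan is to mirror the proof of the finite version of the funnel-walk proposition, noting that its iterative construction proceeds forward from $a_0$ and is entirely local in the sense that each step uses only a bounded amount of data. Concretely, given $a_i \in Z_{j_i}$ lying between $b_i$ and $c_i$, one applies Lemma~\ref{lem: key lemma} (or its reflection in the anti-diagonal, appropriate for the reverse zig-zags used here) to the sub-zig-zag of $Z_{j_{i+1}}$ delimited by $b_{i+1}$ and $c_{i+1}$, together with the position of $a_i$ relative to $b_{i+1}$ and $c_{i+1}$. This produces a ball $a_{i+1} \in Z_{j_{i+1}}$ between $b_{i+1}$ and $c_{i+1}$ which is strictly northwest of $a_i$, so that the sequence continues as a path.

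Because this inductive step depends only on the finitely many nearby cells---specifically on $a_i, b_i, c_i, b_{i+1}, c_{i+1}$ and on the relevant portions of the two zig-zags $Z_{j_i}$ and $Z_{j_{i+1}}$---the construction never uses the hypothesis that the paths $p$ and $q$ eventually meet at a common terminal cell. Consequently the same iteration, now carried out indefinitely rather than stopping at index $k-1$, produces the desired infinite funnel walk $(a_0, a_1, \dots)$.

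The main---and essentially the only---point to verify is that the inductive step of the finite proof is genuinely forward and local, rather than secretly relying on the common endpoint $b_k = c_k$ to set up an induction running from the top down. A brief re-examination of the application of Lemma~\ref{lem: key lemma} in the finite case confirms this: the lemma receives as input a cell ``$c$'' (built from $a_i$), two cells ``$b_1, b_2$'' (built from $b_{i+1}, c_{i+1}$), and the portion of the zig-zag between them, and it outputs the next walk step. No information from beyond the $(i+1)$-st index is consulted. Hence no compactness or diagonal-extraction argument is required; the infinite funnel walk is built directly by transfinite-free forward induction.
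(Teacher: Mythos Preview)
Your proposal is correct and matches the paper's approach: the paper's proof is the single sentence ``This statement follows by a repeated application of Lemma~\ref{lem: key lemma},'' and your write-up simply unpacks why that repetition is legitimate in the infinite setting, observing that each step is forward and local and hence independent of any terminal coincidence $b_k=c_k$. Your additional commentary about no compactness or diagonal argument being needed is accurate and a useful clarification, but the underlying argument is identical to the paper's.
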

\begin{proof}
This statement follows by a repeated application of Lemma \ref{lem: key lemma}.
\end{proof}

\begin{defn}
The path $(a_0,a_1,\dots)$ in the above proposition will be called a \emph{bounded walk}.  The proposition may be reflected in an anti-diagonal; in this case the resulting path is called a \emph{reverse bounded walk}.
\end{defn}

\begin{prop}
\label{prop:semibounded}
Consider a partial permutation $w$, a proper numbering $d$, and the corresponding collection $\{Z_j\}_{j\in\Z}$ of zig-zags. Suppose we have a (finite or infinite) path on the set of balls of $\fwC{d}{w}$: $p=(b_0, b_1, \dots)$ and that for each $i$, $b_i\in Z_{j_i}$. Suppose $a_0\in Z_{j_0}\cap\B_w$. If $a_0$ is northeast (resp. southwest) of $b_0$ then there exists a path $(a_0,a_1,\ldots)$ with $a_i\in Z_{j_i}\cap\B_w$ such that each, for every $i$,  $a_i$ is northeast (resp. southwest)  of $b_i$. An example is shown in Figure \ref{fig:semi-bounded walk}.
\end{prop}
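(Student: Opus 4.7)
The plan is to proceed by induction on $i$, paralleling the inductive strategy used in the preceding propositions on funnel walks and bounded walks (which are established by repeated application of Lemma \ref{lem: key lemma}). The base case $i = 0$ is the hypothesis of the proposition. I focus on the NE case; the SW case follows by a reflected argument. For the inductive step, I assume $a_0, \ldots, a_i$ have been constructed and aim to produce $a_{i+1} \in Z_{j_{i+1}} \cap \B_w$ that is NE of $b_{i+1}$ and strictly NW of $a_i$.

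To construct $a_{i+1}$, I would apply Lemma \ref{lem: key lemma} with $c = a_i$. Since $b_{i+1}$ is strictly NW of $b_i$ while $a_i$ is NE of $b_i$, the cell $b_{i+1}$ is strictly west of $a_i$, so I set $b_1 = b_{i+1}$. By iterated application of the continuity axiom starting from $a_i$, one obtains a chain of balls of $\B_w$ going strictly northwest with consecutively decreasing $d$-labels; provided $j_{i+1} < j_i$, this chain reaches a ball on $Z_{j_{i+1}}$ strictly NW of $a_i$ (in particular strictly north of $a_i$), which I take to be $b_2$. Consider the forward sub-zig-zag $Z$ of $Z_{j_{i+1}}$ running from $b_1$ to $b_2$: it begins with a north step out of the outer corner $b_1$ and ends with an east step into the inner corner $b_2$, so is indeed a forward zig-zag. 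I claim no cell of $Z$ lies strictly SE of $a_i$: each inner corner of $Z_{j_{i+1}}$ has $d$-label $j_{i+1} < j_i = d(a_i)$ and hence cannot be strictly SE of $a_i$ by monotonicity, while each cell in the horizontal or vertical run between consecutive inner corners shares a row or column with an inner corner, and so also avoids the strictly SE quadrant of $a_i$.

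Lemma \ref{lem: key lemma} then yields an outer corner-post $c'$ of $Z$ strictly NW of $a_i$. Since outer corners of $Z$ are outer corners of $Z_{j_{i+1}}$ lying between $b_{i+1}$ and $b_2$ in the NE-chain, $c'$ is itself a ball of $\fw{w}$. To extract the required inner corner $a_{i+1} \in \B_w$, I would take $a_{i+1}$ to be the inner corner of $Z_{j_{i+1}}$ directly north of $c'$ (in the same column). Then $a_{i+1}$ is strictly west of $a_i$ (inheriting $c'$'s column, which is strictly west of $a_i$'s) and strictly north of $a_i$ (being strictly above $c'$, which is already strictly above $a_i$), hence strictly NW of $a_i$; and $a_{i+1}$ is NE of $b_{i+1}$, lying further NE than $c'$ along the zig-zag chain.

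The main obstacle is verifying $j_{i+1} < j_i$, which is essential both for the continuity iteration to reach $Z_{j_{i+1}}$ and for the monotonicity argument controlling the position of inner corners of $Z_{j_{i+1}}$ relative to $a_i$. I would establish this by a geometric case analysis comparing the inner corners of $Z_{j_i}$ and $Z_{j_{i+1}}$ adjacent to $b_i$ and $b_{i+1}$: among the four pairs of inner corners (one from each zig-zag, either directly west or directly north of the corresponding outer corner), the positional constraints forced by $b_{i+1}$ strictly NW of $b_i$ and $a_i$ NE of $b_i$ on $Z_{j_i}$ always produce at least one pair that is comparable in the strict NW/SE order, so that monotonicity yields the required strict inequality $j_{i+1} < j_i$ on the zig-zag indices. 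This step is the most delicate part of the argument, but follows the same spirit as the applications of Lemma \ref{lem: key lemma} in the funnel and bounded walk constructions.
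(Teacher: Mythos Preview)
Your inductive framework is correct, but the execution through Lemma~\ref{lem: key lemma} has real problems, and the paper's proof avoids the lemma entirely with a much shorter direct argument.

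\textbf{The gap.} To apply Lemma~\ref{lem: key lemma} you need the sub-zig-zag from $b_1=b_{i+1}$ to $b_2$ to be a forward zig-zag. But $b_1$ is an outer corner and $b_2$ an inner corner of the \emph{reverse} zig-zag $Z_{j_{i+1}}$; tracing the cells from $b_1$ to $b_2$, the first step is north and the \emph{last} step is also north (inner corners of a reverse zig-zag are always entered from the south), so this is not a forward zig-zag at all. Separately, you never verify that $b_1$ is southwest of $b_2$: your $b_2$ is obtained by continuity from $a_i$, so it is strictly northwest of $a_i$, but when $b_{i+1}$ is itself north of $a_i$ there is no reason $b_2$ should lie northeast of $b_{i+1}$ on $Z_{j_{i+1}}$, and the sub-zig-zag is undefined. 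Your justification that no cell of $Z$ lies southeast of $a_i$ is also incomplete: a horizontal run between an inner corner strictly west of $a_i$ and one strictly northeast of $a_i$ can pass through cells weakly southeast of $a_i$.

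\textbf{What the paper does instead.} The paper simply splits on whether $b_{i+1}$ is north or south of $a_i$ (it is always strictly west, since $b_{i+1}$ is strictly west of $b_i$ and $a_i$ is northeast of $b_i$). If $b_{i+1}$ is north of $a_i$, take $a_{i+1}$ to be the inner corner directly north of $b_{i+1}$; this is strictly northwest of $a_i$ and (weakly) northeast of $b_{i+1}$. If $b_{i+1}$ is south of $a_i$, then any ball of $Z_{j_{i+1}}$ northwest of $a_i$ is automatically north of $b_{i+1}$, hence northeast of $b_{i+1}$ (cells of a zig-zag form a chain under $\leqslant_{NE}$); such a ball exists by continuity once $j_{i+1}<j_i$.

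\textbf{On $j_{i+1}<j_i$.} You are right that this is needed, and the paper glosses over it. But it is not delicate: it is exactly the monotonicity of $\fw{d}$ (stated later as Remark~\ref{rmk:induced is monotone but not proper}), which follows by comparing the inner corner directly north of $b_{i+1}$ with the inner corners adjacent to $b_i$ and invoking continuity of $d$. Your proposed case analysis would work, but it is not the heart of the argument.
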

\begin{proof}
Without loss of generality, $a_0$ is northeast of $b_0$. Since $d$ is proper, there exist balls of $w$ northwest of $a_0$ in the zig-zag of $b_1$. We show that at least one of them is northeast of $b_1$. The ball $b_1$ is west of $b_0$ and hence west of $a_0$. If $b_1$ is north of $a_0$ then the ball of $w$ directly north of $b_1$ is also northwest of $a_0$. If $b_1$ is south of $a_0$ then any ball of its zig-zag which is northwest of $a_0$ is also northeast of $b_1$. Repeating the argument gives the sought path $(a_0, a_1, \dots)$ northeast of $p$. 
\end{proof}

\begin{rmk}
\label{rmk:semi-bounded walk}
Unlike with the other two constructions, this one does not have the antidiagonal symmetry, due to the fact that the ends of the zig-zags are always balls of $w$, not $\fwC{d}{w}$. The difference is that the reverse version is sometimes forced to stop; it is not always possible to find the next element. However (for future applications) we can analyze when this happens.

Consider a (finite or infinite) reverse path $p = (c_0,c_1,\dots)$ on the balls of $w$ with $c_i\in\Z_{j_i}$. Choose some $a_0\in\B_{\fwC{d}{w}}\cap Z_{j_0}$. Without loss of generality, $a_0$ is northeast of $c_0$. The argument from the previous paragraph can fail in two ways. First, there do not have to be balls of $\fwC{d}{w}$ in the zig-zag of $c_1$ southeast of $a_0$. Second, even if there is such a ball, we cannot repeat the argument since there may not be a ball of $\fwC{d}{w}$ directly east of $c_1$. Thus (in either case) if we could not find the next element $a_1$, then $Z_{j_1}$ lies completely south of $a_0$. An example is shown on the right in Figure \ref{fig:semi-bounded walk}.
\end{rmk}

\begin{defn}
\label{def:semi bounded walk}
We will refer to a path as in Proposition \ref{prop:semibounded} as a \emph{semi-bounded walk}. The version reflected in the antidiagonal, which may be shorter than the original path, will be referred to as a \emph{reverse semi-bounded walk}.
\end{defn}

\begin{figure}
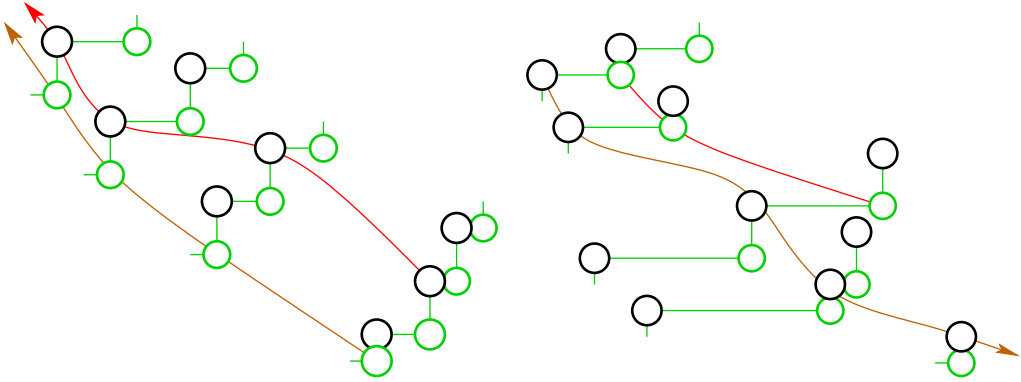

\centering
\resizebox{.4\textwidth}{!}{\input{figures/semi_bounded_walk_ng.pspdftex}}\quad\resizebox{.4\textwidth}{!}{\input{figures/semi_bounded_walk_rev_ng.pspdftex}}
\caption{A semi-bounded walk and a reverse semi-bounded walk. The latter is, in this case, forced to stop after two steps.}
\label{fig:semi-bounded walk}
\end{figure}

\subsection{Applications to channels}
\label{sec:apps to channels}

In this section we apply the above constructions to prove some basic results about channels. The first several results describe the relative position of channels of a partial permutation $w$ and the partial permutations $\fwC{d}{w}$ and $\bk{S}{w}$.

\begin{lemma}
\label{lem:width decreases}
Suppose $w$ is a partial permutation and $d$ is a proper numbering. Then the width of $P_w$ is at least as large as the width of $P_{\fwC{d}{w}}$.  
\end{lemma}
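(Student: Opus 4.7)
The plan is to manufacture, from a density-$m'$ channel of $w' := \fwC{d}{w}$, a long reverse path in $\B_w$ and then extract the inequality $m' \leqslant m$ from Corollary \ref{cor:reinterpret m}. Write $m$ for the width of $P_w$, $m'$ for that of $P_{w'}$, and let $\{Z_j\}_{j\in\Z}$ be the zig-zags associated with $d$. By Proposition \ref{prop: period of proper numbering}, $d$ has period $m$, so $Z_{j+m}=Z_j+(n,n)$. Fix a channel $C'$ of $w'$ of density $m'$ and reverse the order of its cells to obtain a NW-going path $(b_0,b_1,b_2,\ldots)$ in $\B_{w'}$ with $b_{i+m'}=b_i-(n,n)$. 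Each $b_i$ is an outer corner-post of some zig-zag $Z_{\tilde j_i}$, and semi-periodicity forces $\tilde j_{i+m'}=\tilde j_i-m$.

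I would first invoke Proposition \ref{prop:semibounded}. Choosing $a_0\in Z_{\tilde j_0}\cap\B_w$ to be the inner corner-post of $Z_{\tilde j_0}$ lying directly north of $b_0$ (which is automatically northeast of $b_0$), the semi-bounded walk construction produces an infinite NW-going path $(a_0,a_1,a_2,\ldots)$ in $\B_w$ with $a_i\in Z_{\tilde j_i}\cap\B_w$ for every $i\geqslant 0$. Restricting to the arithmetic subsequence $i=km'$ and using $Z_{\tilde j_{km'}}=Z_{\tilde j_0}-k(n,n)$, I get
\[
a_{km'}+k(n,n)\in Z_{\tilde j_0}\cap\B_w \qquad \text{for every } k\geqslant 0.
\]

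The key bound is that $Z_{\tilde j_0}\cap\B_w$ is finite of cardinality at most $m$: its elements all share the $d$-value $\tilde j_0$, so by monotonicity they form a $\leqslant_{SW}$-antichain in $\B_w$; moreover, since $(n,n)$-translates differ in $d$-value by $\pm m$, the projection $\varphi_w$ restricted to this set is injective, and its image is an antichain in the Shi poset $P_w$, whose width is $m$. Pigeonhole applied to $k=0,1,\ldots,m$ then supplies indices $0\leqslant k_1<k_2\leqslant m$ with $a_{k_1m'}+k_1(n,n)=a_{k_2m'}+k_2(n,n)$, equivalently $a_{k_2m'}=a_{k_1m'}-(k_2-k_1)(n,n)$.

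Finally, reversing the sub-path $a_{k_1m'},a_{k_1m'+1},\ldots,a_{k_2m'}$ yields a reverse path in $\B_w$ of length $(k_2-k_1)m'$ with endpoints $a_{k_2m'}$ and $a_{k_1m'}=a_{k_2m'}+(k_2-k_1)(n,n)$. Corollary \ref{cor:reinterpret m} immediately gives $(k_2-k_1)m'\leqslant(k_2-k_1)m$, hence $m'\leqslant m$. The main technical point I expect to pin down is the cardinality bound $|Z_{\tilde j_0}\cap\B_w|\leqslant m$ that powers the pigeonhole; once that is in place, the rest is a mechanical chaining of Proposition \ref{prop:semibounded} and Corollary \ref{cor:reinterpret m}.
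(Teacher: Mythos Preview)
Your overall strategy works, but the step you flag as ``the main technical point'' contains a genuine error: the claim that $\varphi_w(Z_{\tilde j_0}\cap\B_w)$ is an antichain in $P_w$ is false. Balls with the same $d$-value are pairwise $\leqslant_{NE}$-comparable, but this does \emph{not} force their Shi-poset images to be incomparable; the relation $i\leqslant_S j$ records whether some translate of the ball over $i$ sits strictly southwest of the ball over $j$, and NE-comparable balls can satisfy this after translation. For a concrete counterexample take $n=2$, $w=[2,1]$: here $2<_S 1$, so $m=1$, yet the southwest channel numbering assigns the same value to $(1,2)$ and $(2,1)$, giving $|Z_j\cap\B_w|=2>m$, with projections $1,2$ forming a chain in $P_w$.

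Fortunately the error is inessential. You only need $Z_{\tilde j_0}\cap\B_w$ to be \emph{finite}, and your injectivity argument for $\varphi_w$ already gives $|Z_{\tilde j_0}\cap\B_w|\leqslant |P_w|$. Running the pigeonhole over $k=0,1,\ldots,|P_w|$ still yields $k_1<k_2$ with $a_{k_1m'}+k_1(n,n)=a_{k_2m'}+k_2(n,n)$, and since Corollary~\ref{cor:reinterpret m} gives $(k_2-k_1)m'\leqslant (k_2-k_1)m$ regardless of how large $k_2-k_1$ is, your conclusion $m'\leqslant m$ survives intact.

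By contrast, the paper's proof is a one-line pigeonhole that avoids both Proposition~\ref{prop:semibounded} and Corollary~\ref{cor:reinterpret m}. Two distinct balls of a channel $D\subseteq\B_{\fwC{d}{w}}$ are strictly SE-comparable, whereas two outer corner-posts of the same zig-zag are NE-comparable; hence no zig-zag contains two balls of $D$. Since $Z_{j+m}=Z_j+(n,n)$, the $m'$ translate classes of $D$ inject into the $m$ residue classes of zig-zag indices, giving $m'\leqslant m$ directly. Your route is a valid alternative, but it imports heavier machinery than the statement requires.
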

\begin{proof}
Let $\{Z_i\}_{i\in\Z}$ be the relevant collection of zig-zags. Consider a channel $D$ of $\fwC{d}{w}$. If the width of $P_{\fwC{d}{w}}$ were larger, then by the pigeonhole principle, for $b\in D$, between $b$ and $b+(n,n)$ at least two balls of $D$ would be part of the same zig-zag. This is impossible.
\end{proof}

Now it is easy to see that AMBC produces a valid element of $\Omega$.
\begin{proof}[Proof of Theorem \ref{thm:AMBC produces valid element}]\label{pf:AMBC produces valid element}  It is clear that two tabloids have the same shape and are filled with the distinct residue classes. By the above lemma, the row sizes decrease. 
\end{proof}

\begin{prop}
\label{prop: between channels a channel}
Let $w$ be a partial permutation, and $d$ a proper numbering. 
\begin{enumerate}
\item\label{first} For any pair of channels $C_1,C_2\in\C_w$ with $C_1$ southwest of $C_2$, there exists a channel $D\in\C_{\fwC{d}{w}}$ such that each ball of $D$ is located between the ball of $C_1$ and the ball of $C_2$ on its zig-zag. 
\item\label{second} If $P_{\fwC{d}{w}}$ and $P_w$ have the same width, then for any $D\in\C_{\fwC{d}{w}}$ there exists $C_1\in\C_w$ southwest of it and $C_2\in\C_w$ northeast of it.
\end{enumerate}
\end{prop}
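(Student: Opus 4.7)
\smallskip
\noindent\textbf{Proof plan.}
For part (\ref{first}), the plan is to align the proper numbering $d$ on the two channels so that each integer $i$ corresponds simultaneously to a ball $b_i\in C_1$ and a ball $c_i\in C_2$; this is possible because Corollary~\ref{cor:proper numbers channels contiguosly} tells us $d$ numbers each channel by consecutive integers. Both $b_i$ and $c_i$ are then inner corner-posts of the same zig-zag $Z_i$. Since inner corner-posts of a single zig-zag form a chain in $\leqslant_{SW}$, and since the SW order on channels corresponds (via the kind of reasoning in Lemma~\ref{lem:minimal elements of maximal antichains}) to the SW order on their maximal antichains in $P_w$, one argues that $b_i$ is weakly SW of $c_i$, with strict inequality whenever $b_i\ne c_i$. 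Hence, away from the degenerate case, each $Z_i$ has at least one outer corner-post strictly between $b_i$ and $c_i$ along its inner-corner-post chain.

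Next I would apply the reverse version of the funnel-walk construction (the antidiagonal reflection of the funnel walk defined above) to the reverse paths $(\ldots,b_{i-1},b_i,b_{i+1},\ldots)$ along $C_1$ and $(\ldots,c_{i-1},c_i,c_{i+1},\ldots)$ along $C_2$: starting from an outer corner-post $d_0$ of $Z_0$ between $b_0$ and $c_0$, this yields a reverse walk $(d_i)$ of outer corner-posts of $\fwC{d}{w}$ with each $d_i$ between $b_i$ and $c_i$ on $Z_i$. To produce an $(n,n)$-periodic chain I would use an extremal rule at each step, for example defining $d_i$ to be the $\leqslant_{SW}$-minimum outer corner-post of $Z_i$ that still admits a valid continuation; $(n,n)$-invariance of $C_1$, $C_2$ and the family of zig-zags then forces $d_{i+m}=d_i+(n,n)$, where $m$ is the width of $P_w$. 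The resulting $D=\{d_i:i\in\Z\}$ is an $(n,n)$-periodic chain in $\leqslant_{SE}$ of density $m$, and by Lemma~\ref{lem:width decreases} its density saturates the upper bound on the width of $P_{\fwC{d}{w}}$, so $D$ is automatically a channel.

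For part (\ref{second}), the hypothesis that the widths of $P_w$ and $P_{\fwC{d}{w}}$ coincide means every channel $D$ of $\fwC{d}{w}$ has density $m$, so contains exactly one ball per zig-zag $Z_i$. For each $d_i\in D\cap Z_i$, which sits between two consecutive inner corner-posts $x_j^{(i)}$ and $x_{j+1}^{(i)}$ of $Z_i$, I would set $b_i:=x_j^{(i)}$ (directly west of $d_i$) and $c_i:=x_{j+1}^{(i)}$ (directly north of $d_i$). A semi-bounded-walk argument (Proposition~\ref{prop:semibounded}) applied to the reverse path underlying $D$ would then show that $(b_i)$ and $(c_i)$ are $(n,n)$-periodic chains of balls of $w$ of density $m$, hence channels, with $C_1$ southwest of $D$ and $C_2$ northeast of $D$ by construction.

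The main obstacle is the $(n,n)$-invariance step in part (\ref{first}): one must verify that the extremal rule genuinely produces a periodic chain, even though the inner-corner-post sequence of $Z_i$ can shift unpredictably with $i$ (in particular the naive ``SW-most outer corner-post between $b_i$ and $c_i$'' need not form a chain, as direct examples show). The hard work is in showing that the extremal choice conditioned on admitting a continuation repairs this defect, so that the chain of outer corner-posts really does saturate density $m$ in a way that is compatible with $(n,n)$-translation.
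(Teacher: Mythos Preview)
Your setup for part~(\ref{first}) is correct and matches the paper: align the channels so that $b_i\in C_1$ and $c_i\in C_2$ both lie on $Z_i$, then launch a reverse bounded walk $(a_0,a_1,\ldots)$ on balls of $\fwC{d}{w}$ between the two channels, visiting consecutive zig-zags. The divergence comes in how to extract a channel from this walk.

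Your proposed extremal rule (``SW-minimum outer corner-post that still admits a valid continuation'') is where the gap lies, and you are right to flag it. The condition ``admits a valid continuation'' is forward-looking and infinite, so it is not clear it is well-defined step by step, let alone that it forces $(n,n)$-periodicity. The paper avoids this entirely: rather than engineering a periodic walk, it takes \emph{any} reverse bounded walk, applies pigeonhole to find indices $0$ and $l-1$ with $a_0$ and $a_{l-1}$ translates, and forms the partial permutation $w'$ consisting of all $(n,n)$-translates of $\{a_0,\ldots,a_{l-1}\}$. The numbering $d'$ on $w'$ inherited from the zig-zag indices is proper (monotone because $\fw{d}$ is monotone, continuous because consecutive $a_i$ are in consecutive zig-zags), so Proposition~\ref{prop: period of proper numbering} forces the width of $P_{w'}$ to equal the period $m$ of $d'$, which is the width of $P_w$. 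Combined with Lemma~\ref{lem:width decreases}, this means $P_{w'}$ contains a maximal antichain of $P_{\fwC{d}{w}}$; the corresponding channel $D$ has all its balls among the translates of the $a_i$, hence between $C_1$ and $C_2$. No extremal rule or direct periodicity argument is needed.

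For part~(\ref{second}), your specific choice $b_i=$ inner corner-post directly west of $d_i$ runs into the same issue you identified in part~(\ref{first}): there is no a priori reason these form a chain. The paper instead starts from \emph{any} ball of $w$ southwest of $D$ (respectively northeast), takes the semi-bounded walk on balls of $w$ bounded by $D$, and then repeats the pigeonhole-plus-Proposition~\ref{prop: period of proper numbering} argument verbatim to extract $C_1$ (respectively $C_2$). So the missing ingredient in both parts is the same: replace the attempt to build a periodic chain directly by the combination of pigeonhole and the period-equals-width proposition.
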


\begin{proof}
(\ref{first}) For each $i$, let $b_i\in C_1$ (resp. $c_i\in C_2$) be the ball of $C_1$ (resp. $C_2$) in the $Z_i$. Choose $a_0\in\B_{\fwC{d}{w}}\cap Z_0$ which lies between $b_0$ and $c_0$. Let $(a_0, a_1,\dots)$ be the corresponding bounded reverse walk.

By the pigeonhole principle, this reverse walk contains two $(n,n)$-translates of the same ball. Up to reindexing, we can assume that for some $l$, $a_0$ and $a_{l-1}$ are $(n,n)$-translates. Consider the partial permutation $w'$ whose balls are $\{a_0,\dots, a_{l-1}\}$ as well as all their $(n,n)$-translates. Let $d'$ be the numbering of $w'$ where a ball is numbered $i$ if it is in $Z_i$. Then $d'$ is obviously a proper numbering, hence by Proposition \ref{prop: period of proper numbering} the widths $P_w$ and $P_{w'}$ must be the same. Now by Lemma \ref{lem:width decreases}, the width of $P_{\fwC{d}{w}}$ cannot be greater than that of $w$, so $P_{w'}$ must contain a maximal antichain of $P_{\fwC{d}{w}}$. The corresponding channel is the sought channel $D$. 

(\ref{second}) Start with a ball southwest of $D$ and a ball northeast of $D$. The semi-bounded (by $D$) walks starting at these balls arrive at channels $C_1$ and $C_2$ by the same argument as before. 
\end{proof}

\begin{cor}
\label{cor:interlacing channels}
Suppose $w$ is a partial permutation and $P_w$ has $k\geqslant 2$ disjoint maximal antichains; let $d$ be a proper numbering. Let $\{C_1,\dots, C_k\}$ be the largest disjoint collection of channels such that $C_i$ is southwest of $C_{i+1}$ for all $i$. For each $i$ choose a channel $D_i$ of $\fwC{d}{w}$ between $C_i$ and $C_{i+1}$ as done in Proposition \ref{prop: between channels a channel}. Then the channels $D_1,\dots, D_{k-1}$ form a largest disjoint collection of channels of $\fwC{d}{w}$.
\end{cor}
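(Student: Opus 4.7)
My plan is to prove the corollary in two stages: first, to verify that $D_1,\ldots,D_{k-1}$ are pairwise disjoint, and second, to rule out any collection of $k$ pairwise disjoint channels in $\fwC{d}{w}$.

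For disjointness, Proposition \ref{prop: between channels a channel} places each ball of $D_i$ strictly between the balls of $C_i$ and $C_{i+1}$ on a common zig-zag (strictly, because balls of $D_i \subseteq \B_{\fwC{d}{w}}$ are outer corner-posts while balls of $C_i, C_{i+1}$ are inner corner-posts). Because $C_1,\ldots,C_k$ are strictly ordered in the southwest partial ordering, the zig-zag intervals they demarcate are pairwise disjoint, so balls in different $D_i$'s cannot coincide; the same observation gives $D_i$ southwest of $D_{i+1}$.

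The heart of the argument is maximality. I first note that the hypothesis $k\geqslant 2$ forces the widths of the two Shi posets to agree: $m_w = m_{\fw{w}}$. Indeed, Proposition \ref{prop: between channels a channel} applied to $C_1$ and $C_2$ yields, via the walk-and-pigeonhole construction in its proof, a chain of balls of $\fwC{d}{w}$ of density $m_w$; combined with Lemma \ref{lem:width decreases}, this forces equality. Consequently, every channel of $\fwC{d}{w}$ contains exactly one ball in each zig-zag $Z_i$, just as every channel of $w$ does, so channels of $\fwC{d}{w}$ can be parameterized by zig-zag index in the same way.

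Now assume toward a contradiction that $\fwC{d}{w}$ has $k$ pairwise disjoint channels $E_1,\ldots,E_k$ ordered southwest to northeast, and write $e_{j,i}$ for the ball of $E_j$ on $Z_i$. I will construct $k+1$ pairwise disjoint channels $F_0,F_1,\ldots,F_k$ of $w$. For each $1\leqslant j\leqslant k-1$, I pick a starting ball $a_0 \in \B_w \cap Z_0$ strictly between $e_{j,0}$ and $e_{j+1,0}$ --- such a ball exists because between any two outer corner-posts on a reverse zig-zag lies at least one inner corner-post. I then extend $a_0$ to a reverse path on $\B_w$ by the reverse analogue of the bounded walk construction, using the reverse paths along $E_j$ and $E_{j+1}$ as bounding paths. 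Pigeonhole closes the walk at some length $l-1$, and the proper-numbering argument with $d'(a_i) = i$ on the resulting partial permutation $w'$ --- which must have period $m_w$ because $a_i \in Z_i$ and $d$ has period $m_w$ --- combined with Proposition \ref{prop: period of proper numbering} forces $w'$ to be a channel $F_j$ of $w$. For $F_0$ (respectively $F_k$) I analogously apply a reverse semi-bounded walk (Definition \ref{def:semi bounded walk}) starting from a ball of $w$ strictly southwest (respectively northeast) of $e_{1,0}$ (respectively $e_{k,0}$); such a ball exists because the first and last cells of any nontrivial zig-zag are inner corner-posts lying strictly southwest (respectively northeast) of every outer corner-post on that zig-zag. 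The channels $F_0,\ldots,F_k$ occupy pairwise disjoint zig-zag strips and cannot coincide with any $E_j$ (since $\B_w$ and $\B_{\fwC{d}{w}}$ are disjoint as cell sets), yielding the required $k+1$ pairwise disjoint channels of $w$ and contradicting the maximality of $\{C_1,\ldots,C_k\}$. The main technical obstacle throughout is the density bookkeeping that forces the walks to close at length exactly $m_w$ (rather than at a multiple of it), but this is handled by the same zig-zag-period mechanism already used in the proof of Proposition \ref{prop: between channels a channel}.
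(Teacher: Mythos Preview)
Your argument is correct and follows essentially the same route as the paper: both derive the contradiction by producing $k+1$ pairwise disjoint channels of $w$ from a hypothetical collection of $k$ disjoint channels of $\fwC{d}{w}$, via the walk constructions underlying Proposition \ref{prop: between channels a channel}. The paper's two-line proof simply cites that proposition and leaves the reader to assemble the $k+1$ channels; you have unpacked this explicitly, including the width-equality step that makes part (2) of the proposition applicable.

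One terminological correction: for the construction of $F_0,\ldots,F_k$ you want the \emph{forward} bounded and semi-bounded walks (Proposition \ref{prop:semibounded}), which produce paths on $\B_w$ bounded by paths on $\B_{\fwC{d}{w}}$. In the paper's conventions the ``reverse semi-bounded walk'' of Definition \ref{def:semi bounded walk} is the antidiagonally reflected version---a walk on $\B_{\fwC{d}{w}}$ bounded by a path on $\B_w$---and, as Remark \ref{rmk:semi-bounded walk} warns, that one can terminate early. Since the $E_j$ are bi-infinite you may run the forward walks in the northwest direction and still close them by pigeonhole and Lemma \ref{lem:high flow implies channel}, so this is a labeling slip rather than a mathematical gap.
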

\begin{proof}
It is clear that the channels $D_1,\dots, D_{k-1}$ are pairwise disjoint. If there were more than $k-1$ pairwise disjoint channels of $\fwC{d}{w}$, then by Proposition \ref{prop: between channels a channel}, there would be more than $k$ disjoint channels for $w$.
\end{proof}

The next result is a similar statement for a step of the backward algorithm.

\begin{prop}
Suppose $w$ is a partial permutation and $S$ is a compatible stream such that the width of $P_w$ is equal to the flow of $S$. Choose a maximal disjoint collection  $\{D_1,\dots, D_{k-1}\}$ of channels of $w$ such that for any $i$, $D_i$ is southwest of $D_{i+1}$. Then there exists a channel $C_1$ of $\bk{S}{w}$ whose cells are southwest of the cells of $D_1$, a channel $C_k$ whose cells are northeast of the cells of $D_{k-1}$, and for each $2\leqslant i\leqslant k-1$ a channel $C_i$ between $D_{i-1}$ and $D_i$. The channels $C_1,\dots, C_k$ form a maximal disjoint collection of channels of $\bk{S}{w}$.
\end{prop}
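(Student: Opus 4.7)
\emph{Plan.} The main idea is to reinterpret the backward step as a forward step in the opposite direction, which makes the results of the previous section directly transportable. By Lemma \ref{lem:backward is proper} the numbering $d'$ of $\bk{S}{w}$ induced by the backward step (Definition \ref{def:numbering induced by backward step}) is proper, and by comparing inner, outer, and back corner-posts I would verify that the zig-zags of the forward step on $\bk{S}{w}$ with numbering $d'$ coincide geometrically with the zig-zags $Z_i$ of the backward step; indeed, the inner corner-posts of $Z_i$ (the balls of $\bk{S}{w}$ numbered $i$ by $d'$) determine $Z_i$ completely. This yields $\fwC{d'}{\bk{S}{w}} = w$ and $\str{d'}{\bk{S}{w}} = S$. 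By Corollary \ref{cor:with of backward Shi poset}, the width of $P_{\bk{S}{w}}$ equals the flow of $S$, which by hypothesis equals the width of $P_w$, placing us in the equal-width regime needed to apply Proposition \ref{prop: between channels a channel} and Corollary \ref{cor:interlacing channels} to the pair $(\bk{S}{w}, d')$.

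Next, I would construct the $C_i$'s in two families. For the middle indices $2\leq i\leq k-1$, I would mimic the proof of Proposition \ref{prop: between channels a channel}(1). Take reverse paths along $D_{i-1}$ and $D_i$ indexed by the backward numbering of $w$ with respect to $S$; by Proposition \ref{prop:backward numbers channel consecutively} these paths hit every zig-zag $Z_j$ consecutively, and the ball of $D_{i-1}$ in $Z_j$ lies strictly southwest of the ball of $D_i$ in $Z_j$. Pick an inner corner-post of $Z_0$ (a ball of $\bk{S}{w}$) lying between these two outer corner-posts, and apply the infinite bounded reverse walk construction to obtain a reverse path $(a_0,a_1,\ldots)$ on $\B_{\bk{S}{w}}$ with $a_l\in Z_l$ lying between the two chosen paths. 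Pigeonhole yields $(n,n)$-translates $a_0$ and $a_L$; since the zig-zag indexing is semi-periodic with period $m$ equal to the width of $P_w$, one gets $L=m$, so the chain has maximal density and forms a channel $C_i$ of $\bk{S}{w}$ sandwiched between $D_{i-1}$ and $D_i$ on each zig-zag. For $C_1$ and $C_k$, I would invoke Proposition \ref{prop: between channels a channel}(2) applied to $(\bk{S}{w},d')$ with $D = D_1$ and $D = D_{k-1}$, respectively, to obtain channels of $\bk{S}{w}$ southwest of $D_1$ and northeast of $D_{k-1}$.

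For maximality, I would apply Corollary \ref{cor:interlacing channels} to $(\bk{S}{w},d')$: any maximal disjoint collection of channels of $\bk{S}{w}$ of size $k'$ interlaces to a maximal disjoint collection of channels of $\fwC{d'}{\bk{S}{w}}=w$ of size $k'-1$, and since the latter has size $k-1$ we must have $k'=k$, matching our construction. Pairwise disjointness of the $C_i$ is immediate from the fact that consecutive $C_i$'s are separated by a $D_j$ on every zig-zag. The main obstacle is really the opening move: the careful geometric identification $\fwC{d'}{\bk{S}{w}} = w$ together with the coincidence of zig-zags under the two interpretations, which hinges on the symmetric roles of inner and outer corner-posts. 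Once this duality is verified, the entire argument amounts to transporting the forward-step theory from Section \ref{sec:apps to channels} through it.
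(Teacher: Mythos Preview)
Your approach is correct and essentially the same as the paper's: both use the proper numbering $d'$ induced on $\bk{S}{w}$ by the backward step (Lemma~\ref{lem:backward is proper}) and then run bounded and semi-bounded walks on the balls of $\bk{S}{w}$, bounded by the channels $D_i$ of $w$, exactly as in Proposition~\ref{prop: between channels a channel}. The paper's proof is more terse---it simply says ``by the same argument as in Proposition~\ref{prop: between channels a channel}'' and for maximality observes that between any two disjoint channels of $\bk{S}{w}$ there is a channel of $w$---whereas you make the underlying duality $\fwC{d'}{\bk{S}{w}}=w$ explicit; this extra step is a correct and clarifying observation (and is precisely what justifies invoking Proposition~\ref{prop: between channels a channel}(2) and Corollary~\ref{cor:interlacing channels} verbatim), but the paper gets by without stating it because the walk constructions only need the common zig-zags $Z_i$, which are determined by their inner corner-posts and hence are identical in both the forward and backward pictures.
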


\begin{proof}
There exists a ball $b$ of $\bk{S}{w}$ directly west of a ball of $D_1$. By Lemma \ref{lem:backward is proper}, the numbering of $\bk{S}{w}$ induced from the backward step is proper, so we may consider a semi-bounded walk starting at $b$ with respect to $D_1$. By the same argument as in Proposition \ref{prop: between channels a channel}, this walk arrives at a channel which we will call $C_1$ (by construction, it satisfies the desired property). By the same argument we can construct $C_k$; for $2\leqslant i\leqslant k-1$, $C_i$ is obtained by considering a bounded walk between $D_{i-1}$ and $D_i$. It is clear that these form a maximal collection of channels of $\bk{S}{w}$ since between any two channels of $\bk{S}{w}$ there is a channel of $w$. 
\end{proof}

\begin{rmk}
\label{rmk: can get to specific channel}
Suppose $w$ is a partial permutation and $P_w$ has $k\geqslant 2$ disjoint maximal antichains. Let $(C_1,\dots, C_k)$ and $(D_1,\dots,D_{k-1})$ be interlacing collections of channels as in Corollary \ref{cor:interlacing channels}. We know that for any $i$, a bounded walk $p$ between $D_i$ and $D_{i+1}$ necessarily intersects some channel (which lies completely between $D_i$ and $D_{i+1}$). It turns out that $p$ must intersect specifically our chosen channel $C_{i+1}$.

Indeed, since $p$ is an infinite path, two elements of $p$ must be translates. However $p$ contains an element in a consecutive collection of zig-zags, so by Lemma \ref{lem:high flow implies channel}, a subcollection of elements of $p$ together with their translates must form a channel $C$. Of course, $C$ must intersect $C_{i+1}$ since otherwise there would be too many disjoint channels of $w$. 

Exactly the same story holds for bounded reverse walks. Similarly, starting with a ball of $\fw{w}$ southwest (resp. northeast) of $D_1$ (resp. $D_{k-1}$), a semi-bounded walk intersects $C_1$ (resp. $C_k$). 
\end{rmk}

\subsection{Dominance in the image of $\Phi$}

 The main goal of this section is to prove that the image of $\Phi$ is  subset of $\Omega_{dom}$.  Before getting to the proof, we study two monotone numberings of the balls of $\fw{w}$. One is its own southwest channel numbering $d_{\fw{w}}^{SW}$. The second one is the numbering afforded by the zig-zags from $d_w^{SW}$ as follows. 

\begin{defn}
Suppose $d$ is a proper numbering of $w$. Define $\fw{d}:\B_{\fwC{d}{w}}\to\Z$ by $\left(\fw{d}\right)(b) = i$ if $b$ is part of the $i$-th zig-zag of $d$. 
\end{defn}

\begin{rmk}
\label{rmk:induced is monotone but not proper}
The two numberings $d_{\fw{w}}^{SW}$ and $\fw{d_w^{SW}}$ do not always coincide; moreover the numbering $\fw{d_w^{SW}}$ is not always proper. However, it is not difficult to see that $\fw{d}$ is monotone for every proper numbering $d$.
\end{rmk}

The next result demonstrates that, although the two numberings do not coincide everywhere, they do coincide on channels.
 
\begin{prop}
\label{prop:induced and southwest coincide on channels}
Consider a partial permutation $w$ such that $P_w$ has at least two disjoint maximal antichains. Fix some shift of $d_w^{SW}$ and choose the shift of $d_{\fw{w}}^{SW}$ to coincide with $\fw{d_w^{SW}}$ on the southwest channel of $\fw{w}$. Then the two numberings coincide on all the channels of $\fw{w}$. Moreover, for any $b\in\B_{\fw{w}}$ we have $\left(\fw{d_w^{SW}}\right)(b) \geqslant d_{\fw{w}}^{SW}(b)$.
\end{prop}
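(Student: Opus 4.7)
My plan is to tackle the two assertions separately: the inequality $\fw{d_w^{SW}} \geqslant d_{\fw{w}}^{SW}$ first (as it is the easier part), then the equality on channels.

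For the inequality: write $e := \fw{d_w^{SW}}$. By Remark \ref{rmk:induced is monotone but not proper}, $e$ is monotone on $\B_{\fw{w}}$, and by the choice of shift $e$ agrees with $d_{\fw{w}}^{SW}$ on the southwest channel $D_1$ of $\fw{w}$. For any $b\in\B_{\fw{w}}$, continuity of $d_{\fw{w}}^{SW}$ gives a path $(b=b_0,b_1,\dots,b_l)$ with $b_l\in D_1$ along which $d_{\fw{w}}^{SW}$ decreases by exactly $1$ at each step. Monotonicity of $e$ forces $e(b_{s+1})\leqslant e(b_s)-1$, so $e(b)\geqslant e(b_l)+l=d_{\fw{w}}^{SW}(b_l)+l=d_{\fw{w}}^{SW}(b)$. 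This is essentially the argument already used in Remark \ref{rmk:distance direction}, adapted from proper to merely monotone numberings.

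For the equality on channels I would invoke Remark \ref{rmk:monotone is sometimes channel}: since $e$ is monotone and agrees with $d_{\fw{w}}^{SW}$ on $D_1$, to show $e(b)=d_{\fw{w}}^{SW}(b)$ for $b$ in any channel $D_j$ it suffices to exhibit a path in $\B_{\fw{w}}$ from $b$ to $D_1$ along which $e$ decreases by exactly $1$ at each step. Equivalently, I need a sequence of balls of $\fw{w}$ stepping NW through consecutive zig-zags $Z_{i_0},Z_{i_0-1},\dots$ and terminating on $D_1$. The construction uses the interlacing collections of Corollary \ref{cor:interlacing channels}: one picks appropriate bounding paths running along $C_1$ (SW-most channel of $w$) and along $C_{j+1}$ (the channel NE of $D_j$) and applies the key Lemma \ref{lem: key lemma} iteratively to produce a bounded-walk-style path on $\fw{w}$ that both (i) advances by one zig-zag at every step and (ii) is forced, by Remark \ref{rmk: can get to specific channel} and the uniqueness of the SW-most channel, to reach $D_1$.

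The main obstacle is assertion (i): verifying that the constructed path truly moves to the adjacent zig-zag at every step, rather than skipping. This requires a careful inductive argument: starting with $b\in D_j\cap Z_{i_0}$, one uses the two adjacent inner corner-posts of $Z_{i_0}$ (directly W and directly N of $b$) together with continuity of $d_w^{SW}$ to produce balls of $w$ in $Z_{i_0-1}$ positioned suitably NW; Lemma \ref{lem: key lemma} then supplies an outer corner-post (ball of $\fw{w}$) of $Z_{i_0-1}$ strictly NW of $b$. A subsidiary point worth checking explicitly before all this is that under the hypothesis $k\geqslant 2$ the width of $P_{\fw{w}}$ equals the width of $P_w$, so that $e$ and $d_{\fw{w}}^{SW}$ have the same semi-periodicity and the choice of shift making them coincide on $D_1$ is well-defined; this should follow by combining Lemma \ref{lem:width decreases} with the fact that the interlacing channel $D_1$ itself meets each zig-zag (otherwise two channels $D_i,D_{i'}$ couldn't be disjoint after enough translations), forcing the density of $D_1$ to equal the number of zig-zags per period.
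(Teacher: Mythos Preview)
Your proof of the inequality $e(b)\geqslant d_{\fw{w}}^{SW}(b)$ (with $e:=\fw{d_w^{SW}}$) is correct and is the paper's argument. Your overall strategy for the equality on channels is also the paper's: invoke Remark~\ref{rmk:monotone is sometimes channel} by producing, from each $b\in D_j$, a path in $\B_{\fw{w}}$ to $D_1$ along which $e$ drops by exactly one per step, i.e.\ a path visiting consecutive zig-zags. Your subsidiary point about equality of widths is a genuine prerequisite (the paper uses it implicitly when it asserts that ``both numberings number channels by consecutive integers'').

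The gap is the construction of the path. First, your local step does not work as stated. You find inner corner-posts of $Z_{i_0-1}$ northwest of $b$ and then appeal to Lemma~\ref{lem: key lemma} for an outer corner-post of $Z_{i_0-1}$ northwest of $b$. But the $Z_i$ here are \emph{reverse} zig-zags, whose outer corner-posts (the balls of $\fw{w}$) sit on the southeast side; Lemma~\ref{lem: key lemma} (for forward zig-zags) and its anti-diagonal reflection both hand you corner-posts on the wrong side. More tellingly, your argument never uses the hypothesis $b\in D_j$: if it worked it would prove $e$ is continuous everywhere, contradicting Remark~\ref{rmk:induced is monotone but not proper}. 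Second, even granting a walk on $\fw{w}$ between $C_1$ and $C_{j+1}$ through consecutive zig-zags, Remark~\ref{rmk: can get to specific channel} only pins down the target channel when the bounds are \emph{adjacent}; between $C_1$ and $C_{j+1}$ sit all of $D_1,\dots,D_j$, and nothing forces the walk to land on $D_1$.

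The paper closes both gaps by building the path segment by segment, constructing for each $i$ a reverse path on $\fw{w}$ from $D_{i-1}$ to $D_i$ that visits consecutive zig-zags. The missing ingredient is the transpose of Lemma~\ref{lem:channel numbering determined by closest channel}: it furnishes a path $p$ on $\B_w$ from $C_{i-1}$ to $C_{i-2}$ along which $d_w^{SW}$ drops by exactly one per step. The pair $(p,\,C_{i-1})$ then bounds a \emph{reverse funnel walk} on $\fw{w}$ starting at a ball of $D_{i-1}$; because both bounds visit consecutive zig-zags, so does the walk. Once this walk emerges northeast of $C_{i-1}$, a reverse bounded walk between the \emph{adjacent} channels $C_{i-1}$ and $C_i$---where Remark~\ref{rmk: can get to specific channel} does apply---carries it to $D_i$. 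Chaining these segments (with short runs along the $D_i$) gives the required path from any $D_j$ down to $D_1$.
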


\begin{proof}
Consider interlacing maximal collections of channels $\{C_1,\dots, C_k\}$ of $w$ and $\{D_1,\dots, D_{k-1}\}$ of $\fw{w}$ as in Corollary \ref{cor:interlacing channels}. Since any channel of $\fw{w}$ intersects some $D_i$, and both numberings number channels by consecutive integers, it is sufficient to show that $\left(\fw{d_w^{SW}}\right)(b) = d_{\fw{w}}^{SW}(b)$ for every $b\in D_1\cup\dots\cup D_{k-1}$.

As mentioned in Remark \ref{rmk:induced is monotone but not proper}, $\fw{d_w^{SW}}$ is monotone. By construction, $D_1$ intersects the southwest channel of $\fw{w}$. By Remark \ref{rmk:monotone is sometimes channel}, to check whether $\fw{d_w^{SW}}$ coincides with $d_{\fw{w}}^{SW}$ on a certain ball $b$, it is sufficient to check that there is a path $(b=b_0, b_1,\dots, b_l)$ to $D_1$ such that $\fw{d_w^{SW}}(b_{i+1}) = \fw{d_w^{SW}}(b_i) - 1$. It remains to construct such a path from any $b\in D_1\cup\dots\cup D_{k-1}$. For every $i$, we will construct such a path from $D_i$ to $D_{i-1}$. We can then get from $D_i$ to $D_1$ as follows. Take the path from $D_i$ to $D_{i-1}$, then follow $D_{i-1}$ to the start of the closest translate of the path to $D_{i-2}$, and repeat until arriving at $D_1$. 

It is easier to construct the path in reverse; i.e. from $D_{i-1}$ to $D_i$. The construction of the reverse path will proceed in two parts. The first part will use a reverse funnel walk whose purpose is to get a reverse path to some point above $C_{i-1}$ (it is illustrated in Figure \ref{fig:path from di to di-1}). The second part is a bounded reverse walk from a ball of $\fw{w}$ between $C_{i-1}$ and $C_i$ to $D_i$.

By (the transpose of) Lemma \ref{lem:channel numbering determined by closest channel}, $d_w^{SW}$ and $d_w^{C_{i-2}}$ coincide on $C_{i-1}$ (where the shift of $d_w^{C_{i-2}}$ is chosen to coincide with $d_w^{SW}$ on $C_{i-2}$). Consider a path $p = (c_0,\ldots, c_h)$ from some $c_0\in C_{i-1}$ to some $c_h\in C_{i-2}$ such that $d_w^{C_{i-2}}$ decreases by $1$ at each step; since $d_w^{SW}$ and $d_w^{C_{i-2}}$ coincide on both $c_0$ and $c_h$, $d_w^{SW}$ must also decrease by $1$ with each step of $p$.

Consider the collection $\{Z_i\}_{i\in\Z}$ of zig-zags corresponding to $d_w^{SW}$. Since the overall shift of all numberings is irrelevant, we may for notational convenience assume that $c_h\in Z_0$, i.e. that $d_w^{SW}(c_h) = 0$. Start with the ball $b'_0\in D_{i-1}\cap Z_0$ which is located between $C_{i-1}$ and $C_{i-2}$. Thus $\fw{d_w^{SW}}(b'_0) = 0$. Let $(b'_0,\dots,b'_{h-1})$ be the reverse funnel walk between $p$ and $C_{i-1}$. Since the walk visits consecutive zig-zags, $\fw{d_w^{SW}}$ increases by $1$ with every step. Let $b'_h\in Z_h$ be the ball of $\fw{w}$ directly east of $c_0$. Let $(b'_h,b'_{h+1},\dots, b'_{h'})$ be a reverse bounded walk between $C_{i-1}$ and $C_i$ from $b'_h$  to $D_i$ (it exists by Remark \ref{rmk: can get to specific channel}). Since this walk visit consecutive zig-zags, $\fw{d_w^{SW}}$ again increases by $1$ at every step. Thus we have constructed a path with the desired properties, finishing the proof that the two numberings coincide on channels.

Observe that at each step of a path of maximal worth from any ball of $\fw{w}$ to the southwest channel, $d_{\fw{w}}^{SW}$ increases by 1 while $\fw{d_w^{SW}}$ increases by at least 1. This demonstrates the last claim.
\end{proof}

\begin{figure}
\centering
\resizebox{.6\textwidth}{!}{\input{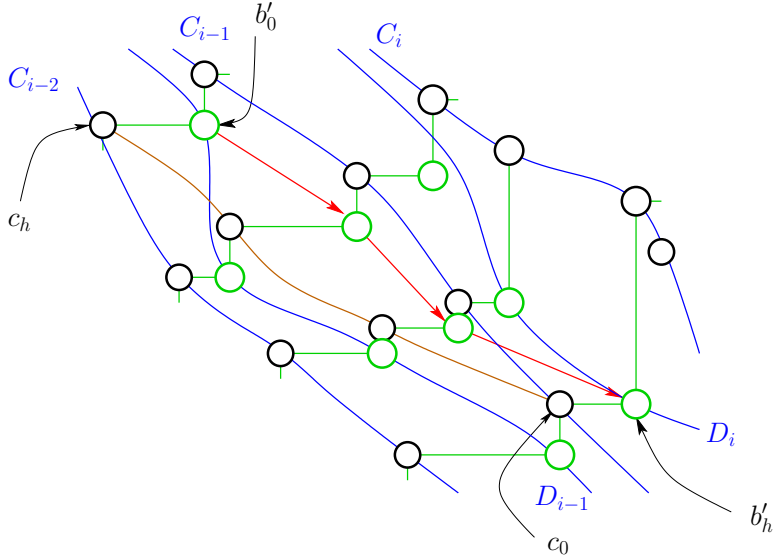}}
\caption{A reverse path from $D_{i-1}$ to $D_{i}$. The balls of $w$ are in black, while the balls of $\fw{w}$ are in green. The channels are indicated by thin blue lines. The path from $C_{i-1}$ to $C_{i-2}$ is indicated by the brown line.}
\label{fig:path from di to di-1}
\end{figure}

We proceed to a technical lemma, which handles the crux of the proof that the image of $\Phi$ is a subset of $\Omega_{dom}$. Recall that whenever we have a stream $S$ and a proper numbering of it, then $S^{(i)}$ refers to the cell of the stream numbered $i$. 

\begin{lemma}
\label{lem: plus above next star}
Let $w$ be a partial permutation such that $P_w$ has $k\geqslant 2$ maximal antichains.  Let $S = \st(w)$, and let $T = \st(\fw{w})$. Fix a shift of $d_w^{SW}$ and consider the numbering $d_{\fw{w}}^{SW}$ which coincides with $\fw{d_w^{SW}}$ on the channels of $\fw{w}$. The stream $S$ (resp. $T$) has a natural numbering $d_S:S\to\Z$ (resp. $d_T:T\to\Z$) coming from $d_w^{SW}$ (resp. $d_\fw{w}^{SW}$). In this case for some $i$, we have $T^{(i)}$ lies north of $S^{(i+1)}$.
\end{lemma}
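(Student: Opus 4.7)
The plan is to exhibit an index $i$ together with a ball $b\in \B_{\fw{w}}$ satisfying $d_{\fw{w}}^{SW}(b)=i$ and such that the row of $b$ is at most the row of $S^{(i+1)}$. Since $T^{(i)}$ is weakly northwest of every ball of $\fw{w}$ with $d_{\fw{w}}^{SW}$-value $i$, this would immediately yield $\mathrm{row}(T^{(i)})\leqslant\mathrm{row}(b)\leqslant\mathrm{row}(S^{(i+1)})$. The hypothesis $k\geqslant 2$ together with Corollary \ref{cor:interlacing channels} guarantees that $\fw{w}$ has at least one channel; let $D$ be its southwest channel. By Proposition \ref{prop:induced and southwest coincide on channels}, $d_{\fw{w}}^{SW}$ and $\fw{d_w^{SW}}$ agree on $D$, so any ball $b\in D$ with $d_{\fw{w}}^{SW}(b)=i$ is automatically an outer corner-post of the zig-zag $Z_i$ of $w$ associated to $d_w^{SW}$.

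Next, I would exploit the zig-zag geometry. For each index $j$, let $\beta_j$ denote the northeast-most inner corner-post of $Z_j$, so that the row of $\beta_j$ equals the row of $S^{(j)}$. By continuity of $d_w^{SW}$ there is always an inner corner-post of $Z_j$ strictly northwest of $\beta_{j+1}$; let $\alpha$ be the northeast-most such inner corner-post of $Z_j$. Whenever $\alpha$ admits a successor $\alpha'$ in the chain of inner corner-posts of $Z_j$, the outer corner-post $o$ between $\alpha$ and $\alpha'$ is a ball of $\fw{w}$ with $\fw{d_w^{SW}}(o)=j$, and a direct coordinate computation (the outer corner-post between consecutive inner corner-posts $\alpha <_{SW} \alpha'$ has row equal to the row of $\alpha$ and column equal to the column of $\alpha'$) gives row of $o$ strictly less than row of $\beta_{j+1}=S^{(j+1)}$.

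The main obstacle, and the crux of the argument, is to show that for at least one $i$ the outer corner-post $o$ produced above lies on a channel of $\fw{w}$: indeed, if $o$ lies on any channel of $\fw{w}$, then Proposition \ref{prop:induced and southwest coincide on channels} forces $d_{\fw{w}}^{SW}(o)=\fw{d_w^{SW}}(o)=i$, and taking $b:=o$ completes the proof. My plan is to produce such an $o$ by choosing a ball $b\in D$ first and arguing that it must play the role of the $o$ above for its own index $i=d_{\fw{w}}^{SW}(b)$. Concretely, the next channel ball $b^+\in D$ southeast of $b$ lies in $Z_{i+1}$ of $w$ and admits an inner corner-post $c$ of $Z_{i+1}$ directly north of it; combining this with the fact that $D$ is the southwest-most channel of $\fw{w}$, which constrains how far northeast $b$ can sit within $Z_i$ relative to $\beta_{i+1}$, is intended to pin down row of $b$ as at most row of $\beta_{i+1}$. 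Carrying out the geometric comparison in all configurations, likely by case analysis on how the channels of $w$ pass through $Z_i$ and $Z_{i+1}$ relative to $b$ and $b^+$, is where I expect the bulk of the technical work to lie.
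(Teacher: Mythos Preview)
Your reduction to finding a ball $b\in\B_{\fw{w}}$ with $d_{\fw{w}}^{SW}(b)=i$ and $\mathrm{row}(b)\leqslant\mathrm{row}(S^{(i+1)})$ is sound, and you are right that the crux is ensuring $d_{\fw{w}}^{SW}(b)=\fw{d_w^{SW}}(b)$ at the ball you produce. However, insisting that $b$ lie on the southwest channel $D$ of $\fw{w}$ is too restrictive, and the case analysis you anticipate will not close. For a generic $b\in D\cap Z_i$, the zig-zag $Z_{i+1}$ can extend arbitrarily far northeast of $C_k$ and hence north of $b$; nothing about $D$ being southwest-most prevents $\beta_{i+1}$ from sitting strictly north of every ball of $D$ in $Z_i$. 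Your proposed mechanism (comparing $b$ to $b^+$ and to $\beta_{i+1}$) only controls positions relative to $C_1,\dots,C_k$, not relative to balls of $w$ that lie strictly northeast of $C_k$, and those are exactly the balls that can make $\beta_{i+1}$ too high.

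The paper's approach supplies the missing idea: rather than staying on $D$, it builds a reverse walk in $\fw{w}$ that visits consecutive zig-zags (so $d_{\fw{w}}^{SW}$ and $\fw{d_w^{SW}}$ agree along the entire walk, not just on channel balls) and eventually moves strictly northeast of $C_k$. Concretely, one takes a path $p$ in $w$ from $C_k$ down to $C_1$ with $d_w^{SW}$ dropping by $1$ at each step, starts at $b_0\in D_1$ on the zig-zag of the endpoint, runs a reverse funnel walk between $p$ and $C_k$, and then continues with a semi-bounded reverse walk northeast of $C_k$. By Corollary~\ref{cor:interlacing channels} there is no channel of $\fw{w}$ northeast of $C_k$, so this semi-bounded reverse walk is forced to terminate, and by Remark~\ref{rmk:semi-bounded walk} termination at $b_{l+l'}$ means precisely that $Z_{i+l'+1}$ lies entirely south of $b_{l+l'}$. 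That termination condition, not any property of $D$ itself, is what produces the desired index.
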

\begin{proof}
Pick interlacing collections of channels $\{C_1,\dots, C_k\}$ of $w$ and $\{D_1,\dots, D_{k-1}\}$ of $\fw{w}$ as before.
Consider a path $p=(c_0,\dots, c_l)$ of $w$ from $C_k$ to $C_1$ such that $d_w^{SW}$ decreases by $1$ at each step. Start with $b_0\in D_1$ on the same zig-zag as $c_l$. Construct a reverse funnel walk $(b_0,\dots, b_{l-1})$ in $\fw{w}$ between $p$ and $C_k$. Since $b_0$ belongs to a channel of $\fw{w}$, $\left(\fw{d_w^{SW}}\right)(b_0) = d_\fw{w}^{SW}(b_0)$.  Now $d_\fw{w}^{SW}$ increases by $1$ with each step of $p$, and, by Proposition \ref{prop:induced and southwest coincide on channels}, so $\left(\fw{d_w^{SW}}\right)(b_i) = d_\fw{w}^{SW}(b_i)$ for $0\leqslant i\leqslant l-1$. 

Consider the collection $\{Z_j\}_{j\in\Z}$ of zig-zags corresponding to $d_w^{SW}$. Let $i= d_\fw{w}^{SW}(b_{l-1})$. Suppose $Z_{i+1}$does not extend north of $b_{l-1}$. In this case we are done since $T^{(i)}$ is north of $b_{l-1}$ while $S^{(i+1)}$ is in the northern row of $Z_{i+1}$, which is south of $b_{l-1}$. Instead, assume the $Z_{i+1}$ does extend above $b_{l-1}$. Choose $b_l$ to be the ball of $\fw{w}$ directly east of $c_0$ (recall $c_0\in Z_{i+1}$ and is south of $b_{l-1}$; hence it must not be the northeastern cell of $Z_{i+1}$). Now consider the semi-bounded reverse walk $(b_l,\dots, b_{l+l'})$ with respect to $C_k$. It must terminate since, by Corollary \ref{cor:interlacing channels}, there is no channel of $\fw{w}$ northwest of $C_k$. By the same argument as in the end of the previous paragraph, $\left(\fw{d_w^{SW}}\right)(b_{l+l'})=d_\fw{w}^{SW}(b_{l+l'})=i+l'$. The fact that the walk terminated implies that the $Z_{i+l'+1}$ lies south of $b_{l+l'}$ (as described in Remark \ref{rmk:semi-bounded walk}). Then $T^{(i+l')}$ lies north of $S^{(i+l'+1)}$.
\end{proof}

Now we are ready to prove that $\Phi(\widetilde{W}) \subseteq\Omega_{dom}$.
\begin{proof}[Proof of Theorem \ref{thm:image1}]
\label{pf:image1}
Of course it is sufficient to prove that dominance is preserved by every step of AMBC. Let $w$ be a partial permutation. We can assume that $P_w$ has at least two disjoint maximal antichains; otherwise dominance does not impose any restriction. Let $S=\st(w)$ and $T= \st(\fw{w})$; we are trying to show that $T$ is no lower than $S$. Consider the numberings $\fw{d_w^{SW}}$ and $d_\fw{w}^{SW}$ of $\fw{w}$ (chosen to coincide on the channels). The cells of $S$ (resp. $T$) have a natural numbering $d_S:S\to\Z$ (resp. $d_T:T\to\Z$) on coming from $d_w^{SW}$ (resp. $d_\fw{w}^{SW}$). Let $\{Z_i\}_{i\in\Z}$ be the collection of zig-zags corresponding to $d_w^{SW}$.

By Proposition \ref{prop:induced and southwest coincide on channels}, for any $b\in\B_w$, we have $\left(\fw{d_w^{SW}}\right)(b)\geqslant d_\fw{w}^{SW}(b)$. This implies that the southwest (resp. northeast) ball of $\fw{w}$ labeled $i$ by $d_\fw{w}^{SW}$ is part of $Z_j$ for $j\geqslant i$, and hence lies east (resp. south) of $S^{(i)}$. Hence for any $i$, $T^{(i)}$ lies southeast of $S^{(i)}$.

Let $t$ be the partial permutation corresponding to $T$ (as in Section \ref{subsec:two streams}); we now show that $d_T$, thought of as a numbering of $t$, is precisely $d_t^{\jbk,S}$. Since $d_T$ is monotone and $T^{(i)}$ lies southeast of $S^{(i)}$, we know that $d_T(b)\leqslant d_t^{\jbk,S}(b)$ (recall Remark \ref{rmk:lower bound for backward numbering}). By Lemma \ref{lem: plus above next star}, there exists $i$ such that $b_0 = T^{(i)}$ lies north of $S^{(i+1)}$. Hence $d_t^{\jbk,S}(b_0) \leqslant i = d_T(b_0)$; hence $d_t^{\jbk,S} = d_T(b_0)$. However they both number the balls by consecutive integers, so they coincide everywhere.

Now $b_0$ lies north of $S^{(i+1)}$. The ball of the southwest channel of $\bk{S}{t}$ (recall Lemma \ref{lem:pre-concurrent} and its proof on page \pageref{pf:pre-concurrent}) numbered $i$ by the induced numbering lies directly west of $b_0$, while the ball of the northeast channel numbered $i+1$ lies directly east of $S^{(i+1)}$. Thus there is a path from the northeast channel to the southwest channel such that the induced numbering decreases by $1$ with each step. Thus the induced numbering is the southwest channel numbering. By Lemma \ref{lem:sw iff higher}, $T$ is no lower than $S$, as desired.
\end{proof}

\subsection{Proof of bijectivity}

In the first part of this section we will develop the theory to show that for any $w$, $\Psi(\Phi(w)) = w$.

\begin{thm}
\label{thm:forward backward pretheorem}
Consider a partial permutation $w$ and $C\in\C_w$. Let $S = \str{C}{w}$. Then $\fw{d_w^C} = d_\fwC{C}{w}^{\jbk,S}$.
\end{thm}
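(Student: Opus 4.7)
The plan is to prove the equality $\fw{d_w^C} = d_{\fwC{C}{w}}^{\jbk,S}$ via two inequalities, using Remark \ref{rmk:lower bound for backward numbering}, which characterizes the backward numbering as the largest monotone numbering $d$ on $\fwC{C}{w}$ such that $S^{(d(b))}$ lies northwest of $b$ for every ball $b$.

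The inequality $\fw{d_w^C} \leqslant d_{\fwC{C}{w}}^{\jbk,S}$ follows directly from this characterization: by Remark \ref{rmk:induced is monotone but not proper} the numbering $\fw{d_w^C}$ is monotone, and for each ball $b$ of $\fwC{C}{w}$ with $\fw{d_w^C}(b) = i$ the cell $S^{(i)}$ is, by construction, the back corner-post of $Z_i$ and therefore lies northwest of the outer corner-post $b$. Thus $\fw{d_w^C}$ is among the competitors whose supremum defines the backward numbering.

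For the reverse inequality, I would argue by contradiction. Assume $d_{\fwC{C}{w}}^{\jbk,S}(b_0) = k > i = \fw{d_w^C}(b_0)$ at some ball $b_0$; then $S^{(i+1)}$ lies strictly northwest of $b_0$. Writing $Z_{i+1} = (c_1^{i+1}, \ldots, c_k^{i+1})$, the back corner-post $S^{(i+1)}$ sits at the intersection of the row of $c_k^{i+1}$ and the column of $c_1^{i+1}$. Exploiting monotonicity of $d_w^C$ on the inner corner-posts of $Z_i$ directly north and directly west of $b_0$, together with the fact that distinct balls of $w$ cannot share a row or column, one shows that $c_1^{i+1}$ is strictly southwest of $b_0$ and $c_k^{i+1}$ is strictly northeast. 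Tracing the initial eastward run of $Z_{i+1}$ from $c_1^{i+1}$ (and using that $\fwC{C}{w}$ is itself a partial permutation to exclude $b_1$ sharing a row or column with $b_0$) then identifies an outer corner-post $b_1 \in Z_{i+1}$ strictly southeast of $b_0$. Monotonicity of $d_{\fwC{C}{w}}^{\jbk,S}$ propagates the gap: $d_{\fwC{C}{w}}^{\jbk,S}(b_1) \geqslant k+1 > i+1 = \fw{d_w^C}(b_1)$. Iterating produces an infinite reverse path $(b_0, b_1, b_2, \ldots)$ in $\fwC{C}{w}$ with $b_j \in Z_{i+j}$ and strictly positive gap throughout.

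The contradiction then emerges from periodicity. Since the outer corner-posts of $Z_i$ within one window are finite, the pigeonhole principle furnishes indices $j_1 < j_2$ for which $b_{j_1 m}$ and $b_{j_2 m}$ lie in the same $(n,n)$-translate class (where $m$ is the width of $P_w$), giving a strictly southeastward reverse path of length exactly $(j_2 - j_1)m$ between translates. Corollary \ref{cor:reinterpret m} applied to $\fwC{C}{w}$ together with Lemma \ref{lem:width decreases} forces the width of $P_{\fwC{C}{w}}$ to equal $m$, and Lemma \ref{lem:high flow implies channel} then places each $b_j$ on a channel of $\fwC{C}{w}$ of density $m$. Proposition \ref{prop:backward numbers channel consecutively} says $d_{\fwC{C}{w}}^{\jbk,S}$ numbers every such channel by consecutive integers, and so does $\fw{d_w^C}$ by monotonicity and its period-$m$ semi-periodicity. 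Hence the gap is constant along each channel visited by the path; chasing this constant gap to a ball where the stream numbering $d_0$ supplies a tight upper bound on $d_{\fwC{C}{w}}^{\jbk,S}$ contradicts its persistent positivity. The main obstacle is the positional construction of $b_1 \in Z_{i+1}$ strictly southeast of $b_0$, where ruling out shared rows and columns with $b_0$ requires exploiting that both $w$ and $\fwC{C}{w}$ are partial permutations together with careful bookkeeping on the $d_w^C$-values along $Z_{i+1}$.
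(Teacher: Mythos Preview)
Your first inequality $\fw{d_w^C} \leqslant d_{\fwC{C}{w}}^{\jbk,S}$ is correct and matches the paper's opening move exactly.

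For the reverse inequality your approach differs substantially from the paper's, and the final step has a genuine gap. Your construction of the reverse path $(b_0,b_1,\dots)$ is essentially sound: once one checks (via continuity of $d_w^C$ and the fact that no inner corner-post of $Z_i$ can be strictly northwest of the outer corner-post $b_0$) that $c_1^{i+1}$ is strictly southwest and $c_k^{i+1}$ strictly northeast of $b_0$, the existence of an outer corner-post $b_1\in Z_{i+1}$ strictly southeast of $b_0$ follows. The pigeonhole/periodicity step then forces $m'=m$ and places the $b_j$ on channels of $\fwC{C}{w}$, with both numberings consecutive there and the gap constant.

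The problem is the last sentence. Having a channel $D$ of $\fwC{C}{w}$ on which the gap is a positive constant $g$ is \emph{not} a contradiction by itself. You assert one can ``chase the gap to a ball where $d_0$ supplies a tight upper bound'', but Remark~\ref{rmk:backward equals stream} only guarantees such a ball somewhere in $\B_{\fwC{C}{w}}$, not on $D$; and there is no mechanism in your argument to transport the gap from $D$ to that ball while keeping it positive. Equivalently, what you still need is a ball $b\in D$ with $d_0(b)=\fw{d_w^C}(b)$, i.e.\ $S^{(\fw{d_w^C}(b)+1)}$ not northwest of $b$. That is precisely the notion of an $Nr$- or $Wr$-terminal ball, and establishing that such balls exist (indeed, that every ball is terminating) is the content of the paper's Lemma~\ref{lem:all balls are terminating}, proved via semi-bounded reverse walks along paths of maximal worth to $C$. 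The paper then argues directly rather than by contradiction: at terminal balls the two numberings coincide manifestly, and the terminating path transports the equality back. Your contradiction machinery, as written, circles around this missing ingredient without supplying it.
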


Now we will work toward the proof of Theorem \ref{thm:forward backward pretheorem}.

\begin{defn}
\label{defn:terminal and terminating}
Consider a partial permutation $w$, a proper numbering $d$, and the corresponding collection of zig-zags $\{Z_i\}_{i\in\Z}$. A ball $b\in\B_{\fwC{d}{w}}$ on $Z_i$ is \emph{$Nr$-terminal} (resp. \emph{$Wr$-terminal}) if $Z_{i+1}$ is completely south (resp. east) of $b$. 

A ball $b\in\B_{\fwC{d}{w}}$ on $Z_i$ is \emph{$Nr$-terminating} if there exists a reverse path $(b=b_0, b_1, \dots, b_k)$ such that $b_l$ is on $Z_{i+l}$ and $b_k$ is $Nr$-terminal. So, an $Nr$-terminal ball is always $Nr$-terminating. Define \emph{$Wr$-terminating} analogously.  
\end{defn}

\begin{lemma}
\label{lem:all balls are terminating}
Consider a partial permutation $w$ and $C\in\C_w$. Then every ball of $\fwC{C}{w}$ is either $Nr$-terminating or $Wr$-terminating (or both).
\end{lemma}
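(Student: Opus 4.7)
The plan is to build a reverse path from $b$ step by step through consecutive zig-zags, showing it can always be extended until it hits a terminal ball. Starting at $b_0 = b \in Z_i$, if $Z_{i+1}$ is completely south of $b_0$ then $b_0$ is $Nr$-terminal (so $b$ is $Nr$-terminating); if $Z_{i+1}$ is completely east then $b_0$ is $Wr$-terminal. Otherwise I will produce $b_1 \in Z_{i+1} \cap \B_{\fwC{C}{w}}$ strictly southeast of $b_0$, and iterate.

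The step of finding $b_{l+1}$ from $b_l$ is where Lemma \ref{lem: key lemma} enters. Suppose $Z_{i+l+1}$ is neither completely south nor completely east of $b_l$. Then $Z_{i+l+1}$ contains some cell weakly west of $b_l$'s column and some cell weakly north of $b_l$'s row. Because $Z_{i+l+1}$ is a (forward) zig-zag — a connected sequence going north and east with a prescribed first step north and last step east — it contains a sub-zig-zag connecting an inner corner-post strictly south of and weakly east of $b_l$ to one strictly east of and weakly north of $b_l$, and this sub-zig-zag avoids the cell directly southeast of $b_l$. Applying Lemma \ref{lem: key lemma} (in its forward-zig-zag form, obtained by reflecting the statement in the anti-diagonal) with $c$ taken to be the cell directly southeast of $b_l$ produces an outer corner-post of $Z_{i+l+1}$ strictly northwest of this $c$, i.e., strictly southeast of $b_l$. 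Such an outer corner-post is a ball of $\fwC{C}{w}$, giving the desired $b_{l+1}$.

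The main obstacle is to rule out an infinite, never-terminating reverse path. Suppose for contradiction such a reverse path $(b_0, b_1, b_2, \ldots)$ exists. Because $d_w^C$ is semi-periodic with period $m$ (Proposition \ref{prop: period of proper numbering}), $Z_{i+l+m}$ is the $(n,n)$-translate of $Z_{i+l}$. Since there are only finitely many translate classes of balls of $\fwC{C}{w}$, the pigeonhole principle yields indices $s < t$ with $b_t = b_s + k(n,n)$ for some $k>0$; since each step changes the zig-zag index by $1$ while an $(n,n)$-translation changes it by $m$, we have $t - s = km$. Thus the reverse path $(b_s, \ldots, b_t)$ attains the length bound of Corollary \ref{cor:reinterpret m}.

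Invoking the (anti-diagonal reflection of) Lemma \ref{lem:high flow implies channel}, the balls $b_s, b_{s+1}, \ldots, b_t$ together with all their translates form a channel $D$ of $\fwC{C}{w}$. But then every ball $b_l$ on $D$ sits on its own zig-zag $Z_{i+l}$ whose inner corner-posts — being balls of $w$ — must, between $b_l$ and the next ball $b_{l+1}$ of $D$, either lie strictly south of $b_l$ (forcing $Z_{i+l+1}$ to be completely east of $b_l$) or strictly east (forcing $Z_{i+l+1}$ to be completely south). In either case $b_l$ is terminal, contradicting the assumption that the path is non-terminating. Therefore the inductive construction must halt at a terminal ball after finitely many steps, completing the proof.
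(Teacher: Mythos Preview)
Your overall strategy—build a reverse path through consecutive zig-zags and argue it cannot continue forever—is reasonable, and the step producing $b_{l+1}$ from $b_l$ via the key lemma is fine (this is essentially the reverse semi-bounded walk construction). The problem is the final paragraph, where you derive a contradiction from the existence of the channel $D \subseteq \B_{\fwC{C}{w}}$.

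Your claim there is that for $b_l \in D$, the next zig-zag $Z_{i+l+1}$ must lie completely south or completely east of $b_l$, i.e.\ that $b_l$ is terminal. This is simply false. Take $w$ with two disjoint channels $C_1$ (southwest) and $C_2$ (northeast), and let $C = C_1$. Then $\fwC{C}{w}$ has a channel $D_1$ sitting between $C_1$ and $C_2$ (Proposition~\ref{prop: between channels a channel}). For $b_l \in D_1 \cap Z_{i+l}$, the zig-zag $Z_{i+l+1}$ contains a ball of $C_1$ southwest of $b_l$ and a ball of $C_2$ northeast of $b_l$, so $Z_{i+l+1}$ is neither completely south nor completely east of $b_l$. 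Thus $b_l$ is \emph{not} terminal, yet your infinite-path construction could perfectly well land on $D_1$ and cycle there forever. The sentence about inner corner-posts ``between $b_l$ and $b_{l+1}$'' does not make sense as written and does not supply the needed implication. (There is also a small gap earlier: Corollary~\ref{cor:reinterpret m} and Lemma~\ref{lem:high flow implies channel} are stated for the ambient permutation, so you would first need $m' := \text{width}(P_{\fwC{C}{w}}) = m$; this follows from Lemma~\ref{lem:width decreases} together with your path of length $km$, but you should say so.)

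The paper's proof avoids this trap by not arguing abstractly about channels of $\fwC{C}{w}$. Instead it fixes a single zig-zag, takes maximal-worth paths $p, p'$ in $w$ from its two extreme corners $c, c'$ down to $C$, and then runs a reverse semi-bounded walk from $b$ \emph{bounded by} $p$ or $p'$. Because $c$ and $c'$ are the endpoints of their zig-zag, the walk is forced to terminate (Remark~\ref{rmk:semi-bounded walk}), yielding a $Wr$- or $Nr$-terminal ball. The key idea you are missing is this use of a path to an extreme corner of a zig-zag as a barrier that forces termination.
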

\begin{proof}
Let $\{Z_i\}_{i\in\Z}$ be the collection of zig-zags associated with $d_w^C$. Choose a zig-zag; let $c$ be its southwestern cell and $c'$ be its northeastern cell. From both $c$ and $c'$ we can take paths (on the balls of $w$) of maximal worth to $C$. Extending the shorter of these along the channel, yields paths $p=(c_0=c, c_1, \dots, c_k)$ and $p'=(c'_0=c', c'_1, \dots, c'_k)$ with $d_w^C(c_{j+1}) = d_w^C(c_j)-1$, $d_w^C(c'_{j+1}) = d_w^C(c'_j)-1$, and $c_k = c'_k\in C$. Let $i = d_w^C(c_k)$.

Choose a ball $b\in\B_\fwC{C}{w}\cap Z_i$. Suppose $b$ is northeast of $c_k$. Consider a semi-bounded reverse walk $(b_0 = b,\dots, b_l)$ from $b$ above $p'$; it necessarily terminates since $c'_0$ is the northeast-most ball of its zig-zag. By Remark \ref{rmk:semi-bounded walk}, this means that $b_l$ is $Nr$-terminal. If instead $b$ is southwest of $c_k$ then the semi-bounded reverse walk leads to a $Wr$-terminal ball. Of course the paths $p, p'$ could have been further extended along $C$, so the argument works for any ball $b\in Z_j$ with $j<i$. However the property of being terminating is invariant under translation by $(n,n)$, finishing the proof.
\end{proof}

\begin{proof}[Proof of Theorem \ref{thm:forward backward pretheorem}]
Let $\{Z_i\}_{i\in\Z}$ be the collection of zig-zags associated with $d_w^C$. By construction, $\fw{d_w^C}$ is a monotone numbering and for each $b\in\B_{\fwC{C}{w}}\cap Z_i$, $S^{(i)}$ lies northwest of $b$. By Remark \ref{rmk:lower bound for backward numbering}, we know that for any ball $b$, $\fw{d^C}(b)\leqslant d_\fwC{C}{w}^{\jbk,S}(b)$.

For a terminal ball $b$, we manifestly have $d_\fwC{C}{w}^{\jbk,S}\leqslant \fw{d^C}(b)$, hence the two numberings must coincide on terminal balls. Now choose any $b$; by Lemma \ref{lem:all balls are terminating} it is terminating. Choose a reverse path from $b$ to a terminal ball such that $\fw{d^C}$ increases by $1$ on each step. The backward numbering must also increase by at least $1$ with each step. Hence $d_\fwC{C}{w}^{\jbk,S}(b)\leqslant\fw{d^C}(b)$. This finishes the proof.
\end{proof}

\begin{proof}[Proof of Proposition \ref{prop:fw bk is id}]
\label{pf:fw bk is id}
The proposition states that that taking a forward step with respect to a channel numbering, followed by a backward step does not alter a partial permutation. This is an immediate corollary of Theorem \ref{thm:forward backward pretheorem}.
\end{proof}
Thus we have now proven that for any $w\in\widetilde{W}$, $\Psi(\Phi(w)) = w$. In the remainder of the section we will show that if $(P,Q,\rho)\in\Omega_{dom}$, then $\Phi(\Psi(P,Q,\rho)) = (P,Q,\rho)$. Before getting to the proof we need to develop the theory of  backward numberings slightly further.

The next major result shows that the numbering induced from a backward step is, in fact, a channel numbering. We start with a definition and preliminary lemmas.

\begin{defn}
\label{defn:back terminating}
Consider a partial permutation $w$ and a compatible stream $S$ with some fixed proper numbering. Then $b\in\B_w$ whose backward numbering is $i$ is \emph{$N$-terminal} (resp. \emph{$W$-terminal}) if it lies north (resp. west) of $S^{(i+1)}$.

A ball $b$ is \emph{$N$-terminating} if there exists a reverse path $(b=b_0,\dots, b_k)$ such that the backward numbering increases by $1$ at each step and $b_k$ is $N$-terminal. Similarly for \emph{$W$-terminating}.
\end{defn}

Note that balls of $w$ which are $N$- or $W$-terminal are precisely the ones whose backward numbering is equal to the stream numbering.

\begin{lemma}
Consider a partial permutation $w$ and a compatible stream $S$ with some fixed proper numbering. Then any $b\in B_w$ is either $N$-terminating or $W$-terminating (or both).
\end{lemma}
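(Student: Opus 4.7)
The plan is to reduce the lemma to showing that every $b \in \B_w$ is \emph{terminating}, meaning there exists a reverse path $(b = b_0, b_1, \dots, b_k)$ with $d(b_{j+1}) = d(b_j) + 1$ whose final ball $b_k$ is terminal. Since any terminal ball is automatically either $N$-terminal or $W$-terminal (by whether $S^{(d(b_k)+1)}$ is south or east of $b_k$), terminating implies $N$- or $W$-terminating and the lemma follows.

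First I would establish the \emph{successor step}: if $b \in \B_w$ is non-terminal with $d(b) = i$, then there exists $b' \in \B_w$ strictly southeast of $b$ with $d(b') = i+1$. This is a direct consequence of the maximality characterization of $d$ in Remark \ref{rmk:lower bound for backward numbering}. Non-terminality means $S^{(i+1)}$ is northwest of $b$, so raising $d$ by $1$ on $b$'s entire $(n,n)$-translation class respects the bound $d' \leqslant d_0$; the attempt must therefore break monotonicity, and since compatibility of $S$ with $w$ forbids any ball of $w$ from sharing a row or column with $b$, the offending monotonicity violator is a ball $b'$ strictly south \emph{and} strictly east of $b$ with $d(b') = i+1$.

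Next, I would package iterated successor steps into a reachability set $R(b)$: the smallest collection of $(n,n)$-translation classes containing the class of $b$ and closed under the relation ``if the class of $c$ lies in $R(b)$ and $c' \in \B_w$ is strictly southeast of $c$ with $d(c') = d(c)+1$, then the class of $c'$ lies in $R(b)$''. Since there are only finitely many classes, $R(b)$ is well-defined and finite. If some ball in an $R(b)$-class is terminal, then unwinding the closure produces a reverse path from $b$ to that terminal ball, which proves the lemma.

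The main obstacle, and the last step of the plan, is to rule out the case that $R(b)$ contains no terminal ball. I would do this by contradiction: suppose all balls in $R(b)$-classes are non-terminal, so $d < d_0$ strictly on them. Define
\[
d'(c) \;=\; \begin{cases} d(c) + 1, & \text{if the class of } c \text{ lies in } R(b),\\ d(c), & \text{otherwise.}\end{cases}
\]
This is semi-periodic (the modification is translation-invariant) and satisfies $d' \leqslant d_0$ everywhere. The only delicate case in verifying monotonicity is when $c$ lies in an $R(b)$-class and $c'$ does not, with $c$ strictly northwest of $c'$; here monotonicity of $d'$ requires $d(c') \geqslant d(c) + 2$, and an equality $d(c') = d(c) + 1$ would make $c'$ a successor of $c$, forcing the class of $c'$ into $R(b)$ by closure and contradicting our case. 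Thus $d'$ is monotone, bounded by $d_0$, and strictly exceeds $d$ on the class of $b$, contradicting the maximality of $d$. Hence $R(b)$ contains a terminal ball, and the desired reverse path from $b$ exists.
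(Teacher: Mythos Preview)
Your proof is correct and takes a genuinely different route from the paper's. The paper works directly with the algorithmic definition of the backward numbering: it follows the sequence $(d_0,d_1,\ldots,d_k)$ produced by the algorithm, and for a non-terminal $b$ identifies the successor $b_1$ as the ball that forced the decrement of $b$'s value at the first step $l_1$ where $d_{l_1}(b)=d(b)$; termination follows because the step-indices $l_1>l_2>\cdots$ are strictly decreasing, so the process must reach a ball whose value was never decremented, i.e.\ a terminal ball. You instead work entirely from the maximality characterization in Remark~\ref{rmk:lower bound for backward numbering}, using it twice in complementary ways: first, raising $d$ on the single class $[b]$ must break monotonicity, which manufactures a successor; second, raising $d$ on the whole reachability set $R(b)$ \emph{cannot} break monotonicity (by closure), which yields the contradiction showing $R(b)$ contains a terminal class. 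This is a clean structural argument that never touches the intermediate numberings $d_i$.

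One small quibble: your justification that the successor $b'$ is \emph{strictly} south and strictly east of $b$ is slightly misattributed. Compatibility of $S$ with $w$ governs rows and columns shared between $\B_w$ and $S$, not between two balls of $w$. The strictness you need comes from the fact that monotonicity is phrased for pairs that are strictly northwest (both coordinates strict), together with $w$ being a partial permutation so that distinct balls never share a row or column. This does not affect the correctness of the argument.
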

\begin{proof}
Let $(d_0, d_1,\ldots, d_k)$ be the sequence of numberings in the backward numbering algorithm; so $d_0$ is the stream numbering and $d_k = d_w^{\jbk,S}$. Pick $b\in\B_w$. If $d_0(b) = d_w^{\jbk,S}$ then $b$ is terminal and we are done. If not, choose the smallest $l_1$ such that $d_{l_1}(b) = d_w^{\jbk,S}(b)$ (so $d_{l_1-1}(b) = d_{l_1}(b) + 1$). By construction, there exists a ball $b_1$ southeast of $b$ with $d_{l_1-1}(b_1) = d_{l_1-1}(b)$. Note that the backward numbering is monotone, so 
\[d_w^{\jbk,S}(b)<d_w^{\jbk,S}(b_1)\leqslant d_{l_1-1}(b_1) = d_{l_1-1}(b) = d_{l_1}(b) + 1 = d_w^{\jbk,S}(b) + 1.\]
So, $d_w^{\jbk,S}(b_1) = d_{l_1-1}(b_1) = d_w^{\jbk,S}(b)+1$. If $b_1$ is terminal, we are done. If not, choose the smallest $l_2$ such that $d_{l_2}(b_1) = d_{l_1-1}(b_1)$. There exists a ball $b_2$ southeast of $b_1$ with $d_{l_2-1}(b_2) = d_{l_2-1}(b_1)$. By the same argument as before, $d_w^{\jbk,S}(b_2) = d_{l_2-1}(b_2)= d_w^{\jbk,S}(b_1)+1$. Repeat this process; we claim that for some $r$, $b_r$ is terminal. Indeed, if this does not happen earlier, then for some $r$, $l_{r+1} = 0$. So $d_0(b_r) = d_{l_r-1}(b_r) = d_w^{\jbk,S}(b_r)$, i.e. $b_r$ is terminal.
\end{proof} 

\begin{lemma}
\label{lem:north of nterminating is nterminating}
Consider a partial permutation $w$ and a compatible stream $S$ with some fixed proper numbering. Let $d$ be the induced (proper) numbering on $\bk{S}{w}$. Suppose two balls $b$ and $b'$ of $w$ are part of the same zig-zag corresponding to $d$, $b$ is southwest of $b'$, and $b$ is $N$-terminating. Then $b'$ is also $N$-terminating.
\end{lemma}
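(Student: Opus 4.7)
The plan is to proceed by induction on the length $k$ of the $N$-terminating reverse path $(b = b_0, b_1, \ldots, b_k)$ that witnesses $b$ being $N$-terminating.

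First I would make a preliminary observation: since $b$ and $b'$ are distinct balls of a partial permutation they lie in distinct rows and distinct columns, and since the outer corner-posts of the reverse zig-zag $Z_i$ form a chain in the strict-northeast partial order (and $b$ is assumed southwest of $b'$), this forces $b$ to be strictly south and strictly west of $b'$. For the base case $k = 0$, $b$ itself is $N$-terminal, so $b$ lies (weakly) north of $S^{(i+1)}$; as $b'$ is strictly north of $b$ it lies strictly north of $S^{(i+1)}$ and is therefore $N$-terminal.

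For the inductive step I would first dispose of the trivial case in which $b'$ is itself $N$-terminal, and then assume $b'$ lies strictly south of $S^{(i+1)}$. The key sub-claim is the existence of a ball $b'_1 \in \B_w$ on $Z_{i+1}$ that is strictly southeast of $b'$ and weakly northeast of $b_1$. Given such a $b'_1$, the induction hypothesis applied to $b_1$ and $b'_1$ (both on $Z_{i+1}$, with $b_1$ weakly southwest of $b'_1$ and $b_1$ being $N$-terminating via a length-$(k-1)$ path) yields that $b'_1$ is $N$-terminating, and prepending $b'$ gives the desired $N$-terminating path of length $k$ for $b'$.

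The main obstacle is the sub-claim itself, which I would establish as follows. If $\operatorname{col}(b_1) > \operatorname{col}(b')$ one simply takes $b'_1 = b_1$, since $\operatorname{row}(b_1) > \operatorname{row}(b) > \operatorname{row}(b')$. Otherwise, consider the strictly-northeast chain $b_1 = c_s, c_{s+1}, \ldots, c_q$ of outer corner-posts of $Z_{i+1}$. Using that $b'$ is strictly south of $S^{(i+1)}$, the northeasternmost cell of $Z_{i+1}$ lies strictly north of $b'$; by Lemma \ref{lem:zigzags dont intersect} it is strictly east of a cell of $Z_i$ lying strictly north of $b'$, and every cell of $Z_i$ strictly north of $b'$ has column at least $\operatorname{col}(b')$ (by the zig-zag structure of $Z_i$ past the outer corner-post $b'$). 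Hence $\operatorname{col}(c_q) > \operatorname{col}(b')$. Letting $l^*$ be the smallest index with $\operatorname{col}(c_{l^*}) > \operatorname{col}(b')$, I would then argue that $\operatorname{row}(c_{l^*}) > \operatorname{row}(b')$: otherwise the N-block of $Z_{i+1}$ between $c_{l^* - 1}$ and $c_{l^*}$ would pass through the cell $(\operatorname{row}(b'), \operatorname{col}(c_{l^*-1}))$, which by Lemma \ref{lem:zigzags dont intersect} would need a $Z_i$ cell strictly northwest of it, but any $Z_i$ cell strictly north of $b'$ has column $\geq \operatorname{col}(b') \geq \operatorname{col}(c_{l^* - 1})$, a contradiction. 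Thus $c_{l^*}$ is strictly southeast of $b'$ and weakly northeast of $b_1 = c_s$, and one sets $b'_1 = c_{l^*}$.
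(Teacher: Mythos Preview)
Your proof is correct and follows essentially the same strategy as the paper: build, step by step, a reverse path from $b'$ that stays northeast of the given $N$-terminating path $p=(b_0,\ldots,b_k)$ from $b$, and observe that either the construction reaches step $k$ (in which case the endpoint is north of $b_k$ and hence $N$-terminal) or it gets stuck earlier because the next zig-zag lies entirely south of the current ball (again $N$-terminal).

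The difference is one of packaging. The paper invokes the reverse semi-bounded walk of Remark~\ref{rmk:semi-bounded walk} to do all of this in one line. You instead carry out the induction explicitly, and your ``sub-claim'' is precisely the single-step existence statement that the walk machinery provides. Your argument for the sub-claim (via Lemma~\ref{lem:zigzags dont intersect}, the staircase structure of $Z_i$ past the outer corner $b'$, and the monotonicity of the backward numbering to place $c_{l^*-1}$ strictly south of $b'$) is correct and in fact spells out carefully a point the paper glosses over: Remark~\ref{rmk:semi-bounded walk} was formulated with the bounding reverse path on \emph{inner} corner-posts and the walk on \emph{outer} corner-posts, whereas here both $p$ and the walk consist of outer corner-posts (balls of $w$). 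The same idea works, and your explicit treatment makes this adaptation rigorous rather than implicit.
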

\begin{proof}
Suppose $p=(b_0=b,\dots, b_k)$ is a reverse walk visiting consecutive zig-zags, and $b_k$ is $N$-terminal. Consider the semi-bounded (by $p$) reverse walk starting at $b'$. If the walk terminates in less than $k$ steps, then its last ball is $N$-terminal as in Remark \ref{rmk:semi-bounded walk}. Otherwise we get a sequence of balls $(b_0',\dots, b_k')$ and $b_k'$ is north of $b_k$ on the same zig-zag. Hence $b_k'$ would also be $N$-terminal.
\end{proof}

Of course, in the notation of the lemma, any ball west of a $W$-terminating ball in the same zig-zag is also $W$-terminating.

\begin{prop}
\label{prop:backward numbering is channel}
Consider a partial permutation $w$ and a compatible stream $S$ with some fixed proper numbering. Let $d$ be the induced (proper) numbering on $\bk{S}{w}$. Then $d$ is a channel numbering.  
\end{prop}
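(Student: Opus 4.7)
The plan is to exhibit an explicit channel $C$ of $\bk{S}{w}$ and verify that $d = d_{\bk{S}{w}}^{C}$. Since $d$ has already been shown to be proper (Lemma~\ref{lem:backward is proper}), and since any proper numbering automatically numbers every channel by consecutive integers (Corollary~\ref{cor:proper numbers channels contiguosly}), we may shift $d^C$ so that it agrees with $d$ on $C$. By the criterion stated in Remark~\ref{rmk:monotone is sometimes channel}, we then have $d = d^{C}$ provided that, for every ball $b \in \B_{\bk{S}{w}}$, we can exhibit a path $(b = b_0, b_1, \ldots, b_l)$ ending in $C$ along which $d$ drops by exactly one at every step.

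To build $C$, recall from Definition~\ref{defn:back terminating} that each ball of $w$ is $N$-terminating or $W$-terminating (or both). Within a fixed zig-zag $Z_i$, Lemma~\ref{lem:north of nterminating is nterminating} (and the symmetric statement for $W$-terminating balls) forces the $N$-terminating outer corner-posts to form an $NE$-contiguous block of the outer corner-post sequence, and the $W$-terminating ones to form an $SW$-contiguous block, with possibly an overlap in the middle. Write the inner corner-posts of $Z_i$ from $SW$ to $NE$ as $b_0, b_1, \ldots, b_l$, and the outer corner-posts interleaving them as $a_1, \ldots, a_l$; let $j_W^{(i)}$ be the largest index for which $a_{j_W^{(i)}}$ is $W$-terminating (or $0$ if none), and put $c_i := b_{j_W^{(i)}}$. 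Define $C := \{c_i : i \in \Z\}$.

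Verifying $C$ is a channel breaks into three parts. Translation invariance is immediate because the zig-zags and the termination properties are translation-equivariant. The density of $C$ is one cell per zig-zag, hence equal to the period of $d$, which by construction equals the flow of $S$, and by Corollary~\ref{cor:with of backward Shi poset} equals the width of the Shi poset of $\bk{S}{w}$; so $C$ has maximum possible density. The technical part is the chain property: we must check that $c_{i+1}$ is strictly $SE$ of $c_i$. For this I would track how the index $j_W^{(\cdot)}$ shifts between consecutive zig-zags, combining Lemma~\ref{lem:zigzags dont intersect} (which positions every cell of $Z_{i+1}$ strictly south and east of some cell of $Z_i$) with the reverse-path structure in the definition of $N$- and $W$-terminating to propagate the location of the transition from $Z_i$ to $Z_{i+1}$.

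The main obstacle is the construction of a $d$-decreasing path from any ball $b \in \B_{\bk{S}{w}}$ to $C$. The plan is an inductive construction, analogous to the semi-bounded walks of Section~\ref{sec:weights}: given $b$ on $Z_i$, I would use Lemma~\ref{lem:zigzags dont intersect} to locate an inner corner-post of $Z_{i-1}$ strictly $NW$ of $b$, chosen to preserve an invariant recording on which side of $c_{i-1}$ the new ball lies (the choice being dictated by whether nearby outer corner-posts are $N$- or $W$-terminating). Semi-periodicity together with pigeonhole, via Lemma~\ref{lem:high flow implies channel}, forces the trace to eventually hit an $(n,n)$-translate of a previously visited ball, and the structural invariant ensures that the trace meets $C$ before that happens. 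Applying Remark~\ref{rmk:monotone is sometimes channel} then yields $d = d^{C}$, completing the proof. The delicate point, and the hardest part of the argument, is justifying that the side-of-$c$ invariant can always be maintained across consecutive zig-zags and that it converges toward $C$ rather than away from it.
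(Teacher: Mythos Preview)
Your overall strategy is sound and in fact anticipates Lemma~\ref{lem:it_s_a_channel}, where the paper later proves that exactly your collection $\{c_i\}$ is a channel. However, the paper's proof of Proposition~\ref{prop:backward numbering is channel} is more economical in two respects, and your proposal is vague precisely at the point where the real work happens.

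First, the paper does \emph{not} verify the chain property of $\{c_i\}$ here. Instead it picks a single ball $c$ at the $N$/$W$-terminating transition in one zig-zag and shows that every ball $c'$ with $d(c')$ sufficiently large admits a $d$-decreasing path to $c$. Applying this to an $(n,n)$-translate of $c$ itself and invoking Lemma~\ref{lem:high flow implies channel} produces a channel through $c$ for free; Remark~\ref{rmk:monotone is sometimes channel} then finishes. This sidesteps entirely the delicate verification that $c_{i+1}$ is strictly southeast of $c_i$, which you flag as ``the technical part'' but do not carry out. (The paper also disposes separately of the easy case where the flow of $S$ strictly exceeds the width of $P_w$, since then $\bk{S}{w}$ has only one river and every proper numbering is trivially a channel numbering.)

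Second, and more importantly, your path construction is where the argument has a genuine gap. Your plan is an inductive walk maintaining a ``side-of-$c$ invariant'', with pigeonhole and Lemma~\ref{lem:high flow implies channel} forcing convergence. But Lemma~\ref{lem:high flow implies channel} only tells you the walk hits \emph{some} channel, not \emph{your} channel $C$; you assert ``the structural invariant ensures that the trace meets $C$ before that happens'' without saying what the invariant is or why it forces this. The paper's concrete mechanism is a \emph{funnel walk}: from the ball $b$ directly south of $c$ take a reverse path visiting consecutive zig-zags to a $W$-terminal ball, and from the ball $b'$ directly east of $c$ take one to an $N$-terminal ball. These two reverse paths, together with $c$, bound a funnel. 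Starting from any $c'$ in a far-enough zig-zag, one first does a semi-bounded walk to enter the funnel, then a bounded walk between the two reverse paths; since the bounding paths meet only at $c$, the walk is forced to land exactly there. This is the missing idea in your proposal.
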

\begin{proof}
First note that if the flow of $S$ is strictly greater than the width of $P_w$, then $\bk{S}{w}$ cannot have two disjoint channels (if it did, then $w$ would have a channel of the same density as demonstrated in the first part of Proposition \ref{prop: between channels a channel}); so any proper numbering is necessarily a channel numbering. Hence assume the flow of $S$ is equal to the width of the Shi poset of $w$.

We will find a ball $c\in\B_{\bk{S}{w}}$ such that for any $c'$ with $d(c')$ sufficiently large, there exists a path $(c_0=c',\dots, c_k=c)$ with the value of $d$ decreasing by $1$ at each step. Let us first see that this will finish the proof. We will know that there exists such a path from some $(n,n)$-translate $c'$ of $c$. By Lemma \ref{lem:high flow implies channel}, $c'$ is part of some channel $C$. Then by Remark \ref{rmk:monotone is sometimes channel}, $d = d_{\bk{S}{w}}^C$ as desired.

Let $\{Z_i\}_{i\in\Z}$ be the collection of zig-zags associated to $d$. Choose $c\in\B_{\bk{S}{w}}$ such that in its zig-zag, all the balls of $w$ weakly north of it are $N$-terminating and all the balls of $w$ weakly west of it are $W$-terminating. Suppose $c\in Z_i$. Let $b\in\B_w$ be the ball directly south of $c$ and $b'\in\B_w$ be the ball directly east of $c$. Choose a reverse path $(b_0=b,b_1,\dots,b_k)$ which visits consecutive zig-zags and has $b_k$ $W$-terminal. Similarly, choose a reverse path $(b_0'=b',b_1',\dots,b_l')$ which visits consecutive zig-zags and has $b_l'$ $N$-terminal. We know that $b$ is strictly southwest of $b'$; let us first assume that $b_i$ is strictly southwest of $b_i'$ whenever both exist.

Without loss of generality, $l\geqslant k$. We start by showing that one can find the desired path when $c'\in Z_{i+l+1}$. First suppose $l>k$. Pick  $c'\in\B_{\bk{S}{w}}\cap Z_{i+l+1}$. Since $b_l'$ is $N$-terminal, we know that $b_l'$ is north of $c'$. If $b_l'$ is west of $c'$, let $c_1$ be the ball of $\bk{S}{w}$ directly west of $b_l'$. Otherwise, let $c_1$ be any ball northwest of $c'$ with $d(c_1) = i+l$ (it exists since $d$ is continuous). Thus $c_1 \in Z_{i+1}$ is northwest of $c_0$ and strictly west of $b_l'$. Let $(c_1,\dots,c_{l-k})$ be the (beginning of) a semi-bounded walk from $c_1$ with respect to $(b_l',b_{l-1}',\dots, b_{k+1}')$. Let $(c_{l-k},c_{l-k+1}, \dots, c_{l+1})$ be the bounded walk from $c_{l-k}$ between $(b_k',\dots, b_0')$ and $(b_k,\dots, b_0)$. But between $b_0$ and $b_0'$ there is only one ball of $\bk{S}{w}$, namely $c$. So $c_{l+1} = c$. If $l=k$ then the same argument works except in the construction of the path $(c_0=c',\dots,c_{l+1})$ one does not need to do the semi-bounded walk. Of course from any zig-zag southwest of $(i+l+1)$-st one we can find a path back to the $(i+l+1)$-st one which satisfies the required properties, finishing this part of the proof.

It remains to handle the case when $b_i$ is not strictly southwest of $b_i'$ for some $i$. In this case, there exists a ball $b''$  which is both $N$- and $W$-terminating (for example, $b'' = b$). Choose reverse paths  $p = (b_0=b'',b_1,\dots,b_k)$ and $p' =(b_0'=b'',b_1',\dots,b_l')$ which visit consecutive zig-zags and end at $W$- and $N$-terminal balls, respectively. We may assume that the two paths do not intersect (since otherwise we could rename their last intersection $b''$). Moreover, we may assume that $b_i$ is strictly southwest of $b_i'$. Indeed, if this were not the case and $i$ would be the first positive integer that $b_i$ is northeast of $b_i'$, then $(b_0',\ldots, b_{i-1}', b_i,\ldots, b_k)$ and $(b_0,\ldots,b_{i-1}, b_i',\ldots, b_l')$ would be acceptable paths with fewer crossings. A similar argument to the one used in the previous paragraph will finish the proof with $c$ being the ball directly west of $b''$.
\end{proof}

%

In the next lemma we prove that under appropriate dominance assumptions the channel numbering in the previous lemma is actually the southwest channel numbering.

\begin{lemma}
Consider a partial permutation $w$ and a compatible stream $S$ (with some fixed proper numbering). Let $T = \st(w)$. Suppose that $S$ and $T$ have the same flow and $T$ is no lower than $S$ (as in Definition \ref{def:no lower}). Let $d$ be the numbering of $\B_{\bk{S}{w}}$ induced by the backward step. Then $d = d_{\bk{S}{w}}^{SW}$.
\end{lemma}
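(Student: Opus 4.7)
By Proposition \ref{prop:backward numbering is channel}, the induced numbering $d$ on $u = \bk{S}{w}$ is a channel numbering; that is, $d = d_u^{C}$ for some channel $C$ of $u$. The goal is therefore to show that $C$ lies in the same river as the southwest channel $C_{SW}^u$ of $u$, for then $d = d_u^{SW}$ by Lemma \ref{lem:zero distance implies same channel numberings}.

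First I would identify an explicit candidate for $C_{SW}^u$. For each zig-zag $Z_i$ of the backward step, let $c_1^{(i)}$ denote the southern end of $Z_i$, which is the cell directly south of the back corner-post $S^{(i)}$. Since $S$ is a stream, the cells $\{S^{(i)}\}_{i \in \Z}$ form a $(n,n)$-translate invariant chain in $\leqslant_{SE}$; so do the cells $c_1^{(i)}$. Each $c_1^{(i)}$ is an inner corner-post of $Z_i$, hence a ball of $u$. The chain they form is translate invariant with density equal to the flow of $S$, which by Corollary \ref{cor:with of backward Shi poset} equals the width of the Shi poset of $u$. Hence this chain is a channel; moreover every other inner corner-post in $Z_i$ is strictly northeast of $c_1^{(i)}$, so this channel is the southwest channel $C_{SW}^u$. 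The induced numbering $d$ gives the value $i$ to $c_1^{(i)}$, so $d$ numbers $C_{SW}^u$ by consecutive integers; choosing the shift of $d_u^{SW}$ to match, we get $d = d_u^{SW}$ on $C_{SW}^u$.

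It remains to show $h(C, C_{SW}^u) = 0$. This is where the hypothesis ``$T$ is no lower than $S$'' enters. The idea is to apply Lemma \ref{lem:sw iff higher} to the single-channel partial permutation $t$ of the stream $T$: since $T$ is no lower than $S$, the induced numbering on $\bk{S}{t}$ is precisely the SW channel numbering of $\bk{S}{t}$. Concretely, this provides an index $i_0$ and a relation between the cell $S^{(i_0+1)}$ and the relevant corner-post that guarantees the SW-channel ball $c_1^{(i_0)}$ of $\bk{S}{t}$ is $W$- or $N$-terminal in the stream-numbering sense of Definition \ref{defn:back terminating}. Crucially, this positional relation depends only on $S$ and the location of $S^{(i_0+1)}$ relative to the column/row of $c_1^{(i_0)}$ — data that does not involve the other balls of $w$. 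I would then transfer this terminality to $u = \bk{S}{w}$: the same cell $c_1^{(i_0)}$ is the SW-most inner corner-post of the same zig-zag $Z_{i_0}$ (the zig-zag's southern column is $S^{(i_0)}$'s column in either setting), so the induced $d$ and the stream numbering $d_0$ still agree there. Using the now-established fact $d = d_u^{SW}$ on $C_{SW}^u$, this terminality ties down the global shift of $d = d_u^{C}$, forcing $d_u^{C}(b) = d_u^{SW}(b)$ for $b \in C$, hence $h(C, C_{SW}^u) = 0$.

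The main obstacle will be making precise the ``lifting'' step, i.e.\ checking that the terminal behaviour at the SW-most inner corner-post proved for $\bk{S}{t}$ via Lemma \ref{lem:sw iff higher} really does imply the same behaviour at the corresponding ball of $u = \bk{S}{w}$. In particular I would need to verify that the zig-zag $Z_{i_0}$ produced by the backward step on $w$ has its southern cell in the same place as it does in the backward step on $t$, which should follow from the fact that this cell's position is determined by $S^{(i_0)}$ alone and from compatibility of $S$ with both $w$ and $t$.
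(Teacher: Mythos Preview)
Your plan has a genuine gap in the ``lifting'' step, and a secondary gap in the identification of $C_{SW}^u$.

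\textbf{The lifting step fails.} You claim the southern end $c_1^{(i_0)}$ of the zig-zag $Z_{i_0}$ in $\bk{S}{w}$ sits in the same place as in $\bk{S}{t}$, because ``this cell's position is determined by $S^{(i_0)}$ alone.'' That is false. The column of $c_1^{(i_0)}$ is indeed the column of $S^{(i_0)}$, but its \emph{row} is the row of the southwest-most ball of $w$ (respectively $t$) carrying backward label $i_0$, which depends entirely on $w$ (respectively $t$). The zig-zags arising in $\bk{S}{t}$ and in $\bk{S}{w}$ are built from different ball configurations and have no a~priori relation beyond sharing the back corner-posts $S^{(i)}$. So the terminality you extract from Lemma~\ref{lem:sw iff higher} for $\bk{S}{t}$ does not transfer to $\bk{S}{w}$ by any positional argument.

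\textbf{The SW channel identification is unjustified.} You assert that the cells $c_1^{(i)}$ form a chain in $\leqslant_{SE}$ because ``the cells $\{S^{(i)}\}$ form a chain,'' but that only controls the columns. The rows of the $c_1^{(i)}$ are rows of balls of $w$, and you have not argued that they increase with $i$; Lemma~\ref{lem:zigzags dont intersect} does not give this directly. Even granting chain-ness, the claim that this channel is the \emph{southwest} channel of $u$ is not argued.

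\textbf{What the paper does instead.} The paper never works with $\bk{S}{t}$. It uses that $T = \st(w)$ comes from the forward step on $w$, so there is a ball $b\in\B_w$ directly east of $T^{(i)}$; then ``$T$ no lower than $S$'' supplies an $i$ with $T^{(i)}$ north of $S^{(i+1)}$, forcing the backward label of $b$ to equal $d_w^{SW}(b)=i$. From $b$ one follows a path in $w$ to the southwest channel of $w$ along which $d_w^{SW}$ and the backward numbering both drop by~$1$; this path therefore visits consecutive backward zig-zags. Finally, for any $c\in\B_{\bk{S}{w}}\cap Z_{i+1}$ one runs a semi-bounded walk in $\bk{S}{w}$ along this path, which is infinite and stays southwest of it, hence meets the southwest channel of $\bk{S}{w}$. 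That exhibits, for every such $c$, a path to $C_{SW}^u$ on which $d$ drops by~$1$, which is exactly what is needed. The key input you are missing is the comparison $d_w^{SW}\leqslant d_w^{\jbk,S}$ on $\B_w$ (Remark~\ref{rmk:lower bound for backward numbering}), which lets the argument take place inside $w$ rather than inside $t$.
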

\begin{proof}
By Proposition \ref{prop:backward numbering is channel}, $d$ is a channel numbering; consider the balls of $w$, the balls of $\bk{S}{w}$, and the zig-zags defined by $d$. We need to show that for any ball $c$ there is a path from $c$ to the southwest channel of $\bk{S}{w}$ such that $d$ decreases by 1 at each step. Since $d$ is proper, it is sufficient to show that such a path exists for any $c$ in some fixed zig-zag.

Number $T$ via its backward numbering with respect to $S$. Fixing a proper numbering of $T$ is equivalent to fixing a particular shift of $d_w^{SW}$; more precisely we fix $d_w^{SW}$ so that the column of $T^{(i)}$ is the column of the southwest ball $b$ with $d_w^{SW}(b) = i$. Notice that $d_w^{SW}$ is a monotone numbering which is bounded above by the stream numbering. Hence, by Remark \ref{rmk:lower bound for backward numbering}, $d_w^{SW}$ is a lower bound for the backward numbering of $w$.

Since $T$ is no lower than $S$, there exists $i$ for which $T^{(i)}$ is north of $S^{(i+1)}$. Consider the ball $b\in\B_w$ which is directly east of $T^{(i)}$. By definition, $d_w^{SW}(b) = i$. Since $S^{(i+1)}$ is south of $b$, the backward numbering of $b$ is at most $i$. Using the previous paragraph, we conclude that the backward numbering of $b$ is $i$. 

Construct a path $(b_0=b, b_1, \dots,b_k)$ to the southwest channel of $w$ such that $d_w^{SW}$ decreases by $1$ at each step. The backward numbering must decrease at each step; since $d_w^{SW}$ is a lower bound for it, the two must coincide on the whole path. Thus the path visits consecutive zig-zags. Continue this path by consecutive elements $(b_{k+1},b_{k+2}, \dots)$ of the southwest channel. 
Now consider $c\in\B_{\bk{S}{w}}\cap Z_{i+1}$. There exists a ball $c_1\in \B_{\bk{S}{w}}\cap Z_i$ northwest of $c$ and west of $b$ (if $c$ is east of $b$ then $c_1$ may be chosen to be the ball directly west of $b$; otherwise $c_1$ may be chosen to be any ball $Z_i$ which is northwest of $c$). Let $(c_1, c_2, \dots)$ be a semi-bounded walk from $c_1$ with respect to $(b_0=b, b_1, \dots)$. This walk is necessarily infinite by Proposition \ref{prop:semibounded}. Moreover, infinitely many cells of the path lie west of the corresponding entries of the southwest channel of $w$. Hence this path must intersect the southwest channel of $\bk{S}{w}$.
\end{proof}

Now it is easy to show that $\Phi\circ\Psi$ is the identity map, provided we start with a dominant weight. 

\begin{proof}[Proof of Theorem \ref{thm:image2}]
\label{pf:image2}
After each step of the backward algorithm we end up with a new partial permutation $\bk{S}{w}$. This partial permutation has two numberings of interest: the numbering induced by the backward step, and its own southwest channel numbering. It turns out that if $\rho$ is dominant, then these two numberings coincide. If the flow of $S$ is strictly greater than the width of $P_w$, the two coincide since there is only one proper numbering up to shift (see the proof of Proposition \ref{prop:backward numbering is channel}). If the flow of $S$ is equal to the width of $P_w$, then the two coincide by the previous lemma. This means that a step of the backward algorithm followed by a step of AMBC preserves the partial permutation. A simple induction finishes the proof.
\end{proof}

We have shown on page \pageref{pf:image1} that $\Phi(w)\in\Omega_{dom}$. Combined with the results of this section, this proves that AMBC provides a bijection between $\widetilde{W}$ and $\Omega_{dom}$.

\section{Distances and altitudes}

The main point of this section is to interpret distances between channels of $w$ in terms of weights and in terms of distances between channels of $\fw{w}$. Namely, we will prove Theorem \ref{thm:dist and alt}. 

Let $w$ be a partial permutation such that $P_w$ has at least $2$ disjoint maximal antichains; let $k$ be the size of the larges collection of such antichains. Consider interlacing maximal collections of channels $\{C_1,\dots, C_k\}$ and $\{D_1,\dots, D_{k-1}\}$ with respect to $d_w^{SW}$ (as described in Corollary \ref{cor:interlacing channels}).

\begin{thm}
\label{thm:dist nonfirst channels}
If $k\geqslant 3$, then for any $1\leqslant i \leqslant k-2$, $\h(D_i,D_{i+1}) = \h(C_{i+1},C_{i+2})$.  
\end{thm}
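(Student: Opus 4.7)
The plan is to follow the path-construction strategy from the proof of Proposition \ref{prop:induced and southwest coincide on channels}, promoting it to a sharp comparison of lengths. Concretely, I will construct a reverse path in $\fw{w}$ from $D_i$ to $D_{i+1}$ whose length equals both $\h(C_{i+1}, C_{i+2})$ and $\h(D_i, D_{i+1})$, forcing them to agree.

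First I would select a path $p = (c_0, \dots, c_h)$ in $w$ from $c_0 \in C_{i+2}$ to $c_h \in C_{i+1}$ of length $h = \h(C_{i+1}, C_{i+2})$ on which $d_w^{SW}$ decreases by exactly $1$ at each step. Such a path is obtained from a maximal-worth path for $d_w^{C_{i+1}}$ starting at a ball $c_0 \in C_{i+2}$ chosen so that $d_w^{C_{i+1}}(c_0)$ realizes the distance $\h(C_{i+1}, C_{i+2})$ above $d_w^{C_{i+1}}(c_h)$; Lemma \ref{lem:channel numbering determined by closest channel} forces $d_w^{SW}$ and $d_w^{C_{i+1}}$ to agree on $C_{i+1}$, and monotonicity combined with the known total change forces $d_w^{SW}$ to decrease by exactly $1$ along $p$.

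Next, mimicking the construction in the proof of Proposition \ref{prop:induced and southwest coincide on channels}, I would take $b'_0 \in D_i$ in the same zig-zag as $c_h$, apply a reverse funnel walk bounded by $p$ and $C_{i+1}$ to obtain $(b'_0, \ldots, b'_{h-1})$, then transition across $C_{i+1}$ by a single step to a ball $b'_h$ directly east of $c_0$, and finally apply a reverse bounded walk between $C_{i+1}$ and $C_{i+2}$ which arrives on $D_{i+1}$ by Remark \ref{rmk: can get to specific channel}. Each step of this reverse path visits consecutive $d_w^{SW}$-zig-zags, so $\fw{d_w^{SW}}$ increases by $1$ at every step; by Proposition \ref{prop:induced and southwest coincide on channels} the same is true for $d_{\fw{w}}^{SW}$.

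Finally, with a careful choice of $c_0$, the reverse bounded walk can be made to have length zero, yielding a reverse path of length exactly $h$. Using Lemma \ref{lem:channel numbering determined by closest channel} applied to channels of $\fw{w}$ to relate $d_{\fw{w}}^{SW}$ to $d_{\fw{w}}^{D_i}$ and $d_{\fw{w}}^{D_{i+1}}$ along this reverse path, this length computes $\h(D_i, D_{i+1})$, giving the desired equality. The main obstacle is showing that the transition step indeed lands on $D_{i+1}$ when $c_0$ is chosen optimally, so that the reverse bounded walk contributes no extra steps, and correctly interpreting the resulting length as the distance $\h(D_i, D_{i+1})$ under the shift conventions of the pseudometric; the cleanest resolution may well be a symmetric companion construction yielding the reverse inequality via dual funnel walks, so that one gets $\h(C_{i+1},C_{i+2}) \leq \h(D_i,D_{i+1})$ and $\h(D_i,D_{i+1}) \leq \h(C_{i+1},C_{i+2})$ separately.
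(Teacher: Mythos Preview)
Your toolkit and your fallback are exactly right: the paper proves the equality by two separate inequalities, each obtained via a funnel/bounded-walk construction translating a path between channels of $w$ into one between channels of $\fw{w}$ and vice versa. That is the argument you should commit to from the outset.

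Your single-pass construction, however, has a geometric mismatch. With $p$ running from $c_0\in C_{i+2}$ northwest to $c_h\in C_{i+1}$, the reverse funnel walk ``bounded by $p$ and $C_{i+1}$'' lives in the region northeast of $C_{i+1}$ (between $C_{i+1}$ and $C_{i+2}$), whereas your starting ball $b'_0\in D_i$ sits between $C_i$ and $C_{i+1}$, i.e.\ on the southwest side of $C_{i+1}$. So $b'_0$ is not in the funnel at all; you have the roles of $D_i$ and $D_{i+1}$ (or the direction of $p$) swapped relative to what the geometry allows. In the paper's version the reverse path starts at $b_0\in D_{i+1}$ on the zig-zag of the $C_{i+2}$ endpoint, funnels down toward $C_{i+1}$, then transitions directly south of the $C_{i+1}$ endpoint into the region between $C_i$ and $C_{i+1}$, and finishes with a reverse bounded walk to $D_i$.

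The second issue is the claim that the bounded walk can be made to have length zero by a clever choice of $c_0$. This is not generally true, and the paper does not attempt it. Instead, if the bounded walk has length $h'$, the paper simply observes that the endpoint on $D_i$ is $h'$ zig-zags ahead of the fixed reference ball $b\in D_i$, and uses the fact that channel numberings shift by exactly $1$ along a channel to absorb $h'$. The cleanest bookkeeping is the paper's: fix $c\in C_{i+1}$ and the ball $b\in D_i$ on the same zig-zag, normalize so that $d_w^{C_{i+2}}$ and $\fw{d_w^{SW}}$ are compared on $C_{i+2}$ and $D_{i+1}$ respectively, and show $d_w^{C_{i+2}}(c)=d_{\fw{w}}^{D_{i+1}}(b)$ by the two funnel/bounded constructions. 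The distance equality then drops out immediately.
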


\begin{proof}
Fix $d_w^{SW}$ and consider the corresponding collection of zig-zags. Fix  $1\leqslant i \leqslant k-2$. All further numberings of $\B_w$ will be chosen to coincide with $d_w^{SW}$ on $C_{i+2}$ and all further numberings of $\B_\fw{w}$ will be chosen to coincide with $\fw{d_w^{SW}}$ on $D_{i+1}$. This fixes the numbering of $C_{i+2}$ and $D_{i+1}$ for the rest of the proof.

By Proposition \ref{prop:induced and southwest coincide on channels} we know that $\fw{d_w^{SW}}$ and $d_\fw{w}^{SW}$ coincide on all channels of $\fw{w}$. Since $D_i$ is southwest of $D_{i+1}$, $d_\fw{w}^{SW}$ and $d_\fw{w}^{D_i}$ coincide on $D_{i+1}$. Hence $\fw{d_w^{SW}}$ and $d_\fw{w}^{D_i}$ coincide on $D_{i+1}$. Suppose for some $c\in C_{i+1}$, we have $d_w^{C_{i+2}}(c) = l$. We will show that in this case, for the element $b\in D_i$ in the zig-zag of $c$ we have $d_\fw{w}^{D_{i+1}}(b) = l$. This will finish the proof.

First we will prove that $d_\fw{w}^{D_{i+1}}(b) \geqslant l$. By assumption, there is a path $p = (c_0=c,\dots, c_h)$ from $c$ to $C_{i+2}$ of worth $l$ (so the fixed numbering of $c_h$ must be $l-h$). We will build a reverse path from $D_{i+1}$ to $D_i$ (see left side of Figure \ref{fig:distance_preserved_1}). Start with a ball $b_0\in D_{i+1}$ which is on the same zig-zag as $c_h$. Let $(b_0,b_1,\dots, b_{h-1})$ be the funnel reverse walk between $p$ and $C_{i+1}$. Let $b_h\in\B_\fw{w}$ be the ball of $\fw{w}$ directly south of $c$. Let $(b_h,b_{h+1},\dots, b_{h+h'})$ be a reverse bounded walk between $C_i$ and $C_{i+1}$ to $D_i$. Thus we have a path with $h+h'$ steps from $b_{h+h'} \in D_i$ to $b_0\in D_{i+1}$. The fixed numbering of $b_0$ is $l-h$ (it is the same as the numbering of $c_h$), so the worth of the path is $l+h'$. Thus $d_\fw{w}^{D_{i+1}}(b_{h+h'})\geqslant l+h'$. Now $b_{h+h'}$ is $h'$ zig-zags ahead of $b$. So $d_\fw{w}^{D_{i+1}}(b) \geqslant l$.

Now we will prove that $d_\fw{w}^{D_{i+1}}(b) \leqslant l$; the argument is nearly symmetric. Suppose there is a path $q = (b_0=b,\dots, b_g)$ from $b$ to $D_{i+1}$ of worth $s$ (so the fixed numbering of $b_g$ must be $s-g$). We will build a path from $c$ to $C_{i+2}$ (see right side of Figure \ref{fig:distance_preserved_1}). Let $(c_0=c,\dots, c_{g-1})$ be the funnel walk between $D_{i+1}$ and $q$. Let $c_g\in\B_w$ be the ball of $w$ directly north of $b_g$. Let $(c_g,\dots, c_{g+g'})$ be the walk from $c_g$ to $C_{i+2}$ (if $D_{i+2}$ exists, we can find a bounded walk between $D_{i+1}$ and $D_{i+2}$; otherwise we can use a semi-bounded walk). The fixed numbering of $c_{g+g'}$ is $s-g-g'$ and the worth of the path $(c_0,\dots, c_{g+g'})$ is $s$. Hence $s\leqslant d_w^{C_{i+2}}(c) = l$. So $d_\fw{w}^{D_{i+1}}(b)$, which is the maximal value of $s$ over the set of paths, is also no larger than $l$.
\end{proof}

\begin{figure}
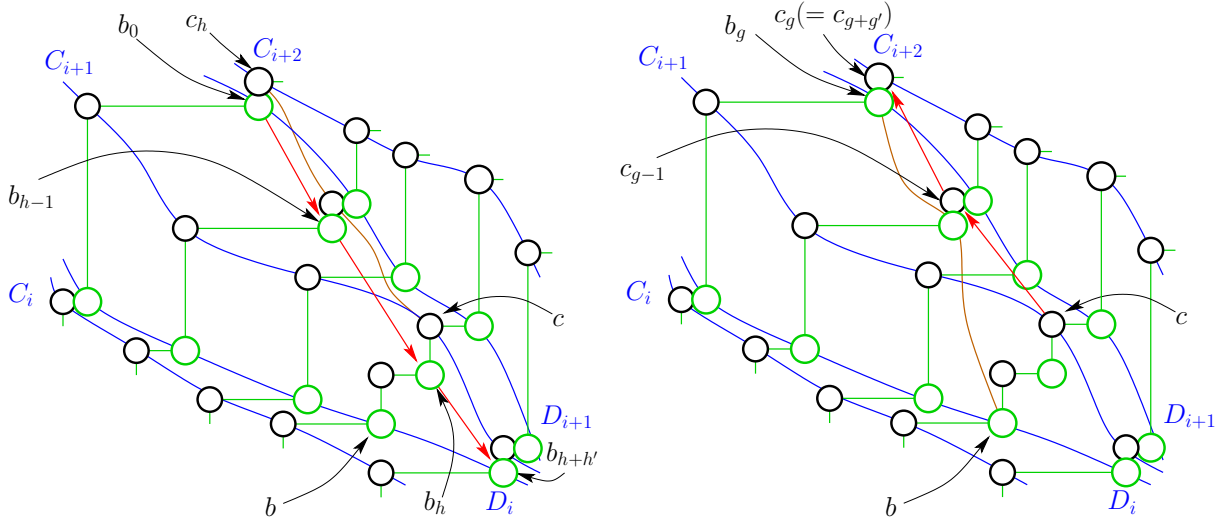

\centering
\resizebox{.45\textwidth}{!}{\input{figures/dist_preserved_1_ng.pspdftex}}\qquad\resizebox{.45\textwidth}{!}{\input{figures/dist_preserved_2_ng.pspdftex}}
\caption{The two paths constructed in the proof of Theorem \ref{thm:dist nonfirst channels}. In the right half the ball $c_g$ already lies on the channel $C_{i+2}$, so the last walk is not necessary in the example.}
\label{fig:distance_preserved_1}
\end{figure}

Before proving the statement for the southwest two channels $C_1, C_2$, we give an alternative description of the pseudometric $\h$ on the set of channels:
\begin{lemma}
Suppose $w$ is a partial permutation and $C, C'$ are channels. 
$$\h(C, C') = \min_{(b_0,\dots,b_f)} d^C(b_0) - (d^{C}(b_f) + f)$$
where the minimum is taken over all paths from $C$ to $C'$.
\end{lemma}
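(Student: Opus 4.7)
The plan is to use the characterization of $\h(C,C')$ as $d^{C'}(b) - d^C(b)$ for $b \in C'$ (with shifts chosen so the two numberings agree on $C$), combined with the defining sup formula for the channel numbering $d^{C'}$. First I would fix shifts of $d^C$ and $d^{C'}$ so that they agree on $C$. By Lemma \ref{lem:proper numbers channels} (applied to $d^{C'}$ restricted to $C$), this is consistent, and by Remark \ref{rmk:distance direction} we then have $d^{C'}(b) \geqslant d^C(b)$ for every ball $b$, with equality gap exactly $\h(C,C')$ whenever $b \in C'$.

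For the lower bound ($\geqslant$), take any path $(b_0,\dots,b_f)$ from $b_0 \in C$ to $b_f \in C'$. Since this is one of the paths competing in the definition of $d^{C'}(b_0)$, we have $d^{C'}(b_0) \geqslant d^{C'}(b_f) + f$. Because $b_0 \in C$, the left side equals $d^C(b_0)$, and because $b_f \in C'$, $d^{C'}(b_f) - d^C(b_f) = \h(C,C')$. Rearranging gives
\[
d^C(b_0) - d^C(b_f) - f \;\geqslant\; d^{C'}(b_f) - d^C(b_f) \;=\; \h(C,C'),
\]
which is the required inequality for every path.

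For the upper bound ($\leqslant$), I need a path realizing equality, i.e.\ a path $(b_0,\dots,b_f)$ from $C$ to $C'$ along which the inequality $d^{C'}(b_0) \geqslant d^{C'}(b_f) + f$ is tight. Pick any $b_0 \in C$. By Proposition \ref{prop:channel numbering defined} the sup in the definition of $d^{C'}(b_0)$ is bounded above and takes integer values, so it is attained by some path from $b_0$ to $C'$. Call this path $(b_0,\dots,b_f)$. For it we have $d^{C'}(b_0) = d^{C'}(b_f) + f$, hence
\[
d^C(b_0) - d^C(b_f) - f \;=\; d^{C'}(b_0) - d^C(b_f) - f \;=\; d^{C'}(b_f) - d^C(b_f) \;=\; \h(C,C'),
\]
which shows the minimum is achieved and equals $\h(C,C')$.

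The only subtle step is the existence of a max-worth path from some $b_0 \in C$ that actually terminates in $C'$ (rather than some other channel), but this is built into the definition: $d^{C'}(b_0)$ is by definition a supremum over paths ending in $C'$, and being a bounded sup of integers it is a maximum, so such a path exists. Everything else is a direct bookkeeping argument using Remark \ref{rmk:distance direction} and the sup definition of the channel numbering, so I do not anticipate any further obstacles.
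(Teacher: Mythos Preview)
Your proof is correct and follows essentially the same approach as the paper: both arguments identify the minimum over paths with the definition of the channel numbering $d^{C'}(b_0)$ as a supremum over paths to $C'$. The only cosmetic differences are that the paper chooses the shift of $d^{C'}$ to agree with $d^C$ on $C'$ (rather than on $C$ as you do) and compresses your two inequalities into the single observation that for fixed $b_0$ the right-hand side equals $d^C(b_0) - d^{C'}(b_0)$, which is $\h(C,C')$ by definition and independent of $b_0$.
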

\begin{proof} Note that if we fix $b_0$ then the right hand side becomes $d^C(b_0) - d^{C'}(b_0)$, where the shift of $d^{C'}$ is chosen so that $d^C$ and $d^{C'}$ coincide on $C'$. This is independent of $b_0$ and is, by definition, $h(C,C')$.
\end{proof}

\begin{figure}
\centering\resizebox{.6\textwidth}{!}{\input{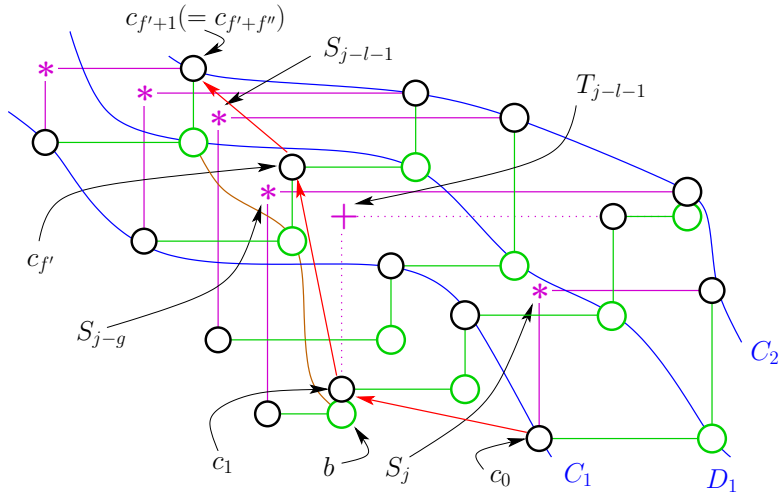}}
\caption{The construction of the first path in the proof of Theorem \ref{thm:dist first channels}. The cells of $S$ are labeled by $*$'s while the cell of $T$ is labeled by $+$ (other cells of $T$ are not relevant hence are omitted). }
\label{fig:distance_preserved_2}
\end{figure}

\begin{thm}
\label{thm:dist first channels}
Let $S$ be the stream resulting from a step of AMBC on $w$, and let $T$ be the stream resulting from a step of AMBC on $\fw{w}$. Choose $a_0$ to be the altitude of the stream from the class of $T$ (recall Definitions \ref{defn:class} and \ref{def:altitude}) which is concurrent to $S$. Then $a(T) = a_0 + \h(C_1, C_2)$ 
\end{thm}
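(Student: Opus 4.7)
The strategy parallels that of Theorem \ref{thm:dist nonfirst channels}, but now compares $\h(C_1, C_2)$ to altitude data via the backward step $\bk{S}{t}$, where $t$ is the partial permutation whose balls are the cells of $T$. Since $w$ has $k \geq 2$ channels, the widths of $P_w$ and $P_{\fw{w}}$ are equal (as shown in the proof of Proposition \ref{prop: between channels a channel}), so $S$ and $T$ share a common flow $m$, the altitude $a_0$ is well-defined by Proposition \ref{prop:unique concurrent}, and $\bk{S}{t}$ decomposes by Lemma \ref{lem:pre-concurrent} into two disjoint channels $E_1$ (SW) and $E_2$ (NE). By Lemma \ref{lem:sw iff higher} together with Theorem \ref{thm:image1} (which gives that $T$ is no lower than $S$), the numbering induced by the backward step is the SW channel numbering of $\bk{S}{t}$.

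The plan is to prove the two equalities $a(T) - a_0 = \h(E_1, E_2)$ and $\h(E_1, E_2) = \h(C_1, C_2)$ separately. For the first, let $T_l$ denote the stream at altitude $l$ in the class of $T$ and $t_l$ its partial permutation. By induction on $l \geq a_0$, I would argue that the two channels of $\bk{S}{t_l}$ have distance exactly $l - a_0$: the base case $l=a_0$ is the definition of concurrency, and the inductive step uses Lemmas \ref{lem:backward inequalities} and \ref{lem:higher streams} (the backward numberings of $t_l$ and $t_{l+1}$ coincide on rows) to argue that the SW channel of $\bk{S}{t_l}$ is preserved under $l\mapsto l+1$ while the NE channel shifts one column-cycle east, so that the inter-channel distance grows by exactly $1$ at each step. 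Specializing to $l = a(T)$ gives $\h(E_1, E_2) = a(T) - a_0$.

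For the second equality, $\h(E_1, E_2) = \h(C_1, C_2)$, I would construct from any path in $w$ from $C_2$ to $C_1$ along which $d_w^{C_1}$ decreases by $1$ at each step, a corresponding path in $\bk{S}{t}$ from $E_2$ to $E_1$ with the analogous property for $d_{\bk{S}{t}}^{E_1}$, and conversely. The main tool is the funnel/bounded-walk technology of Section \ref{sec:weights}, applied through the forward zig-zags. Concretely, using Proposition \ref{prop:fw bk is id} with $C = C_1$ (so that $\bk{S}{\fw{w}} = w$), the channels $C_1, C_2$ arise as inner-corner-post data for the backward step on the full $\fw{w}$; restricting outer-corner-post data from all of $\fw{w}$ down to just the cells of $T$ should produce $E_1, E_2$ as the outermost traces of $C_1, C_2$, and Proposition \ref{prop:induced and southwest coincide on channels} identifies the relevant channel numberings along the way.

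The chief obstacle is the equality $\h(E_1, E_2) = \h(C_1, C_2)$: although $\bk{S}{\fw{w}}$ fully reconstructs $w$, the backward step on $t$ alone retains only the two outermost channels, and one must show that the numbering-theoretic distance between these is preserved under the restriction. Careful bookkeeping of funnel walks across the pre- and post-backward zig-zag geometries, in the spirit of the proof of Theorem \ref{thm:dist nonfirst channels}, will form the bulk of the work.
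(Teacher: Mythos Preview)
Your approach is genuinely different from the paper's, and the difference is instructive. The paper does \emph{not} factor through $\h(E_1,E_2)$ at all. Instead, it exploits a direct characterization of the altitude gap: writing $l = a(T) - a_0$, one has that for some $j$ the cell $S^{(j)}$ lies east of $T^{(j-l-1)}$ (and $l$ is maximal with this property). This geometric condition on consecutive cells of $S$ and $T$ is then used to build, via funnel and semi-bounded walks, a path from $C_1$ to $C_2$ in $w$ whose worth witnesses $\h(C_1,C_2)\le l$; the reverse inequality is obtained by starting from a path realizing $\h(C_1,C_2)$ and producing such a pair $S^{(j)}$, $T^{(j-l'-1)}$. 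No reference to $\bk{S}{t}$ or its channels is needed.

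Your decomposition creates a genuine obstacle at the second equality $\h(E_1,E_2)=\h(C_1,C_2)$, and the sketch you give for it is not a proof. The channels $E_1,E_2$ live in the partial permutation $\bk{S}{t}$, which has only the two translate classes coming from $T$; the channels $C_1,C_2$ live in $w$, which may have many balls between and beyond them. There is no obvious restriction map taking paths in $w$ to paths in $\bk{S}{t}$, and Proposition~\ref{prop:fw bk is id} (which reconstructs $w$ from $\fw{w}$ and $S$) does not give you control over what happens when you replace $\fw{w}$ by the much sparser $t$. Your first equality is more tractable---the inductive picture of $E_1$ fixed and $E_2$ shifting is correct---but even there the claim that $\h$ increases by exactly one per shift needs an argument, since the ambient partial permutation changes at each step. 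The paper's direct route via the $S^{(j)}$--$T^{(j-l-1)}$ criterion is what makes the proof go through cleanly.
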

\begin{proof}
We have already proved (Theorem \ref{thm:image1}) that $a(T) \geqslant a_0$. Suppose for some $l$, $a(T) = a_0 + l$. We will show that $\h(C_1,C_2)\leqslant l$ and that $\h(C_1,C_2)\geqslant l$. To prove the first statement we need to find a path $(b_0,\dots,b_f)$ from $C_1$ to $C_2$ such that $d^{SW}(b_0) - (d^{C}(b_f) + f)\leqslant l$.

Fix some proper numbering of $S$. If $t$ is the partial permutation whose balls are the cells of $T$, fix the numbering $T$ which gives the backward numbering of $t$ with respect to $S$. Recall from the proof of Theorem \ref{thm:image1} on page \pageref{pf:image1} that choosing the relative matching of the numberings of $T$ and $S$ in this way ensures that $\fw{d_w^{SW}}$ and $d_{\fw{w}}^{SW}$ coincide on channels.

One can see that $a(T) = a_0 + l$ means precisely that for some $j$, $S_j$ is east of $T_{j-l-1}$. Now $T_{j-l-1}$ lies directly north of a ball $b\in\B_{\fw{w}}$ (see Figure \ref{fig:distance_preserved_2}). So $d_\fw{w}^{SW}(b) = j-l-1$ and hence $\left(\fw{d_w^{SW}}\right)(b)\geqslant j-l-1$ (Proposition \ref{prop:induced and southwest coincide on channels}). Let $g = j - \left(\fw{d_w^{SW}}\right)(b)$; this way $b$ is in the $(j-g)$-th zig-zag. We know that $0< g\leqslant l+1$.

Consider a path $p=(b_0=b,\dots, b_{f'})$ of maximal worth from $b$ to $D_1$. We will now construct the required path from $C_1$ to $C_2$. Choose $c_0\in C_1$ such that $d_w^{SW}(c_0) = j$. Let $c_1\in\B_w$ be a ball northwest of $c_0$ and between $b$ and $D_1$ (it exists by Lemma \ref{lem: key lemma}). Let $(c_1,\dots, c_{f'})$ be the funnel walk between $p$ and $D_1$. Let $c_{{f'}+1}\in\B_w$ be the ball directly north of $b_{f'}$. Let $(c_{f'+1},\dots, c_{f'+f''})$ be the walk from $c_{f'+1}$ to $C_2$ (it is either bounded between $D_1$ and $D_2$ or semi-bounded above $D_1$, depending on whether $k>2$ or $k=2$).

The resulting path $(c_0,\dots, c_{f'+f''})$ has $f'+f''$ steps. By construction $c_0$ had label $j$ (in the SW numbering) and $c_1$ had label $j-g$. Now the label of $b_{f'}$ must be $j-g-f'$, and so the label of $c_{f'+1}$ is also $j-g-f'$. Finally the label of $c_{f'+f''}$ must be $j-g-f'-f''+1\geqslant j-l-f'-f''$. The worth of the path $(c_0,\dots, c_{f'+f''})$ with respect to $C_2$ is thus $\geqslant j-l$. Thus $\h(C_1,C_2)\leqslant l$ as desired.

Now we will show that $\h(C_1, C_2)\geqslant l$. Suppose $\h(C_1,C_2) = l'$. Choose a path $q=(c_0,c_2,\dots, c_f)$ from $C_1$ to $C_2$ such that $d_w^{SW}(c_0) - (d_w^{SW}(c_f) + f) = l'$. We want to show that for some $j$, $S^{(j)}$ is east of $T^{(j-l'-1)}$. Let $b_0$ be the ball of $D_1$ on the zig-zag of $c_f$; we will construct a reverse path $(b_0 = b,\dots, b_{f'})$. First consider a reverse funnel walk $(b_0,\dots, b_{f-1})$ between $q$ and $C_1$. If $c_0$ is the SW-most ball of its zig-zag, then let $f' = f-1$. Otherwise let $b_f$ be the ball of $\fw{w}$ directly south of $c_0$. Then continue the reverse path to a (necessarily finite) semi-bounded reverse walk $(b_f,b_{f+1},\dots, b_{f'})$ with respect to $C_1$. 

Suppose the zig-zag containing $b_{f'}$ is labeled $j-1$. Notice that $j - 1 = d_w^{SW}(c_0) + f'$ since the walk visits each zig-zag. Since the walk stopped, we must have $S_j$ lying east of $b_{f'}$. We know that $d_{\fw{w}}^{SW}(b_0) = d_w^{SW}(c_f)$ since $b_0$ is on a channel. Hence $d_{\fw{w}}^{SW}(b_{f-1})\geqslant d_w^{SW}(c_f) + f-1 = d_w^{SW}(c_0) - l' - 1$. Hence $d_{\fw{w}}^{SW}(b_{f-1})\geqslant d_w^{SW}(c_0) - l' + f' = j-l'-1$. Thus $T^{(j-l'-1)}$ is west of $b_{f'}$. This finishes the proof.
\end{proof}

\section{Proof of Weyl symmetry}

The purpose of this section is to prove Theorem \ref{thm:weyl} and Corollary \ref{cor:weyl}, namely to show that $\Phi\circ\Psi$ preserves the tabloids and maps the weight to the dominant chamber.

Suppose $P$ and $Q$ are two tabloids with the same shape $\lambda$, and $\rho$ is an integer vector of size $\ell(\lambda)$. Let $r_1 = 0$. For each $2\leqslant i\leqslant\ell(\lambda)$; if $\lambda_i < \lambda_{i-1}$ then let $r_i = 0$, otherwise let $r_i$ be the unique integer such that $\mathfrak{st}_{r_i}(P_i, Q_i)$ is concurrent to $\mathfrak{st}_{0}(P_{i-1}, Q_{i-1})$.

Choose $i$ such that $\lambda_i = \lambda_{i-1}$. Since any permutation is a product of transpositions, to prove Theorem \ref{thm:weyl} it suffices to prove that $\Psi(P,Q,\rho) = \Psi(P,Q,\rho')$, where 
\[\rho' = (\rho_1,\dots,\rho_{i-2}, \rho_i-r_i+r_{i-1},\rho_{i-1}-r_{i-1}+r_i, \rho_{i+1},\dots, \rho_{\ell(\lambda)}).\] 
We will in fact prove that after the two backward steps which remove rows $i$ and $i-1$, the partial permutations already coincide. 

The notions of shifting streams, channels, and rivers will be central to the rest of this section. Recall the for a stream $S$ we write $S\sh{i}$ for the shift of $S$ by $i$, and analogously for partial permutations consisting of a single channel.
\begin{defn}
For a partial permutation $w$, a channel $C$, and a river $R$ we write 
\begin{itemize}
\item $w\sh{i}_C$ for the permutation obtained from $w$ by replacing $C$ with $C\sh{i}$, and
\item $w\sh{i}_R$ for the permutation obtained from $w$ by shifting the northeast channel of $R$ by $i$ if $i > 0$ or shifting the southwest channel of $R$ by $i$ if $i < 0$.
\end{itemize}
Th first of these is referred to as \emph{shifting the channel by $1$} and the second as \emph{shifting the river by $1$}.
\end{defn}

In Section \ref{sec:moving streams proofs} we study what happens to the backward step of a permutation if we change the altitude of the stream by 1. In Section \ref{sec:moving channels proofs} we study what happens when change the altitude of one stream and do two steps back (preserving the altitude of the second stream). In Section \ref{sec:weyl symmetry proofs} we combine the two theories to say that first decreasing the altitude of one stream appropriately and then increasing the altitude of the other preserves the partial permutation resulting from the two backward steps.

\subsection{Shifting streams}
\label{sec:moving streams proofs}
Consider a partial permutation $w$ and a stream $S$ compatible with it; let $S'= S\sh{1}$. Assume that the streams have the same flow as the width of the Shi poset of $w$. In this section we describe the difference between the partial permutations $\bk{S}{w}$ and $\bk{S'}{w}$; it turns out that only one channel is different between them. 

We will have to consider proper numberings of the two streams $S$ and $S'$. The overall shift is unimportant, so we fix some proper numbering of $S$. There are two natural numberings of $S'$: one matches that of $S$ on the rows (call it the \emph{row-matching numbering}) while the other one matches on the columns (call it the \emph{column-matching numbering}). 
\begin{rmk}
\label{rmk:column one greater than row}
Since $S$ is numbered by consecutive integers, for any cell of $S'$, the column-matching numbering is always $1$ greater than the row-matching numbering.
\end{rmk}

\begin{lemma}
Let $d$ be the backward numbering of $w$ with respect to $S$, and $d'$ the backward numbering with respect to the row-matching numbering of $S'$. Then for any $b\in\B_w$ we have $d(b) = d'(b)$ or $d(b) = d'(b) + 1$. 
\end{lemma}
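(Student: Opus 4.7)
The plan is to apply the maximality characterization of the backward numbering from Remark \ref{rmk:lower bound for backward numbering} twice, once with respect to $S$ and once with respect to $S'$, after first making explicit the geometric relationship between $S^{(k)}$, $S'^{(k)}$, and $S^{(k+1)}$ under the row-matching numbering.

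First I would unpack the geometry. Writing $S=\st_r(A,B)$ with the proper numbering $S^{(k)}=(i_k,j_{k+r})$ in the notation of Definition \ref{def:altitude}, the row-matching convention for $S'=S\sh{1}=\st_{r+1}(A,B)$ forces $S'^{(k)}=(i_k,j_{k+r+1})$. From this I would extract two observations that are the heart of the argument: (i) $S'^{(k)}$ lies in the same row as $S^{(k)}$ and strictly east of it (since $j_{k+r+1}>j_{k+r}$), and (ii) $S'^{(k)}$ lies in the same column as $S^{(k+1)}$ and strictly north of it (since $i_k<i_{k+1}$).

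To establish $d'(b)\leqslant d(b)$, I would use that $d'$ is monotone and that $S'^{(d'(b))}$ is northwest of $b$ by construction. By observation (i), $S^{(d'(b))}$ is in the same row as $S'^{(d'(b))}$ and strictly west of it, hence also northwest of $b$. Remark \ref{rmk:lower bound for backward numbering} applied to the stream $S$ then gives $d'(b)\leqslant d(b)$.

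For the other direction $d(b)\leqslant d'(b)+1$, I would consider the shifted numbering $\tilde d := d-1$, which inherits monotonicity from $d$. From $S^{(d(b))}$ being northwest of $b$ and observation (ii) applied to $k=d(b)-1$, the cell $S'^{(d(b)-1)}=(i_{d(b)-1},j_{d(b)+r})$ is in the same column as $S^{(d(b))}$ but strictly north of it, so $S'^{(\tilde d(b))}$ is also northwest of $b$. Remark \ref{rmk:lower bound for backward numbering} applied to $S'$ then gives $\tilde d(b)\leqslant d'(b)$, i.e., $d(b)\leqslant d'(b)+1$. Combining the two bounds yields the claim.

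The only real obstacle is setting up the indexing so that observations (i) and (ii) are transparent; once $S^{(\cdot)}$ and $S'^{(\cdot)}$ are aligned via the row-matching convention, the rest is two symmetric applications of the maximality characterization, with no further algorithmic work needed.
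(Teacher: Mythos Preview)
Your proof is correct and takes essentially the same approach as the paper: two applications of Remark \ref{rmk:lower bound for backward numbering}, one for each inequality. The only cosmetic difference is that the paper packages the upper bound via the column-matching backward numbering $d''$ and Remark \ref{rmk:column one greater than row} (yielding $d\leqslant d''=d'+1$), whereas you verify directly that $\tilde d=d-1$ satisfies the stream condition for $S'$; these are the same computation.
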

\begin{proof}
Let $d''$ be the backward numbering with respect to the column-matching numbering of $S'$. Note that $d'$ is a monotone numbering which is no larger than the stream numbering for $S$, so by Remark \ref{rmk:lower bound for backward numbering}, $d'(b)\leqslant d(b)$. For the same reason, it is clear that $d(b)\leqslant d''(b)$. By Remark \ref{rmk:column one greater than row}, $d''(b)-d'(b) = 1$. This finishes the proof.
\end{proof}

Next we describe precisely on which balls the numberings disagree.

\begin{lemma}
\label{lem:moving stream numbering}
Let $d$ be the backward numbering of $w$ with respect to $S$, and $d'$ the backward numbering with respect to the row-matching numbering of $S'$. Then $d(b) = d'(b) + 1$ if and only if $b$ is $W$-terminating with respect to $S$. Moreover, any ball $W$-terminating with respect to $S$ is $W$-terminating with respect to $S'$.
\end{lemma}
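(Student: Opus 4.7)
The strategy is to pin down the geometry of shifting $S$ to $S'$ with row-matching, then propagate the resulting relationship through the backward algorithm using the maximality characterization of Remark \ref{rmk:lower bound for backward numbering}. First I would establish the key comparison for stream numberings: with row-matching, $S'^{(i)}$ sits at the intersection of the row of $S^{(i)}$ and the column of $S^{(i+1)}$, so a direct case analysis yields that $d_0'(b) = d_0(b) - 1$ if and only if $b$ lies strictly west of the column of $S^{(d_0(b)+1)}$, and $d_0'(b) = d_0(b)$ otherwise. When $b$ is terminal with respect to $S$ (so $d(b) = d_0(b)$), the first case coincides exactly with $b$ being $W$-terminal with respect to $S$.

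For the forward direction, suppose $b$ is $W$-terminating along a reverse path $(b_0 = b, b_1, \ldots, b_k)$ with $b_k$ $W$-terminal. The comparison above gives $d_0'(b_k) = d(b_k) - 1$; combined with the bound $d - 1 \leq d' \leq d_0'$ from the preceding lemma, this forces $d'(b_k) = d(b_k) - 1$. Walking back along the path and using monotonicity of $d'$ together with $d' \geq d - 1$, a straightforward induction yields $d'(b_l) = d(b_l) - 1$ for every $l$, in particular at $b$. The ``moreover'' clause follows because the same reverse path with the same increments witnesses $W$-terminating with respect to $S'$: the cell $S'^{(d(b_k))}$ sits in the same column as $S^{(d(b_k)+1)}$, so $b_k$ being west of the latter makes $b_k$ also $W$-terminal with respect to $S'$.

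For the reverse direction, I would build a reverse path from $b$ iteratively. Given $b_i$ with $d'(b_i) = d(b_i) - 1$, attempt to raise $d'(b_i)$ to $d(b_i)$ simultaneously on all translates. Maximality of $d'$ forces the attempt to fail: either the bound $d' \leq d_0'$ is violated, in which case $d_0'(b_i) = d(b_i) - 1$ and the inequalities $d_0 \geq d$, $d_0' \geq d_0 - 1$ force $d_0(b_i) = d(b_i)$, so that the first paragraph's comparison identifies $b_i$ as $W$-terminal with respect to $S$ and the iteration terminates; or monotonicity is violated, yielding $b_{i+1}$ strictly southeast of $b_i$ with $d(b_{i+1}) = d(b_i) + 1$ and $d'(b_{i+1}) = d(b_{i+1}) - 1$, and the iteration continues.

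The main obstacle is excluding an infinite iteration. If the iteration runs forever, pigeonhole among translation classes together with semi-periodicity of $d$ produces $i < j$ with $b_j = b_i + t(n,n)$ and $j - i = tm$, and Lemma \ref{lem:high flow implies channel} (after reversing) places the segment $(b_i, \ldots, b_j)$ on channels. On any such channel $C$, both $d$ and $d'$ restrict to consecutive integer numberings (Proposition \ref{prop:backward numbers channel consecutively} together with Lemma \ref{lem:proper numbers channels}), so $d - d' \equiv 1$ on all of $C$. The finishing move would be to locate a $W$-terminating ball $b^{\star} \in C$, so that the iteration restarted at $b^{\star}$ terminates and can be extended by a witnessing reverse path; concatenating the original iteration path up to its entry into $C$, a walk along $C$ to $b^{\star}$, and this witness would exhibit $b$ as $W$-terminating with respect to $S$. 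Verifying the existence of such a $b^{\star}$, equivalently ruling out a channel trapped in the exceptional set $\{d' = d - 1\}$ that consists entirely of balls which are $N$-terminating but not $W$-terminating, is the delicate technical point.
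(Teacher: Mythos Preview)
Your forward direction and the ``moreover'' clause are correct and match the paper's argument essentially verbatim.

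The reverse direction has a genuine gap, which you yourself flag: your iteration may never terminate, and your proposed channel argument does not close it. The paper avoids this difficulty entirely by a change of viewpoint. Rather than iterating with respect to $S$ and hoping to hit a $W$-terminal ball, it invokes the general fact (the unlabeled lemma just before Lemma~\ref{lem:north of nterminating is nterminating}) that \emph{every} ball is $N$- or $W$-terminating with respect to \emph{any} compatible stream, in particular with respect to $S'$. So one takes a reverse path $(b_0=b,\dots,b_k)$ along which $d'$ increases by~$1$ and $b_k$ is terminal for $S'$; this path exists and is finite by that general lemma. Since $d$ is monotone, $d$ also increases along the path, and since $d-d'\in\{0,1\}$ by the preceding lemma, the gap cannot shrink; hence $d'(b_k)=d(b_k)-1$ and $d$ increases by exactly~$1$ at each step. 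Setting $i=d(b_k)$, one has $S^{(i)}$ northwest of $b_k$, while terminality of $b_k$ for $S'$ means $S'^{(i)}$ is not northwest of $b_k$. But $S'^{(i)}$ shares its row with $S^{(i)}$, so $S'^{(i)}$ is north of $b_k$ and therefore east of $b_k$; since $S'^{(i)}$ shares its column with $S^{(i+1)}$, the latter is east of $b_k$, making $b_k$ $W$-terminal for $S$. Thus $b$ is $W$-terminating for $S$.

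The key idea you are missing is to run the terminating argument with respect to $S'$ rather than $S$: termination then comes for free, and the row/column relationship between $S$ and $S'$ transfers $W$-terminality back.
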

\begin{proof}
Suppose $b$ is $W$-terminating with respect to $S$. Consider a reverse path $p=(b_0 = b, b_1, \dots, b_k)$ such that $d$ increases by $1$ at each step and $b_k$ is $W$-terminal. Let $i = d(b_k)$. Now $S^{(i+1)}$ is east of $b_k$; hence $S'^{(i)}$ is east of $b_k$, and so $d'(b_k) < d(b_k)$. So $d'(b) < d(b)$. Hence $d'(b) = d(b) - 1$, as desired. Moreover, $d'(b_k) = d(b_k) - 1 = i-1$ and hence $b_k$ is $W$-terminal with respect to $S'$. Since $d'$ increases with each step of $p$, it must increase by $1$; thus $b$ is $W$-terminating with respect to $S'$.

Conversely, suppose $d'(b) < d(b)$. Consider a reverse path $(b_0=b,\dots, b_k)$ such that $d'$ increases by one at each step and $b_k$ is terminal with respect to $S'$. Of course, $d$ must also increase at each step, hence $d'(b_k) < d(b_k)$. By the previous lemma, $d'(b_k) = d(b_k) - 1$ and $d$ increases by $1$ at each step. To finish the proof we will show that $b_k$ is $W$-terminal with respect to $S$.

Let $i = d(b_k)$; we know that $S^{(i)}$ is northwest of $b_k$. However since $b_k$ was terminal with respect to $S'$ and $d'(b_k) = i-1$ , $S'^{(i)}$ is not northwest of $b_k$. Since $S'^{(i)}$ is in the same row as $S^{(i)}$, is is north of $b_k$. Hence $S'^{(i)}$ is east of $b_k$. Hence $S^{(i+1)}$ is east of $b_k$. Thus $b_k$ is $W$-terminal with respect to $S$ as desired. 
\end{proof}

Recall that the balls of $w$ in each zig-zag southwest of a certain point are $W$-terminating and northeast of that point are not $W$-terminating.
\begin{lemma}
\label{lem:it_s_a_channel}
Consider the permutations $w$ and $\bk{S}{w}$, and the corresponding zig-zags $\{Z_i\}_{i\in\Z}$. For each $i$, let $c_i$ be the ball of $\B_{\bk{S}{w}}\cap Z_i$ directly north of the northeastern $W$-terminating ball of $\B_w\cap Z_i$ (or, if $Z_i$ has no $W$-terminating balls of $w$, let $c_i$ be the southwest ball of $\B_{\bk{S}{w}}\cap Z_i$). Then $\{c_i\}_{i\in\Z}$ is a channel of $\bk{S}{w}$.
\end{lemma}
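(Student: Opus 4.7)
The plan is to verify the three defining properties of a channel for $\{c_i\}_{i\in\Z}$: invariance under $(n,n)$-translation, forming a chain in $\leqslant_{SE}$, and having density equal to the width of $P_{\bk{S}{w}}$.

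Translation invariance is automatic: the backward numbering $d$ of $w$ with respect to $S$ is semi-periodic with period $m$ equal to the flow of $S$ by Proposition \ref{prop: period of proper numbering} (noting that under our standing assumption the flow of $S$ equals the width of $P_w$), and the $W$-terminating property is formulated purely in terms of $d$, $S$, and the geometry of $\B_w$, all of which are $(n,n)$-equivariant. Hence $c_{i+m} = c_i + (n,n)$. The same observation gives that the density of $\{c_i\}$ equals $m$, which by Corollary \ref{cor:with of backward Shi poset} is precisely the width of $P_{\bk{S}{w}}$ and therefore maximal.

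The main content is the chain property, namely that $c_{i+1}$ is strictly $\leqslant_{SE}$-comparable to $c_i$. In the principal case where $Z_i$ admits a $W$-terminating ball, write $b_{i,1}, b_{i,2}, \ldots, b_{i,k_i}$ for the outer corner-posts of $Z_i$ going SW to NE and let $j^*(i)$ be the largest index for which $b_{i, j^*(i)}$ is $W$-terminating, so $c_i$ is the inner corner-post of $Z_i$ situated in the row of $b_{i, j^*(i)+1}$ and the column of $b_{i, j^*(i)}$. The $W$-terminating property yields a reverse path $(b_{i,j^*(i)} = a_0, a_1, \ldots, a_l)$ with $d(a_s) = i+s$ ending at a $W$-terminal ball; its tail shows $a_1 \in \B_w \cap Z_{i+1}$ is itself $W$-terminating and strictly SE of $b_{i, j^*(i)}$. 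By the remark following Lemma \ref{lem:north of nterminating is nterminating}, $a_1$ lies weakly SW of $b_{i+1, j^*(i+1)}$ in the outer-corner-post chain of $Z_{i+1}$, so the column of $b_{i+1, j^*(i+1)}$, and hence of $c_{i+1}$, strictly exceeds that of $c_i$.

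For the matching row comparison row$(c_{i+1}) \geqslant$ row$(c_i)$, I will use a parallel argument proceeding from the SW-most non-$W$-terminating ball of $Z_{i+1}$ (when it exists), invoking continuity of $d$ to walk NW one zig-zag into $Z_i$ and tracking how the resulting cell sits relative to $b_{i, j^*(i)+1}$. The edge cases $j^*(i) \in \{0, k_i\}$ (and similarly for $i+1$) are handled using the explicit positions of $a_{i, 0}$ and $a_{i, k_i}$ in terms of $S^{(i)}$ together with the fact that $S^{(i+1)}$ is strictly SE of $S^{(i)}$. The row comparison is the main obstacle: because ``non-$W$-terminating'' (equivalently, $N$-terminating-only) does not propagate NW through zig-zags in a manner perfectly dual to the $W$-propagation of Lemma \ref{lem:north of nterminating is nterminating}, one must substitute continuity of $d$ combined with the chain structure of the outer corner-posts of $Z_{i+1}$ to obtain the needed weak inequality.
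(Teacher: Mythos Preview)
Your column-comparison argument is essentially sound and in fact parallels the paper's second contradiction case (``$c_{i+1}$ not strictly west of $c_i$''): both arguments push the $W$-terminating property from $Z_i$ into $Z_{i+1}$ along a reverse path and then invoke the reflection of Lemma~\ref{lem:north of nterminating is nterminating}. You are a bit cavalier about the case $l=0$ (i.e.\ $b_{i,j^*(i)}$ already $W$-terminal), but that is easily patched using the column of $S^{(i+1)}$.

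The row comparison, however, has a genuine gap. You invoke ``continuity of $d$'' to walk northwest from $b_{i+1,j^*(i+1)+1}$ into $Z_i$, but $d = d_w^{\jbk,S}$ is \emph{not} known to be continuous on $\B_w$. Lemma~\ref{lem:backward is proper} establishes properness only for the induced numbering on $\B_{\bk{S}{w}}$. On $\B_w$ continuity can in fact fail: the outer corner-posts of $Z_{i-1}$ lie strictly inside the range spanned by its inner corner-posts (the southwest inner corner-post $a_{i-1,0}$ sits in the column of $S^{(i-1)}$, west of every outer corner-post), so a ball $b\in Z_i$ can be strictly southeast of cells on the first horizontal leg of $Z_{i-1}$ while having every outer corner-post of $Z_{i-1}$ weakly east of it. In that situation there is no ball of $\B_w$ with $d$-value $i-1$ northwest of $b$. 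Since your row argument hinges on exactly this step, the plan does not go through as written.

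The paper avoids the issue entirely by arguing by contradiction. Assuming $c_{i+1}$ lies north of $c_i$, it uses two ingredients you did not bring in: Lemma~\ref{lem:zigzags dont intersect} (no cell of $Z_{i+1}$ is weakly northwest of any cell of $Z_i$) to pin down the position of the southwest inner corner-post $a_{i+1,0}$ relative to the non-$W$-terminating ball $b$ directly east of $c_i$, and then the anti-diagonal reflection of Lemma~\ref{lem: key lemma} to produce a ball of $\B_w\cap Z_{i+1}$ southeast of $b$ and southwest of $c_{i+1}$. That ball is $W$-terminating by the definition of $c_{i+1}$, which forces $b$ to be $W$-terminating as well---a contradiction. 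This contradiction strategy is what lets the paper bypass the lack of backward continuity; you would need to import these two lemmas (or find a substitute) to close the row direction.
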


\begin{proof}
By Corollary \ref{cor:with of backward Shi poset}, it is sufficient show that $c_{i+1}$ is always southeast of $c_i$. Suppose first that $c_{i+1}$ is north of $c_i$. Since the induced numbering $d$ of $\bk{S}{w}$ is proper, there must be a ball $\B_{\bk{S}{w}}\cap Z_i$ which is strictly north of $c_{i+1}$. Hence, there exists a ball $b$ of $w$ directly east of $c_i$. Now by Lemma \ref{lem:zigzags dont intersect}, $c_{i+1}$ must be east of $b$. By construction, $b$ was not $W$-terminating and in particular not $W$-terminal. Hence the southwest ball of $\bk{S}{w}$ in the $(i+1)$-st zig-zag must be west of $b$. It cannot lie northwest of $b$, so it must be south of $b$. By the reflection of Lemma \ref{lem: key lemma} in an anti-diagonal, we know that there is a ball $b'$ of $w$ in the $(i+1)$-st zig-zag, southwest of $c_{i+1}$, and southeast of $b$. Now $b'$ must be $W$-terminating since it lies below $c_{i+1}$ in its zig-zag. Hence $b$ is also $W$-terminating. This is a contradiction. Thus $c_{i+1}$ is south of $c_i$.

Now suppose $c_{i+1}$ is strictly west of $c_i$. Let $b\in\B_w$ be the ball directly south of $c_i$ (it must exist since $d$ is proper). By construction of $c_i$, $b$ is $W$-terminating, but it is not $W$-terminal. Hence there is a ball in $\B_w\cap Z_{i+1}$ which is east of $c_i$ and is $W$-terminating. So there is a ball of $w$ directly east of $c_{i+1}$, and by the reflection of Lemma \ref{lem:north of nterminating is nterminating}, it is $W$-terminating. This contradicts the definition of $c_{i+1}$ and finishes the proof.   
\end{proof}

\begin{defn}
Suppose $w$ is a partial permutation and $S$ is a compatible stream. Then a step of the backward algorithm induces a channel numbering $d$ on $\bk{S}{w}$. Define the \emph{indexing river} corresponding to $S$ and $w$ to be 
\[\bigcup_{\substack{C\in\C_{\bk{S}{w}}\\ d^C_{\bk{S}{w}} = d}} C.\]
\end{defn}

\begin{rmk}
\label{rmk:indexing between green and blue}
Suppose $b\in\B_{\bk{S}{w}}$ and the ball of $w$ directly south of $b$ is $W$-terminating while the ball of $w$ directly east of it is $N$-terminating. In the proof of Proposition \ref{prop:backward numbering is channel} we have shown that starting at any ball of $\bk{S}{w}$ with sufficiently large value of $d$, there is a path to $b$ such that $d$ decreases by $1$ with each step. In particular, there exists such a path from a translate of $b$. By Lemma \ref{lem:high flow implies channel}, $b$ is part of a channel. Moreover, this shows that $b$ is in the indexing river. 
\end{rmk}

\begin{lemma}
\label{lem:channel_is_northeast_indexing}
Consider the permutations $w$ and $\bk{S}{w}$. Then the channel $C=\{c_i\}_{i\in\Z}$ described in the previous lemma is the northeast channel of the indexing river.
\end{lemma}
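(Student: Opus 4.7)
The plan is to prove the lemma in two steps: first, that $C = \{c_i\}$ lies in the indexing river, and second, that within the indexing river no channel has a ball strictly northeast of $c_i$ in the zig-zag $Z_i$.

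For the first step, each $c_i$ satisfies the key hypothesis used in the proof of Proposition \ref{prop:backward numbering is channel}: inside its zig-zag $Z_i$, every ball of $\B_w$ weakly west of $c_i$ is $W$-terminating, and every ball weakly north of $c_i$ is $N$-terminating. This follows directly from the construction of $c_i$ (as directly north of the northeast-most $W$-terminating ball $q_j$ of $\B_w \cap Z_i$, with an analogous argument in the degenerate case $c_i = p_1$), combined with the antidiagonal reflection of Lemma \ref{lem:north of nterminating is nterminating} (giving that balls of $\B_w \cap Z_i$ southwest of $q_j$ are $W$-terminating) and the terminating dichotomy that every ball of $\B_w$ is $N$- or $W$-terminating (giving that those northeast of $q_j$ are $N$-terminating). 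The path-construction in the proof of Proposition \ref{prop:backward numbering is channel} then yields, for every ball $b$ with $d(b)$ sufficiently large, a reverse path to $c_i$ along which $d$ decreases by $1$ at each step; semi-periodicity of $d$ together with the translation-invariance of $C$ promotes this to a path from every ball of $\bk{S}{w}$ to some element of $C$. Remark \ref{rmk:monotone is sometimes channel} then yields $d = d^C$, placing $C$ in the indexing river.

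For the second step, fix any channel $C' = \{c'_i\}$ of the indexing river. The inner corner-posts of $Z_i$ form a chain in the NE--SW direction and both $c_i$ and $c'_i$ lie among them with $d$-value $i$, so they are comparable; the target is to show $c'_i$ is weakly southwest of $c_i$, which yields $C'$ southwest of $C$ and hence identifies $C$ as the northeast channel of the river. Suppose for contradiction that, indexing the inner corner-posts of $Z_i$ by $p_1, \ldots, p_k$ from southwest to northeast, $c_i = p_{j+1}$ while $c'_i = p_l$ with $l \geqslant j+2$. The outer corner-post $q_{l-1} \in \B_w$ directly south of $c'_i$ is then strictly northeast of the $W$-terminating ball $q_j$, so it is not $W$-terminating and must be $N$-terminating, producing a reverse path $(q_{l-1} = \alpha_0, \alpha_1, \ldots, \alpha_s)$ in $\B_w$ through consecutive zig-zags ending at an $N$-terminal $\alpha_s$, with $S^{(i+s+1)}$ strictly south of $\alpha_s$. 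The plan is to combine a reverse semi-bounded walk from $c'_i$ in $\bk{S}{w}$ staying strictly north of the $\alpha_t$-path (the backward analog of Proposition \ref{prop:semibounded} and the termination analysis of Remark \ref{rmk:semi-bounded walk}) with the forward progression along $C'$ --- producing $c'_{i+t}$ strictly southeast of $c'_{i+t-1}$ --- to derive an incompatibility between the forced position of $c'_{i+s+1}$ and the stream cell $S^{(i+s+1)}$.

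The main obstacle will be making this last combinatorial contradiction fully airtight: one must convert the $N$-terminating, non-$W$-terminating status of $q_{l-1}$ into a clean contradiction with the existence of an indexing-river channel through $p_l$. A likely cleaner route is to introduce the antidiagonal dual $c_i^{\natural}$, namely the ball of $\B_{\bk{S}{w}} \cap Z_i$ directly west of the southwest-most $N$-terminating ball of $\B_w \cap Z_i$, and to argue by reflecting Step 1 across the main diagonal that the collection $C^{\natural} = \{c_i^{\natural}\}$ is itself a channel of the indexing river and is in fact its southwest channel. The NE--SW squeeze --- every indexing-river channel's ball in $Z_i$ lies weakly between $c_i^{\natural}$ and $c_i$ in the chain of inner corner-posts --- would then follow by duality, sidestepping the fragile walk-termination analysis and yielding that $C$ is the northeast channel of the indexing river.
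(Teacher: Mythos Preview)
Your Step 1 is correct and matches the paper: each $c_i$ has the ball of $w$ directly south of it $W$-terminating and the ball directly east of it $N$-terminating, so Remark~\ref{rmk:indexing between green and blue} places every $c_i$---hence the channel $C$---in the indexing river.

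Step 2, however, does not go through in either of your proposed forms. Your first attempt correctly observes that the ball $b$ of $w$ directly south of $c'_i$ is $N$-terminating and not $W$-terminating, but the contradiction you sketch (comparing a semi-bounded reverse walk from $c'_i$ with the progression of $C'$ against the stream cell $S^{(i+s+1)}$) is, as you yourself note, not airtight: there is no reason the balls $c'_{i+t}$ of $C'$ must stay northeast of the $\alpha$-path, and without that the positional incompatibility does not materialize. Your alternative route via the dual channel $C^{\natural}$ is circular. Reflecting Step~1 across the main diagonal shows only that $C^{\natural}$ lies \emph{in} the indexing river; it does not show that $C^{\natural}$ is its southwest channel. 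That extremality claim is exactly the antidiagonal reflection of the lemma you are trying to prove, so invoking it to deduce the ``squeeze'' assumes the conclusion.

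The paper's argument for Step~2 supplies the missing idea: rather than trying to exploit the $N$-terminating property of $b$, it shows directly that $b$ is $W$-terminating, contradicting its position strictly northeast of the northeast-most $W$-terminating ball in $Z_i$. The mechanism is a funnel walk. Assuming $h(C,C')=0$, one obtains a path $p=(a_0,\ldots,a_k)$ in $\B_{\bk{S}{w}}$ through consecutive zig-zags connecting $C'$ and $C$ (this is exactly what membership of $C'$ in the indexing river buys you). One then runs a funnel walk $(b_0=b,b_1,\ldots,b_{k-1})$ in $\B_w$ bounded between $C$ and $p$. At the terminal step, $a_k$ lies on $C$, so either $a_k$ is the southwest cell of its zig-zag (forcing $b_{k-1}$ to be $W$-terminal) or the ball of $w$ directly south of $a_k$ is $W$-terminating by the very definition of $C$; either way $b_{k-1}$, and hence $b$, is $W$-terminating. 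This funnel-walk propagation of the $W$-terminating property back to $b$ is the step your proposal is missing.
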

\begin{proof} As observed in Remark \ref{rmk:indexing between green and blue}, $C$ is part of the indexing river. Consider a channel $C'$ weakly northeast of $C$. We would like to show that $d(C,C')>0$. Choose a ball $c\in C'\cap Z_i$ strictly northeast of $c_i$. 

Suppose that this was not the case. Choose a path $p = (a_0,a_1,\dots, a_k)$ which visits consecutive zig-zag and starts at $a_0\in C\cap Z_i$. Consider the ball $b$ of $w$ which is directly south of $c$. Since $c$ is strictly northeast of $c_i$, $b$ is not $W$-terminating. Consider a reverse funnel walk $(b_0=b,b_1,\dots, b_{k-1})$ between $C$ and $p$. We will show that $b_{k-1}$ is $W$-terminating, thus arriving at the desired contradiction.

Either $a_k$ is the southwest cell of its zig-zag, or there exists a ball $b_k$ of $w$ directly south of it. In the first case, $b_{k-1}$ is, in fact, $W$-terminal. In the second, let $b_k$ be the ball directly south of $a_k$. Then by the definition of $C$, $b_k$ is $W$-terminating and is southeast of $b_{k-1}$; thus $b_{k-1}$ is also $W$-terminating. This finishes the proof.  
\end{proof}

\begin{thm}
\label{thm:moving streams}
Consider a partial permutation $w$ and a compatible stream $S$; let $S'=S\sh{1}$. Assume that the streams have the same flow as the width of the Shi poset of $w$. Let $R$ (resp. $R'$) be the indexing river of $\bk{S}{w}$ (resp. $\bk{S'}{w}$). Then $\bk{S'}{w} = \bk{S}{w}\sh{1}_R$, and $\bk{S}{w} = \bk{S'}{w}\sh{-1}_{R'}$.
\end{thm}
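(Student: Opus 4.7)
The plan is to track how the zig-zags of the backward step evolve when $S$ is replaced by $S'=S\sh{1}$, and to show that $\B_{\bk{S'}{w}}$ differs from $\B_{\bk{S}{w}}$ only along the channel $C = \{c_i\}_{i\in\Z}$ from Lemma \ref{lem:it_s_a_channel}, with each $c_i$ shifted one step east.

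First, I would fix the row-matching numberings of $S$ and $S'$, so that for every $i$, $S'^{(i)}$ lies in the same row as $S^{(i)}$ but one column east. Let $d = d_w^{\jbk, S}$ and $d' = d_w^{\jbk, S'}$ be the corresponding backward numberings and let $\{Z_i\}$, $\{Z_i'\}$ denote the associated collections of zig-zags. By Lemma \ref{lem:moving stream numbering}, the outer corner-posts $\B_w\cap Z_i'$ equal the non-$W$-terminating outer corner-posts of $Z_i$ (which remain numbered $i$ under $d'$) together with the $W$-terminating outer corner-posts of $Z_{i+1}$ (which drop from number $i+1$ to $i$). Denote by $b_i^*$ the NE-most $W$-terminating outer corner of $Z_i$ and by $b_i^{**}$ the SW-most non-$W$-terminating outer corner of $Z_i$; then $c_i = (\mathrm{row}(b_i^{**}), \mathrm{col}(b_i^*))$, and I write $c_i = (p_i, q_i)$.

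Since $d'$ is a proper numbering, its equi-value classes form antichains in the NW ordering, so the outer corners of $Z_i'$ form a strictly NE-increasing chain, and in particular $b_{i+1}^*$ and $b_i^{**}$ appear as consecutive outer corners of $Z_i'$ with $b_{i+1}^*$ strictly SW of $b_i^{**}$. This forces a new inner corner of $Z_i'$ at the junction cell
\[
(\mathrm{row}(b_i^{**}), \mathrm{col}(b_{i+1}^*)) = (\mathrm{row}(c_i), \mathrm{col}(c_{i+1})) = (p_i, q_{i+1}).
\]
The remaining inner corners of $Z_i'$ coincide with previously existing inner corners: those strictly SW of $c_{i+1}$ in $Z_{i+1}$ sit between two $W$-terminating outer corners and persist, while those strictly NE of $c_i$ in $Z_i$ sit between two non-$W$-terminating outer corners and persist. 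Assembling these observations, $\B_{\bk{S'}{w}}$ differs from $\B_{\bk{S}{w}}$ only along $C$, each ball $c_i = (p_i, q_i)$ being replaced by $(p_i, q_{i+1})$, which is exactly the shift $C \to C\sh{1}$. Since by Lemma \ref{lem:channel_is_northeast_indexing} the channel $C$ is the NE channel of $R$, this gives the first equation $\bk{S'}{w} = \bk{S}{w}\sh{1}_R$.

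For the second equation I would apply the analogous development starting with $S'$ and moving "down" to $S = S'\sh{-1}$, using the antidiagonal reflections of Lemmas \ref{lem:moving stream numbering}--\ref{lem:channel_is_northeast_indexing}; these interchange $W$-terminating with $N$-terminating balls and the NE channel with the SW channel of the indexing river. The reflected argument identifies the SW channel of the indexing river $R'$ of $\bk{S'}{w}$ as the unique channel whose one-step shift to the west produces $\bk{S}{w}$; one then observes that this channel is the same geometric object as $C\sh{1}$ identified from the other direction (each $c_i\sh{1}$ is the ball of $\bk{S'}{w}\cap Z_i'$ directly east of the NE-most $N$-terminating ball of $Z_i'$ w.r.t.\ $S'$). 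Shifting $R'$ by $-1$ then returns $C\sh{1}$ to $C$, yielding $\bk{S}{w} = \bk{S'}{w}\sh{-1}_{R'}$.

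The main obstacle is carefully handling the boundary cases in which $Z_i$ consists entirely of $W$-terminating balls or entirely of non-$W$-terminating balls: one of $b_i^*$, $b_i^{**}$ is then absent, and the junction of $Z_i'$ degenerates into either the back corner-post $S'^{(i)}$ or the extremal cell of $Z_i'$. In these degenerate situations the formula $c_i \mapsto (p_i, q_{i+1})$ still describes the change, but verifying this requires directly tracing through the induced numberings on $\bk{S}{w}$ and $\bk{S'}{w}$ using Lemma \ref{lem:north of nterminating is nterminating} (and its transpose) to guarantee that the channel-boundary balls of $C$ are tracked correctly. A secondary subtlety is matching the SW channel of $R'$ with $C\sh{1}$ in the second equation — one needs that the antidiagonal-reflected analysis identifies the same bridge channel between the two partial permutations as the forward analysis does.
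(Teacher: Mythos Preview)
Your proposal is correct and follows essentially the same approach as the paper: use Lemma~\ref{lem:moving stream numbering} to see how the backward numbering changes, observe that the induced change in inner corner-posts is exactly the shift $C\to C\sh{1}$ of the channel from Lemma~\ref{lem:it_s_a_channel}, and invoke Lemma~\ref{lem:channel_is_northeast_indexing} to identify $C$ as the northeast channel of $R$; the second equation then follows by transposition in the main diagonal. The paper compresses your zig-zag analysis into a single ``it is easy to see,'' and for the second equation simply transposes (without needing your explicit matching of $C\sh{1}$ with the southwest channel of $R'$, which the paper defers to Remark~\ref{rmk:moving streams}).
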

\begin{proof}
It is easy to see that the change in backward numbering described in Lemma \ref{lem:moving stream numbering} results, after the backward step, in shifting by 1 the channel $C = \{c_i\}$ from Lemma \ref{lem:it_s_a_channel}. This channel is the northeast channel of the indexing river by Lemma \ref{lem:channel_is_northeast_indexing}, finishing the proof. The last statement follows by transposing with respect to the main diagonal.
\end{proof}

\begin{rmk} 
\label{rmk:moving streams}
As noted in Lemma \ref{lem:moving stream numbering}, a ball which is $W$-terminating with respect to $S$ remains $W$-terminating with respect to $S'$. It is also clear that any ball which is not $W$-terminating with respect to $S$ remains $N$-terminating with respect to $S'$ (the same path to an $N$-terminal ball works in both cases). By Remark \ref{rmk:indexing between green and blue}, $C\sh{1}$  is part of the indexing river of $\bk{S'}{w}$ (in fact, it is the southwest channel of this river). Thus if we were to shift $S'$ by $1$, the result of the backward step would be to shift by $1$ the river containing $C\sh{1}$.
\end{rmk}

\subsection{Shifting channels}
\label{sec:moving channels proofs}

Consider a permutation $w$ and a compatible stream $S$ such that the flow of $S$ is equal to the width of the Shi poset of $w$. Choose a channel $C$ of $w$ which is the northeast channel of its corresponding river. Let $C' = C\sh{1}$ and $w' = w\sh{1}_C$. In this section we are interested in studying the difference between the partial permutations $\bk{S}{w}$ and $\bk{S}{w'}$. It turns out that, just as in the case of shifting a stream, these permutations are related by shifting a channel.

\subsubsection{Preliminary results on shifting rivers}

We start with a few results about the relationship between $w$ and $w'$ themselves.

\begin{defn}
\label{defn:new sw numbering}
Construct a numbering of $w'$ as follows. For each $b'\in\B_{w'}$, define
\[d(b')=\begin{cases}
d_w^C (b')     & \text{if $b'$ is strictly northeast of $C'$}\\
d_w^C (b)      & \text{if $b'\in C'$}\\
d_w^C (b') - 1 & \text{if $b'$ is strictly southwest of $C'$}
\end{cases}.\] 
\end{defn}

\begin{lemma}
\label{lem:d is the sw numbering}
With the above notation, $d = d_{w'}^{c'}$.
\end{lemma}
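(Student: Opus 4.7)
The plan is to apply the characterization of channel numberings from Remark \ref{rmk:monotone is sometimes channel}: since $d$ is defined so as to agree with a fixed consecutive numbering on $C'$ (namely the one inherited from $d_w^C$ on $C$ via the natural bijection $C \to C'$), it suffices to check that (i) $d$ is monotone on $\B_{w'}$, and (ii) from every ball of $w'$ there is a path to $C'$ along which $d$ decreases by exactly one at each step.

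For monotonicity, observe that $\B_{w'}\setminus C' = \B_w\setminus C$, and on these shared balls $d$ differs from $d_w^C$ by $0$ or $-1$ depending on whether the ball lies strictly NE or strictly SW of $C'$. When $b_1'$ is strictly NW of $b_2'$ and both lie on the same side of $C'$ (or one lies on $C'$), monotonicity of $d$ follows directly from monotonicity of $d_w^C$. The one delicate case is $b_1'$ strictly NE of $C'$ and $b_2'$ strictly SW of $C'$; here one must show that $d_w^C(b_2')-d_w^C(b_1')\geqslant 2$ so that the $-1$ correction on $b_2'$ does not destroy the strict inequality. This is where the hypothesis that $C$ is the northeast channel of its river is used: it forces at least one element of $C$ to sit strictly NW/SE between $b_1'$ and $b_2'$, giving the required margin.

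For the path construction, given $b'\in\B_{w'}$ strictly NE of $C'$, take a $d_w^C$-decreasing path from $b'$ to $C$ in $w$ and replace its terminal cell $c\in C$ by the corresponding cell $c'\in C'$; the result is a path in $w'$ along which $d$ decreases by exactly one at each step, the final step being covered by the defining formula $d(c')=d_w^C(c)$. For $b'$ strictly SW of $C'$, we build a $d_w^C$-decreasing path from $b'$ to a ball of $w$ just south or west of $C$, then take a single further step onto $C'$; the apparent jump of size at least $2$ in $d_w^C$ across this last step is exactly absorbed by the $-1$ shift built into the definition of $d$ on the SW side, leaving a unit drop in $d$. Balls of $C'$ itself are connected to $C'$ trivially by walking along $C'$.

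The main obstacle is pinning down the delicate monotonicity case and the matching final transition across $C'$ in the path construction, both of which rely essentially on the hypothesis that $C$ is the NE channel of its river (in particular, that the channel numbering $d_w^C$ and the southwest channel numbering of the river containing $C$ coincide, so that the $-1$ shift fits consistently with $d_w^C$ on the SW region). Once these are handled, the remainder is a routine case analysis based on the position of each ball relative to the staircase $C'$.
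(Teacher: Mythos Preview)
Your overall strategy matches the paper's: verify monotonicity of $d$ and construct, for each ball, a path to $C'$ along which $d$ drops by exactly $1$ at each step, then invoke Remark~\ref{rmk:monotone is sometimes channel}. The path construction is essentially the same as the paper's (which is also brief on this point), and the paper in addition checks continuity to get a proper numbering.

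There is, however, a genuine gap in your monotonicity analysis. You assert that when one of $b_1', b_2'$ lies on $C'$ the inequality follows ``directly from monotonicity of $d_w^C$''. This is false for the case $b_1'$ strictly NE of $C'$ and $b_2'\in C'$. Write $b_2'=(p_i,q_{i+1})$, so $d(b_2')=d_w^C(p_i,q_i)=i$. You need $d_w^C(b_1')<i$, but from $b_1'$ strictly NW of $b_2'$ you only know $\mathrm{col}\,b_1'<q_{i+1}$, not $\mathrm{col}\,b_1'<q_i$; hence $b_1'$ need not be NW of $(p_i,q_i)$, and monotonicity of $d_w^C$ alone gives nothing. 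The forbidden-region argument rules out $p_{i-1}<\mathrm{row}\,b_1'<p_i$ with $q_i<\mathrm{col}\,b_1'<q_{i+1}$, but still allows $\mathrm{row}\,b_1'<p_{i-1}$ with $\mathrm{col}\,b_1'\in(q_i,q_{i+1})$, and in that configuration a path of worth $\geq i$ from $b_1'$ is not excluded a priori. This is precisely where the hypothesis that $C$ is the northeast channel of its river is needed: if $d_w^C(b_1')=i$, then prepending the ball $(p_{i+1},q_{i+1})\in C$ (directly south of $b_2'$) to a maximal-worth path from $b_1'$ to $C$ yields a path from $C$ to $C$ hitting every zig-zag and containing $b_1'$ strictly northeast of $C$, contradicting the NE-channel assumption. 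The paper singles out exactly this case (and its reduction from your $b_1'$ NE, $b_2'$ SW case) as the nontrivial content of the lemma.

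So your identified ``delicate case'' ($b_1'$ NE, $b_2'$ SW) is real but reduces to the one you dismissed; it is the boundary case with $b_2'\in C'$ that carries the argument, and your sketch does not supply it.
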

\begin{proof}
There is a path from any ball of $w'$ to $C'$ with $d$ decreasing by $1$ at each step (obtained from a path to $C$ in $w$ by substituting the last ball with the relevant ball of $C'$). Thus it remains to show that $d$ is proper.

First we show that there are no balls of $w$ or $w'$ is certain positions. Let $(\ldots, c_{-1},c_0,c_1,\ldots)$ be the list of balls of $C$ from northwest to southeast. Consider the collection $\mathcal M$ of cells $c$ such that the exists $i$ with $c_i$ strictly north and strictly west of $b$ and  $c_{i+1}$ is strictly south and strictly east of $b$ ($\mathcal M$ is lightly shaded in the example if Figure \ref{fig:shifting channel is ok}). Note that $\mathcal M$ cannot contain any balls of $w$ or $w'$. Let $\mathcal N$ be an analogous collection for $C\sh{1}$; it is shaded dark in the example in the figure. There are also no balls of $w$ or $w'$ in $\mathcal N$ since the existence of such a ball would contradict the fact that $C$ is the northeast channel of its river (just replace one ball of $C$ with a ball of $\mathcal N$ northeast of it). Thus a ball of $w$ is strictly southwest (resp. northeast) of $C$ if and only if it is present is $w'$ and is strictly southwest (northeast) of $C'$.

It is not difficult to check continuity of $d$. For monotonicity, there are $9$ cases, depending on where each of the two balls lies relative to $C'$. Most of them are easy; we will only do the two most difficult ones. First, suppose $b_1'$ is northeast of $C'$, $b_2'\in C'$, and $b_1'$ is northwest of $b_2'$. We can check that $d(b_1') < d(b_2')$. Suppose not. Consider a maximal worth path $(c_0 = b_1',c_1,\dots c_k)$ from $b_1'$ to $C$ in $w$. If $c$ is the ball directly south of $b_2'$ then $(c, c_0, \dots, c_k)$ is a path in $w$ from $C$ to itself which visits every zig-zag (with respect to $d_w^C$). Moreover this path intersects $C$ and contains balls strictly northeast of $C$. This contradicts the assumption that $C$ was the northeast channel of its river. The second case is similar except $b_2'$ is southwest of $C'$. In this case consider the ball $b_2''$ of $C'$ directly east of the ball of $C$ on the zig-zag of $b_2'$. Replacing $b_2'$ with $b_2''$ reduces the proof to the previous case.
\end{proof}

\begin{figure}
\centering\resizebox{.3\textwidth}{!}{\input{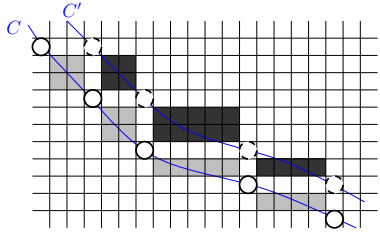}}
\caption{Two areas that cannot have any balls of $w$ or $w'$.}
\label{fig:shifting channel is ok}
\end{figure}

Since $d$ is proper and has the same period as $d_w^C$, Proposition \ref{prop: period of proper numbering} implies:
\begin{cor}
\label{cor:shi posets have same width}
The posets $P_w$ and $P_{w'}$ have the same width.
\end{cor}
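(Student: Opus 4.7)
The plan is to combine the previous lemma with Proposition \ref{prop: period of proper numbering}, which identifies the width of the Shi poset with the period of any proper numbering. By Lemma \ref{lem:d is the sw numbering}, the numbering $d$ defined in Definition \ref{defn:new sw numbering} is the southwest channel numbering $d_{w'}^{C'}$; in particular it is proper. So by Proposition \ref{prop: period of proper numbering}, the width of $P_{w'}$ equals the semi-periodic period of $d$. It therefore suffices to show that this period equals the period $m$ of $d_w^C$, which in turn equals the width of $P_w$ by the same proposition.

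To verify that $d$ has period $m$, I would check translation-by-$(n,n)$ invariance case by case according to which of the three clauses of Definition \ref{defn:new sw numbering} applies to a given ball $b'\in\B_{w'}$. Note that $b'$ is strictly northeast of $C'$ (resp. lies on $C'$, resp. is strictly southwest of $C'$) if and only if $b'+(n,n)$ is, since $C'$ is translation invariant. So $b'$ and $b'+(n,n)$ fall into the same clause. For the first and third clauses, $d(b'+(n,n))-d(b') = d_w^C(b'+(n,n))-d_w^C(b') = m$ by semi-periodicity of $d_w^C$. For the middle clause, if $b\in C$ is the ball on the same zig-zag as $b'$, then $b+(n,n)$ is on the same zig-zag as $b'+(n,n)$, so $d(b'+(n,n)) = d_w^C(b+(n,n)) = d_w^C(b) + m = d(b')+m$. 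Thus $d$ is semi-periodic with period $m$.

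Putting the pieces together, the width of $P_{w'}$ equals the period of $d$, which equals $m$, which equals the width of $P_w$. No step here is difficult; the only thing to be careful about is matching the translate classes across the three cases in the piecewise definition of $d$, and this is immediate from the $(n,n)$-invariance of $C'$.
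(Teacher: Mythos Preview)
Your proposal is correct and follows the same approach as the paper: by Lemma \ref{lem:d is the sw numbering} the numbering $d$ is proper, and since it has the same period as $d_w^C$, Proposition \ref{prop: period of proper numbering} gives the conclusion. The paper compresses this into a single sentence, while you spell out the (easy) case check that $d$ has period $m$; one small terminological slip is that in the middle clause the ball $b\in C$ associated to $b'\in C'$ is the ball directly west of $b'$ (same row), not a ball ``on the same zig-zag,'' but this does not affect your argument since the correspondence $b'\mapsto b$ is manifestly $(n,n)$-equivariant.
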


\begin{lemma}
$C'$ is the southwest channel of its river.
\end{lemma}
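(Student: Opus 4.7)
The plan is to strengthen the statement and show that any channel $\tilde C$ of $w'$ strictly southwest of $C'$ satisfies $h_{w'}(\tilde C, C') \geqslant 1$. This immediately rules out any such $\tilde C$ lying in the river of $C'$, so $C'$ must be the southwest channel of its river.

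First, by Lemma \ref{lem:if nonzero distance then southwest comparable} and the SW-of-union construction used in Proposition \ref{prop:sw channel exists}, it suffices to consider $\tilde C$ strictly southwest of $C'$ (any channel at distance zero that is not weakly NE of $C'$ can be replaced by the southwest elements of its union with $C'$, which is still a channel in the same river and is strictly SW of $C'$). Any such $\tilde C$ is disjoint from $C'$, so $\tilde C\subseteq\B_{w'}\setminus C' = \B_w\setminus C$. By Corollary \ref{cor:shi posets have same width}, the widths of $P_w$ and $P_{w'}$ coincide, so $\tilde C$ has maximal density as a chain in $\B_w$ and is therefore a channel of $w$. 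Since every element of $C'$ lies in the same row as an element of $C$ just to its west, any ball strictly SW of $C'$ is also strictly SW of $C$; in particular $\tilde C$ is strictly southwest of $C$ in $w$.

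Next, Lemma \ref{lem:d is the sw numbering} gives the explicit relation $d_{w'}^{C'}(b) = d_w^C(b) - 1$ for every $b\in\B_{w'}$ strictly SW of $C'$, and in particular for every $b\in\tilde C$. I would then argue that $d_{w'}^{\tilde C}(b) = d_w^{\tilde C}(b)$ on the region strictly SW of $C$ (with overall shifts matched): extremal paths from $b$ strictly SW of $C$ to $\tilde C$ can be taken to stay inside $\B_w\setminus C = \B_{w'}\setminus C'$, by a path-confinement argument in the spirit of the bounded and semi-bounded walks of Section \ref{sec:weights}, so worths of paths in $w$ and in $w'$ agree. Combining the two equalities on $\tilde C$ and reading off distances via Remark \ref{rmk:distance direction} yields $h_{w'}(\tilde C, C') = h_w(\tilde C, C) + 1 \geqslant 1$, contradicting $h_{w'}(\tilde C, C') = 0$.

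The main obstacle will be the path-confinement step, namely rigorously showing that extremal paths from balls strictly SW of $C$ to $\tilde C$ do not need to venture into the NE region of $C$, where the balls of $w$ and $w'$ differ. The point is that $\tilde C$ lies strictly southwest of the narrow ``strip'' between consecutive elements of $C$ and $C'$, together with the fact that the forbidden rectangular regions $\mathcal M$ and $\mathcal N$ from the proof of Lemma \ref{lem:d is the sw numbering} contain no balls; together these should force any detour into the NE region to be replaceable by an equally good path remaining on the common side.
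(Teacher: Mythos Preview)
Your approach is substantially more elaborate than what is needed, and the gap you flag is real. The paper's proof bypasses the entire channel-distance comparison: it simply takes a single step of a path from $C'$ into the strictly-southwest region and reads off a contradiction with monotonicity of $d_w^C$ using the formula from Lemma~\ref{lem:d is the sw numbering}. Concretely, if some ball $c$ strictly southwest of $C'$ lies in the river of $C'$, there is a path $(c_0,c_1,\dots,c_k=c)$ from $c_0\in C'$ to $c$ along which $d_{w'}^{C'}$ drops by $1$ at each step; one may assume $c_1$ is already strictly southwest of $C'$. Letting $b\in C$ be directly west of $c_0$, one checks $c_1$ is northwest of $b$, yet the formula gives $d_w^C(b)=d_{w'}^{C'}(c_0)=1+d_{w'}^{C'}(c_1)=d_w^C(c_1)$, contradicting properness of $d_w^C$. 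No comparison of $d_{w'}^{\tilde C}$ with $d_w^{\tilde C}$, and hence no path-confinement argument, is required.

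As for your path-confinement step itself: it is not obviously true that extremal paths from a ball strictly southwest of $C$ to $\tilde C$ can be kept on the southwest side. A northwest-going path can weave across a southeast-going channel, and the forbidden regions $\mathcal M,\mathcal N$ only tell you that the crossing cannot happen ``between'' consecutive channel balls; they do not by themselves prevent a path from gaining worth by detouring through the northeast side. You would need a genuine replacement argument (of the sort used later in Lemma~\ref{lem:to nf} for reverse paths) to justify this, and that machinery has not yet been developed at this point in the paper. So even if your route can be made to work, it imports tools that are both unnecessary here and not yet available.
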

\begin{proof}
Consider the zig-zags corresponding to $d_{w'}^{C'}$. Suppose there exists a ball $c$ of $w'$ southwest of $C'$ and part of the same river. Then there should be a path $(c_0,c_1,\dots, c_k = c)$ from $c_0\in C'$ to $c$ which visits consecutive zig-zags. We may assume that $c_1$ is southwest of $C'$ (if not, find the smallest $i$ such that $c_j$ is southwest of $C'$ for every $j\geqslant i$, let $c_0'$ be the ball of $C'$ in the zig-zag of $c_{i-1}$, and consider the path $(c_0',c_{i},\dots, c_k = c)$). Let $b$ be the element of $C$ directly west of $c_0$. In this case, $c_1$ is northwest of $b$. However, 
$$d_{w}^{C}(b) = d_{w'}^{C'}(c_0) = 1 + d_{w'}^{C'}(c_1) = d_{w}^{C}(c_1).$$ 
This contradicts the fact that $d_{w}^{C}$ is proper.
\end{proof}

The next series of results develops the theory of $W$- and $N$-terminating balls.

\begin{lemma}
Consider a partial permutation $w$ and a compatible stream $S$. For any river $R$ of $w$ and any ball $b\in R$, if $b$ is $W$-terminating (resp. $N$-terminating) then every ball of $R$ is $W$-terminating (resp. $N$-terminating).
\end{lemma}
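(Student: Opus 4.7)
The plan is to prove the lemma in three stages: $(n,n)$-translation invariance, propagation within a single channel, and propagation between channels of a river. I focus on $W$-terminating; the $N$-terminating case then follows by applying antidiagonal reflection to the entire argument.

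\textbf{Translation invariance and within-channel propagation.} The backward numbering $d=d_w^{\jbk,S}$ is semi-periodic with period $m$ equal to the flow of $S$ (see the proof of Proposition \ref{prop: backward numbering defined}), and $S$ is invariant under $(n,n)$-translation with $S^{(i)}+(n,n)=S^{(i+m)}$. Hence $b$ is $W$-terminal iff $b+(n,n)$ is, and any reverse path witnessing $W$-termination of $b$ translates to one witnessing $W$-termination of $b+(n,n)$. For within-channel propagation, if $c\in C$ is $W$-terminating via $p=(c=c^{(0)},\dots,c^{(k)})$ with $c^{(k)}$ $W$-terminal, and $c'\in C$, then Proposition \ref{prop:backward numbers channel consecutively} lets us walk along $C$ from $c'$ in the SE direction with $d$ increasing by $1$ per step until reaching some translate $c+t(n,n)\in C$; splicing in $p+t(n,n)$ produces a reverse path from $c'$ to the $W$-terminal $c^{(k)}+t(n,n)$. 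Balls of $C$ lying NW of $c$ are reached by first translating SE and then translating back.

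\textbf{Between-channel reduction.} Given $C_1,C_2\in R$, either they are $\leqslant_{SW}$-comparable or not. If not, then by the proof of Lemma \ref{lem:if nonzero distance then southwest comparable}, the channel $C_{SW}$ formed by the SW-most elements of $C_1\cup C_2$ intersects both $C_1$ and $C_2$ (and lies in $R$, since a shared ball forces $\h=0$); applying within-channel propagation three times along $C_1$, then $C_{SW}$, then $C_2$, passing via the shared balls, settles this case. Otherwise we may assume $C_1<_{SW} C_2$ in $R$, and by within-channel propagation it suffices to exhibit a single $W$-terminating ball of $C_2$.

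\textbf{Main step.} Pick $c\in C_1$ that is $W$-terminating via a reverse path $p=(c=c^{(0)},c^{(1)},\dots,c^{(k)})$, and choose $c'\in C_2$ with $d(c')=d(c)=i$, arranged by translation using that $d$ numbers each channel consecutively. Both balls lie in the same backward-step zig-zag $Z_i$, with $c'$ strictly NE of $c$ within $Z_i$. By Lemmas \ref{lem:it_s_a_channel} and \ref{lem:channel_is_northeast_indexing}, the $W$-terminating balls of $w$ in $Z_i$ form an initial SW segment bounded above by the ball directly south of the $i$-th ball of the NE channel of the indexing river of $\bk{S}{w}$; the main obstacle will be showing $c'$ lies weakly SW of this cutoff. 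My approach is to inductively build a reverse path $(c'=b_0,b_1,\dots)$ in lock-step with $p$, where at each stage $b_{j+1}\in Z_{i+j+1}\cap\B_w$ is chosen strictly SE of $b_j$ and weakly NE of $c^{(j+1)}$ in its zig-zag; existence of such a ball should follow from Lemma \ref{lem: key lemma} applied together with Lemma \ref{lem:zigzags dont intersect}. The coincidence $d_w^{C_1}=d_w^{C_2}$ on $R$ (Lemma \ref{lem:zero distance implies same channel numberings}) is intended to force the walk to remain close enough to $p$ that it terminates at a $W$-terminal ball, yielding $W$-termination of $c'$.
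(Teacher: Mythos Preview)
Your first two stages (translation invariance, within-channel propagation, and reducing the non-comparable case via the southwest channel of the union) are fine. The gap is in your \textbf{Main step}, which is not a proof but a sketch with speculative language (``should follow'', ``is intended to force'') that does not close.

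Concretely, the reverse semi-bounded walk $(c'=b_0,b_1,\dots)$ you build, staying northeast of $p$, does \emph{not} lead to a $W$-terminal ball. If the walk reaches a ball $b_k$ in $Z_{i+k}$ northeast of the $W$-terminal ball $c^{(k)}$, then $b_k$ lies weakly east of $c^{(k)}$, and there is no reason $b_k$ should be west of $S^{(i+k+1)}$; so $b_k$ need not be $W$-terminal. Worse, if the walk terminates early, Remark~\ref{rmk:semi-bounded walk} tells you the last ball is $N$-terminal, not $W$-terminal. This is exactly why Lemma~\ref{lem:north of nterminating is nterminating} and its reflection propagate $N$-termination \emph{northeastward} and $W$-termination \emph{southwestward} within a zig-zag, whereas you are trying to push $W$-termination northeastward. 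Your appeal to Lemmas~\ref{lem:it_s_a_channel} and~\ref{lem:channel_is_northeast_indexing} only restates the obstacle (``show $c'$ lies weakly SW of this cutoff'') without resolving it; invoking $d_w^{C_1}=d_w^{C_2}$ does not by itself control where the walk ends.

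The paper's argument avoids all of this with one observation: since $C$ and $C'$ lie in the same river, there is a path from $b$ to an $(n,n)$-translate of $b$ passing through $C'$ along which the river numbering $d_w^R$ drops by exactly $1$ at each step. The backward numbering $d$ is monotone and (in the ambient context where the flow of $S$ equals the width of $P_w$) has the same period as $d_w^R$, so $d$ must also drop by exactly $1$ at each step of this path. Reversing the portion from $C'$ to $b$ gives a reverse path from some ball of $C'$ to $b$ with $d$ increasing by $1$ per step; concatenating with the terminating reverse path out of $b$ finishes immediately. No case analysis on southwest comparability, no semi-bounded walk, no cutoff.
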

\begin{proof}
Let $d = d_w^{\jbk, S}$. Consider $C\in\C_w$ with $b\in C$ and another channel $C'\subseteq R$; let $d_w^R := d_w^C = d_w^{C'}$. There exists a path $p$ from $b$ to some $(n,n)$-translate of $b$ which visits $C'$ and at each step $d_w^R$ decreases by one. Since $d$ is monotone and has the same period as $d_w^R$, we know that $d$ also decreases by one at each step of $p$. So there is a reverse path from $C'$ to $b$ such that $d$ increases by one with each step. Since $b$ is $W$-terminating, there is a reverse path from $b$ to a $W$-terminal ball with $d$ increasing by one with each step. So some (equiv. every) ball of $C'$ is also $W$-terminating. The same argument works in case $b$ was $N$-terminating.
\end{proof}

\begin{lemma}
\label{lem:one purple river}
Consider a partial permutation $w$ and a compatible stream $S$. Suppose $C$ is a channel whose balls are both $W$-terminating and $N$-terminating. Fix the shift of $d_w^C$ to coincide with $d_w^{\jbk,S}$ on $C$. Then $d_w^C$ and $d_w^{\jbk,S}$ coincide on any channel of $w$. 
\end{lemma}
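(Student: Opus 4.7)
The plan is to establish the equality $d_w^C = d_w^{\jbk,S}$ first globally in one direction and then on each channel by constructing suitable reverse paths from $C$. First, I would verify $d_w^C \leq d_w^{\jbk,S}$ everywhere: given any ball $b$, take a path $(b=b_0, b_1,\ldots,b_k)$ to $C$ realizing the worth $d_w^C(b)$, so $d_w^C$ decreases by exactly $1$ at each step. Monotonicity of $d_w^{\jbk,S}$ forces it to decrease by at least $1$ at each step, and combined with equality on $C$ (from the shift convention) this yields $d_w^{\jbk,S}(b)\geqslant d_w^C(b_k)+k = d_w^C(b)$.

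Next, the key bridging observation: along any reverse path $(b_0=b, b_1,\ldots,b_k)$ starting from some $b\in C$ with $d_w^{\jbk,S}(b_{i+1}) = d_w^{\jbk,S}(b_i)+1$, the two numberings coincide on every $b_i$. Indeed, monotonicity of $d_w^C$ along a reverse path gives $d_w^C(b_i)\geqslant d_w^C(b)+i$, while by the previous paragraph $d_w^C(b_i)\leqslant d_w^{\jbk,S}(b_i) = d_w^C(b)+i$, forcing equality. Thus it suffices, for every channel $C'$, to produce a reverse path from $C$ to $C'$ of unit-increment with respect to $d_w^{\jbk,S}$; since both numberings number $C'$ by consecutive integers (via Corollary~\ref{cor:proper numbers channels contiguosly} and Proposition~\ref{prop:backward numbers channel consecutively}) and agree at one ball, they will agree on all of $C'$.

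The construction of such reverse paths is where the hypothesis enters crucially. From any $b\in C$ we have, by assumption, unit-increment reverse paths to both an $N$-terminal ball and a $W$-terminal ball. Once the path reaches a terminal ball $b_k$ with $j:=d_w^{\jbk,S}(b_k)$, the cell $S^{(j+1)}$ lies strictly east of $b_k$ (in the $N$-terminal case) or strictly south (in the $W$-terminal case). The strategy is to continue the reverse path past $b_k$ by picking a ball southeast of $S^{(j+1)}$ and northwest of $S^{(j+2)}$ (which by the continuity properties of the stream numbering and the backward algorithm has backward number exactly $j+1$), thereby ``crossing the stream.'' By alternating between the two terminal branches and performing this crossing repeatedly, the reverse path can be steered to intersect any prescribed channel $C'$.

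The main obstacle is making this extension rigorous: one must verify at each crossing that a suitable continuing ball exists (using monotonicity, continuity of the stream numbering, and the fact that the backward numbering decrements only certain violating balls), and that the $N$/$W$ dichotomy together with the purple hypothesis gives enough freedom to reach \emph{every} channel, not just one. The argument will be modeled on the reverse funnel and (semi-)bounded walk constructions of Section~\ref{sec:weights} together with the terminating-ball analysis of Lemmas~\ref{lem:north of nterminating is nterminating} and the proof of Proposition~\ref{prop:backward numbering is channel}; I expect that a careful bookkeeping of which zig-zags of $\bk{S}{w}$ the extended reverse path visits will force its closure to pass through the chosen $C'$, completing the proof.
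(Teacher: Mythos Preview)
Your first two paragraphs are sound: the inequality $d_w^C \leqslant d_w^{\jbk,S}$ follows exactly as you say, and reducing the problem to exhibiting, for each channel $C'$, a reverse path from $C$ to $C'$ with $d_w^{\jbk,S}$ increasing by one at each step is the right framework. This is equivalent to what the paper does via Remark~\ref{rmk:monotone is sometimes channel}.

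The gap is in your paragraphs three and four. Your plan is to start at $C$, run a terminating path to a terminal ball, then ``cross the stream'' by jumping to a ball southeast of $S^{(j+1)}$, and iterate, hoping to steer into $C'$. This does not work: a ball southeast of $S^{(j+1)}$ and northwest of $S^{(j+2)}$ has \emph{stream} number $j+1$, not necessarily backward number $j+1$, so unit-increment can fail at that step; and even if you could continue, there is no mechanism forcing such a path to meet a prescribed $C'$. The stream $S$ is not a barrier separating channels, so ``crossing'' it buys nothing toward reaching $C'$.

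The paper's construction is both simpler and uses an idea you are missing: instead of trying to push a path outward from $C$ toward $C'$, take the second path to be $C'$ itself. Pick $b\in C$ and $b'\in C'$ on the same backward zig-zag (this uses Proposition~\ref{prop:backward numbers channel consecutively}), assume $b'$ is northeast of $b$, and run the $N$-terminating reverse path $(b_0,\ldots,b_k)$ from $b$ alongside consecutive elements $(b_0',\ldots,b_{k+1}')$ of $C'$. Both sequences visit consecutive zig-zags. If $C'$ ever drops weakly southwest of the terminating path at some index $h$, then $(b_h',b_{h-1},\ldots,b_0)$ is the required path. If not, then $b_k'$ is still northeast of the $N$-terminal ball $b_k$; but the next zig-zag lies \emph{entirely south} of $b_k$ by $N$-terminality, forcing $b_{k+1}'$ to be southeast of $b_k$, and $(b_{k+1}',b_k,\ldots,b_0)$ works. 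The point is that $N$-terminality of $b_k$ is exactly what traps $C'$: it cannot stay northeast forever because the zig-zags eventually run out of room to the north. Your proposal never invokes this trapping mechanism, which is the heart of the argument.
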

\begin{proof} The zig-zags considered will be associated to the backward step. Consider another channel $C'$. Pick $b\in C$ and $b'\in C'$ on the same zig-zag. We assume that $b'$ is northeast of $b$ (if not, just transpose everything, including compass directions). Consider a reverse path $(b_0=b, b_1,\dots, b_k)$ from $b$ to an $N$-terminal ball $b_k$ which visits consecutive zig-zags. Consider a reverse path $(b_0'=b',\dots, b_{k+1}')$ consisting of consecutive elements of the channel $C'$. 

There are three possible cases:
\begin{enumerate}
\item $b_i'$ is northeast of $b_i$ for $i = 1,\ldots,k$ (as in the left-hand side of Figure \ref{fig:one_purple_river}), or
\item the two paths cross without intersecting (as in the right-hand side of Figure \ref{fig:one_purple_river}), or
\item the two paths intersect.
\end{enumerate}
In the first case, $b_{k+1}'$ is east of $b_k'$ which is east of $b_k$. Also, since $b_k$ is $N$-terminal, $b_{k+1}'$ is part of a zig-zag which is entirely south of $b_k$. So $b_{k+1}'$ is southeast of $b_k$. Then the path $(b_{k+1}',b_k,\dots, b_0)$ is a path from $C'$ to $C$ such that the backward numbering decreases by 1 at each step. Thus, by Remark \ref{rmk:monotone is sometimes channel}, the backward numbering agrees with $d_w^C$ on $C'$. In the last two cases, let $h$ be the smallest number such that $b_h'$ is weakly southwest of $b_h$. By essentially the same argument as before, $(b_h',b_{h-1},\dots, b_0)$ is a path from $C'$ to $C$ such that the backward numbering decreases by 1 at each step. Thus the backward numbering again agrees with $d_w^C$ on $C'$.

\begin{figure}
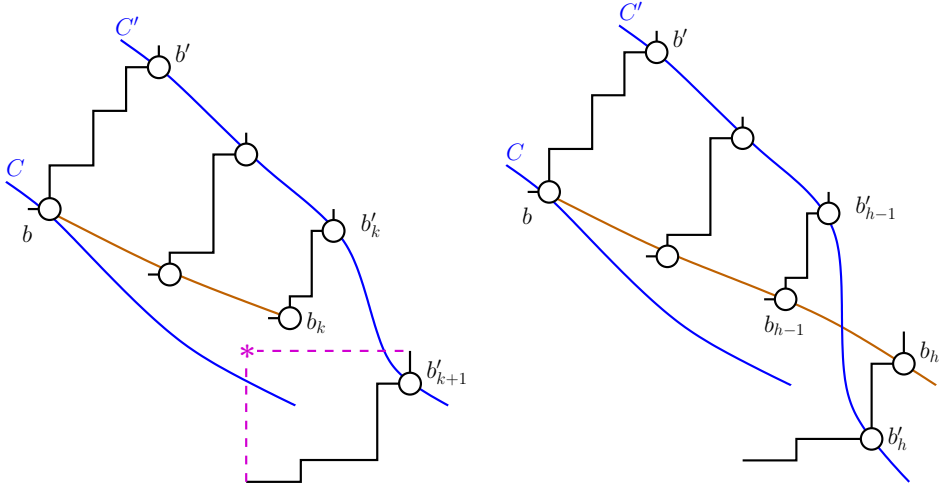

\centering\resizebox{.36\textwidth}{!}{\input{figures/one_purple_1.pspdftex}}\qquad \resizebox{.35\textwidth}{!}{\input{figures/one_purple_2.pspdftex}}
\caption{The first two cases considered in Lemma \ref{lem:one purple river}.}
\label{fig:one_purple_river}
\end{figure}

\end{proof}

\begin{cor}
\label{cor:W and N}
Consider a partial permutation $w$ and a compatible stream $S$. Then there exists at most one river of $w$ whose balls are both $W$-terminating and $N$-terminating with respect to $S$.
\end{cor}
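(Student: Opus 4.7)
The plan is to prove the corollary by contradiction, reducing it directly to Lemma \ref{lem:one purple river}. Suppose, toward a contradiction, that $w$ has two distinct rivers $R_1 \ne R_2$, each containing a channel ($C_1 \subseteq R_1$ and $C_2 \subseteq R_2$) whose balls are all both $W$-terminating and $N$-terminating with respect to $S$. I will use Lemma \ref{lem:one purple river} twice to conclude that $\h(C_1, C_2) = 0$, which contradicts the fact that $C_1$ and $C_2$ belong to different rivers.

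First, fix the shift of $d_w^{C_1}$ so that it coincides with $d_w^{\jbk,S}$ on $C_1$, and fix the shift of $d_w^{C_2}$ so that it coincides with $d_w^{\jbk,S}$ on $C_2$. Applying Lemma \ref{lem:one purple river} to $C_1$ gives that $d_w^{C_1} = d_w^{\jbk,S}$ on every channel of $w$; in particular on both $C_1$ (trivially) and $C_2$. The symmetric application of the lemma to $C_2$ yields $d_w^{C_2} = d_w^{\jbk,S}$ on every channel, again both on $C_2$ (trivially) and $C_1$. So with these shifts, $d_w^{C_1}$ and $d_w^{C_2}$ agree on $C_1$ and they also agree on $C_2$ (since on each of these channels they both equal $d_w^{\jbk,S}$).

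By Definition \ref{def:distance between channels}, $\h(C_1, C_2)$ is computed by shifting the two channel numberings to agree on $C_1$ and measuring the discrepancy on $C_2$; our chosen shifts already satisfy the former, and we have just shown the numberings also coincide on $C_2$, so $\h(C_1, C_2) = 0$. But rivers are by definition the equivalence classes of channels under the pseudometric $\h$ (Proposition \ref{prop:h is a pseudometirc}), so $\h(C_1, C_2) = 0$ forces $C_1$ and $C_2$ to lie in the same river, contradicting $R_1 \ne R_2$. This completes the argument.

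There is no real obstacle here: the corollary is a formal consequence of Lemma \ref{lem:one purple river} together with the way rivers are defined from the pseudometric $\h$. The only care required is in bookkeeping the shifts of the channel numberings so that both appeals to the lemma produce numberings that can legitimately be compared to compute $\h(C_1, C_2)$.
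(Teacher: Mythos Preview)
Your proof is correct and takes essentially the same approach as the paper, which simply states that the corollary ``follows easily from Lemma \ref{lem:one purple river}.'' You have spelled out exactly the intended argument: apply the lemma to each of the two putative channels to see that both channel numberings agree with $d_w^{\jbk,S}$ on all channels, hence with each other, forcing $\h(C_1,C_2)=0$ and contradicting the assumption that they lie in distinct rivers.
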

\begin{proof}
This follows easily from Lemma \ref{lem:one purple river}.
\end{proof}

We will periodically need to keep careful track of how a (reverse) path can ``cross'' a channel. To this end we describe several types of interaction.

\begin{defn}
Suppose $p=(c_0,c_1,\dots, c_k)$ is a path or a reverse path, and $C$ is a channel. Then we say
\begin{itemize}
\item $p$ \emph{bridges} $C$ on step $i$ if $c_i$ is on one side (strictly northeast or strictly southwest) of $C$ while $c_{i+1}$ is on the other, 
\item $p$ \emph{fords} $C$ if for some $1\leqslant i<j<k$ we have $c_{i+1},\dots,c_j\in C$, while $c_i$ and  $c_{j+1}$ are on different sides of $C$,
\item $p$ \emph{skims} $C$ if for some $1\leqslant i<j<k$ we have $c_{i+1},\dots,c_j\in C$, while $c_i$ and  $c_{j+1}$ are on the same side of $C$, and
\item $p$ \emph{intersects} $C$ if $p$ fords or skims $C$.
\end{itemize}
\end{defn}

The next result is a non-algorithmic alternative definition for the backward numbering.	
\begin{lemma}
\label{lem:minimal r worth}
Consider a partial permutation $w$ and a compatible stream $S$. Let $d = d_w^{\jbk,S}$ and let $d_0$ be the stream numbering. Then for any ball $b$ of $w$,
$$d(b) = \min_{(b_0=b,\dots, b_k)} d_0(b_k) - k,$$
where the minimum is taken over all reverse paths starting at $b$.
\end{lemma}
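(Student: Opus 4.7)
My plan is to prove the two inequalities separately, and both directions follow quickly from results already established.

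For the upper bound, I would fix any reverse path $(b_0 = b, b_1, \ldots, b_k)$ and show $d(b) \leqslant d_0(b_k) - k$. Since $d$ is monotone and $b_0, b_1, \ldots, b_k$ is a reverse path (each $b_{i+1}$ strictly southeast of $b_i$, so $b_i$ strictly northwest of $b_{i+1}$), we get $d(b) < d(b_1) < \cdots < d(b_k)$, hence $d(b) \leqslant d(b_k) - k$. Then from Remark \ref{rmk:lower bound for backward numbering} we know $d \leqslant d_0$ pointwise, so $d(b_k) \leqslant d_0(b_k)$. Combining, $d(b) \leqslant d_0(b_k) - k$, and taking the minimum over all reverse paths gives one direction.

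For the lower bound, I need to exhibit a reverse path realizing equality. This is exactly what the terminating framework provides. By the lemma just before Lemma \ref{lem:north of nterminating is nterminating} (every ball of $w$ is either $N$-terminating or $W$-terminating with respect to $S$), there exists a reverse path $(b_0 = b, b_1, \ldots, b_k)$ along which $d$ increases by $1$ at each step and whose endpoint $b_k$ is terminal, meaning $d(b_k) = d_0(b_k)$ (as noted right after Definition \ref{defn:back terminating}). Then
\[
d_0(b_k) - k = d(b_k) - k = d(b),
\]
so the minimum is attained and equals $d(b)$.

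Combining both inequalities yields the formula. There is no real obstacle here: the bulk of the work was done in establishing that $d \leqslant d_0$ (Remark \ref{rmk:lower bound for backward numbering}) and that every ball is terminating. The statement is essentially a clean reformulation packaging those two facts into a single extremal characterization of the backward numbering, analogous to the path-based definition of channel numberings in Definition \ref{def:channel numbering} but with a minimum in place of a supremum.
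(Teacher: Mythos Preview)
Your proof is correct. The upper-bound direction (showing $d(b) \leqslant d_0(b_k) - k$ for every reverse path) is exactly how the paper argues it: monotonicity of $d$ gives $d(b) \leqslant d(b_k) - k$, and $d \leqslant d_0$ finishes it.

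For the lower bound you take a genuinely different route. The paper defines $d'(b) := \min_{(b_0=b,\ldots,b_k)} d_0(b_k) - k$, observes that $d'$ is monotone and bounded above by $d_0$ (via the trivial length-$0$ path), and then invokes Remark~\ref{rmk:lower bound for backward numbering} directly to conclude $d' \leqslant d$. Your argument instead invokes the lemma that every ball is terminating to explicitly produce a reverse path achieving the value $d(b)$. Both are valid and short. The paper's version is slightly more self-contained (it needs only the extremal characterization of $d$ from the Remark and nothing about terminating balls), whereas your version has the pedagogical advantage of exhibiting the minimizing path concretely and tying the formula to the terminating framework you correctly identify as the conceptual source.
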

\begin{proof}
Let $d'$ be the numbering defined by the right hand side of the above equation. It is clear that $d'$ is monotone and $d'(b) \leqslant d_0(b)$. So, by Remark \ref{rmk:lower bound for backward numbering}, $d'(b)\leqslant d(b)$. Now let $(b_0=b,\dots, b_k)$ be a reverse path which achieves the desired minimum, i.e. such that $d'(b) = d_0(b_k) - k$. So
\[d'(b) = d_0(b_k) - k \geqslant d(b_k) - k\geqslant d(b).\] 
\end{proof}

\begin{defn}
Consider a partial permutation $w$ and a compatible stream $S$. For a reverse path $(b_0,\dots, b_k)$, we refer to the number $d_0(b_k) - k$ as the \emph{$r$-worth} of the path.
\end{defn}

\begin{rmk}
\label{rmk:last ball is terminal}
The last ball on a reverse path of minimal $r$-worth is necessarily terminal since the stream numbering coincides with the backward numbering. 
\end{rmk}

We now analyze the backward numbering with respect to $S$ of $w$ and of $w'$.

\begin{thm}
\label{thm:how backward numbering changes}
Let $w$ be a partial permutation and $S$ be a compatible stream whose flow is equal to the width of $P_w$. Choose a non-$N$-terminating river of $w$ and let $C$ be its northeast channel. Let $w'=w\sh{1}_C$. Let $C'=C\sh{1}$. Let $\widetilde{C}$ be the northeast channel of $\bk{S}{w}$ which is southwest of $C$. Let $d$ (resp. $d'$) be the backward numbering of $w$ (resp. $w'$) with respect to $S$. Then
\begin{itemize}
\item for any $b\in C$, $d(b) = d'(b')$, where $b'$ is the ball of $C'$ directly north of $b$,
\item for any $b\in\B_w$ which is strictly northeast of $C$, $d'(b) = d(b)$,
\item for any $b\in\B_w$ which is southwest of $\widetilde{C}$, $d'(b) = d(b)$, and
\item for any $b\in\B_w$ which is northeast of $\widetilde{C}$ and strictly southwest of $C$, $d'(b) = d(b) + 1$. 
\end{itemize}
\end{thm}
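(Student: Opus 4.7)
My plan is to define an explicit candidate numbering $\tilde d'$ on $\B_{w'}$ using the prescription of the theorem (equal to $d$ strictly NE of $C$ or SW of $\widetilde C$, equal to $d+1$ on the ring of balls NE of $\widetilde C$ and strictly SW of $C$, and equal to $d(b)$ on $b'\in C'$ directly north of $b\in C$) and then show $\tilde d' = d'$ by combining the two characterizations of the backward numbering: as the largest monotone numbering bounded above by the stream numbering (Remark \ref{rmk:lower bound for backward numbering}), and as the minimum $r$-worth over reverse paths (Lemma \ref{lem:minimal r worth}). First I would compute stream numberings and record that $d_0^w = d_0^{w'}$ on $\B_w\setminus C$, while for corresponding balls $b = (p_j,q_j)\in C$ and $b' = (p_{j-1}, q_j)\in C'$ directly north of $b$, the difference $d_0^w(b) - d_0^{w'}(b')$ equals the number of stream cells in the open strip of rows $p_{j-1} < r < p_j$ that lie in columns $\leqslant q_j$; this bookkeeping will be needed in Part 1.

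Next I would verify that $\tilde d'$ is monotone and satisfies $\tilde d' \leqslant d_0^{w'}$. Monotonicity is a case analysis by region, inheriting monotonicity of $d$ inside each region and checking the boundaries; the $+1$ shift on the middle ring is calibrated so that monotonicity across $\widetilde C$ is preserved, and monotonicity across $C'$ uses that $C'$ is the southwest channel of its river in $w'$ (the lemma proved right after Lemma \ref{lem:d is the sw numbering}). The bound $\tilde d' \leqslant d_0^{w'}$ is immediate outside the middle ring. On the middle ring it requires $d(b) < d_0^w(b)$, i.e.\ no ring ball is terminal with respect to $S$ in $w$. Here the hypotheses enter decisively: the river of $C$ is non-$N$-terminating and hence by Corollary \ref{cor:W and N} purely $W$-terminating, forcing each ring ball to have a $W$-terminal ball of its zig-zag strictly SE of it (so it is not $W$-terminal), while being strictly NE of $\widetilde C$ together with the description of $\widetilde C$ in Lemma \ref{lem:it_s_a_channel} rules out $N$-terminality. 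This yields $\tilde d' \leqslant d'$.

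For the reverse inequality $d' \leqslant \tilde d'$, I would produce for each $b'\in\B_{w'}$ a reverse path whose $r$-worth equals $\tilde d'(b')$. Starting from an optimal reverse path in $w$ witnessing $d(b)$ (taking $b = b'$ in regions where $\tilde d' = d$ and $b$ the southern neighbor on $C$ in Part 1), I rewrite each pass through $C$ as a pass through $C'$, splicing or deleting steps when the new ``directly north'' position would violate strict SE-progression. When a path ends in the middle ring, the rewriting gains exactly one step because the first $C'$-step lands north rather than on the deleted $C$-ball, producing the $+1$ in the ring. The main obstacle I anticipate is Part 1: because $b'$ is directly north of $b$ rather than east, prepending $b'$ to an optimal reverse path for $b$ is valid but does not by itself rule out a strictly cheaper path starting at $b'$; controlling this relies on the monotone upper bound $\tilde d'\leqslant d'$ from the previous paragraph, which combined with the lower bound gives equality on $C'$ and propagates through the other regions via the rewriting.
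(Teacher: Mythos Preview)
Your global strategy—define the target numbering $\tilde d'$ and verify that it is the largest monotone numbering bounded by the stream numbering—is natural, but the monotonicity check has a real gap that your outline does not address. Take $a$ in the middle ring with $d(a)=k$ and $b$ strictly northeast of $C$ with $a$ strictly northwest of $b$. Then $\tilde d'(a)=k+1$ and $\tilde d'(b)=d(b)$, so monotonicity of $\tilde d'$ requires $d(b)\geq k+2$, a gap of two in the backward numbering. Monotonicity of $d$ only gives $d(b)\geq k+1$, and one can check that no ball of $C$ sits strictly between $a$ and $b$ in the northwest order (indeed $c_k$ is northwest of both and $c_{k+1}$ is south of $b$), so the usual fording/splicing arguments, including Lemma~\ref{lem:jump off the bridge}, only recover $d(a)\leq d(b)-1$. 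Your cited justification—that $C'$ is the southwest channel of its river in $w'$—establishes the analogous gap-of-two for the channel numbering $d_{w'}^{C'}$ via Lemma~\ref{lem:d is the sw numbering}, but $d_{w'}^{C'}$ and the backward numbering $d'$ are unrelated in general, so this does not transfer. In fact, the gap of two is a \emph{consequence} of the theorem (since $d'$ is monotone and equals $\tilde d'$), not something one can verify a priori with the tools you list; the paper avoids this circularity by proving bullets~2 and~4 separately via direct reverse-path comparisons (Lemmas~\ref{lem:ne of C} and~\ref{lem:ne of Ctilde and sw of C}) and letting monotonicity of $d'$ fall out afterward.

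Two smaller points: your argument that middle-ring balls are non-terminal cites the wrong lemmas. Corollary~\ref{cor:W and N} does not say a non-$N$-terminating river is $W$-terminating (that follows instead from the fact that every ball is $N$- or $W$-terminating). For ``not $W$-terminal'' the relevant fact is Remark~\ref{rmk:touching channels}; for ``not $N$-terminal'' it is the contrapositive of Lemma~\ref{lem:north of nterminating is nterminating} applied to the non-$N$-terminating channel $C$, not Lemma~\ref{lem:it_s_a_channel} (which concerns the indexing river, a different object from $\widetilde C$). The conclusion you want is correct, but the route you describe is not.
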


The proof of the above theorem (p. \pageref{pf:how backward numbering changes}), toward which we will now start working, mostly consists of (tedious) careful analysis of the $r$-worth of reverse paths starting from the four regions.  Until the end of the proof, let $d, d', C, \widetilde{C}$ be as in the theorem, except we do not assume $C$ is not $N$-terminating.

\subsubsection{Normal form of paths}

First we narrow the class of reverse paths that need to be considered. In the next result we will show that the path does not need to cross $C$ multiple times. After that, we will show that considering paths that bridge $C$ is often unnecessary.

\begin{defn}
A reverse path $(b_0,\dots, b_l)$ for $w$ is in \emph{normal form} if for some $0\leqslant g\leqslant h\leqslant l+1$ we have
\begin{itemize}
\item $b_0,\dots, b_{g-1}$ all lie strictly to one side (southwest or northeast) of $C$,
\item $b_{g},\dots, b_{h-1}$ are consecutive elements of $C$, and
\item $b_{h},\dots, b_l$ all lie strictly to one side (southwest or northeast) of $C$.
\end{itemize}
\end{defn}

\begin{lemma}
\label{lem:to nf}
For every reverse path $p=(b_0,\dots, b_k)$ for $w$, there exists a reverse path $p'=(b'_0=b_0,\dots, b'_l=b_k)$ in normal form of at most the same $r$-worth.
\end{lemma}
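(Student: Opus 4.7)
The plan is to transform the arbitrary reverse path $p = (b_0, \ldots, b_k)$ into a normal-form path with the same endpoints via two reduction procedures, each of which preserves endpoints while not decreasing the path length (equivalently, not increasing the $r$-worth $d_0(b_k) - \text{length}$).

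\textbf{Step 1 (eliminating bridges).} Suppose $p$ bridges $C$ at some step $i$, so $b_i$ and $b_{i+1}$ lie strictly on opposite sides of $C$, with $b_{i+1}$ strictly SE of $b_i$. Writing $C = \{c_l\}_{l \in \Z}$ in the strict SE order, I consider the set of channel elements that are simultaneously strictly SE of $b_i$ and strictly NW of $b_{i+1}$. Since $b_i$ and $b_{i+1}$ lie on opposite sides of the staircase and $b_{i+1}$ is strictly SE of $b_i$, this set is a nonempty interval of consecutive elements $c_g, c_{g+1}, \ldots, c_h$; inserting it between $b_i$ and $b_{i+1}$ produces a valid longer reverse path with the same endpoints. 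I repeat this procedure until no bridges remain.

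\textbf{Step 2 (consolidating channel segments).} After Step 1, whenever $p$ leaves $C$ and later returns, the entire intermediate excursion stays strictly on one fixed side of $C$. Consider such a segment $(b_j, b_{j+1}, \ldots, b_{j+s})$ with $b_j, b_{j+s} \in C$ and $b_{j+1}, \ldots, b_{j+s-1}$ strictly on one side. Writing $b_j = c_\alpha$ and $b_{j+s} = c_{\alpha + \delta}$, I replace this segment with the channel walk $(c_\alpha, c_{\alpha+1}, \ldots, c_{\alpha+\delta})$. The replacement is a valid reverse path since $C$ is a strict SE chain. The key inequality $\delta \geqslant s$ follows from monotonicity: each of the $s$ strictly SE steps of the original segment forces $d_w^C$ to strictly increase, so $d_w^C(b_{j+s}) - d_w^C(b_j) \geqslant s$, and by Corollary~\ref{cor:proper numbers channels contiguosly} the channel numbering takes consecutive values on $C$, so the left-hand side equals $\delta$. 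Iterating this consolidation eliminates all intermediate off-channel excursions so that at most one maximal run of channel cells remains.

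After both steps, the transformed path splits into a (possibly empty) initial strictly-one-side segment, a (possibly empty) contiguous run of consecutive channel elements, and a (possibly empty) final strictly-one-side segment, which is exactly normal form. Since each reduction preserves $b_0$ and $b_k$ and does not decrease length, the final $r$-worth is at most that of $p$.

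The main obstacle is the bridge-insertion of Step 1: I must verify that the interval $c_g, \ldots, c_h$ is nonempty and that the concatenation $(\ldots, b_i, c_g, \ldots, c_h, b_{i+1}, \ldots)$ is a valid reverse path, i.e.\ $c_g$ is strictly SE of $b_i$ and $b_{i+1}$ is strictly SE of $c_h$. This reduces to a careful geometric check that when $b_i$ and $b_{i+1}$ lie strictly on opposite sides of the SE staircase $C$ with $b_{i+1}$ strictly SE of $b_i$, the channel necessarily passes through the ``gap'' between them; this uses the $(n,n)$-periodicity of $C$ to rule out degenerate configurations.
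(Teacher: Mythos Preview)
Your Step~2 is sound, but Step~1 has a genuine gap: the set of channel elements strictly southeast of $b_i$ and strictly northwest of $b_{i+1}$ can be empty, and $(n,n)$-periodicity does not prevent this. Take consecutive channel elements $c_{\alpha+1},c_{\alpha+2}$ whose columns are far apart; choose $b_i$ strictly northeast of $C$ with row between the rows of $c_\alpha$ and $c_{\alpha+1}$ and column strictly between the columns of $c_{\alpha+1}$ and $c_{\alpha+2}$, and choose $b_{i+1}$ strictly southwest of $C$ with row between the rows of $c_{\alpha+2}$ and $c_{\alpha+3}$ and column strictly between the column of $b_i$ and that of $c_{\alpha+2}$. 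Then $b_{i+1}$ is strictly southeast of $b_i$, yet the only channel elements in the relevant row range are $c_{\alpha+1}$ (west of $b_i$) and $c_{\alpha+2}$ (east of $b_{i+1}$), so nothing can be inserted. One checks that such $b_i,b_{i+1}$ lie outside the region strictly between consecutive channel elements, so the hypothesis that $C$ is the northeast channel of its river does not exclude them. Note also that a reverse path with exactly one bridge and no channel visit is already in normal form (take $g=h$, so the channel segment is empty); eliminating \emph{all} bridges is stronger than the lemma requires.

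The paper's approach avoids insertion at bridges. It takes two consecutive crossings, between which the path lies strictly on one side of $C$, and replaces each ball $b_l$ in that stretch by the unique channel element in the same backward-step zig-zag, namely the $c\in C$ with $d_w^{\jbk,S}(c)=d_w^{\jbk,S}(b_l)$. Since $d_w^{\jbk,S}$ is monotone and, by Proposition~\ref{prop:backward numbers channel consecutively}, takes consecutive values on $C$, these replacements (with any skipped channel elements filled in) form a valid reverse path with at least as many steps, merging the two crossings into a single channel segment. The zig-zag structure supplies the channel anchors that your direct-insertion argument lacks.
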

\begin{proof}
If the reverse path never intersects or bridges the channel, then we can take $p' = p$, $g = h = 0$. Thus we only need to show that if $p$ intersects or bridges the channel twice, then the path obtained by following the channel between these points will have at most the same $r$-worth. There are four cases to consider depending on whether there are bridges or intersections each of the two times. The cases are similar, so we only consider one in detail. Suppose that the reverse path first intersects the channel and then bridges the channel (see Figure \ref{fig:not cross too much} for an example). 

Without loss of generality, we can consider $b_0$ to be the point of intersection and we may assume that the path stays northeast of $C$ between $b_0$ and the bridge. Let $j$ be such that $b_j$ is the first ball after the bridge. Consider the collection of zig-zags induced by the backward step. For each $1\leqslant i<j$, let $b_i'$ be the ball in $C$ in the same zig-zag as $b_i$. Now, $b_j$ is southwest of $C$, hence south of the element of $C$ in its zig-zag, and thus south of $b_{j-1}'$. Also, $b_j$ is east of $b_{j-1}$, which is east of $b_{j-1}'$. So $b_j$ is southeast of $b_{j-1}'$. Thus the path $(b_0,b_1',\ldots, b_{j-1}',b_j,\ldots,b_k)$ has the same $r$-worth. If the $b_i'$s are not in consecutive zig-zags (as required by the definition of normal form), then including the elements of $C$ between them will only decrease the $r$-worth of the alternate path as compared to the $r$-worth of $p$. 

\begin{figure}
\centering\resizebox{.33\textwidth}{!}{\input{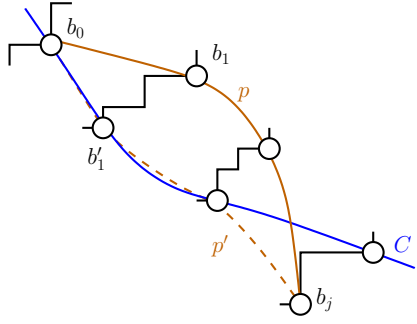}} 
\caption{Getting a path into normal form, as in the proof of Lemma \ref{lem:to nf}.}
\label{fig:not cross too much}
\end{figure}
\end{proof}

\begin{lemma}
\label{lem:jump off the bridge}
Suppose we have a reverse path $p=(b_0,\dots, b_k)$ for $w$ in normal form; moreover suppose that $p$ bridges $C$. Then there exists a reverse path $p'=(b'_0=b_0,\dots, b'_l)$ in normal form of the same $r$-worth that fords $C$.
\end{lemma}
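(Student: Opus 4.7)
The plan is as follows. First I would observe that in a normal-form reverse path, the only step that can possibly bridge $C$ is the direct transition from the pre-$C$ block to the post-$C$ block. Indeed, cells of $C$ themselves are neither strictly NE nor strictly SW of $C$, so any step entirely within one of the three blocks stays on a single side (or on $C$), and any boundary step $b_{g-1}\to b_g$ with $b_g\in C$ (or $b_{h-1}\to b_h$ with $b_{h-1}\in C$) cannot be a bridge. Consequently, if $p$ bridges $C$, its middle block of channel cells must be empty, i.e., $g=h$, and the bridge is the single step $b_{g-1}\to b_g$. Up to transposition in the antidiagonal, I may assume $b_{g-1}$ lies strictly NE of $C$ and $b_g$ strictly SW.

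Second, I would route this bridge step through $C$. Viewing $C$ as a biinfinite NW-to-SE staircase separating $b_{g-1}$ (above) from $b_g$ (below), and using the fact that $b_g$ is strictly SE of $b_{g-1}$, the cells of $C$ that lie in the open rectangle strictly SE of $b_{g-1}$ and strictly NW of $b_g$ form a nonempty contiguous chain $c_1,\ldots,c_m$ in $C$. The natural candidate is then obtained by splicing these cells into the bridge:
\[ p' = (b_0,\ldots,b_{g-1},c_1,\ldots,c_m,b_g,\ldots,b_k). \]
Each successive pair is in strictly SE relation (the only nontrivial checks being $c_1$ strictly SE of $b_{g-1}$ and $b_g$ strictly SE of $c_m$, both of which hold by construction), so $p'$ is a reverse path; it is in normal form with channel block $c_1,\ldots,c_m$, and it fords $C$ because $b_{g-1}$ and $b_g$ lie strictly on opposite sides of $C$.

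The main obstacle is showing that $p'$ has the \emph{same} $r$-worth as $p$, not merely a smaller one: splicing in $m$ cells while keeping the same terminal ball $b_k$ decreases the $r$-worth by $m$. I expect the correct resolution is to pair the insertion with a compensating adjustment of the endpoint: instead of retaining the tail $b_g,\ldots,b_k$, one can truncate $p'$ at a suitable cell on $C$ (or follow $C$ past $c_m$ for the right number of additional steps) so that the new path has total length exactly $k$ and ends at a channel cell whose stream number $d_0$ matches $d_0(b_k)$. The key input is that $d_0$ advances by exactly $1$ per step along a channel (this is the content of Corollary \ref{cor:proper numbers channels contiguosly} and Lemma \ref{lem:proper numbers channels}, applied to $d_0$ viewed as a proper numbering in the case of equal flow and width), which makes extensions along $C$ $r$-worth-neutral. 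The most delicate point will be checking that one can perform this channel-extension while staying compatible with the normal-form structure, i.e., without disturbing the pre-$C$ and post-$C$ blocks.
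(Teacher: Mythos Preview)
Your first observation is correct: in a normal-form reverse path, any bridge must occur at the single step $b_{g-1}\to b_g$ with $g=h$. After that, however, the argument breaks down.

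The claim that the open rectangle strictly southeast of $b_{g-1}$ and strictly northwest of $b_g$ contains a nonempty chain of cells of $C$ is false in general. For instance, if $C$ contains consecutive cells $(0,0),(3,3),(6,6)$ and $b_{g-1}=(1,4)$, $b_g=(7,5)$, then $b_{g-1}$ is strictly northeast of $C$, $b_g$ is strictly southwest of $C$, $b_g$ is strictly southeast of $b_{g-1}$, and yet there is no integer column strictly between $4$ and $5$, so the open rectangle is empty. Even when the rectangle is nonempty, it need not meet $C$: the bridge can jump over a gap between two consecutive channel cells. So your spliced path $p'$ need not exist.

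Your proposed repair for the $r$-worth also fails. The stream numbering $d_0$ is \emph{not} a proper numbering of $w$; it is only weakly monotone, and Corollary~\ref{cor:proper numbers channels contiguosly} and Lemma~\ref{lem:proper numbers channels} do not apply to it. In particular, $d_0$ does not advance by exactly $1$ per step along a channel of $w$, so extensions along $C$ are not $r$-worth-neutral in the way you need. Moreover, truncating on $C$ or following $C$ further produces a path that ends on $C$, which is not a ford.

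The paper's argument is different and avoids both problems by exploiting periodicity. Starting from the bridge step $b_i\to b_{i+1}$, it locates $b_i'\in C$ just west of $b_{i+1}$, inserts the next $m$ consecutive channel cells $b_{i+1}',\dots,b_{i+m}'=b_i'+(n,n)$, and then appends the entire tail shifted by $(n,n)$:
\[
(b_0,\dots,b_i,\,b_{i+1}',\dots,b_{i+m}',\,b_{i+1}+(n,n),\dots,b_k+(n,n)).
\]
This adds exactly $m$ steps while replacing the endpoint $b_k$ by $b_k+(n,n)$; since $d_0$ is semi-periodic of period $m$, the $r$-worth is unchanged. The two geometric checks ($b_{i+1}'$ strictly southeast of $b_i$, and $b_{i+m}'$ strictly northwest of $b_{i+1}+(n,n)$) follow from the definition of $b_i'$ and the side conditions on $b_i,b_{i+1}$. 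This is the missing idea in your approach.
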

\begin{proof}
Choose $i$ such that $b_i$ is on one side of $C$ while $b_{i+1}$ is on the other. Without loss of generality, $b_i$ is northeast of $C$ and $b_{i+1}$ is southwest of $C$. Let $b_i'$ be the southeast-most ball of $C$ which is west of $b_{i+1}$. Recall that $m$ is our notation for the width of $P_w$; let $b_{i+1}', b_{i+3}',\dots, b_{i+m}' = b_i'+(n,n)$ be consecutive elements of $C$ after $b_i'$. 

Notice that by definition of $b_i'$, $b_{i+1}'$ is east of $b_{i+1}$ and hence east of $b_i$. Moreover, $b_{i+1}'$ is south of $b_i'$which is south of $b_i$ (else we would contradict the assumption that $b_i$ is northeast of $C$). So $b_{i+1}'$ is southeast of $b_i$. Also, note that $b_i'$ is north (and, by definition, west) of $b_{i+1}$ (since $b_{i+1}$ is southwest of $C$). So $b_{i+m}' = b_i' + (n,n)$ is northwest of $b_{i+1}+(n,n)$. Now the reverse path $(b_0,\dots,b_i,b_{i+1}',\dots, b_{i+m}',b_{i+1}+(n,n),\dots, b_k+(n,n))$ fords $C$ and has the same $r$-worth as $p$. 
\end{proof}

\begin{lemma}
\label{lem:crossing paths not interesting 1}
Suppose $b\in\B_w$ is on one side of $C$ and $\tilde{b}\in\B_w$ is on the other. If there is a reverse path from $b$ to $\tilde{b}$ of $r$-worth $l$ then there exists a reverse path for 
$w'$ from $b$ to $\tilde{b}$ of $r$-worth $\leqslant l$.

Similarly suppose $b'\in\B_{w'}$ is on one side of $C'$ and $\tilde{b'}\in\B_{w'}$ is on the other. If there is a reverse path from $b'$ to $\tilde{b'}$ of $r$-worth $l$ then there exists a reverse path for $w$ from $b'$ to $\tilde{b'}$ of $r$-worth $\leqslant l$.

\end{lemma}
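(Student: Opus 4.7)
The plan is to transform a given reverse path in $\B_w$ into a reverse path in $\B_{w'}$ that has the same length and endpoints, hence the same $r$-worth. The key observation I would exploit is that $\B_w$ and $\B_{w'}$ differ only by replacing $C = \{c_i = (p_i, q_i)\}$ with $C' = \{c'_i = (p_i, q_{i+1})\}$, so any step of the path that avoids $C$ is already valid in $\B_{w'}$.

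First I would apply Lemma \ref{lem:to nf} to put the given path into normal form, preserving the endpoints and not increasing the $r$-worth. Since $b$ and $\tilde{b}$ lie on opposite sides of $C$, I can then split into subcases. Any bridge step from one side directly to the other is still a valid reverse-path step in $\B_{w'}$, because the strictly-SE relation between two cells does not depend on a third ball. Thus the only substantive case is when the path fords $C$ via a consecutive channel segment $c_{i_0}, c_{i_0+1}, \ldots, c_{i_0+j}$, with entry ball $b_{g-1}$ (strictly NW of $c_{i_0}$) and exit ball $b_h$ (strictly SE of $c_{i_0+j}$); I will take WLOG $b_{g-1}$ to lie northeast of $C$ and $b_h$ to lie southwest of $C$.

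For this ford case my plan is to replace $(c_{i_0}, \ldots, c_{i_0+j})$ with the equal-length $C'$-segment $(c'_{i_0-1}, c'_{i_0}, \ldots, c'_{i_0+j-1})$, giving a reverse path in $\B_{w'}$ of the same length and endpoints as the original, and therefore of the same $r$-worth. The main obstacle I anticipate is checking entry compatibility: that $b_{g-1}$ is strictly NW of $c'_{i_0-1} = (p_{i_0-1}, q_{i_0})$, i.e., that $\mathrm{row}(b_{g-1}) < p_{i_0-1}$. I would prove this by contradiction: suppose $\mathrm{row}(b_{g-1}) \geqslant p_{i_0-1}$; then by the partial-permutation property and $b_{g-1} \notin C$, the row lies strictly in $(p_{i_0-1}, p_{i_0})$. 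Splitting on the column, if $\mathrm{col}(b_{g-1}) \leqslant q_{i_0-1}$ then $b_{g-1}$ ends up strictly SW of $c_{i_0-1}$, and a short check using $\mathrm{row}(b_{g-1}) > p_{i_0-1}$ and $\mathrm{col}(b_{g-1}) < q_{i_0}$ against the monotonicity of $(p_i), (q_i)$ shows $b_{g-1}$ is in fact strictly SW of every $c_i$, contradicting it being NE of $C$; otherwise $\mathrm{col}(b_{g-1}) \in (q_{i_0-1}, q_{i_0})$ and $b_{g-1}$ sits in the open rectangle strictly SE of $c_{i_0-1}$ and strictly NW of $c_{i_0}$, whence adjoining $b_{g-1}$ and its $(n,n)$-translates to $C$ would produce a chain in $\leqslant_{SE}$ of density $m+1$, contradicting the maximality of the channel $C$.

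Once the row inequality is secured, the intermediate steps between consecutive $c'_l$'s and the exit step from $c'_{i_0+j-1}$ to $b_h$ all follow routinely from the monotonicity of $(p_i)$ and $(q_i)$ together with $b_h$ being strictly SE of $c_{i_0+j}$. The second statement of the lemma then follows by the symmetric argument with the roles of $w, C$ and $w', C'$ exchanged, appealing to the fact proven earlier in this section that $C'$ is the southwest channel of its river in $w'$; the analogous geometric key step is handled by the same channel-maximality contradiction, now applied to $C'$ inside $w'$.
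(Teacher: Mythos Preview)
Your argument is correct and follows essentially the same route as the paper: reduce to normal form via Lemma \ref{lem:to nf}, dispose of the bridge case trivially, and in the ford case replace the channel segment $(c_{i_0},\dots,c_{i_0+j})$ by the $C'$-segment $(c'_{i_0-1},\dots,c'_{i_0+j-1})$, i.e.\ the balls of $C'$ directly north of the original segment. The paper verifies entry compatibility in one line by observing that $b_{g-1}$, being strictly northeast of $C$, must lie northeast of some ball of $C$; your two-subcase analysis simply makes explicit the no-balls-in-$\mathcal{M}$ fact established in Lemma \ref{lem:d is the sw numbering}.
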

\begin{proof}
Suppose first that $b$ is northeast of $C$ and $\tilde{b}$ is southwest of $C$. Consider a path $p=(b_0=b,\dots, b_k=\tilde{b})$ in normal form with $r$-worth $\leqslant l$; it necessarily fords or bridges $C$. In case of bridging, the same path can be used in $w'$. Otherwise, consider $g$ and $h$ such that $b_{g},\dots, b_{h-1}$ are the elements of the path on $C$. Let $b_{g}',\dots, b_{h-1}'$ be the balls of $w'$ directly north of $b_{g},\dots, b_{h-1}$. Now $b_{g-1}$ is clearly west of $b_g'$ and it must be northeast of some element of $C$, so it is north of $b_{g-1}$. Thus $b_{g-1}$ is northwest of $b_g'$. So $(b_0,\dots, b_{g-1},b_g',\dots, b_{h-1}',b_h,\dots, b_k)$ is a path in $w'$ of the same $r$-worth $l$. 

If $b$ is southwest of $C$ and $\tilde{b}$ is northeast of $C$, the argument is the same, but we need to take $b_g',\dots, b_{h-1}'$ directly east of $b_g,\dots, b_{h-1}$. The argument for the statement in the second paragraph is exactly the same.
\end{proof}

\subsubsection{Effect of shifting a channel on backward numbering}

In the next two lemmas, we address the first part of Theorem \ref{thm:how backward numbering changes}, i.e. we describe the backward numbering on $C'$.

\begin{lemma}
\label{lem: backward numbering pm1}
Choose a ball $b\in C$. Let $b'\in C'$ be the ball directly north of $b$, and let $b''\in C$ be the ball directly west of $b'$. Then either $d'(b') = d(b)$ or $d'(b') = d(b'')$.
\end{lemma}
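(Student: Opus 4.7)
The plan is to prove the double inequality $d(b'') \leq d'(b') \leq d(b)$. Since $b$ and $b''$ are consecutive elements of the channel $C$ (with $b''$ northwest of $b$), Proposition \ref{prop:backward numbers channel consecutively} gives $d(b) = d(b'') + 1$, so these two bounds sandwich $d'(b')$ between consecutive integers, forcing $d'(b') \in \{d(b), d(b'')\}$.

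For bookkeeping, I would fix the indexing $C = \{c_j : j \in \Z\}$ with $c_{j+1}$ southeast-adjacent to $c_j$ and $c_j = (p_j, q_j)$, so that $C' = \{c'_j = (p_j, q_{j+1})\}$. If $b = c_j$, then $b' = c'_{j-1} = (p_{j-1}, q_j)$ and $b'' = c_{j-1} = (p_{j-1}, q_{j-1})$. Throughout I use Lemma \ref{lem:minimal r worth} to compute $d$ and $d'$ as minima of $r$-worths of reverse paths in $w$ and $w'$ respectively (with respect to the same stream $S$ and the same proper numbering of it), and Lemma \ref{lem:to nf} to restrict attention to normal-form paths.

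For the upper bound $d'(b') \leq d(b)$, take a minimal-$r$-worth reverse path $p = (b_0 = b, b_1, \ldots, b_k)$ in $w$ in normal form. Since $b_0 = b \in C$, its initial channel segment consists of consecutive balls $c_j, c_{j+1}, \ldots, c_{j+h-1}$ for some $h \geq 1$. Replace this segment with $(c'_{j-1}, c'_j, \ldots, c'_{j+h-2}) \subseteq C'$, keeping the tail $b_h, \ldots, b_k$ (which lies off $C$ and so survives in $w'$). Consecutive $c'$-cells still step southeast, and the transition $c'_{j+h-2} \to b_h$ is valid because $b_h$ is southeast of $c_{j+h-1} = (p_{j+h-1}, q_{j+h-1})$, so in particular its row exceeds $p_{j+h-2}$ and its column exceeds $q_{j+h-1}$, making it southeast of $c'_{j+h-2} = (p_{j+h-2}, q_{j+h-1})$. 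The new path in $w'$ starts at $b'$, has length $k$, and ends at $b_k$, giving $r$-worth $d_0(b_k) - k = d(b)$. The degenerate case $k = 0$ (path $= (b)$, $b$ terminal) is handled by the trivial path $(b')$ in $w'$, whose $r$-worth $d_0(b')$ satisfies $d_0(b') \leq d_0(b) = d(b)$ since $b'$ lies (weakly) northwest of $b$.

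For the lower bound $d'(b') \geq d(b'')$, symmetrically take a minimal-$r$-worth normal-form reverse path $p' = (b'_0 = b', b'_1, \ldots, b'_k)$ in $w'$. Its initial segment on $C'$ is $c'_{j-1}, c'_j, \ldots, c'_{j+h-2}$ for some $h \geq 1$. Replace with $(c_{j-1}, c_j, \ldots, c_{j+h-2}) \subseteq C$, keeping the tail in $w$. The transition $c_{j+h-2} \to b'_h$ is valid: since $b'_h$ is southeast of $c'_{j+h-2} = (p_{j+h-2}, q_{j+h-1})$, its row exceeds $p_{j+h-2}$ and its column exceeds $q_{j+h-1} > q_{j+h-2}$. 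This yields a length-$k$ reverse path in $w$ from $b''$ ending at $b'_k$ with the same $r$-worth $d_0(b'_k) - k = d'(b')$, so $d(b'') \leq d'(b')$. The one subtlety is the degenerate case $h = k+1$ (path entirely on $C'$), where the new path in $w$ ends at $c_{j+k-1}$ rather than $c'_{j+k-1}$; here I would verify $d_0(c_{j+k-1}) \leq d_0(c'_{j+k-1})$ by noting that every stream cell weakly northwest of $c_{j+k-1}$ is weakly northwest of $c'_{j+k-1}$ (same row, strictly larger column), which again gives $r$-worth $\leq d'(b')$. The main obstacle is simply this careful verification of the endpoint comparisons and the degenerate normal-form cases; once those are checked, the argument is a direct path-surgery between $C$ and $C'$.
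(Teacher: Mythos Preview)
Your proof is correct and follows essentially the same approach as the paper: prove the sandwich $d(b'') \leq d'(b') \leq d(b)$ by path surgery, replacing the channel segment of a minimal-$r$-worth normal-form reverse path with the corresponding segment of the shifted channel. The paper's proof is nearly identical, just phrased without explicit coordinates.

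One small gap: in your upper-bound argument you only single out the degenerate case $k=0$, but the case ``path entirely on $C$'' (i.e.\ $h=k+1$) can occur for any $k$. You handle exactly this situation correctly in the lower-bound argument (comparing $d_0$ at the shifted endpoint), and the same one-line fix applies here: the replacement path ends at $c'_{j+k-1}$, which lies weakly northwest of $b_k = c_{j+k}$, so $d_0(c'_{j+k-1}) \leq d_0(b_k)$ and the $r$-worth is still at most $d(b)$.
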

\begin{proof}
We will show that $d'(b')\leqslant d(b)$ and that $d(b'')\leqslant d'(b')$. Since $d(b'') = d(b) - 1$, this will imply the desired conclusion. We will only do the first inequality since the proof of the second one is exactly the same. 

Consider a reverse path in normal form $p=(b_0=b,b_1,\dots, b_k)$ starting at $b$ and having minimal $r$-worth. For each $i$ with $b_i\in C$, let $b_i'$ be the ball of $C'$ directly north of $b_i$. Either $b_k\in C$ or not. If $b_k\in C$, then $(b_0'=b',b_1', \dots, b_k')$ is a reverse path starting at $b'$ and it has at most the $r$-worth of $p$ (it has the same number of steps and $d_0(b_k')\leqslant d_0(b_k)$. Now suppose $b_k\notin C$. Let $h$ be such that $b_h$ is the last ball of $p$ on $C$. Then $(b_0'=b',b_1', \dots, b_h', b_{h+1}, \dots, b_k)$ is a reverse path starting at $b'$ and it has the same $r$-worth as $p$ (since it has the same number of steps and the same ending).
\end{proof} 

\begin{lemma} 
\label{lem: correct backward numbering}
Choose a ball $b\in C$. Let $b'\in C'$ be the ball directly north of $b$, and let $b''\in C$ be the ball directly west of $b'$. The channel $C$ is $N$-terminating if and only if $d'(b') = d(b'')$.
\end{lemma}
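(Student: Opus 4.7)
The plan is to reduce, via Lemma \ref{lem: backward numbering pm1}, to deciding which of the two consecutive values $d(b)$ and $d(b'') = d(b)-1$ equals $d'(b')$. I will handle each direction by constructing explicit reverse paths and using the $r$-worth characterization of the backward numbering (Lemma \ref{lem:minimal r worth}).

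For the direction $C$ is $N$-terminating $\Rightarrow d'(b') = d(b'')$: since $C$ is $N$-terminating, every ball of $C$ is $N$-terminating, so $b''$ admits a reverse path $(b'' = c_0, c_1, \ldots, c_{k'})$ in $w$ with $d$ increasing by $1$ per step and $c_{k'}$ $N$-terminal, yielding $r$-worth $d(b'')$. The natural transformation replaces each $c_j$ lying on $C$ (beginning with $c_0 = b'' \mapsto b'$) by the unique ball on $C'$ in the same row, i.e.\ its one-column-east image along the channel. The $r$-worth of the transformed path equals $d(b'')$: when the final ball is off $C$ it is unchanged, and when on $C$, the $N$-terminal condition forces the relevant stream cell to lie south of both $c_{k'}$ and its shifted image, so $d_0$ is unchanged at the endpoint. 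For the reverse direction, given a minimum-$r$-worth reverse path in $w'$ from $b'$, I invert the same transformation to produce a reverse path from $b''$ in $w$ of $r$-worth $d(b'')$, hence minimum; I then argue that the endpoint (terminal by Remark \ref{rmk:last ball is terminal}) cannot be only $W$-terminal, since the shift back from $C'$ to $C$ would otherwise force the $r$-worth in $w$ strictly below $d(b'')$. This makes $b''$, and hence every ball of $C$ (by the river-constancy of terminating type noted in the passage around Lemma \ref{lem:one purple river}), $N$-terminating.

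The main obstacle is verifying that the transformation produces a genuine reverse path in $w'$ at each step, given the one-column east shift of balls on $C$. The delicate case is the step at which the path leaves $C$: if $c_{j+1} \notin C$ has column strictly between consecutive column-indices of $C$, the east-shift of $c_j$ overshoots $c_{j+1}$, destroying the strictly-SE relation. I would resolve this either by refining the path choice (using $N$-terminating to arrange that the exit step goes sufficiently east), or by appealing to the forbidden-region arguments around $\mathcal M$ and $\mathcal N$ from the proof of Lemma \ref{lem:d is the sw numbering} together with the hypothesis that $C$ is the NE channel of its river, which together constrain where $c_{j+1}$ may lie. The edge case in which the path stays on $C$ throughout is easy: the transformed path stays on $C'$, and the stream-numbering calculation at the terminal shifted ball shows the $r$-worth is precisely $d(b'')$.
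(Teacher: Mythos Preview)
Your overall strategy---transforming reverse paths between $w$ and $w'$ and comparing $r$-worths via Lemma~\ref{lem:minimal r worth}---is exactly the paper's approach. But the proposal is a sketch with two genuine holes, one of which you flag yourself.

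\textbf{The exit-step obstacle.} You correctly identify that the east-shift may break the strictly-southeast relation where the path leaves $C$. The paper's fix is not a forbidden-region argument but rather a preliminary reduction to \emph{normal form} (Lemma~\ref{lem:to nf}): one may assume the path starts on $C$, follows consecutive elements of $C$, then exits once to one side and stays there. With this in hand the case analysis is on where the endpoint $b_k$ lies. If $b_k$ is strictly northeast of $C$, the exit step is automatically far enough east (a ball of $w$ that is strictly southeast of $(p_i,q_i)\in C$ and strictly northeast of $C$ must already have column $>q_{i+1}$). If $b_k\in C$ the transformed path stays on $C'$ and the stream-numbering check goes through directly. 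If $b_k$ is strictly southwest of $C$, the paper does \emph{not} attempt the transformation at all; instead it observes that then some ball of $C$ itself is $N$-terminal, so one may replace the path by the walk along $C$, reducing to the previous case. Your proposed resolutions are too vague to constitute this argument.

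\textbf{The reverse direction.} Your claim that a $W$-terminal-only endpoint would force the $r$-worth in $w$ strictly below $d(b'')$ is not justified as stated: with your west-shift the transformed path has $r$-worth exactly $d(b'')$, not less, so there is no immediate contradiction. One can rescue it by extending the transformed path one further step along $C$ and checking that either this drops the $r$-worth (contradiction) or the stream cell $S^{(i+1)}$ lies south of the channel ball, forcing $N$-terminality after all---but this needs to be written out. The paper avoids this by using the \emph{south}-shift $C'\to C$ (mapping $b'\mapsto b$ rather than $b'\mapsto b''$), which in the southwest-exit case gives a path from $b$ of $r$-worth $d'(b')<d(b)$, an immediate contradiction; and in the northeast-exit case, if the endpoint is $W$-terminal, one reroutes $p'$ to stay on $C'$ (the $C'$-ball in the same zig-zag is also $W$-terminal), reducing to the on-channel case.
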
 
\begin{proof}
Suppose $d'(b') = d(b'')$. Consider a reverse path $p'=(b_0'=b', b_1',\dots, b_k')$ in normal form (with respect to the channel $C'$) of minimal $r$-worth. For each $i$ with $b_i'\in C'$, let $b_i$ be the ball of $C$ directly south of $b_i'$. There are three cases depending on whether $b_k'$ is on $C'$, southwest of $C'$ or northeast of $C'$.  

Suppose $b_k'$ is on $C'$. Consider the reverse path $p=(b_0=b, b_1,\dots, b_k)$. Let $d_0$ be the stream numbering; we don't need to make a distinction between $w$ and $w'$ since for balls present in both partial permutations the stream numbering is the same. By Remark \ref{rmk:last ball is terminal}, $b_k'$ is terminal, so we have $d_0(b_k') = d'(b_k') = d(b_k'') = d(b_{k-1})$. So $S^{(d_0(b_k') + 1)}$ is not northwest of $b_k'$ and hence not northwest of $b_{k-1}$. The path $p$ has the same number of steps as the path $p'$, so by the assumption that $d(b) > d'(b')$, we have $d_0(b_k) > d_0(b_k')$. Hence the element of the stream $S^{(d_0(b_{k})+1)}$ must be northwest of $b_{k}$. So $S^{(d_0(b_{k})+1)}$ is south of $b_{k-1}$, and hence $b_{k-1}$ is $N$-terminal.

Suppose $b_k'$ is southwest of $C'$. Let $h$ be the index of the last ball $b_h'$ of $p'$ on $C'$; so the balls $b_{h+1}',\ldots, b_k'$ are strictly southwest of $C'$. We know that $b_{h+1}'$ is southeast of $b_h'$ and southwest of $C'$. As shown in the proof of Lemma \ref{lem:d is the sw numbering}, this implies $b_{h+1}'$ is southeast of $b_h$. Then $(b_0=b,b_1,\dots, b_h, b_{h+1}',\dots, b_k')$ is a reverse path for $w$ starting at $b$ and having the same $r$-worth as $p'$. This contradicts the assumption that $d(b) (= d(b'') + 1) = d'(b')+1$.

Suppose $b_k'$ is northeast of $C'$. Let $h$ be the index of the last ball $b_h'$ of $p'$ on $C'$. Then $p=(b_0=b,b_1,\dots, b_{h-1}, b_{h+1}',\dots, b_k')$ is a reverse path for $w$ starting at $b$ and having $r$-worth one greater than than of $p'$ (we have no guarantee that $b_{h+1}'$ is south of $b_h$, so we could not include $b_h$ in the path). Since $d(b) = d'(b')+1$, Lemma \ref{lem:minimal r worth} guarantees that $p$ is a minimal $r$-worth reverse path from $b$, and thus $b_k'$ is terminal for $w$ (and of course it was terminal for $w'$). If it were $W$-terminal, then the element of $C'$ in its zig-zag would also be $W$-terminal. Thus we could have chosen $p'$ to stay on $C'$, reducing the problem to a previous case. This proves the first half of the statement.

Now we prove that if $C$ is $N$-terminating then $d'(b') = d(b'')$. Consider a minimal $r$-worth reverse path $p=(b_0 = b'',b_1,\dots,b_k)$ from $b''$ in normal form such that $b_k$ is $N$-terminal. For each $i$ with $b_i\in C$, let $b_i'''$ be the ball directly east of $b_i$. If $b_k\in C$ then, since $S^{(d(b_k))}$ is south of $b_k$, we have $d'(b_k''')\leqslant d(b_k)$ and Lemma \ref{lem: backward numbering pm1} finishes the proof. If $b_k$ is strictly southwest of $C$, then there is a $N$-terminal ball on $C$, so one can replace $p$ by the reverse path along the channel and reduce the problem to the previous case.  Finally, if $b_k$ is strictly northeast of $C$, let $i$ be the largest index such that $b_i\in C$. Then $(b_0'''=b',b_1',\dots,b_i''',b_{i+1},\dots, b_k)$ is a reverse path in $w'$ starting at $b'$ and having $r$-worth equal to $d(b'')$. Hence $d'(b')\leqslant d(b'')$ and Lemma \ref{lem: backward numbering pm1} again finishes the proof.
\end{proof}

Combining the lemma with its reflection in the main diagonal yields the following corollary.
\begin{cor}
\label{cor: terminating sides}
The channel $C$ is $N$-terminating if and only if $C'$ is not $W$-terminating.
\end{cor}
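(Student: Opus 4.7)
The plan is to combine Lemma \ref{lem: correct backward numbering} with its reflection in the main diagonal. Under such a reflection the directions north/south swap with west/east, $N$-terminating swaps with $W$-terminating, and the channel shift $\sh{1}$ corresponds (in the original shift convention) to $\sh{-1}$. The reflected analog of Lemma \ref{lem: correct backward numbering}, whose proof is obtained by the same argument with the roles of rows and columns exchanged, reads: for a partial permutation $\bar w$ with a compatible stream and any channel $\bar C$ of $\bar w$, setting $\bar w^- = \bar w\sh{-1}_{\bar C}$ and $\bar C^- = \bar C\sh{-1}$, if $\bar b \in \bar C$ while $\bar b^\star \in \bar C^-$ is directly west of $\bar b$ and $\bar b^{\star\star} \in \bar C$ is directly north of $\bar b^\star$, then $\bar C$ is $W$-terminating if and only if $\bar d^-(\bar b^\star) = \bar d(\bar b^{\star\star})$, where $\bar d$ and $\bar d^-$ denote the backward numberings of $\bar w$ and $\bar w^-$.

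I would then apply this reflected statement with $\bar w = w'$ and $\bar C = C'$. The lemma preceding Lemma \ref{lem: correct backward numbering} shows that $C'$ is the southwest channel of its river in $w'$, so in particular it is a channel of $w'$. Moreover $w'\sh{-1}_{C'} = w$ and $C'\sh{-1} = C$, so $\bar w^- = w$, $\bar C^- = C$, $\bar d = d'$, and $\bar d^- = d$. Taking $\bar b = b'$, the ball $\bar b^\star$ is forced to be the ball of $C$ in the row of $b'$ lying directly to its west, which is precisely $b''$. The ball $\bar b^{\star\star}$ is the ball of $C'$ in the column of $b''$ lying directly to its north; call it $b'_{\mathrm{pre}}$. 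Geometrically, $b'_{\mathrm{pre}}$ is the immediate northwestern neighbor of $b'$ along $C'$.

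The reflected lemma thus yields: $C'$ is $W$-terminating if and only if $d(b'') = d'(b'_{\mathrm{pre}})$. Since $C'$ is a channel of $w'$, Proposition \ref{prop:backward numbers channel consecutively} guarantees that $d'$ numbers $C'$ by consecutive integers, so $d'(b'_{\mathrm{pre}}) = d'(b') - 1$. Rearranging, $C'$ is $W$-terminating if and only if $d'(b') = d(b'') + 1 = d(b)$.

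Combining this with Lemma \ref{lem: backward numbering pm1}, which asserts $d'(b') \in \{d(b), d(b'')\}$ with $d(b) - d(b'') = 1$, we see that $C'$ is $W$-terminating if and only if $d'(b') \neq d(b'')$. By Lemma \ref{lem: correct backward numbering}, the equation $d'(b') = d(b'')$ characterizes when $C$ is $N$-terminating, so the two characterizations are contrapositives of each other. The only delicate point is the bookkeeping in the reflected lemma—identifying $\bar b^\star$ with $b''$ and $\bar b^{\star\star}$ with $b'_{\mathrm{pre}}$—after which the corollary follows from a short chain of equivalences.
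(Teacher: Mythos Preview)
Your argument is correct and is exactly what the paper intends by ``combining the lemma with its reflection in the main diagonal'': you apply the reflected version of Lemma~\ref{lem: correct backward numbering} to $(\bar w,\bar C)=(w',C')$, identify $\bar b^\star=b''$ and $\bar b^{\star\star}=b'_{\mathrm{pre}}$, and then use Lemma~\ref{lem: backward numbering pm1} to turn the two numerical characterizations into contrapositives. The only thing the paper leaves implicit and that you spell out is the bookkeeping (including that $C'$ is the southwest channel of its river in $w'$, so the reflected hypothesis is satisfied, and that the widths of $P_w$ and $P_{w'}$ agree so Proposition~\ref{prop:backward numbers channel consecutively} applies to $d'$ on $C'$).
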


In the next lemma we describe the backward numbering strictly northeast of $C$; it turns out to be the same for $w$ and $w'$. 

\begin{lemma}
\label{lem:ne of C}
Suppose $C$ is not $N$-terminating. Then for any ball $b\in\B_w$ strictly northeast of $C$, $b$ is also a ball of $w'$ and $d'(b) = d(b)$.
\end{lemma}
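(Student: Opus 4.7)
A ball $b\in\B_w$ strictly northeast of $C$ lies in $\B_{w'}$ and is strictly northeast of $C'$ as well, by the forbidden-region analysis from the proof of Lemma \ref{lem:d is the sw numbering} (the same shaded regions $\mathcal{M}, \mathcal{N}$ that were forbidden balls of $w$ are forbidden balls of $w'$). We establish $d(b) = d'(b)$ by two inequalities, using Lemma \ref{lem:minimal r worth} which expresses each backward numbering as the minimum $r$-worth over reverse paths originating at $b$.

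For $d'(b)\leqslant d(b)$, start with a minimal-$r$-worth reverse path $p = (b_0=b, b_1,\dots, b_k)$ in $w$; reduce to normal form by Lemma \ref{lem:to nf} and eliminate bridges using Lemma \ref{lem:jump off the bridge}. If $p$ avoids $C$, it is a reverse path in $w'$ of the same $r$-worth, since every ball it visits is strictly northeast of both $C$ and $C'$. If $p$ fords $C$, Lemma \ref{lem:crossing paths not interesting 1} produces an equivalent path in $w'$. If $p$ skims or terminates on $C$, replace the $C$-portion $b_g,\ldots,b_{h-1}$ by the balls of $C'$ directly north of them; a direct geometric check, using the fact that a ball $b_{g-1}$ strictly northeast of $C$ with $b_g = (p_j, q_j)\in C$ the next ball of the path necessarily has row strictly less than $p_{j-1}$, verifies that the modified sequence is a valid reverse path in $w'$. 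Its $r$-worth equals that of $p$ in the skimming case and is no larger when $p$ terminates on $C$ (since the replacement endpoint is weakly northwest of the original, so its stream numbering is no greater). Hence $d'(b)\leqslant d(b)$.

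For $d(b)\leqslant d'(b)$ we proceed analogously with a minimal-$r$-worth reverse path $p'$ in $w'$ in normal form without bridges. Paths avoiding $C'$ are already valid in $w$, and fording paths are handled by Lemma \ref{lem:crossing paths not interesting 1}. The main obstacle lies in the skimming case: replacing each $C'$-ball by the ball of $C$ directly south of it shifts the replacement downward, which can violate the northwest-southeast strictness needed for the subsequent step of the path when that step reenters the region strictly northeast of $C'$. Here we crucially invoke the hypothesis that $C$ is not $N$-terminating, which by Corollary \ref{cor: terminating sides} is equivalent to $C'$ being $W$-terminating. Mimicking the terminal-ball analysis in the proof of Lemma \ref{lem: correct backward numbering}, the $W$-terminating structure of $C'$ forces a minimal-$r$-worth reverse path in $w'$ to be modifiable, by prolonging along $C'$ to a $W$-terminal ball and exiting to the southwest, into one that either avoids $C'$ or fords it with the same $r$-worth. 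The main technical obstacle is precisely this last modification: ensuring that the $W$-terminating hypothesis can be applied uniformly to every minimal-$r$-worth path in $w'$ originating at $b$, so that the forbidden skim-and-exit-northeast configuration is never the unique minimum.
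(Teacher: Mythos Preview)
Your argument for $d'(b)\leqslant d(b)$ is fine, but the reverse inequality has a genuine gap exactly where you flag it: when a minimal-$r$-worth path in $w'$ skims $C'$ and exits back to the northeast, replacing the $C'$-segment by the balls of $C$ directly south can break the strict-northwest condition at the exit step, and your appeal to the $W$-terminating property of $C'$ does not actually eliminate this configuration. You end by stating what would need to be shown rather than showing it.

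The paper sidesteps this entire case analysis. Once a normal-form reverse path (in either permutation) first meets the channel at index $h$, one does not try to translate the remainder of the path; instead one uses Lemma~\ref{lem: correct backward numbering} directly. That lemma already converts the hypothesis ``$C$ is not $N$-terminating'' into the equality $d(b_h)=d'(b_h')$ for corresponding balls $b_h\in C$ and $b_h'\in C'$ (with $b_h'$ directly north of $b_h$). One then replaces the tail beyond index $h$ by a \emph{fresh} minimal-$r$-worth path in the other permutation starting from $b_h'$ (resp.\ $b_h$). The spliced path has $r$-worth $d'(b_h')-h=d(b_h)-h$, which is at most the $r$-worth of the original; the step from $b_{h-1}$ to the new ball is a one-line geometric check in each direction. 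This makes the two inequalities genuinely symmetric and avoids ever having to control what happens after the path leaves the channel.
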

\begin{proof}
Let $b\in\B_w$ be strictly northeast of $C$; of course $b$ is a ball of $w'$ since $w'$ is obtained from $w$ by shifting $C$. We want to show that for any reverse path (in normal form) in $w$ starting at $b$, there exists a reverse path in $w'$ of at most the same $r$-worth, and vice versa. We omit the second part of the proof since it is nearly identical to the first one.

Consider a reverse path $p$ (in normal form) starting at $b$ in $w$. If the path does not intersect $C$, then the same path has the same $r$-worth in $w'$. Thus we now assume $p$ intersects $C$.

Suppose $p = (b_0=b,b_1,\dots, b_k)$. Let $h$ be the first index such that $b_h\in C$. Let $b_h'$ be the ball of $C'$ directly north of $b_h$. Note that $b_{h-1}$ is strictly northeast of $C$, and every ball which is northeast of $C$ and northwest of $b_h$ is also northwest of $b_h'$. Choose a path $(b_h',\dots, b_l')$ of smallest $r$-worth in $w'$. Since $d(b_h) = d'(b_h')$, the $r$-worth of the reverse path $(b_0,\dots, b_{h-1}, b_h',\dots, b_l')$ is at most the same as the $r$-worth of $p$.
\end{proof}

In the next three statements we describe the backward numbering strictly southwest of $\widetilde{C}$ (recall the definition in Theorem \ref{thm:how backward numbering changes}); it turns out to be the same for $w$ and $w'$. 

\begin{lemma}
\label{lem: southwest of Ctilde}
Suppose $C$ is not $N$-terminating and $b$ is some ball southwest of $\widetilde{C}$. Then there exists a path of minimal $r$-worth such that every element of the path is southwest of $\widetilde{C}$.
\end{lemma}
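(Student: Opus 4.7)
The plan is to start with any minimal $r$-worth reverse path $p = (b_0 = b, b_1, \ldots, b_k)$ from $b$ and modify it, while preserving $r$-worth, until every $b_i$ lies southwest of $\widetilde{C}$. By Remark \ref{rmk:last ball is terminal}, the endpoint $b_k$ is terminal with respect to $S$. If $p$ already stays southwest of $\widetilde{C}$, there is nothing to do, so assume some $b_i$ is strictly northeast of $\widetilde{C}$.

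The first step is a normal-form reduction for $p$ with respect to $\widetilde{C}$, analogous to Lemmas \ref{lem:to nf} and \ref{lem:jump off the bridge}, but using the fact that $\widetilde{C}$ is a channel of $\bk{S}{w}$ whose cells are inner corner-posts of the zig-zags of the backward step on $w$. For each zig-zag $Z_i$ traversed by $\widetilde{C}$, the outer corner-post of $Z_i$ directly south and the outer corner-post directly west of the $\widetilde{C}$-cell are balls of $w$ that are (weakly) southwest of $\widetilde{C}$. I would use these ``boundary balls'' to replace any stretch of $p$ that lies northeast of $\widetilde{C}$ by a stretch of equal length that hugs $\widetilde{C}$ from the southwest side. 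This reduces the problem to the case that $p$ crosses $\widetilde{C}$ at most once in the weak sense: either $p$ stays southwest the whole time (done), or there is an index $j$ with $b_0,\ldots,b_{j-1}$ southwest and $b_j,\ldots,b_k$ strictly northeast of $\widetilde{C}$.

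In the remaining case, the terminal ball $b_k$ lies strictly northeast of $\widetilde{C}$. Here the hypothesis that $C$ is not $N$-terminating enters. By Lemma \ref{lem:channel_is_northeast_indexing} the channel $\widetilde{C}$ is the northeast channel of its river, so any river of $\bk{S}{w}$ sitting between $\widetilde{C}$ and $C$ corresponds (via the interlacing structure used in Section \ref{sec:apps to channels}) to a river of $w$ weakly between $\widetilde{C}$ and $C$. The assumption that $C$ is not $N$-terminating, together with Lemma \ref{lem:one purple river} and Corollary \ref{cor:W and N}, forces every river of $w$ in this intermediate region to be $W$-terminating but not $N$-terminating. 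Therefore $b_k$ must in fact be $W$-terminal, and the $W$-terminating reverse path that witnesses this can be pulled back across $\widetilde{C}$ (using the same boundary-ball construction as above) to produce a $W$-terminal ball on the southwest side of $\widetilde{C}$ with the same stream numbering as $b_k$. Replacing the tail $(b_j,\ldots,b_k)$ by the corresponding rerouted tail yields a reverse path of the same length ending at a ball with equal stream numbering, hence of the same $r$-worth, and lying entirely southwest of $\widetilde{C}$.

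I expect the main obstacle to be the second step: making precise the claim that a $W$-terminating reverse path from a ball $b_k$ northeast of $\widetilde{C}$ can be transported to an analogous path on the southwest side without increasing the number of steps or the stream numbering of the endpoint. The routine rerouting argument from the first step will handle the interior of such a path, but verifying that the final $W$-terminal ball on the southwest side has stream numbering no greater than $d_0(b_k)$ requires careful bookkeeping with the zig-zag structure of the backward step and the fact that $\widetilde{C}$ is numbered by consecutive integers (Proposition \ref{prop:backward numbers channel consecutively}).
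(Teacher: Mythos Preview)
Your approach differs substantially from the paper's, and it has genuine gaps.

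The paper does not start from an arbitrary minimal $r$-worth path and try to push it southwest. Instead it \emph{constructs} the desired path directly: take the reverse semi-bounded walk from $b$ that stays southwest of $\widetilde{C}$ (in the sense of Remark~\ref{rmk:semi-bounded walk}, with $\bk{S}{w}$ playing the role of $w$ and $w$ that of $\fwC{d}{w}$). This walk visits consecutive zig-zags. If it is forced to stop, the last ball is $W$-terminal and we are done. Otherwise it reaches a channel $B$ of $w$ lying southwest of $\widetilde{C}$; since $C$ is not $N$-terminating, the ball $b_k\in B$ is $W$-terminating. From $b_k$ one then takes a minimal $r$-worth reverse path in normal form with respect to $B$, and because $B$ is $W$-terminating this path can be kept weakly southwest of $B$. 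The concatenation visits consecutive zig-zags up to $b_k$ and is minimal thereafter, hence has $r$-worth exactly $d(b)$.

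Your plan runs into two real problems. First, your normal-form reduction is with respect to $\widetilde{C}$, a channel of $\bk{S}{w}$, not of $w$. Lemmas~\ref{lem:to nf} and~\ref{lem:jump off the bridge} work by replacing a stretch of the path by a stretch \emph{along the reference channel}, which requires that channel to consist of balls of $w$. Your ``boundary balls'' (outer corner-posts adjacent to $\widetilde{C}$) do not obviously form a $\leqslant_{SE}$-chain, so the replacement argument does not transfer.

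Second, after your reduction the terminal ball $b_k$ lies strictly northeast of $\widetilde{C}$, but nothing prevents it from lying northeast of $C$ as well. In that region a terminal ball can perfectly well be $N$-terminal, so your conclusion that $b_k$ is $W$-terminal is unjustified. Your appeal to Lemma~\ref{lem:one purple river} and Corollary~\ref{cor:W and N} only constrains rivers in the strip between $\widetilde{C}$ and $C$, not arbitrary terminal balls possibly beyond $C$. And even if $b_k$ were $W$-terminal, there is no mechanism guaranteeing a $W$-terminal ball southwest of $\widetilde{C}$ with the same stream numbering; the ``pull back'' you flag as the main obstacle is not just delicate bookkeeping but lacks a clear candidate construction.

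The paper's approach sidesteps both issues: the semi-bounded walk never leaves the southwest side, so no crossing analysis against $\widetilde{C}$ is needed; and once a channel $B$ of $w$ is reached, the existing normal-form lemmas apply verbatim.
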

\begin{proof}
Suppose $b$ is a ball of $w$ southwest of $\widetilde{C}$. Consider the semi-bounded reverse walk starting at $b$ and staying southwest of $\widetilde{C}$. If the walk is forced to stop, then we are done since it must stop at a $W$-terminal element. Otherwise, it necessarily reaches some channel $B$ of $w$; let $p=(b_0=b,b_1, \dots,b_k)$ be the initial part of the walk such that $b_k$ is the first element in $B$. Now $b_k$ is southwest of $C$ (in fact, southwest of $\widetilde{C}$), hence $b_k$ is $W$-terminating. Consider a reverse path $q$ (in normal form) starting at $b_k$ and having minimal $r$-worth. The path $q$ may be chosen to stay (weakly) southwest of $B$, since if it ended northeast of $B$, the ball of $B$ in the same zig-zag as the ending would be $W$-terminal, hence one could just follow $B$ to the terminal ball. The path formed by following $p$ and then following $q$ satisfies the desired properties.
\end{proof}

\begin{cor}
Suppose $C$ is not $N$-terminating and $b$ is some ball southwest of $\widetilde{C}$. Then $d'(b)\leqslant d(b)$.
\end{cor}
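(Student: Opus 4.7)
The plan is to apply the preceding lemma directly, then observe that the constructed path lives entirely in a region where $w$ and $w'$ agree.

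First, I would invoke Lemma \ref{lem: southwest of Ctilde} to obtain a reverse path $p = (b_0 = b, b_1, \ldots, b_k)$ in $w$ of minimal $r$-worth such that every $b_i$ lies southwest of $\widetilde{C}$. By Lemma \ref{lem:minimal r worth}, the $r$-worth of $p$ equals $d(b)$, i.e.\ $d_0(b_k) - k = d(b)$, where $d_0$ is the stream numbering with respect to $S$.

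Next I would verify that no $b_i$ lies on $C$. This uses the fact that in the setup of Theorem \ref{thm:how backward numbering changes}, $\widetilde{C}$ sits strictly southwest of $C$; the four bullets of that theorem are meant to partition $\B_w$, so the region ``southwest of $\widetilde{C}$'' is disjoint from the channel $C$. Since $w' = w\sh{1}_C$ is obtained from $w$ by replacing the balls of $C$ with those of $C' = C\sh{1}$ and leaving every other ball untouched, each $b_i$ is also a ball of $w'$. Consequently $p$ is a valid reverse path in $w'$.

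Finally, the stream numbering $d_0$ depends only on the stream $S$ and the cell positions of the balls under consideration, not on whether those cells are viewed as balls of $w$ or of $w'$. Hence the $r$-worth of $p$ computed in $w'$ equals its $r$-worth computed in $w$, namely $d(b)$. Applying Lemma \ref{lem:minimal r worth} to $w'$ then gives
$$d'(b) \;\leqslant\; d_0(b_k) - k \;=\; d(b),$$
which is the desired inequality. The only non-bookkeeping step is the disjointness claim in the second paragraph, which I expect to be the main (minor) obstacle; it should follow immediately from the intended partition of $\B_w$ into the four regions used in Theorem \ref{thm:how backward numbering changes}.
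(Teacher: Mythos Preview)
Your proof is correct and follows exactly the same approach as the paper: invoke Lemma \ref{lem: southwest of Ctilde} to get a minimal $r$-worth path staying southwest of $\widetilde{C}$, then observe that such a path is also a path in $w'$ (since $w$ and $w'$ differ only on $C$, which lies northeast of $\widetilde{C}$). The paper's proof is the two-sentence version of what you wrote; your additional bookkeeping about the stream numbering and the disjointness of the region from $C$ is correct and is precisely what the paper's terse ``Such path is also a path in $w'$'' leaves implicit.
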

\begin{proof}
By Lemma \ref{lem: southwest of Ctilde}, there is a path of minimal $r$-worth (in $w$) southwest of $\widetilde{C}$. Such path is also a path in $w'$.
\end{proof}

\begin{lemma}
\label{lem:sw of Ctilde}
Suppose $C$ is not $N$-terminating and $b$ is some ball southwest of $\widetilde{C}$. Then $d'(b) = d(b)$.
\end{lemma}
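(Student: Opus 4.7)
The preceding corollary shows $d'(b) \leq d(b)$, so the task is to prove the reverse inequality $d(b) \leq d'(b)$. The plan is to take a minimum $r$-worth reverse path for $w'$ starting at $b$ (as guaranteed by Lemma \ref{lem:minimal r worth}) and modify it into a reverse path for $w$ of no greater $r$-worth.

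Choose a minimum $r$-worth reverse path $p' = (b_0 = b, b_1, \ldots, b_k)$ in $w'$, so that its $r$-worth equals $d'(b)$; by Lemma \ref{lem:to nf} we may assume $p'$ is in normal form with respect to $C'$. If $p'$ never touches $C'$, then every ball of $p'$ lies in $\B_{w'} \setminus C' = \B_w \setminus C$, so $p'$ is already a valid reverse path in $w$ of $r$-worth $d'(b)$, giving $d(b) \leq d'(b)$.

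Otherwise, let $g$ be the smallest index with $b_g \in C'$, and let $b_g^*$ be the ball of $C$ directly south of $b_g$. Since $b_{g-1}$ is strictly NW of $b_g$ and $b_g^*$ lies in the column of $b_g$ strictly below it, $b_{g-1}$ is strictly NW of $b_g^*$, so the step $b_{g-1} \to b_g^*$ is valid in $w$. By Lemmas \ref{lem: backward numbering pm1} and \ref{lem: correct backward numbering}, together with the hypothesis that $C$ is not $N$-terminating, we get $d(b_g^*) = d'(b_g)$. Monotonicity of $d'$ forces $d'(b_g) \geq d'(b) + g$, while the tail $(b_g, \ldots, b_k)$ of $p'$ has $r$-worth $d_0(b_k) - (k - g) = d'(b) + g$ and hence bounds $d'(b_g) \leq d'(b) + g$; thus $d'(b_g) = d'(b) + g$ and consequently $d(b_g^*) = d'(b) + g$. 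Now take a minimum $r$-worth reverse path $(b_g^*, \tilde{b}_1, \ldots, \tilde{b}_{l'})$ in $w$ and form the concatenation $(b_0, \ldots, b_{g-1}, b_g^*, \tilde{b}_1, \ldots, \tilde{b}_{l'})$. This is a reverse path in $w$ with $g + l'$ steps ending at $\tilde{b}_{l'}$; its $r$-worth equals $d_0(\tilde{b}_{l'}) - (g + l') = d(b_g^*) - g = d'(b)$, giving $d(b) \leq d'(b)$.

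The main obstacle is that the naive replacement of the entire $C'$-segment $b_g, \ldots, b_{h-1}$ by the corresponding $C$-segment breaks down at the exit step $b_{h-1} \to b_h$ when $p'$ leaves $C'$ going strictly northeast: the ball $b_h$ can land strictly between the rows of $b_{h-1}'$ on $C'$ and $b_{h-1}$ on $C$, spoiling the SE condition. The way around this is to abandon $p'$ the moment it first hits $C'$, dropping from $b_g \in C'$ to $b_g^* \in C$ and then picking up a completely fresh minimum $r$-worth tail in $w$; the crucial identity $d(b_g^*) = d'(b_g)$ supplied by Lemma \ref{lem: correct backward numbering}---where the hypothesis that $C$ is not $N$-terminating enters---is exactly what makes the arithmetic of $r$-worths balance.
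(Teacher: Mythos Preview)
Your proof is correct and follows essentially the same strategy as the paper's: truncate a minimal $r$-worth reverse path in $w'$ at its first contact with $C'$, jump to the corresponding ball on $C$, and append a fresh minimal $r$-worth tail in $w$. The only difference is that you drop \emph{south} from $b_g\in C'$ to $b_g^*\in C$ (yielding the equality $d(b_g^*)=d'(b_g)$), whereas the paper drops \emph{west} to the ball of $C$ in the same row (yielding the strict inequality $d(b_k)<d'(b_k')$); both choices give $d(b)\le d'(b)$, though the paper's version has the incidental consequence that a minimal $r$-worth path in $w'$ can never touch $C'$ at all.
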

\begin{proof}
Consider a reverse path $p'$(in normal form) for $w'$ which starts at $b$. We will show that there exists a reverse path $p$ for $w$ which starts at $b$ and has at most the same $r$-worth as $p'$. This implies $d(b)\leqslant d'(b)$, and an application of the previous corollary will finish the proof.

If $p'$ did not intersect $C'$ then there is nothing to prove; hence we assume that it did. Let $(b_0'=b, b_1',\dots, b_k')$ be the initial segment of $p'$ such that $b_k'$ is the first element of the path on $C'$ (hence $b_{k-1}'$ is southwest of $C'$). Let $b_k$ be the element of $C$ directly west of $b_k'$; then $d(b_k) < d'(b_k')$ by Lemmas \ref{lem: backward numbering pm1} and \ref{lem: correct backward numbering}. Let $(b_k, b_{k+1},\dots, b_l)$ be a reverse path of minimal $r$-worth starting at $b_k$. Then the reverse path $(b_0'=b, b_1',\dots, b_{k-1}',b_k,b_{k+1},\dots, b_l)$ is a reverse path in $w$ of smaller $r$-worth than $p'$.
\end{proof}

In fact, the proof of the above lemma shows that a minimal $r$-worth path in $w'$ must not intersect $C'$. Finally, in the next two results, we finish the description of the backward numbering of $w'$.

\begin{lemma}
Consider a ball $b\in\B_w$ which is northeast of $\widetilde{C}$ and strictly southwest of $C$. If $d(b) = k$, then $b$ is east of the element $c$ of $\widetilde{C}$ in $Z_{k+1}$. 
\end{lemma}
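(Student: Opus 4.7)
The plan is to argue by contradiction, assuming $b$ lies strictly west of $c$. The conclusion will follow from exhibiting a channel of $\bk{S}{w}$ that is strictly northeast of $\widetilde{C}$ yet still southwest of $C$, contradicting the choice of $\widetilde{C}$ as the northeast channel of $\bk{S}{w}$ southwest of $C$.

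First, I will exploit the structure of the reverse zig-zag $Z_{k+1}$. Since $c\in\widetilde{C}\cap Z_{k+1}\subseteq\B_{\bk{S}{w}}$, $c$ is an inner corner-post of $Z_{k+1}$, so there is an outer corner-post $b^{\star}\in\B_w\cap Z_{k+1}$ located either directly east or directly south of $c$. In either case $b^{\star}$ is weakly east of $c$, and the assumption that $b$ is strictly west of $c$ forces $b^{\star}$ to be strictly east of $b$. Also note that $d(b^{\star})=k+1>k=d(b)$.

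Next, I will locate the element $c_0:=\widetilde{C}\cap Z_k$ relative to $b$. Since $b$ is northeast of $\widetilde{C}$, some cell $c^{\star}\in\widetilde{C}$ is weakly southwest of $b$; but $\widetilde{C}$ is a $\leqslant_{SE}$-chain and any cell of $\widetilde{C}$ lying in $Z_{k+j}$ with $j\geqslant 1$ is strictly SE of $c$ (by Lemma \ref{lem:zigzags dont intersect} applied to consecutive intersections of $\widetilde{C}$ with the zig-zags), hence strictly east of $b$. Therefore $c^{\star}\in Z_j$ for some $j\leqslant k$, and following $\widetilde{C}$ from $c^{\star}$ to $c_0$ (which is weakly SE of $c^{\star}$ and weakly NW of $c$) we obtain that $c_0$ is itself weakly southwest of $b$. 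Since $c_0\in\B_{\bk{S}{w}}$ and $b\in\B_w$ are distinct, $c_0$ is strictly west of $b$ or strictly north of $b$.

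The final step is the splicing. The ball $b$ now sits strictly between $c_0$ and $c$ (strictly east of $c_0$, strictly west of $c$) and is strictly southwest of $C$. Using the zig-zag geometry near $Z_k$ and $Z_{k+1}$, I will identify inner corner-posts $c_0'\in\B_{\bk{S}{w}}\cap Z_k$ and $c'\in\B_{\bk{S}{w}}\cap Z_{k+1}$ that are strictly northeast of $c_0$ and $c$ respectively, still southwest of $C$, and $\leqslant_{SE}$-comparable with the rest of $\widetilde{C}$. Replacing $c_0,c$ (and all of their $(n,n)$-translates) in $\widetilde{C}$ by $c_0',c'$ yields a chain of cells of $\bk{S}{w}$ of the same density, hence a channel strictly northeast of $\widetilde{C}$ and still southwest of $C$, contradicting the maximality of $\widetilde{C}$. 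The main obstacle is this third step: one must exhibit the replacement cells $c_0',c'$ explicitly and verify that the spliced object really is a $\leqslant_{SE}$-chain of the required density. It is likely that the two subcases \emph{$b^{\star}$ directly east of $c$} and \emph{$b^{\star}$ directly south of $c$} need to be handled separately, since the available inner corner-posts of $Z_{k+1}$ northeast of $c$ differ in each case.
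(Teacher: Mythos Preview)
Your overall strategy---assume $b$ is west of $c$ and contradict the maximality of $\widetilde{C}$ among channels of $\bk{S}{w}$ southwest of $C$---matches the paper's. Your preliminary observations are also fine: in particular, $c_0:=\widetilde{C}\cap Z_k$ is southwest of $b$ (the paper gets this in one line: $b$ is northeast of $\widetilde{C}$, hence northeast of $c_0$, hence north of $c$).

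The genuine gap is your ``splicing'' step. Replacing just two translate-classes $c_0,c$ of $\widetilde{C}$ by cells $c_0',c'$ strictly to their northeast need not produce a $\leqslant_{SE}$-chain: the natural candidate $c_0'$ (the inner corner-post of $Z_k$ directly north of $b$) is indeed northeast of $c_0$ and northwest of $c$, but there is no reason it should be southeast of the $\widetilde{C}$-element in $Z_{k-1}$. So a local two-cell replacement does not close, and your proposal stops exactly at the point where the work lies. (Your ball $b^{\star}$ in $Z_{k+1}$ is also never used.)

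The paper avoids this by a global construction rather than a local splice. It takes $c'\in\B_{\bk{S}{w}}\cap Z_k$ to be the inner corner-post directly north of $b$, notes that $c'$ is strictly northeast of $c_0$ and strictly southwest of $C$, and then runs a \emph{walk} (bounded between $C$ and a channel of $w$ southwest of $C$, or semi-bounded by $C$ if no such channel exists) starting at $c'$. This walk visits consecutive zig-zags $Z_k,Z_{k-1},\ldots$ and, by the machinery of Section~\ref{sec:apps to channels} (Remark~\ref{rmk: can get to specific channel}), must land back on $\widetilde{C}$. Prepending the single step $c\to c'$ gives a path $(c,c',c_1,\dots,c_l)$ from $\widetilde{C}$ to $\widetilde{C}$ that visits consecutive zig-zags, stays southwest of $C$, and contains $c'$ strictly northeast of $\widetilde{C}$. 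Via Lemma~\ref{lem:high flow implies channel} this produces a channel southwest of $C$ and not southwest of $\widetilde{C}$, the desired contradiction. The walk machinery is precisely what handles the compatibility with $Z_{k-1},Z_{k-2},\dots$ that your splicing cannot guarantee.
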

\begin{proof}
Since $b$ is northeast of $\widetilde{C}$, it is northeast of the ball of $\widetilde{C}$ in $Z_k$, which is north of $c$. Thus $b$ is north of $c$. Suppose $b$ is west of $c$. Let $c'$ be the ball of $\bk{S}{w}$ directly north of $b$; $c'$ is northeast of $\widetilde{C}$ and (strictly) southwest of $C$. If there exists a channel of $w$ strictly southwest of $C$, let $(c_0=c', c_1, \dots, c_l)$ be the walk bounded by it and $C$ from $c'$ to $\widetilde{C}$. Otherwise, let $(c_0=c', c_1, \dots, c_l)$ be the walk semi-bounded by $C$ from $c'$ to $\widetilde{C}$. Then the path $(c, c_0, c_1,\dots, c_l)$ is a path from $\widetilde {C}$ to itself which visits consecutive zig-zags, stays southwest of $C$, and contains elements strictly northeast of $\widetilde{C}$. This contradicts the fact that $\widetilde{C}$ was the northwest channel of $\bk{S}{w}$ southwest of $C$.
\end{proof}

\begin{rmk}
\label{rmk:touching channels}
Notice an interesting consequence of the previous lemma. First, there cannot be $W$-terminal balls northeast of $\widetilde{C}$ and strictly southwest of $C$. If in every zig-zag there are balls in the region described, then of course there cannot be $W$-terminal balls northeast of $C$. Moreover, no reverse path which visits consecutive zig-zags can cross $\widetilde{C}$ in this case. So in this case $C$ could not have been $W$-terminating. Thus if $C$ is $W$-terminating then there exists a ball of $C$ directly east of a ball of $\widetilde{C}$.
\end{rmk}

\begin{lemma}
\label{lem:ne of Ctilde and sw of C}
Suppose $C$ is not $N$-terminating. Consider a ball $b\in\B_w$ which is northeast of $\widetilde{C}$ and strictly southwest of $C$. Then $d'(b) = d(b) + 1$.
\end{lemma}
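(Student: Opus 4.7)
\medskip

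The plan is to prove the equality by establishing the two inequalities $d'(b) \leq k+1$ and $d'(b) \geq k+1$, where $k = d(b)$, via the characterization of the backward numbering as the minimum $r$-worth over reverse paths (Lemma \ref{lem:minimal r worth}).

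For $d'(b) \leq k+1$, I would take a minimum-$r$-worth reverse path $p = (b_0=b, b_1, \ldots, b_h)$ in $w$ in normal form (using Lemma \ref{lem:to nf}), so its $r$-worth is $k$. The crux is that $p$ must intersect $C$: if not, then since $b_1 \in Z_{k+1}$ is SE of $b$, the same bounded/semi-bounded walk construction used in the proof of the preceding lemma (where $b$ was shown to be east of $c \in \widetilde{C}\cap Z_{k+1}$) would produce a reverse path from $\widetilde{C}$ to itself staying southwest of $C$ while visiting a ball strictly northeast of $\widetilde{C}$, contradicting $\widetilde{C}$ being the northeast channel of its river. Once $p$ intersects $C$ over balls $b_g,\ldots,b_{h'-1}\in C$, I would replace each $b_i$ on $C$ by the ball $b_i'$ of $C'$ directly north of $b_i$; joining this modified $C'$-segment to the pieces of $p$ before and after requires at most one extra insertion (since $C'$ sits one position northeast of $C$), producing a reverse path in $w'$ of $r$-worth at most $k+1$.

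For $d'(b) \geq k+1$, I would take a minimum-$r$-worth reverse path $p' = (b_0=b, b_1,\ldots,b_l)$ in $w'$, put it into normal form by Lemma \ref{lem:to nf}, and replace any bridges of $C'$ by fordings using Lemma \ref{lem:jump off the bridge}. If $p'$ stays strictly southwest of $C'$ throughout, then every ball of $p'$ is a ball of $w$ as well (none are on $C$, since they are in $w'$), so $p'$ itself would already be a reverse path in $w$; applying the same obstruction as in Part 1 (the ``no minimum path from $b$ stays strictly southwest of $C$'' obstruction, coming from $\widetilde C$'s maximality) rules this out \emph{unless} $p'$ in fact has a ball on $C$'s shifted region, and the fact that $C$ is not $N$-terminating forces the terminal ball to lie on the ``other side,'' adding an unavoidable step. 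If $p'$ fords $C'$ on balls $b_g',\ldots,b_{h'-1}'$, I would replace each $b_i'\in C'$ by the ball $b_i$ of $C$ directly south (symmetric to the Part 1 modification), obtaining a reverse path in $w$ of $r$-worth at most $(\text{$r$-worth of }p')-1$, since the forced extra step the construction in Part 1 needed is absent here. This gives $k = d(b)\le (\text{$r$-worth of }p')-1 = d'(b)-1$, as required.

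The main obstacle will be verifying carefully that the modified sequences are genuinely reverse paths (each ball strictly southeast of the previous) at the ``hinge'' cells where the path enters or exits $C$ or $C'$, and that the $r$-worth changes by exactly the predicted amount. The relevant geometric inputs are Lemma \ref{lem: key lemma} applied at the entry/exit hinges, the analysis of $C$ vs.\ $C'$ positions from the proof of Lemma \ref{lem:d is the sw numbering} (showing cells between $C$ and $C'$ contain no balls of $w$ or $w'$), the preceding lemma (placing $b$ east of $c \in \widetilde{C}\cap Z_{k+1}$), and Corollary \ref{cor: terminating sides} (translating the hypothesis ``$C$ is not $N$-terminating'' into ``$C'$ is $W$-terminating,'' which controls where terminal balls of $w'$ can sit relative to $C'$).
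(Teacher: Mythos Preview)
Your high-level strategy matches the paper's: prove both inequalities via Lemma~\ref{lem:minimal r worth}, show a minimal reverse path from $b$ must reach the channel $C$ (resp.\ $C'$), and then modify across the channel. But two execution details go wrong.

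First, your argument that $p$ must reach $C$ is not the right one. You invoke a walk construction contradicting the maximality of $\widetilde{C}$, but that is the argument of the \emph{preceding} lemma, not of this one. The clean argument here is: since $p$ visits consecutive zig-zags, the preceding lemma applied to each $b_i$ in the region shows $b_{i+1}$ stays northeast of $\widetilde{C}$ and $b_i$ is not $W$-terminal. If $p$ never left the region, its terminal ball $b_k$ would be $N$-terminal; but then the ball of $C$ in $Z_{d(b_k)}$ is further north, hence also $N$-terminal, contradicting that $C$ is not $N$-terminating. This is where the hypothesis is actually used.

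Second, your channel-segment replacement has a genuine hinge problem. Replacing each $b_i\in C$ by the $C'$-ball directly north moves the entry ball into the row of the \emph{previous} $C$-ball, and $b_{g-1}$ (strictly southwest of $C$) need not be strictly north of that row; ``one extra insertion'' does not fix this in any obvious way. The paper avoids the issue entirely: it replaces only the \emph{first} $C$-ball $b_i$ by the $C'$-ball $b_i'$ directly \emph{east} of it (so $b_{i-1}$ is automatically northwest of $b_i'$), and then appends a fresh minimal $r$-worth reverse path in $w'$ from $b_i'$. Since $d'(b_i')=d(b_i)+1$ by Lemmas~\ref{lem: backward numbering pm1} and~\ref{lem: correct backward numbering}, the concatenation has $r$-worth exactly $d(b)+1$. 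The second inequality is then literally symmetric (first $C'$-ball replaced by the $C$-ball directly west, fresh path in $w$); your Part~2 case analysis is unnecessary once you adopt this replacement.
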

\begin{proof}
We want to show that $d'(b)\leqslant d(b)+1$ and $d(b)\leqslant d'(b)-1$. In terms of reverse paths, we want to show that for every reverse path in $w$ from $b$ there exists a path in $w'$ whose $r$-worth is at most one greater, and for every reverse path in $w'$ from $b$ there exists a path in $w$ whose $r$-worth is at least one less. We only do the first of these since the second one is exactly the same. Consider a reverse path $p=(b_0 = b, b_1, \dots, b_k)$ in $w$ of minimal $r$-worth in normal form. 

As a path of minimal $r$-worth, $p$ visits consecutive zig-zags. Moreover, by the previous lemma, if $b_i$ is northeast of $\widetilde{C}$ and strictly southwest of $C$ then $b_{i+1}$ is northeast of $\widetilde{C}$ (provided $i<k$) and $b_i$ is not $W$-terminal. Hence some $b_i$ must be northeast of $C$. By Lemma \ref{lem:jump off the bridge} we only need to consider the case when $p$ intersects $C$. Suppose $b_i$ is the first ball of $p$ on $C$. Let $b_i'$ be the ball of $C'$ directly east of $b_i$. Let $(b_i', b_{i+1}',\dots, b_l')$ be a reverse path of minimal $r$-worth starting at $b_i'$. Then, since $d(b_i) = d'(b_i') - 1$ the path $(b_0,\dots, b_{i-1},b_i',\dots, b_l')$ has $r$-worth at most one greater than that of $p$.
\end{proof}

\begin{proof}[Proof of Theorem \ref{thm:how backward numbering changes}]
\label{pf:how backward numbering changes}
The theorem follows by Lemmas \ref{lem: correct backward numbering}, \ref{lem:ne of C}, \ref{lem:sw of Ctilde} and \ref{lem:ne of Ctilde and sw of C}.
\end{proof}

\subsubsection{Effect of moving a channel on the result of the backward step}

\begin{lemma}
Let $w$ be a partial permutation and $S$ be a compatible stream whose flow is equal to the width of the Shi poset of $w$. Choose a non-$N$-terminating river of $w$ and let $C$ be its northeast channel. Let $\widetilde{C}$ be the northeast channel of $\bk{S}{w}$ which is southwest of $C$. Then $\widetilde{C}$ is the northeast channel in its river.
\end{lemma}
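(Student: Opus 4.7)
I would argue by contradiction, patterning the proof on that of Lemma~\ref{lem:channel_is_northeast_indexing}. Suppose $\widetilde{C}$ is not the northeast channel of its river in $\bk{S}{w}$, and let $D$ be the northeast channel of that river; by passing if necessary to the cells that are northeast in $\widetilde{C}\cup D$ (which still form a channel in the same river, by the discussion around Proposition~\ref{prop:sw channel exists}), I may assume $D$ is disjoint from $\widetilde{C}$ and, in each zig-zag of the backward step, its unique ball lies strictly northeast of the corresponding ball of $\widetilde{C}$.

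The first step is to unpack the hypothesis. Since the river of $C$ in $w$ is non-$N$-terminating and every ball of $\B_w$ is either $N$-terminating or $W$-terminating, every ball of $C$ is $W$-terminating; by Remark~\ref{rmk:touching channels}, in fact some ball of $C$ lies directly east of some ball of $\widetilde{C}$. Next, because $\widetilde{C}$ is by hypothesis the northeast channel of $\bk{S}{w}$ southwest of $C$ and $D$ is strictly northeast of $\widetilde{C}$, the channel $D$ is not itself southwest of $C$. Hence there is a ball $d^{*}\in D$ having no ball of $C$ weakly northeast of it, and by periodicity we can choose $d^{*}$ in any desired translate class.

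The plan for the contradiction is to produce, from $d^{*}$, a $W$-terminating ball of $w$ strictly northeast of $C$ lying in a channel of $w$ in the same river as $C$. To do this, I would start a reverse path along $D$ through consecutive zig-zags from $d^{*}$, take the ball of $\B_w$ directly south of the next cell on $D$ after $d^{*}$, and then build a reverse funnel walk squeezed between this path of $D$ and $\widetilde{C}$ using Lemma~\ref{lem: key lemma}. This yields a reverse path of balls of $\B_w$ visiting consecutive zig-zags, starting at a ball strictly northeast of $C$. Choosing the starting zig-zag so that the walk terminates at a cell of $D$ adjacent (directly west of) a ball of $C$ (which exists by the touching property of Remark~\ref{rmk:touching channels}), the final ball of the walk is forced to be $W$-terminating, since it is southwest of a $W$-terminating ball of $w$ in the next zig-zag. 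Concatenating, the first ball of the walk is then also $W$-terminating by Definition~\ref{defn:back terminating}. Using Lemma~\ref{lem:high flow implies channel} to upgrade this reverse path into a channel of $w$, and noting that the construction keeps us in the same river as $C$ (since the channel just built consists entirely of $W$-terminating balls and ``touches'' $C$ via the reverse funnel walk in the sense of Lemma~\ref{lem:one purple river}), we obtain a channel of $w$ strictly northeast of $C$ in the same river, contradicting the choice of $C$.

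The main obstacle is the geometric one, of selecting the right translate of $d^{*}$ and verifying the reverse funnel walk really stays between $\widetilde{C}$ and $D$ all the way to a $W$-terminal end that sits strictly northeast of $C$. The available control is aggregate (``$D$ is not southwest of $C$'') rather than zig-zag-by-zig-zag, so locating the precise zig-zag at which the walk produces a ball northeast of $C$ — and then threading the walk back along $\widetilde{C}$ to meet a $W$-terminating ball of $C$ via Remark~\ref{rmk:touching channels} — is where the delicate case analysis lives, in close analogy with the corresponding arguments in the proofs of Lemma~\ref{lem:channel_is_northeast_indexing} and Lemma~\ref{lem:one purple river}.
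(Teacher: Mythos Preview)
Your approach diverges sharply from the paper's, which dispatches the lemma in a single sentence: adapt Theorem~\ref{thm:dist nonfirst channels} to arbitrary channel numberings, and observe that the indexing river of $\bk{S}{w}$ lies northeast of $C$. The point is that the numbering induced by the backward step is a channel numbering for the indexing river (Proposition~\ref{prop:backward numbering is channel}), and applying the forward step with this numbering recovers $w$ (Proposition~\ref{prop:fw bk is id}). The distance-preservation of Theorem~\ref{thm:dist nonfirst channels} then says that, for consecutive channels of $\bk{S}{w}$ both lying on the southwest side of the indexing river, their $\h$-distance equals the $\h$-distance of the corresponding consecutive channels of $w$. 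Since $C$ is non-$N$-terminating, every ball of $C$ is $W$-terminating, so by the description in Lemmas~\ref{lem:it_s_a_channel} and~\ref{lem:channel_is_northeast_indexing} the indexing river is indeed northeast of $C$; hence $\widetilde{C}$ and its northeast neighbour in $\bk{S}{w}$ are both on the southwest side, and the positive distance from $C$ to the next channel northeast of it in $w$ transfers directly to $\widetilde{C}$.

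Your geometric plan has genuine gaps. The most serious is the claim that the channel you build lies in the \emph{same river of $w$} as $C$. You invoke Lemma~\ref{lem:one purple river}, but that lemma is a statement about the backward numbering $d_w^{\jbk,S}$, whereas the river structure of $w$ is governed by the pseudometric $\h$ of Definition~\ref{def:distance between channels} on \emph{proper} numberings of $w$; a channel consisting of $W$-terminating balls that ``touches'' $C$ via a reverse path need not have $\h$-distance zero from $C$. Second, your appeal to Lemma~\ref{lem:high flow implies channel} is not justified: that lemma requires a path of length exactly $tm$ between a ball and its $t(n,n)$-translate, and your reverse funnel walk is a finite path ending near a chosen touching point of $C$ and $\widetilde{C}$, with no reason to have this length. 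Third, the assertion that the walk ``starts at a ball strictly northeast of $C$'' is not established: you take the ball of $w$ directly south of the cell of $D$ just after $d^*$, but the fact that $d^*$ has no ball of $C$ weakly northeast of it does not control the position of this southward ball relative to $C$. Finally, the reduction allowing you to assume $D$ is disjoint from $\widetilde{C}$ and strictly northeast of it in every zig-zag is not correct as stated; taking northeast balls of the union gives a channel that is weakly northeast of $\widetilde{C}$, not strictly so in each zig-zag.
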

\begin{proof}
This follows from an easy adaptation of Theorem \ref{thm:dist nonfirst channels} to the case of arbitrary channel numberings, noting that the indexing river here is northeast of $C$.
\end{proof}

\begin{prop}
\label{prop:shifting channel right}
Let $w$ be a partial permutation and $S$ be a compatible stream whose flow is equal to the width of the Shi poset of $w$. Choose a non-$N$-terminating river of $w$ and let $C$ be its northeast channel. Let $w'=w\sh{1}_C$. Let $C' = C\sh{1}$. Let $\widetilde{C}$ be the northeast channel of $\bk{S}{w}$ which is southwest of $C$. Then $\bk{S}{w'} = \bk{S}{w}\sh{1}_{\widetilde{C}}$.
\end{prop}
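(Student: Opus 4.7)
The plan is to derive this from Theorem \ref{thm:how backward numbering changes}, which already describes how the backward numbering with respect to $S$ changes when $w$ is replaced by $w'$, together with a direct geometric comparison of the zig-zags of the backward step.

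First, I would apply Theorem \ref{thm:how backward numbering changes} to pin down the relationship between $d = d_w^{\jbk,S}$ and $d' = d_{w'}^{\jbk,S}$: the numbering is preserved strictly NE of $C$ and strictly SW of $\widetilde{C}$; it shifts from $C$ to $C'$ via the ``directly north'' correspondence; and it is incremented by $1$ throughout the open band NE of $\widetilde{C}$ and strictly SW of $C$. This then pins down the outer-corner-post structure of the zig-zag $Z_i'$ of the backward step for $(w',S)$ in terms of the zig-zags $Z_i$ and $Z_{i-1}$ of $(w,S)$: they share outer corner-posts strictly NE of $C$ and strictly SW of $\widetilde{C}$; the unique $C$-ball of $Z_i$ is replaced in $Z_i'$ by its $C'$-neighbor directly above; and in the open band the outer corner-posts of $Z_i'$ are the outer corner-posts of $Z_{i-1}$ there (the balls of $w$ in the band with $d = i-1$ are precisely the balls of $w'$ with $d' = i$).

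Next, using the fact that the inner corner-posts of a reverse zig-zag are completely determined by its back corner-post and its ordered list of outer corner-posts (via the formula that the inner corner-post between consecutive outer corner-posts $a_j, a_{j+1}$ sits at the row of $a_{j+1}$ and the column of $a_j$, with the analogous rule at the endpoints involving the back corner-post), I would compare the inner corner-posts of $Z_i'$ with those of $Z_i$. The direct analysis should yield: (i) inner corner-posts strictly NE of $C$ or strictly SW of $\widetilde{C}$ are shared between $Z_i$ and $Z_i'$; (ii) within the interior of the open band, $Z_i$ and $Z_{i+1}'$ share outer corner-posts (by the $d\mapsto d+1$ shift), so their interior inner corner-posts in the band agree, and consequently the inner corner-posts of $Z_i'$ strictly inside the band agree with those of $Z_i$ there; and (iii) the unique inner corner-post of $Z_i$ on $\widetilde{C}$, namely $\widetilde{c}_i$, is replaced in $Z_i'$ by the corresponding cell of $\widetilde{C}\sh{1}$. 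Summing (i)--(iii) over all $i\in\Z$ then gives $\bk{S}{w'} = \bk{S}{w}\sh{1}_{\widetilde{C}}$, as required.

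The main obstacle is rigorously establishing (iii): showing that the replacement at $\widetilde{C}$ is precisely the channel shift $\widetilde{C} \to \widetilde{C}\sh{1}$, rather than some other nearby perturbation of the inner corner-post. This requires careful geometric bookkeeping at the ``seams'' where the zig-zag transitions between the three regions (SW of $\widetilde{C}$, the open band, NE of $C$), and invokes the fact (from the preceding lemma) that $\widetilde{C}$ is the NE channel of its river in $\bk{S}{w}$, together with structural observations such as Remark \ref{rmk:touching channels} which control the local position of balls of $C$ relative to balls of $\widetilde{C}$. Once (iii) is in hand, claims (i) and (ii) reduce to the routine observation that localized changes to the outer-corner-post list induce the expected localized changes to the inner-corner-post list via the explicit formula above.
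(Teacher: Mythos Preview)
Your approach is essentially the same as the paper's: both derive the proposition from Theorem~\ref{thm:how backward numbering changes} by a direct geometric comparison of the zig-zags for $(w,S)$ and $(w',S)$, tracking how the inner corner-posts change across the three regions (strictly NE of $C$, the open band, and SW of $\widetilde{C}$).

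Two small remarks. First, your claim (ii) has an indexing slip: from $Z_i$ and $Z_{i+1}'$ sharing outer corner-posts in the band you get that the interior inner corner-posts of $Z_{i+1}'$ agree with those of $Z_i$ (equivalently, $Z_i'$ with $Z_{i-1}$), not $Z_i'$ with $Z_i$. What you need is only the union over all $i$, which is unaffected by this shift, so the conclusion survives. Second, for the seam analysis in (iii) the paper extracts the needed positional constraints more directly than via Remark~\ref{rmk:touching channels}: it observes that monotonicity of the numbering in Theorem~\ref{thm:how backward numbering changes} forces (a) every ball on the zig-zag of $c\in\widetilde{C}$ strictly between $\widetilde{C}$ and $C$ to lie east of the next element $c'$ of $\widetilde{C}$, and (b) every ball NE of a ball $b\in C$ on the same zig-zag to lie north of the element of $C$ preceding $b$. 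These two facts are exactly what you need to see that at the $\widetilde{C}$-seam the unique change in inner corner-posts is $\widetilde{c}_i \mapsto (\widetilde{C}\sh{1})_i$, and they make the analogous constraint at the $C$-seam automatic. This is a cleaner route than going through the river property of $\widetilde{C}$ plus Remark~\ref{rmk:touching channels}.
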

\begin{proof}
Notice a couple of consequences of Theorem \ref{thm:how backward numbering changes}. Suppose $c$ and $c'$ are two consecutive elements of $\widetilde{C}$. Then all the elements on the zig-zag of $c$ strictly between $C$ and $\widetilde{C}$ lie east of $c'$ (otherwise the numbering described in the theorem would not be monotone). Similarly, any ball of $w$ in the zig-zag of $b\in C$ which is northeast of $b$ must lie north of the element of $C$ preceding $b$.

Now using Theorem \ref{thm:how backward numbering changes} we can analyze the zig-zags for $\bk{S}{w'}$, which can be seen in Figure 
\ref{fig:moving channel}. The zig-zags corresponding to $\bk{S}{w}$ are shown in black. The zig-zags corresponding to $\bk{S}{w'}$ are shown in green. The black balls are balls of $w$, the green balls are balls of $C'\subseteq \B_{w'}$, and the magenta balls are balls of $\widetilde{C}\subseteq\B_{\bk{S}{w}}$. As can be seen from the picture, northwest corners of the black zig-zags and green zig-zags almost always coincide, except the collection of black corners giving $\widetilde{C}$ gets replaced by a collection of green corners corresponding to a shift of $\widetilde{C}$ by 1. Hence the two partial permutations differ by a shift of $\widetilde{C}$, as desired.
\end{proof}

\begin{figure}
\centering\resizebox{.4\textwidth}{!}{\input{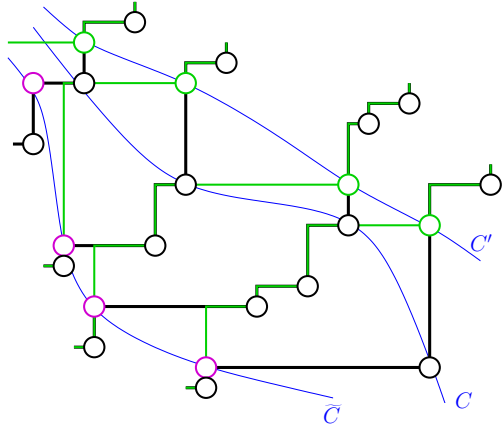}} 
\caption{The zig-zags of $\bk{S}{w}$ (in black) and of $\bk{S}{w'}$ (in green).}
\label{fig:moving channel}
\end{figure}

We have described what happens when we shift a non-$N$-terminating river by $1$. By reflecting in the main diagonal, we can thus describe what happens when we shift a non-$W$-terminating river by $-1$. To finish the description, we need to understand what happens when we shift an $N$-terminating river by $1$, or equivalently when we shift an $W$-terminating river by $-1$.

\begin{prop}
\label{prop:shifting channel by -1}
Let $w$ be a partial permutation and $S$ be a compatible stream whose flow is equal to the width of the Shi poset of $w$. Choose a $W$-terminating river of $w$ and let $C$ be its southwest channel. Let $w'=w\sh{-1}_C$. Let $C'=C\sh{-1}$. 
Let $B$ be the northeast channel of $\bk{S}{w}$ southwest of $C$; let $\widetilde{C}$ be the southwest channel of the river of $B$. Then $\bk{S}{w'}=\bk{S}{w}\sh{-1}_{\widetilde{C}}$.
\end{prop}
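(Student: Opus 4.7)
The plan is to mirror the proof of Proposition \ref{prop:shifting channel right}, substituting its hypotheses (NE channel, non-$N$-terminating, shift $+1$) with their analogs (SW channel, $W$-terminating, shift $-1$). The three stages parallel the three subsections preceding that proof.

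First, I would establish the analog of Lemma \ref{lem:d is the sw numbering}: the SW channel numbering of $w'$ should be given by
\[
d_{w'}^{C'}(b') = \begin{cases}
d_w^C(b') + 1 & \text{if } b' \text{ is strictly NE of } C',\\
d_w^C(b) & \text{if } b' \in C' \text{ and } b \in C \text{ is directly east of } b',\\
d_w^C(b') & \text{if } b' \text{ is strictly SW of } C'.
\end{cases}
\]
The crucial input, analogous to the ``no balls in $\mathcal{M}\cup\mathcal{N}$'' observation, is that no ball of $w$ or $w'$ lies strictly NE of $C'$ and strictly SW of $C$: otherwise such a ball could be combined with the other channels of $C$'s river to produce a channel strictly SW of $C$ in the same river, contradicting that $C$ is the SW channel. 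As a byproduct, $P_w$ and $P_{w'}$ have the same width.

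Second, I would establish the analog of Theorem \ref{thm:how backward numbering changes}, comparing $d := d_w^{\jbk,S}$ with $d' := d_{w'}^{\jbk,S}$. The claim I would prove: $d$ and $d'$ agree on balls strictly SW of $C$ and on balls strictly NE of $\widetilde{C}$; they differ by $d'(b) = d(b) - 1$ on balls NE of $\widetilde{C}$ and strictly SW of $C$; and an appropriate formula analogous to Lemma \ref{lem: correct backward numbering} holds on $C'$. The argument parallels Lemmas \ref{lem:to nf} through \ref{lem:ne of Ctilde and sw of C}: introduce normal forms for reverse paths crossing $C$ or $C'$, reduce bridging to fording, and analyze minimal $r$-worth paths case by case. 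Here the $W$-terminating hypothesis on $C$ (equivalently, by Corollary \ref{cor: terminating sides}, the non-$N$-terminating hypothesis on $C'$ in $w'$) supplies the handle needed in the analog of Lemma \ref{lem: correct backward numbering}. The zig-zag comparison carried out as on page \pageref{pf:how backward numbering changes} then shows that the only cells of $\bk{S}{w}$ whose positions change trace out precisely $\widetilde{C}$, shifted west by $1$.

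The main obstacle is the reverse-path analysis of the second step, specifically pinpointing $\widetilde{C}$ as the correct southwestern boundary of the region of change. In Proposition \ref{prop:shifting channel right} the analog of $\widetilde{C}$ was simply the nearest NE channel of $\bk{S}{w}$ lying SW of $C$; here the nearest such channel is $B$, but $\widetilde{C}$ is the SW channel of $B$'s river and may be strictly SW of $B$. The reason is that the channels of $B$'s river are all at distance $0$, so minimal $r$-worth reverse paths are insensitive to which of them they pass through; only $\widetilde{C}$, the SW-most channel of that river, provides a barrier whose crossing changes the $r$-worth by exactly the amount needed to force $d'(b) = d(b) - 1$ throughout the strip between $C$ and $\widetilde{C}$. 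We must also check that the target shift is well-formed: since $\widetilde{C}$ is the SW channel of its river in $\bk{S}{w}$, shifting it by $-1$ keeps it the SW channel of the corresponding river in $\bk{S}{w'}$, so that $\bk{S}{w}\sh{-1}_{\widetilde{C}}$ is an unambiguous partial permutation.
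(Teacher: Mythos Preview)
Your approach has a conceptual gap. The main-diagonal reflection of Proposition~\ref{prop:shifting channel right} handles the case of a \emph{non-$W$-terminating} river shifted by $-1$, not the $W$-terminating case treated here; the paper says this explicitly in the paragraph preceding the proposition. So ``mirroring the proof'' does not directly apply: the hypothesis you would need (non-$W$-terminating) is precisely the one you do not have. Relatedly, your claimed region decomposition is self-contradictory: you assert that $d$ and $d'$ agree on balls strictly southwest of $C$ \emph{and} on balls strictly northeast of $\widetilde{C}$, but then that they differ on balls northeast of $\widetilde{C}$ and strictly southwest of $C$ --- that strip lies in both of your ``agree'' regions.

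The paper's proof is entirely different and much shorter. By Corollary~\ref{cor: terminating sides} (with roles reversed, since here $C = C'\sh{1}$), the fact that $C$ is $W$-terminating in $w$ forces $C'$ to be non-$N$-terminating in $w'$. One also checks that $C'$ is the northeast channel of its river in $w'$. Hence Proposition~\ref{prop:shifting channel right} applies to $w'$ with channel $C'$: shifting $C'$ by $+1$ recovers $w$, and the proposition gives $\bk{S}{w} = \bk{S}{w'}\sh{1}_{B'}$ where $B'$ is the northeast channel of $\bk{S}{w'}$ southwest of $C'$. It then remains only to identify $B'\sh{1}$ with $\widetilde{C}$, which is a short river-intersection argument. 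No new reverse-path analysis is needed; the entire content is reduced to the already-proven forward case applied to $w'$.
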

\begin{proof}
By Corollary \ref{cor: terminating sides} we know that $C'$ is not $N$-terminating. Thus, given Proposition \ref{prop:shifting channel right}, we need to show that if $B'$ is the northwest channel of $\bk{S}{w'}$ southwest of $C'$, then $\widetilde{C} = B'\sh{1}$. We already know that $B'\sh{1}$ is the southwest channel of its river and that $\bk{S}{w} = \left(\bk{S}{w'}\right)\sh{1}_{B'}$. If the channel $B$ did not intersect $B'\sh{1}$ then we would have a contradiction with the definition of $B'$. So $B'$ is part of the river of $B$, as desired.
\end{proof}

We can also understand what happens when we shift a channel multiple times.
\begin{prop}
\label{prop:moving channels}
Suppose $w$ is a partial permutation, $S$ is a compatible stream whose flow is equal to the width of the Shi poset of $w$, and $C$ is the northeast channel of a river which is not $N$-terminating. Let $\widetilde{C}$ be the northeast channel of $\bk{S}{w}$ southwest of $C$. Let $C' = C\sh{1}$, $\widetilde{C}' = \widetilde{C}\sh{1}$, and $w' = w\sh{1}_C$. Let $C''$ be the northeast channel of the river of $w'$ containing $C'$, and let $\widetilde{C}''$ be the northwest channel of the river of $\bk{S}{w'}$ containing $\widetilde{C}'$. Let $w'' = w'\sh{1}_{C''}$. 

Then $$\bk{S}{w''} = \bk{S}{w'}\sh{1}_{\widetilde{C}''}.$$
\end{prop}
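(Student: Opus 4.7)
The plan is to derive the identity $\bk{S}{w''} = \bk{S}{w'}\sh{1}_{\widetilde{C}''}$ by a direct application of Proposition \ref{prop:shifting channel right} to the triple $(w', S, C'')$, with $C''$ playing the role that $C$ played in the original proposition and $\widetilde{C}''$ playing the role of $\widetilde{C}$. For this reduction to work, two hypotheses must be verified: (i) that $C''$ is the northeast channel of a non-$N$-terminating river of $w'$, and (ii) that $\widetilde{C}''$ coincides with the northeast channel of $\bk{S}{w'}$ lying southwest of $C''$. Being the northeast channel of its river in $w'$ is part of the definition of $C''$, so the real work is to establish the non-$N$-terminating property and the identification (ii).

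For (ii), I would exploit Proposition \ref{prop:shifting channel right} applied to the original triple $(w, S, C)$, which gives $\bk{S}{w'} = \bk{S}{w}\sh{1}_{\widetilde{C}}$. This means the channels of $\bk{S}{w'}$ are in bijection with those of $\bk{S}{w}$, with $\widetilde{C}$ replaced by $\widetilde{C}' = \widetilde{C}\sh{1}$, and the pseudometric pairings (and hence the rivers) are preserved under this shift. Since by hypothesis $\widetilde{C}$ is the northeast channel of $\bk{S}{w}$ southwest of $C$, no channel of $\bk{S}{w}$ lies strictly between $\widetilde{C}$ and $C$; transferring across the shift, no channel of $\bk{S}{w'}$ lies strictly between $\widetilde{C}'$ and $C'$. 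A channel of $\bk{S}{w'}$ southwest of $C''$ but strictly northeast of $\widetilde{C}'$ must then be one of the new channels that becomes sandwiched between $C'$ and $C''$ in $w'$; by Proposition \ref{prop: between channels a channel}, such a channel lies in the same river as $\widetilde{C}'$, so it is weakly southwest of $\widetilde{C}''$. This confirms that $\widetilde{C}''$ is the northeast channel of $\bk{S}{w'}$ southwest of $C''$.

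For (i), I would argue by contradiction: suppose some ball of the river of $C''$ in $w'$ is $N$-terminating with respect to $S$. All balls of a river share the same terminating classification (by the lemma preceding Lemma \ref{lem:one purple river}), so there exists a ball $b$ of this river, and a reverse path in $w'$ from $b$ through consecutive zig-zags to an $N$-terminal ball. I would then lift this reverse path back to a reverse path in $w$ starting at a ball of the river of $C$ in $w$, thereby contradicting the standing hypothesis that that river is not $N$-terminating. The lift uses Theorem \ref{thm:how backward numbering changes}, which localizes the difference between $d$ and $d'$ to the strip between $\widetilde{C}$ and $C$, together with the normal-form reductions of Lemmas \ref{lem:to nf} and \ref{lem:jump off the bridge} and the crossing-swap idea of Lemma \ref{lem:crossing paths not interesting 1}, which allow one to replace segments of the path that cross $C'$ by segments that cross $C$, with the $N$-terminal endpoint preserved.

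With (i) and (ii) established, Proposition \ref{prop:shifting channel right} applied to $(w', S, C'')$ delivers the stated equation immediately. The hardest step is clearly (i): transporting the $N$-terminating property (which is a statement about existence of certain reverse paths visiting consecutive zig-zags) across a channel shift requires careful bookkeeping, since the shift both removes balls (those of $C$) and inserts balls (those of $C'$), any of which could in principle serve as a step of a reverse path. The normal-form machinery developed earlier in this section, together with the geometric constraints recorded in Lemma \ref{lem:channel_is_northeast_indexing} and Remark \ref{rmk:touching channels}, should suffice.
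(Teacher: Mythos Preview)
Your reduction to Proposition \ref{prop:shifting channel right} applied to $(w',S,C'')$ is exactly what the paper does in one case, but your claim (i)---that the river of $C''$ in $w'$ is not $N$-terminating---is \emph{false in general}. The paper's proof splits into two cases precisely on whether $C'$ (and hence its river, which is the river of $C''$) is $N$-terminating, and the second case is nonempty: Corollary \ref{cor: terminating sides} only tells you that $C'$ is $W$-terminating (because $C$ is not $N$-terminating), but says nothing about whether $C'$ is $N$-terminating. So your proposed proof of (i) by lifting an $N$-terminating reverse path in $w'$ back to an $N$-terminating reverse path in $w$ starting on $C$ cannot succeed---there is no contradiction to be found, since $C'$ can genuinely be $N$-terminating while $C$ is not.

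In the $N$-terminating case the paper abandons Proposition \ref{prop:shifting channel right} entirely and instead uses the reflection of Proposition \ref{prop:shifting channel by -1} (shifting by $+1$ a $N$-terminating river). The argument then hinges on showing that $\widetilde{C}'$ lies in the indexing river of $\bk{S}{w'}$, which follows from Remark \ref{rmk:touching channels}: since $C$ was $W$-terminating, some ball of $C$ is directly east of a ball of $\widetilde{C}$, hence some ball of $C'$ is directly east of a ball of $\widetilde{C}'$, and such a ball is both $N$- and $W$-terminating. This places $\widetilde{C}'$ in the indexing river, and the northeast channel of that river is exactly $\widetilde{C}''$, so the reflected proposition gives the desired identity. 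Your proposal is missing this entire second branch.
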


\begin{proof} 
Suppose first that $C'$ is also not $N$-terminating; let $\widetilde{C}'$ be the result of shifting $\widetilde{C}$ by 1. Suppose we shift again the river containing $C'$; let $C''$ be the northeast channel of that river. One only needs to prove that $\widetilde{C}'$ is part of the same river as the northeast channel $\widetilde{C}''$ of $\bk{S}{w'}$ southwest of $C''$.

Since the indexing river is located northeast of $\widetilde{C}''$, it is sufficient to show that there exists a path from $\widetilde{C}''$ to $\widetilde{C}'$ which visits consecutive zig-zags. Now $C'$ and $C''$ are part of the same river, there exists a path from $C''$ to itself which intersects $C'$ such that the corresponding river numbering drops by 1 at each step. Since the backward numbering and the river numbering have the same period, it must be that the backward numbering also drops down by one at each step. Hence there is a path from $C''$ to $C'$ such that the backward numbering drops down by 1 at each step. Continue this path along $C'$. Then a (bounded or semi-bounded) walk leads from $\widetilde{C}''$ to $\widetilde{C}'$ and visits consecutive zig-zags. 

Now suppose $C'$ is $N$-terminating (and of course, still, $W$-terminating). Since $C$ contains a ball directly east of $\widetilde{C}$, $C'$ contains a ball directly east of a ball of $\widetilde{C}'$. Now this ball must be $N$- and $W$-terminating, so $\widetilde{C}'$ is part of the indexing river. Now any ball directly north of a ball of $C''$ is also part of the indexing river. Hence, using the reflection of Proposition \ref{prop:shifting channel by -1} in the main diagonal, we see that the channel to be shifted to get $\bk{S}{w''}$ is exactly the northwest channel of the indexing river, i.e. $\widetilde{C}''$.
\end{proof}

\subsection{Weyl symmetry}
\label{sec:weyl symmetry proofs}

We are now ready to tackle the proof of our last main result, namely Theorem \ref{thm:weyl}.

\begin{lemma}
Suppose $w$ is a partial permutation, $T$ is a compatible stream, and $S$ is a stream compatible with $w' := \bk{T}{w}$. Moreover suppose that $T$ is concurrent to $S$ (in particular, $T$ and $S$ have the same flow). Then the indexing river of $w'$ is both $N$-terminating and $W$-terminating with respect to $S$.
\end{lemma}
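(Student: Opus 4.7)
The plan is to prove the two statements ($N$-terminating and $W$-terminating) separately, noting that they are interchanged by reflection in the antidiagonal, so it suffices to establish the $W$-terminating claim in detail.

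First I would characterize the balls of the indexing river of $w'$ by the local adjacency condition extracted from the proof of Proposition \ref{prop:backward numbering is channel} and recorded in Remark \ref{rmk:indexing between green and blue}: a ball $b\in\B_{w'}$ lies in the indexing river if and only if, in the backward step $w\to w'$ with stream $T$, the ball of $w$ directly south of $b$ is $W$-terminating with respect to $T$ and the ball of $w$ directly east of $b$ is $N$-terminating with respect to $T$. Fix such a ball $b$ with induced numbering value $i$, and denote by $c$ the $W$-terminating ball of $w$ directly south of $b$; it satisfies $d_w^{\jbk,T}(c)=i$.

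Next I would lift a $W$-terminating reverse path of $c$ in $w$ to a reverse path of $b$ in $w'$. Choose a reverse path $(c=c_0,c_1,\dots,c_r)$ in $\B_w$ along which $d_w^{\jbk,T}$ increases by $1$ at each step and whose final ball $c_r$ is $W$-terminal with respect to $T$. For each $j$, the inner corner-post $b_j$ of the zig-zag $\widetilde{Z}_{i+j}$ (with back corner-post $T^{(i+j)}$) that lies directly north of $c_j$ is a ball of $w'$. Lemma \ref{lem: key lemma} applied step by step shows that the $b_j$ form a reverse path in $w'$, and each $b_j$ inherits the local adjacency characterization above, so lies in the indexing river. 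Combined with the fact that the induced channel numbering increases by $1$ along this path, I obtain a reverse path along channels of the indexing river.

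The heart of the argument, and the step I expect to be the main obstacle, is to show that the endpoint $b_r$ is $W$-terminal with respect to $S$, i.e.\ that $S^{(d_{w'}^{\jbk,S}(b_r)+1)}$ lies east of $b_r$ in the same row. This is where the concurrency hypothesis enters. I would apply Lemma \ref{lem:sw iff higher} and its reflection (together with the analysis in Section \ref{subsec:two streams}) to the partial permutation $t$ associated with $T$: concurrency of $T$ and $S$ means $T$ is at the same height as $S$, so in the backward step $t\to\bk{S}{t}$ the induced numbering is simultaneously the SW and the NE channel numbering. Translating this into the geometry of the zig-zag $Z_{i+r+1}'$ with back corner $S^{(i+r+1)}$ and outer corner $T^{(i+r+1)}$, the $W$-terminality of $c_r$ with respect to $T$ (namely $c_r$ lies west of $T^{(i+r+1)}$ in its row) together with the relative placement of $T^{(i+r+1)}$ and $S^{(i+r+1)}$ forced by concurrency yields that $b_r$ is west of $S^{(i+r+1)}$, and that the backward numbering with respect to $S$ coincides along the lifted path with $i+r$ (so the path realizes $W$-terminating, not just ends near a $W$-terminal position). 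The alternative approach here, if direct geometric bookkeeping becomes unwieldy, is to invoke the uniqueness statement of Proposition \ref{prop:unique concurrent} to replace $(w,T)$ by a shifted variant using the machinery of Section \ref{sec:moving streams proofs} (Theorem \ref{thm:moving streams} and Remark \ref{rmk:moving streams}) and reduce to the base case $w=\varnothing$, $w'=t$.

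Finally, the $N$-terminating claim follows by applying the entire argument after reflection in the antidiagonal, using the east-adjacent ball $c'$ of $w$ (which is $N$-terminating with respect to $T$) in place of $c$. Combining both directions shows that every ball of the indexing river is both $N$-terminating and $W$-terminating with respect to $S$, as desired.
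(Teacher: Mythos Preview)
Your approach has a genuine gap at exactly the point you flag as ``the main obstacle.'' To conclude that $b_r$ is $W$-terminal with respect to $S$ you need two facts: (i) that $b_r$ lies west of $S^{(d_{w'}^{\jbk,S}(b_r)+1)}$, and (ii) that $d_{w'}^{\jbk,S}(b_r)=i+r$. For (i), knowing $c_r$ is west of $T^{(i+r+1)}$ only tells you $b_r$ is west of $T^{(i+r+1)}$; since $S^{(i+r+1)}$ is \emph{northwest} of $T^{(i+r+1)}$, this does not place $b_r$ west of $S^{(i+r+1)}$. Concurrency does give special indices where $T^{(j)}$ is north of $S^{(j+1)}$, but you have no control over whether your particular terminal index $i+r$ is one of them. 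For (ii), you assert the backward numbering with respect to $S$ agrees with the induced numbering along the lifted path, but nothing in your argument establishes this; it is precisely the content that needs proving.

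The paper's proof bypasses all of this by working entirely in $w'$ and comparing two numberings: the numbering $d$ induced by the backward step via $T$, and the backward numbering $d'=d_{w'}^{\jbk,S}$. Since $S^{(i)}$ is northwest of $T^{(i)}$ (by the choice of numbering on $T$) and $T^{(i)}$ is northwest of every ball labeled $i$ by $d$, Remark~\ref{rmk:lower bound for backward numbering} gives $d\leqslant d'$ globally. Concurrency then supplies an index $i$ with $T^{(i)}$ north of $S^{(i+1)}$; the ball $b\in\B_{w'}$ directly east of $T^{(i)}$ has $d(b)=i$ and is north of $S^{(i+1)}$, forcing $d'(b)\leqslant i$ and hence $d'(b)=d(b)$. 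This $b$ is $N$-terminal for $S$ by construction. Now take a path from $b$ to the indexing river along which $d$ decreases by $1$ each step; monotonicity of $d'$ together with $d\leqslant d'$ and equality at $b$ forces $d=d'$ along the whole path, so the indexing river is $N$-terminating. Reflection gives $W$-terminating. No lifting from $w$ is needed, and the numbering equality you were missing is exactly what the global inequality $d\leqslant d'$ plus a single point of equality provides.
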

\begin{proof}
Choose a proper numbering $d_S$ of $S$; let $d_T$ be the backward numbering of $T$ with respect to $S$. Let $d$ be the induced numbering of $w'$ from the backward step with respect to $T$. Let $d'$ be the backward numbering of $w'$ with respect to $S$. By Lemma \ref{lem:backward is proper}, $d$ is a monotone numbering, and by construction, for each $i$, $S^{(i)}$ is northwest of all balls labeled $i$. So, by Remark \ref{rmk:lower bound for backward numbering} we know that $d(b)\leqslant d'(b)$ for all $b\in\B_{w'}$. 

Since $T$ is concurrent to $S$, we can choose $i$ such that $T^{(i)}$ is north of $S^{(i+1)}$. Consider $b\in\B_{w'}$ directly east of $T^{(i)}$. By definition of $d'$, $d'(b)\leqslant i = d(b)$, so $d'(b)= d(b)$. Thus $b$ is $N$-terminal with respect to $S$. Now choose a path $(b_0=b,b_1, \dots, b_k)$ such that $b_k$ is on the indexing river of $w'$ and $d$ decreases by $1$ at each step. Since $d'$ must also decrease at each step, we have $d(b_i) = d'(b_i)$ for all $i$. So the indexing river of $w'$ is $N$-terminating. By reflecting the arguments in the main diagonal, we see that the indexing river of $w'$ is $W$-terminating.
\end{proof}

\begin{lemma}
Suppose $w$ is a partial permutation and $S$ is a compatible stream whose flow is equal to the width of the Shi poset of $w$. Suppose $w$ has a river $R$ which is both $W$-terminating and $N$-terminating. Let $w'=w\sh{-1}_R$ and $w''=w\sh{1}_R$. Then $\bk{S}{w'}$ differs from $\bk{S}{w}$ by shifting the indexing river by $-1$, and $\bk{S}{w''}$ differs from $\bk{S}{w}$ by shifting the indexing river by $1$.
\end{lemma}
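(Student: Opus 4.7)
My plan is to treat the two cases separately. The \emph{$w'$ case} will follow directly from Proposition \ref{prop:shifting channel by -1}: since $R$ is $W$-terminating and $C := \mathrm{SW}(R)$ is its SW channel, we have $w' = w\sh{-1}_R = w\sh{-1}_C$, so that proposition will yield $\bk{S}{w'} = \bk{S}{w}\sh{-1}_{\widetilde{C}}$ for the channel $\widetilde{C}$ it specifies --- namely the SW channel of the river of $B$, where $B$ is the NE channel of $\bk{S}{w}$ southwest of $C$. The remaining task will be to identify $\widetilde{C}$ with the SW channel of the indexing river $\widetilde{R}$ of $\bk{S}{w}$.

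For the \emph{$w''$ case}, let $C := \mathrm{NE}(R)$, so that $w'' = w\sh{1}_C$. Proposition \ref{prop:shifting channel right} does not apply directly because $R$ is $N$-terminating. Instead I will apply Proposition \ref{prop:shifting channel by -1} to the pair $(w'', C\sh{1})$, exploiting the identity $w = w''\sh{-1}_{C\sh{1}}$. The reduction requires verifying that (i) $C\sh{1}$ is the SW channel of its river $R''$ in $w''$, and (ii) $R''$ is $W$-terminating with respect to $S$. For (i), I will adapt the argument of Lemma \ref{lem:d is the sw numbering}: the regions $\mathcal{M}$ and $\mathcal{N}$ between $C$ and $C\sh{1}$ are empty of balls of both $w$ and $w''$, so channels of $w''$ distinct from $C\sh{1}$ correspond to channels of $w$ distinct from $C$, and a channel of $w''$ weakly southwest of $C\sh{1}$ at distance zero would produce a channel of $w$ weakly northeast of $C$ at distance zero, contradicting $C = \mathrm{NE}(R)$. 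For (ii), the $W$-terminating hypothesis on $R$ will supply a reverse path in $w$ witnessing $W$-termination of $C$; translating it into $w''$, with the backward numbering adjusted via Lemma \ref{lem: correct backward numbering} (which decrements the numbering on $C\sh{1}$ by one relative to that on $C$), should yield a path witnessing $W$-termination of $C\sh{1}$ in $w''$. With both conditions in hand, Proposition \ref{prop:shifting channel by -1} will give $\bk{S}{w} = \bk{S}{w''}\sh{-1}_{\widetilde{C}}$, equivalently $\bk{S}{w''} = \bk{S}{w}\sh{1}_{\widetilde{D}}$ for the corresponding pre-image channel $\widetilde{D}$.

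The principal remaining step, and the main obstacle, will be the identification of $\widetilde{C}$ (in the $w'$ case) and $\widetilde{D}$ (in the $w''$ case) with the SW and NE channels, respectively, of the indexing river $\widetilde{R}$ of $\bk{S}{w}$. This is where the doubly-terminating hypothesis on $R$ becomes essential: by Lemma \ref{lem:one purple river}, the backward numbering of $w$ with respect to $S$ agrees with the channel numbering $d_w^C$ from any $C \in R$, so the zig-zag decomposition of the backward step --- and hence $\widetilde{R}$ itself --- is rigidly controlled by $R$. A careful comparison of zig-zags, guided by the structural results on $N$- and $W$-terminating balls (notably Lemma \ref{lem:channel_is_northeast_indexing} and Corollary \ref{cor:W and N}), should show that the ``NE channel southwest of $C$'' construction invoked in both applications of Proposition \ref{prop:shifting channel by -1} lands inside $\widetilde{R}$ and picks out the correct channel, thereby completing the proof.
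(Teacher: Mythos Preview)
Your handling of the $w'$ case is essentially the paper's approach: apply Proposition \ref{prop:shifting channel by -1} and then identify $\widetilde{C}$ with (the southwest channel of) the indexing river. The paper dispatches that identification in one line: the ball of $\bk{S}{w}$ directly west of any ball $c\in C$ has the ball $c$ (which is $N$-terminating) directly east and a $W$-terminating ball of $w$ directly south, so by the characterization in Remark~\ref{rmk:indexing between green and blue} it lies in the indexing river. Since that ball is also southwest of $C$, the channel $B$ (and hence $\widetilde{C}$) is in the indexing river. Your sketch via Lemma~\ref{lem:one purple river} and Lemma~\ref{lem:channel_is_northeast_indexing} is more circuitous but would get there.

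Your $w''$ case, however, has a genuine gap. You propose to apply Proposition~\ref{prop:shifting channel by -1} to the pair $(w'',\,C\sh{1})$, which requires that the river of $C\sh{1}$ in $w''$ be $W$-terminating. But this is precisely what Corollary~\ref{cor: terminating sides} rules out: since $C$ is $N$-terminating in $w$, the corollary says $C' = C\sh{1}$ is \emph{not} $W$-terminating in $w'' = w\sh{1}_C$. Your proposed translation of a $W$-termination witness from $w$ to $w''$ cannot succeed; indeed, Lemma~\ref{lem: correct backward numbering} (the decrement you invoke) is exactly the mechanism that makes $C'$ fail to be $W$-terminating. Note also that Theorem~\ref{thm:how backward numbering changes}, which would govern how the backward numbering changes off $C'$, explicitly assumes $C$ is non-$N$-terminating and so is unavailable here.

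The paper avoids this obstruction entirely: it observes that reflection in the main diagonal swaps $N$-terminating with $W$-terminating and swaps $\sh{1}$ with $\sh{-1}$, so the $w''$ statement is the main-diagonal reflection of the $w'$ statement and follows immediately. You should replace your $w''$ argument with this symmetry.
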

\begin{proof}
It is sufficient to prove the statement for $w'$; the one for $w''$ will follow by reflection in the main diagonal. Let $C$ be the southwest channel of $R$. We only need to show that the channel $\widetilde{C}$ of $\bk{S}{w}$ from Proposition \ref{prop:shifting channel by -1} is part of the indexing river; this is true because the ball directly west of a ball of $C$ is clearly part of the indexing river (recall the characterization of the indexing river in the proof of Proposition \ref{prop:backward numbering is channel}). 
\end{proof}

\begin{thm}
Suppose $w$ is a partial permutation, $T=\str{r}{A,B}$ is a compatible stream and $S=\str{r'}{A',B'}$ is compatible with $\bk{T}{w}$. Let $z$ be the unique integer such that $\str{z}{A,B}$ is concurrent to $\str{0}{A',B'}$. Let $\widetilde{T} = \str{r'+z}{A,B}$ and $\widetilde{S} = \str{r-z}{A',B'}$. Then $\bk{S}{\bk{T}{w}} = \bk{\widetilde{S}}{\bk{\widetilde{T}}{w}}$.
\end{thm}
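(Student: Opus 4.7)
The plan is to write both $\bk{S}{\bk{T}{w}}$ and $\bk{\widetilde{S}}{\bk{\widetilde{T}}{w}}$ as the same shift of $\bk{S}{\bk{\widetilde{T}}{w}}$. Set $w_0 := \bk{\widetilde{T}}{w}$ and $k := r - r' - z$, so that $T = \widetilde{T}\sh{k}$ and $\widetilde{S} = S\sh{k}$; the case $k = 0$ is trivial. By Proposition \ref{prop:unique concurrent} applied with $l = r'$, $\widetilde{T}$ is concurrent to $S$, and the lemma immediately preceding this theorem then tells us that the indexing river $R$ of $w_0$ with respect to $\widetilde{T}$ is both $N$-terminating and $W$-terminating with respect to $S$.

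First I would apply Theorem \ref{thm:moving streams} iteratively. On the inner backward step (with the aid of Remark \ref{rmk:moving streams}) this identifies $\bk{T}{w} = \bk{\widetilde{T}\sh{k}}{w}$ with $w_0\sh{k}_R$, where ``shifting $R$ by $k$'' is understood via iterated single-step shifts of the corresponding river. Similarly, applied iteratively to the outer backward step, $\bk{\widetilde{S}}{\bk{\widetilde{T}}{w}} = \bk{S\sh{k}}{w_0} = \bk{S}{w_0}\sh{k}_{R^*}$, where $R^*$ is the indexing river of $\bk{S}{w_0}$ with respect to $S$. The theorem thus reduces to the commutation identity
\[\bk{S}{w_0\sh{k}_R} \;=\; \bk{S}{w_0}\sh{k}_{R^*}.\]

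For $|k| = 1$ this is precisely the second pre-theorem lemma. For general $k$, I would induct on $|k|$: at each stage, apply the second pre-theorem lemma to the current permutation and the current ``shifted'' version of $R$, obtaining a single-step shift on both the inner partial permutation and on the outer $\bk{S}{\cdot}$. Chaining and using the identification from the preceding paragraph yields
\[\bk{S}{\bk{T}{w}} = \bk{S}{w_0\sh{k}_R} = \bk{S}{w_0}\sh{k}_{R^*} = \bk{S\sh{k}}{w_0} = \bk{\widetilde{S}}{\bk{\widetilde{T}}{w}}.\]

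The hard part will be justifying the induction: one must verify that, at each stage, the river being shifted in $w_0\sh{l}_R$ remains both $N$-terminating and $W$-terminating with respect to $S$. The first pre-theorem lemma establishes this only at $l = 0$ via the concurrency of $\widetilde{T}$ with $S$, and after a single shift the corresponding stream $\widetilde{T}\sh{1}$ is no longer concurrent to $S$. I expect to address this through a direct geometric analysis of how $N$- and $W$-terminal reverse paths (in the sense of Definition \ref{defn:back terminating}) propagate under the channel shifts studied in Propositions \ref{prop:shifting channel right} and \ref{prop:shifting channel by -1}; analogously one must check that the indexing river on the outer side evolves compatibly, so that the shifts on both sides genuinely telescope to $\sh{k}$.
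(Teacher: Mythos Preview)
Your approach is essentially the paper's: reduce to the identity $\bk{S}{\bk{\widetilde{T}\sh{k}}{w}} = \bk{S\sh{k}}{\bk{\widetilde{T}}{w}}$, use Proposition~\ref{prop:unique concurrent} to get $\widetilde{T}$ concurrent to $S$, invoke the first pre-theorem lemma for the base case, and then iterate Theorem~\ref{thm:moving streams}/Remark~\ref{rmk:moving streams} on one side and the second pre-theorem lemma on the other. You have also correctly located the only real issue, namely the inductive step.

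Where you drift from the paper is in how that gap is filled. You propose a ``direct geometric analysis'' showing that the shifted river remains both $N$- and $W$-terminating; but by Corollary~\ref{cor: terminating sides} this is generally false after a single shift (if $C$ is $N$-terminating then $C\sh{1}$ is \emph{not} $W$-terminating). The paper does not try to preserve the double-terminating hypothesis. Instead it cites Proposition~\ref{prop:moving channels}, whose proof is a two-case analysis: either the shifted channel is again not $N$-terminating and one reuses Proposition~\ref{prop:shifting channel right}, or it becomes $N$-terminating (hence both $N$- and $W$-terminating), in which case the channel to be shifted on the $\bk{S}{\cdot}$ side is identified with the northeast channel of the \emph{indexing} river via Remark~\ref{rmk:touching channels} and the argument in the last paragraph of that proof. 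Together with its reflection this is exactly what makes the two telescoping shift sequences match up for all $k$. So rather than a new geometric analysis, you should invoke Proposition~\ref{prop:moving channels} (and its diagonal reflection) alongside the second pre-theorem lemma; with that addition your outline is complete and coincides with the paper's proof.
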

\begin{proof}
By Proposition \ref{prop:unique concurrent} we know that $\widetilde{T}$ is concurrent to $S$. By the previous two lemmas, Theorem \ref{thm:moving streams}, and Remark \ref{rmk:moving streams} and Proposition \ref{prop:moving channels}, we know that for any $k$, $\bk{S}{\bk{\widetilde{T}\sh{k}}{w}} = \bk{S\sh{k}}{\bk{\widetilde{T}}{w}}$. In particular, for $k = r-z-r'$ we have $\widetilde{T}\sh{k} = T$ and $S\sh{k} = \widetilde{S}$, finishing 	the proof.
\end{proof}

\begin{proof}[Proof of Theorem \ref{thm:weyl}]
\label{pf:weyl}
Since every permutation is a product of transpositions of adjacent elements, repeated application of the previous theorem shows that 
\[\Psi(P,Q,\rho) = \Psi(P,Q,\rho').\] 
Now because $\rho'$ is dominant, $\Phi(\Psi(P,Q,\rho')) = (P, Q,\rho')$. 
\end{proof}

\bibliographystyle{plain}
\bibliography{affine}

\end{document}